\newcommand{\re}{\mathrm{Re}\,}
\newcommand{\im}{\mathrm{Im}\,}
\newtheorem{thm}{Theorem}[section]
\newtheorem{lem}[thm]{Lemma}
\newtheorem{cor}[thm]{Corollary}
\newtheorem{prop}[thm]{Proposition}
\theoremstyle{definition}
\newtheorem{defn}[thm]{Definition}
\theoremstyle{remark}
\newtheorem{rem}[thm]{Remark}
\newtheorem{eg}[thm]{Example}
\newcounter{probno}
\newcommand{\Vol}{\mathrm{Vol}\,}
\newcommand{\codim}{\mathrm{codim}\,}
\newcommand{\id}{\mathrm{id}}
\newcommand{\tto}{\dashrightarrow}
\newcommand{\norm}[2][{}]{\left\|#2\right\|_{#1}}
\newcommand{\pair}[2]{\left\langle #1,#2 \right\rangle}
\newcommand{\R}{\mathbf{R}}
\newcommand{\C}{\mathbf{C}}
\newcommand{\cp}{\mathbf{P}}
\newcommand{\Z}{\mathbf{Z}}
\newcommand{\N}{\mathbf{N}}
\newcommand{\Q}{\mathbf{Q}}
\newcommand{\supp}{\mathrm{supp}\,}
\newtheorem*{ackn}{Acknowledgment}
\newcommand{\torus}{\mathbb{T}}
\newcommand{\nvec}{v}
\renewcommand{\div}{\operatorname{Div}}
\newcommand{\vol}{\operatorname{Vol}}
\newcommand{\ch}[2][{}]{[#2]_{#1}}        
\newcommand{\pcc}{\mathcal{D}_{1,1}}       
\newcommand{\cov}{\rho}              
\newcommand{\growth}[1]{\operatorname{Growth}(#1)}
\newcommand{\pcce}{\mathcal{D}_{ext}}
\newcommand{\pcci}{\mathcal{D}_{int}}
\newcommand{\sfn}{\psi}
\newcommand{\twoform}{\eta}
\newcommand{\ind}{\mathrm{Ind}}
\newcommand{\exc}{\mathrm{Exc}}
\newcommand{\dtop}{d_{top}}          
\newcommand{\ddeg}{\lambda_1} %
\newcommand{\hoo}{H^{1,1}_\R} 
\newcommand{\eltwo}{\mathcal{L}^2}
\newcommand{\trop}{\operatorname{Log}}
\newcommand{\eucl}{\operatorname{eucl}}
\newcommand{\ram}{\operatorname{Ram}}
\newcommand{\rzt}{\hat\torus}
\newcommand{\rzf}{\hat f}
\newcommand{\rzh}{\hat h}
\newcommand{\boundingcurrent}{\bar S}
\renewcommand{\N}{\mathbf{Z}_{\geq 0}}
\DeclareMathOperator{\rs}{rs}
\DeclareMathOperator{\ex}{ex}
\DeclareMathOperator{\dpsh}{DPSH}
\DeclareMathOperator{\psh}{PSH}
\newcommand{\extra}{\mathcal{E}^-}
\title[Equidistribution without stability for toric surface maps]{Equidistribution without stability \\ for toric surface maps}
\author{Jeffrey Diller}
\address{Department of Mathematics\\
  University of Notre Dame\\
  Notre Dame, IN 46556\\
  USA}
\email{diller.1@nd.edu}
\author{Roland Roeder}
\address{IUPUI Department of Mathematics\\
         402 North Blackford Street room LD270\\
         Indianapolis, Indiana 46202-3267 \\
         USA}
\email{roederr@iupui.edu}
\subjclass[2020]{37F80 (primary), 14E05, 32H50, 32U40 (secondary)}
\begin{document}
\begin{abstract}
We prove an equidistribution result for iterated preimages of curves by a large
class of rational maps $f:\cp^2\tto\cp^2$ that cannot be birationally
conjugated to algebraically stable maps.  The maps, which include recent examples with transcendental first dynamical degree,
are distinguished by the fact that they have constant Jacobian determinant relative to the natural
holomorphic two form on the algebraic torus.  Under the
additional hypothesis that $f$ has ``small topological degree'' we
also prove an equidistribution result for iterated forward images of curves.

To prove our results we systematically develop the idea of a positive closed $(1,1)$ current and 
its cohomology class on the inverse limit of all toric
surfaces.   This, in turn, relies upon a careful study of positive closed
$(1,1)$ currents on individual toric surfaces.
This framework may be useful in other contexts.

\end{abstract}

\maketitle
\markboth{\today}{\today}

\section{Introduction}
\label{sec:intro}

It is a central problem concerning the dynamics of a rational map $f:\cp^k\tto\cp^k$ on complex projective space $\cp^k$ to understand the asymptotic behavior of preimages $f^{-n}(V)$ of a suitably general proper subvariety $V\subset\cp^k$ as $n\to\infty$.  It was shown in \cite{RuSh97} that when the degree of $f^{-n}(V)$ tends to $\infty$ quickly enough with $n$, this behavior is independent of $V$.  Indeed one hopes that after normalizing, the preimages converge in the sense of integration currents to some invariant closed positive current whose support tracks the set of points on which the dynamics of $f$ is exponentially expanding in at least $\codim V$ directions.  The case $k=1$ was settled by Brolin \cite{Bro65} for polynomials $f:\C\to\C$ and separately by Lyubich \cite{Ly83} and by Freire, Lopes and Ma\~ne \cite{FLM83} for general rational maps $f:\cp^1\to\cp^1$.   They showed when $\deg f\geq 2$ that preimages $f^{-n}(z)$ of a non-exceptional point $z\in\cp^1$ equidistribute with respect an $f$-invariant measure whose support coincides with the Julia set of $f$.

Increasing the dimensions of both the domain and the subvariety by one, we encounter an additional difficulty.  For our purposes, the \emph{degree} $\deg(f)$ of a dominant rational map $f:\cp^2\tto\cp^2$ will be the degree of the preimage $f^{-1}(\ell)$ of a general line $\ell\subset\cp^2$.   One calls $f$ \emph{algebraically stable} if $\deg(f^n) = (\deg f)^n$ for all $n\in\N$.  Sibony \cite{Sib99} showed that if $f$ is algebraic stable with $\deg f\geq 2$, then there exists a positive closed current $T^*$ of bidegree $(1,1)$ such that for almost every curve $C\subset\cp^2$, one has weak convergence
$$
\frac{1}{\deg(f^n)} f^{-n}(C) \to (\deg C)\cdot T^*.
$$
In contrast to the one dimensional case, however, the algebraic stability condition can fail.  This is related (see \S\ref{subsec:ddegs}) to the fact that a rational map $f:\cp^2\tto\cp^2$ need not be continuously defined at all points.  The notion of algebraic stability generalizes to rational maps $f:X\tto X$ on arbitrary smooth projective surfaces, and many rational maps on $\cp^2$ that fail to be algebraically stable, nevertheless admit algebraically stable models in the sense that they lift to algebraically stable maps on rational surfaces $X\to \cp^2$ obtained by well-chosen finite sequences of point blowups (see \cite{Bir22} for a discussion).  It turns out, moreover, that Sibony's result continues to hold in this case (see Corollary 2.11 in \cite{DDG10}).  

Unfortunately there exist rational maps $f:\cp^2\tto\cp^2$ that not only fail to be algebraically stable but further fail to admit any stable model at all.  Favre \cite{Fav03} observed, for instance, that if $A=\begin{pmatrix} a & b \\ c & d\end{pmatrix}$ is an integer matrix whose eigenvalues $r e^{\pm 2\pi i\theta}$ are non-zero with irrational arguments $\pm\theta$, then the associated `monomial' map $f(x_1,x_2):= (x_1^a x_2^b, x_1^c x_2^d)$, cannot be conjugated by birational change of surface $\varphi:X\tto \cp^2$ to an algebraically stable map $f_X:X\tto X$.  While it is not especially difficult (see Theorem \ref{thm:monomialequidistribution} below) to understand equidistribution for preimages of curves $C\subset\cp^2$ by a monomial map, the first author and Lin \cite{DiLi16} later generalized Favre's observation to the larger and more diverse class of `toric' rational maps.  Here we call $f:\cp^2\tto\cp^2$ \emph{toric} if $f^*\eta = \cov(f)\eta$, where $\eta := \frac{dx_1\wedge dx_2}{x_1x_2}$ denotes the natural holomorphic two form on the algebraic torus $\torus = (\C^*)^2$ and $\cov(f)\in\C^*$ is a non-zero constant.  Aside from some recent skew-product examples by Birkett \cite{Bir22}, all known examples of plane rational maps that do not admit stable models are toric.  The purpose of this article is to show for toric surface maps that one can obtain equidistribution results for curves even when stable models are unavailable.

In order to state our main results, we briefly describe the central
construction on which they are based.  It is observed in \cite{DiLi16} that a
toric map $f:\cp^2\tto\cp^2$ is best analyzed by allowing the domain and range
of $f^n$ to vary through sequences of increasingly elaborate compactifications
of the algebraic torus $\torus := (\C^*)^2$.  Inspired by \cite{BFJ08} and
\cite{Can11} we take this idea to its logical extreme.  That is, we declare
$X\succ Y$ for smooth projective toric surfaces $X$ and $Y$ if the birational
map $\pi_{XY}:X\tto Y$ extending $\id:\torus\to\torus$ is a morphism; i.e.~ if
$X$ is canonically obtained from $Y$ by a finite sequence of point blowups.
We then consider the map $\rzf:\rzt\tto\rzt$ induced by $f$ on the inverse
limit $\rzt$ of all toric surfaces subject to this ordering.

As we explain in \S\ref{sec:rztoric}, the compact space $\rzt$ comes close to being a complex
surface: it contains $\torus$ as an open dense subset, and the complement
$\rzt\setminus\torus$ consists mainly of \emph{poles}, i.e. $\torus$-invariant
curves $C_\tau$ indexed by rational rays $\tau\subset\R^2$.  Likewise, the map
$\rzf:\rzt\tto\rzt$, which we define and study in \S\ref{sec:maps}, behaves
much like a rational map on a projective surface.  It is well-defined and
continuous off a finite \emph{persistently indeterminate} set
$\ind(\rzf)\subset\rzt$.  The image $\rzf(C)$ of each curve $C\subset\rzt$ by
$\rzf$ is either a curve or, if $C\subset\exc(\rzf)$ is one of finitely many
\emph{persistently exceptional} curves, a point.  The sets $\ind(\rzf)$ and
$\exc(\rzf)$ are empty precisely when the toric map $f$ is monomial.  More
generally, we call a toric map $f:\cp^2\tto\cp^2$ \emph{internally stable} if
$\rzf^n(\exc(\rzf)) \cap \ind(\rzf) = \emptyset$ for all $n\in\N$.  For toric
maps, this condition is far weaker than algebraic stability. It is satisfied by
all monomial maps and by the toric maps with transcendental dynamical degrees
exhibited in \cite{BDJ20}.  The work of \cite{HPmemoir} on dynamics of Newton's
method in two complex variables is an early instance of a construction similar
to the one we describe here, where the domain of a map is blown up infinitely
many times in order to understand its dynamics.

We prove in \S\ref{sec:monomial} that monomial maps are distinguished among toric rational maps $f:\cp^2\tto\cp^2$ by the fact that they have minimal \emph{first dynamical degree}
\begin{equation}
\label{eqn:planeddeg}
\ddeg(f) := \lim_{n\to\infty} \deg(f^n)^{1/n} \in [\dtop(f)^{1/2},\deg(f)].
\end{equation}
Here $\dtop(f)$ is the \emph{topological degree} of $f$, given by $\# f^{-1}(x)$ for a typical point $x$.

\begin{thm}
\label{thm:mainC}
If $f:\cp^2\tto\cp^2$ is an internally stable toric map such that $\ddeg(f)^2 = \dtop(f)$, then either 
\begin{itemize}
 \item $f$ is a \emph{shifted monomial map}, given by $x\mapsto yh(x)$ for some monomial map $h$ and fixed factor $y\in\torus$; or
 \item $\ddeg(f)$ is a positive integer and there exists a `distinguished' coordinate system in which $f$ has the skew product form 
$$
f(x_1,x_2) = (t x_1^{\pm\ddeg(f)},g(x_1)x_2^{\pm\ddeg(f)})
$$
for some $t\in\C^*$ and rational function $g:\cp^1\to\cp^1$.  
\end{itemize}
\end{thm}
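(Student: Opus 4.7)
I would approach this by showing that the condition $\ddeg(f)^2 = \dtop(f)$ essentially pins down the algebraic structure of the $2\times 2$ integer matrix $A$ encoding the leading-monomial action of $f$ on the character lattice of $\torus$, and then perform a case analysis on the Jordan type of $A$. The first step is to identify $\ddeg(f)$ with the spectral radius of $A$ and $\dtop(f)$ with $|\det A|$; internal stability is exactly the condition that guarantees $(\rzf^n)^* = (\rzf^*)^n$ on $\hoo(\rzt)$, making the spectral-radius identification rigorous. The hypothesis then reads ``both eigenvalues of $A$ have the same absolute value.'' The proof splits according to whether $A$ admits a rational invariant eigenray.

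\textbf{Case 1: $A$ has no rational invariant eigenray.} This occurs when $A$ has non-real complex conjugate eigenvalues $\lambda e^{\pm i\theta}$, or real irrational eigenvalues $\pm\sqrt{N}$ with $N$ non-square. I would show that $f$ must then be monomial, hence trivially a shifted monomial with identity shift factor. Argue by contradiction: if $f$ were non-monomial, both $\exc(\rzf)$ and $\ind(\rzf)$ would be non-empty and, on any sufficiently refined toric model, would decorate certain rational rays in $\R^2$. Internal stability demands that the $\rzf$-orbit of each persistently exceptional datum avoids $\ind(\rzf)$ forever. The matrix $A$ induces a dynamics on rational rays via the fan, and since $A$ has no rational invariant direction, the $A$-orbit of any rational ray is either dense in the circle of directions (irrational rotation or real irrational eigenvalues) or cycles through incompatibly many rational rays before returning. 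A density or pigeonhole argument then contradicts internal stability, so $f$ is monomial.

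\textbf{Case 2: $A$ has a rational invariant eigenray.} Here the corresponding eigenvalue must be a rational, hence integer, $\pm d = \pm\ddeg(f)$. Choose a $\Z$-basis of the character lattice adapted to this invariant ray. In the resulting coordinates on $\torus$, the first component $f_1$ becomes a rational function of $x_1$ alone with leading monomial $x_1^{\pm d}$, and the toric condition $f^*\eta = \cov(f)\eta$ then forces $f_1$ to be the pure monomial $t\, x_1^{\pm d}$. Substituting back into the toric condition yields a first-order ODE for $f_2$ in the variable $x_2$ whose solutions take the form $g(x_1) x_2^{\pm d}$ with $g$ rational, giving the stated skew-product form. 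When $A$ has two independent rational eigenrays, a symmetric argument in the other coordinate forces $g$ to be constant, exhibiting $f$ as a shifted monomial.

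\textbf{Main obstacle.} The delicate piece is Case 1: with no distinguished rational direction, internal stability alone must exclude all non-monomial toric data. My plan is to study the support function $\sfn$ of an $\rzf^*$-eigenclass realizing $\ddeg(f)$ --- a PL convex function on $\R^2$ that is $A$-equivariant up to the scaling factor $\ddeg(f)$ --- and to show that its linearity domains must refine an $A$-invariant fan, which is forbidden when $A$ has no rational invariant direction unless the eigenclass is the pullback of a monomial class, i.e.\ unless $f$ is already monomial. The cleanest execution likely reduces to a combinatorial statement about invariant PL functions on $\R^2$ under an integer matrix action with irrational or rotational eigenstructure.
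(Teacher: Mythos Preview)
Your proposal rests on a misconception that undermines both cases. For a non-monomial toric map $f$, the tropicalization $A_f$ is a \emph{piecewise} linear self-map of $N_\R$, not a single integer matrix; the linear pieces on different sectors generally differ, and there is no well-defined ``spectral radius of $A$'' to identify with $\ddeg(f)$. Indeed the maps in \cite{BDJ20} (Example~\ref{eg:main} of the paper) are internally stable non-monomial toric maps whose $A_f$ has no rational invariant ray and whose $\ddeg(f)$ is \emph{transcendental}, hence equal to the spectral radius of no integer matrix whatsoever. So the identification you rely on in your first step simply fails, and your Case~1 pigeonhole/density sketch gives no mechanism by which the extra hypothesis $\ddeg^2=\dtop$ would rule out such maps --- you never explain how that equality enters the ray dynamics. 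In Case~2 the gap is different but equally real: $A_f$-invariance of a rational ray $\tau$ only says $\rzf(C_\tau)=C_\tau$, i.e.\ $f_1$ vanishes along $\{x_1=0\}$; it does \emph{not} imply $f_1$ is independent of $x_2$.

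The paper's proof follows a completely different route with no case split on eigenstructure. Internal stability yields a nef class $\alpha^*\in\hoo(\rzt)$ with $\rzf^*\alpha^*=\ddeg\alpha^*$ (Theorem~\ref{thm:invariantclasses}). The push--pull identity $\rzf_*\rzf^*\alpha^*=\dtop\alpha^*+\extra(\alpha^*)$ (Theorem~\ref{thm:pushpull2}) combined with $\ddeg^2=\dtop$ forces $(\extra(\alpha^*)\cdot\alpha^*)=0$, so Hodge index gives $(C\cdot\alpha^*)=0$ and $C^2=0$ for every component $C\subset\rzf(\ind(\rzf))$. If $f$ is not shifted monomial, Corollary~\ref{cor:noexceptions} supplies a persistently exceptional curve $E$; pushing forward shows $[E]$ is also proportional to $\alpha^*$ with $E^2=0$. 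The genus formula on a toric surface fully realizing $E$ then forces $E$ to be a smooth rational curve meeting exactly two opposite poles $C_{\pm\tau_2}$ once each, so in suitable distinguished coordinates $E$ is a vertical line in $\cp^1\times\cp^1$. Invariance of $\alpha^*\propto[E]$ under $\rzf^*$ is what forces $f$ to preserve the vertical fibration, and only then does the relation $f^*\eta=\cov(f)\eta$ pin down the explicit skew-product form.
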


\noindent Distinguished coordinates are introduced in \S\ref{sec:rztoric}.  We invite the reader to compare Theorem \ref{thm:mainC} with the earlier characterization by Favre and Jonsson (see \cite[Theorem C]{FaJo11}) of \emph{polynomial} maps of $\C^2$ with minimal first dynamical degree.

To state the next result, we recall a simple criterion from \cite{DiLi16} for identifying toric rational maps $f:\cp^2\tto\cp^2$ that admit algebraically stable models.  The matrix $A$ underlying a monomial map can be generalized (Theorem \ref{thm:tropmap}) to an arbitrary toric map $f$ by its `tropicalization', a continuous, positively homogeneous and piecewise linear map $A_f:\R^2\to\R^2$.  It governs the action of the induced map $\rzf$ on poles via $\rzf(C_\tau) = C_{A_f(\tau)}$.  In all known examples, $A_f$ is actually a homeomorphism, and so acts on rays by a circle homeomorphism.  The map $f$ then admits a stable model if and only if the rotation number of the circle homeomorphism is rational.  Our main equidistribution result concerns the complementary case.

\begin{thm}
\label{thm:mainA}
Let $f:\cp^2\tto\cp^2$ be a toric rational map whose tropicalization is a homeomorphism with irrational rotation number.  If $f$ is internally stable and not equal to a shifted monomial map, then $\ddeg(f) > 1$ and there exists a positive closed $(1,1)$ current $T^*$ on $\cp^2$ with the following properties.
\begin{itemize}
 \item $T^*$ does not charge curves.
 \item $f^*T^* = \ddeg(f) T^* + D$ where $D$ is an effective $\R$-divisor.
 \item For each curve $C\subset\cp^2$, there exists $m(C)>0$ such that 
 $
 \frac{1}{\deg(f^n)} (f^n)^* C \to m(C) T^*.
 $
\end{itemize}
\end{thm}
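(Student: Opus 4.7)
The plan is to lift the problem from $\cp^2$ to the inverse limit $\rzt$, where by internal stability the map $\rzf$ enjoys an analogue of algebraic stability: $(\rzf^n)^* = (\rzf^*)^n$ on the $(1,1)$-cohomology space $\cH$ constructed in \S\ref{sec:rztoric}. On $\rzt$ one builds an invariant positive closed $(1,1)$ current $\hat T^*$ by a Sibony-style averaging procedure and then pushes it down to a current $T^*$ on $\cp^2$.

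To see that $\ddeg(f) > 1$, note that by Theorem \ref{thm:mainC}, if $\ddeg(f)^2 = \dtop(f)$ and $f$ is not a shifted monomial map, then $f$ has the skew-product form $(tx_1^{\pm \ddeg(f)}, g(x_1) x_2^{\pm \ddeg(f)})$. A direct computation shows that the tropicalization $A_f$ of such a skew product fixes the rays spanned by $(0,\pm 1)$, hence has rational rotation number, contradicting the hypothesis. Therefore $\ddeg(f)^2 > \dtop(f) \geq 1$, and $\ddeg(f) > 1$.

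To construct $\hat T^*$, fix a smooth positive closed $(1,1)$ form $\omega$ on $\cp^2$, pull it back to $\rzt$, and set $\hat T_n := \ddeg(f)^{-n}(\rzf^n)^*\omega$. These are positive closed currents of uniformly bounded mass, since the cohomology classes $\ddeg(f)^{-n}(\rzf^*)^n[\omega]$ are bounded. Any weak-$*$ accumulation point $\hat T^*$ is positive and closed, represents a nef eigenclass with $\rzf^*[\hat T^*] = \ddeg(f)[\hat T^*]$, and charges no curves: each pullback $(\rzf^n)^*\omega$ charges only persistently exceptional curves of $\rzf^n$, whose number grows at most linearly in $n$ (since $\exc(\rzf^n) = \bigcup_{k=0}^{n-1}\rzf^{-k}(\exc(\rzf))$) and whose individual masses are uniformly bounded, so the $\ddeg(f)^{-n}$ normalization kills this contribution in the limit because $\ddeg(f) > 1$. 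Pushing forward the eigenvalue relation to $\cp^2$ yields $f^*T^* = \ddeg(f) T^* + D$, with $D$ the effective $\R$-divisor recording persistently exceptional components visible in $\cp^2$.

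For equidistribution, let $C \subset \cp^2$ be an irreducible curve and lift the normalized pullbacks $\deg(f^n)^{-1}(f^n)^*C$ to $\rzt$. Their cohomology classes obey the spectral decomposition of $\rzf^*$ acting on $\cH$; the crucial input is a spectral gap asserting that $\ddeg(f)$ is a simple dominant eigenvalue, with the remainder of the spectrum confined to a disk of radius at most $\sqrt{\dtop(f)} < \ddeg(f)$ by the $\eltwo$ structure on $\cH$ and a Hodge-type inequality. It follows that $\deg(f^n)^{-1}[(\rzf^n)^*C] \to m(C)[\hat T^*]$ with $m(C) > 0$. Uniform mass bounds then extract a subsequential current limit whose cohomology class is $m(C)[\hat T^*]$; since $\hat T^*$ charges no curves, the limit is uniquely $m(C)\hat T^*$, and pushing down to $\cp^2$ completes the proof. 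The main obstacle is establishing the spectral gap on the infinite-dimensional space $\cH$: this is precisely where the irrational rotation number hypothesis enters essentially, since it rules out periodic cycles of poles that could produce extra eigenvalues on the circle of radius $\ddeg(f)$ and so guarantees the clean spectral decomposition needed to identify the dominant direction with $[\hat T^*]$.
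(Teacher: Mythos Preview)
Your plan for $\ddeg(f)>1$ and for the cohomological convergence via a spectral gap is sound in outline; the paper obtains exactly this from \cite{BFJ08} (Theorem \ref{thm:invariantclasses}).  But there is a genuine gap at the decisive step: the passage from convergence of \emph{classes} to convergence of \emph{currents}.

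You write: ``Uniform mass bounds then extract a subsequential current limit whose cohomology class is $m(C)[\hat T^*]$; since $\hat T^*$ charges no curves, the limit is uniquely $m(C)\hat T^*$.''  This inference is false.  A nef class in $\hoo(\rzt)$ (or on a fixed surface) typically contains many positive closed currents; knowing that one representative $\hat T^*$ fails to charge curves says nothing about which representative a given cluster point of $\ddeg(f)^{-n}\rzf^{n*}C$ happens to be.  Cohomological convergence only pins down the class of every cluster point, not the cluster point itself.  Your Sibony-style extraction therefore yields existence of \emph{some} invariant current but not the equidistribution statement.

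The paper closes this gap by an entirely different mechanism (Theorem \ref{thm:invcurrents}).  One writes $\ddeg^{-n}\rzf^{n*}T = \bar T_n + dd^c\Phi_n$, where $\bar T_n$ is the unique homogeneous representative of the class (Theorem \ref{thm:mainB}) and $\Phi_n = \sum_{j=0}^n \ddeg^{-j}\varphi_{n-j}\circ f^j$ is a telescoping sum of relative potentials.  Convergence of $\bar T_n$ follows from the cohomological convergence; the work is to show $\Phi_n$ converges in $L^1(X)$ for each toric surface $X$.  This requires two inputs: uniform exponential tail bounds on the $\varphi_k$ (Theorem \ref{thm:volbd}, resting on the analysis of support functions in \S\ref{sec:currents}) and, crucially, a dynamical volume estimate (Theorem \ref{thm:volshrink}) showing that any $\mu>\sqrt{\dtop(f)}$ is a dynamical lower volume exponent for $f$.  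Combining these gives $\|\ddeg^{-j}\varphi\circ f^j\|_{L^1(X)} \leq C(\mu/\ddeg)^j$, whence the series converges and the \emph{full} sequence $T_n$ converges, with limit independent of the initial $T$.

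You also misplace where the irrational rotation number enters.  It is not needed for the spectral gap on $\hoo(\rzt)$ (that follows from $\ddeg^2>\dtop$, which you correctly derived).  It is essential instead for the volume estimate, via unique ergodicity of the induced circle map (Theorem \ref{THM:GOOD_ITERATES}) and a bounded-returns argument (Lemma \ref{LEM:BOUNDED_NUMBER_OF_RETURNS}), and again for showing $T^*$ has vanishing Lelong numbers along poles (Theorem \ref{thm:zerolelong}), where one needs $\ram(\rzf^n,C_\tau)/\ddeg^n\to 0$.
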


To prove this theorem we follow \cite{BFJ08} in \S\ref{sec:tcurrents} by introducing a space $\hoo(\rzt)$ of toric cohomology classes together with linear pullback and pushforward operators $\rzf^*,\rzf_*:\hoo(\rzt)\to\hoo(\rzt)$.  Indeed we go one step further by introducing a space of \emph{toric currents} $\pcc(\rzt)$ that naturally represent toric classes.  Curves in $\rzt$ are instances of positive toric currents.  

Implementing these ideas requires a substantial amount of preliminary work,
found in \S\ref{sec:currents}, concerning positive closed currents of bidegree
$(1,1)$ on fixed toric surfaces.  In particular we associate to any (not
necessarily $\torus$-invariant) positive closed $(1,1)$ current $T$ on $\torus$
a convex function $\sfn_T:\R^2\to\R$ and prove in Theorem
\ref{thm:lineargrowth} that $T$ extends trivially to a positive closed current
on some/any compact toric surface if and only if $\sfn_T$ has at most linear
growth.  In particular, any such current $T \in \pcc(\torus)$ corresponds to an element
of $\pcc(\rzt)$ that we call \emph{internal} to distinguish it from currents whose supports contain poles of $\rzt$.  An internal
current $T \in \pcc(\rzt)$ will be called \emph{homogeneous} if 
\begin{itemize}
 \item $\sfn_T$ is positively homogeneous, satisfying $\sfn(t\nvec) = t\sfn(\nvec)$ for all $t\geq 0$ and $\nvec\in\R^2$; and 
 \item $T = dd^c (\sfn_T\circ \trop)$, where $\trop:\torus\to \R^2$ is the `tropicalization' map, given in distinguished local coordinates by $\trop(x_1,x_2) = (-\log|x_1|,-\log|x_2|)$.  
\end{itemize}
Homogeneous currents serve as canonical representatives for their classes in $\hoo(\rzt)$.

\begin{thm}
\label{thm:mainB} 
The continuous linear map $T\mapsto \ch{T}$ associating each toric current to its cohomology class restricts to a homeomorphism from the cone of positive homogeneous currents in $\pcc(\rzt)$ onto the cone of nef classes in $\hoo(\rzt)$.
\end{thm}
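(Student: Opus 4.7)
The plan is to reduce to a classical correspondence on each individual smooth projective toric surface $X_\Sigma$ and then patch the pieces together in the inverse limit $\rzt$. On a single $X_\Sigma$ with fan $\Sigma$ I would establish a bijection between the nef cone of $H^{1,1}_\R(X_\Sigma)$ and the cone of continuous, positively homogeneous, convex functions $\sfn : \R^2 \to \R$ that are linear on each cone of $\Sigma$. This is essentially the support-function picture from toric geometry: the divisor $\sum_\rho \sfn(e_\rho) D_\rho$ attached to the primitive generators $e_\rho$ of the rays $\rho \in \Sigma(1)$ is nef exactly when $\sfn$ is convex, because intersection with each invariant curve reduces to a finite difference of $\sfn$ on adjacent cones. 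Given such an $\sfn$, the current $dd^c(\sfn \circ \trop)$ is positive on $\torus$ and, by Theorem~\ref{thm:lineargrowth}, extends trivially to a positive closed $(1,1)$ current on $X_\Sigma$ whose class realizes the given nef one.

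To pass to $\rzt$, a class $\alpha \in \hoo(\rzt)$ is (by definition) nef iff its pushforward to every smooth projective toric surface is nef. Such an $\alpha$ therefore produces a family $\{\sfn_\Sigma\}$ of piecewise linear convex homogeneous functions, one for each fan. Compatibility of nef pushforwards under toric blowups (refinement of fans) forces $\sfn_{\Sigma'}$ and $\sfn_\Sigma$ to agree wherever both are linear, so the family patches to a single positively homogeneous convex function $\sfn_\alpha$ on the dense set of rational rays in $\R^2$. A positively homogeneous convex function on $\R^2$ is determined by its values on a set of rays dense in $S^1$, so $\sfn_\alpha$ extends uniquely to all of $\R^2$ with the same properties, and the current $T_\alpha := dd^c(\sfn_\alpha \circ \trop)$ is then homogeneous and internal, with $[T_\alpha] = \alpha$ because the construction inverts the single-surface bijection on every $X_\Sigma$.

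Injectivity of $T \mapsto [T]$ on homogeneous currents is immediate from the single-surface bijection: the class on each $X_\Sigma$ determines $\sfn_T$ on the rays of $\Sigma$; as $\Sigma$ varies this pins down $\sfn_T$ on every rational ray and hence on all of $\R^2$ by continuity and homogeneity. Continuity of $T \mapsto [T]$ is automatic from the inverse-limit topology on $\hoo(\rzt)$. For continuity of the inverse map I would use that a net of positively homogeneous convex functions on $\R^2$ which converges pointwise on a dense set of directions converges locally uniformly, so that the associated currents $dd^c(\sfn_n \circ \trop)$ converge weakly on $\torus$, and in fact on every $X_\Sigma$, i.e.~in $\pcc(\rzt)$.

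The main obstacle lies in the inverse-limit step: one must verify that the combinatorial data $\{\sfn_\Sigma\}$ is genuinely compatible across the whole inverse system (not only along a nested tower of refinements), and that the resulting $\sfn_\alpha$ defines, via $dd^c \circ \trop$, a current representing $\alpha$ rather than only a smaller class --- the worry being that the trivial extension across poles might fail to account for contributions picked up on some intermediate surface. Handling sign and normalization conventions so that the nef cone is mapped exactly onto the positive homogeneous cone (and not onto a proper subcone) is the other delicate point.
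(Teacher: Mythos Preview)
Your approach is essentially the same as the paper's: both pass through the correspondence between nef classes, convex positively homogeneous support functions, and homogeneous currents $dd^c(\sfn\circ\trop)$. The paper streamlines your patching step by having already established (Proposition~\ref{prop:nefsupport}) that every toric class is uniquely represented by a normalized external divisor $D\in\pcce(\rzt)$ whose support function is convex precisely when the class is nef, and (Theorem~\ref{thm:canonicalreps2}) that the homogeneous current with support function $\sfn_D$ is completely cohomologous to $D$; these two results dispose of exactly the compatibility and ``trivial extension might miss contributions'' worries you flag at the end.
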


With this setup, the proof of Theorem \ref{thm:mainA} proceeds as follows.  In \S\ref{sec:actions} we define pushforward and pullback by $\rzf$ on $\hoo(\rzt)$ and $\pcc(\rzt)$, following \cite{BFJ08}.  Internal stability of $f$ implies that $(\rzf^n)^* = (\rzf^*)^n$ for all $n\in\Z_{\geq 0}$.  The main result of \cite{BFJ08} then gives us that Theorem \ref{thm:mainA} holds on the cohomological level.  That is, there exists a cohomology class $\alpha^*\in\hoo(\rzt)$ such that for any nef class $\alpha\in\hoo(\rzt)$, we have
$\frac{f^{n*}\alpha}{\deg(f^n)} \to m(\alpha) \alpha^*$ for some constant $m(\alpha)> 0$.  

In \S\ref{sec:invcurrents}, we conclude the proof Theorem \ref{thm:mainA} by passing from cohomology classes to currents.  If $C\subset\rzt$ is a curve that is \emph{internal} in the sense that $C\cap\torus \neq \emptyset$, then the normalized pullbacks $C_n = \frac{1}{\ddeg(f)^n} f^{n*}C$, $n\geq 0$, are also internal, and their cohomology classes $\alpha_n$ are nef.  If $\bar T_n\in\pcc(\rzt)$ is the canonical homogeneous representative of $\ddeg(f)^{-n}\rzf^{n*}\alpha$, then we may write $C_n = \bar T_n + dd^c \varphi_n$ for some relative potential $\varphi_n$ on $\torus$.  The currents $\bar T_n$ converge to the canonical representative $\bar T$ of $\alpha^*$ by Theorem \ref{thm:mainB}.  So modulo some translation of statements on $\rzt$ back to $\cp^2$, it suffices for establishing Theorem \ref{thm:mainA} to prove for each toric surface $X$ that the potentials $\varphi_n$ converge in $L^1(X)$.  

The key to this convergence is Theorem \ref{thm:volshrink}, which provides a weak but sufficient estimate on the way iterates of a toric surface map can shrink volumes of open subsets.  It's proof, which occupies most of \S\ref{sec:volumes}, takes advantage of the fact that the tropicalization of $f$ also governs the ramification of $\rzf$ about poles and therefore the rate at which orbits of most points can escape $\torus$.
With Theorem \ref{thm:volshrink}, convergence of the potentials $\varphi_n$ follows from the internal stability hypothesis and rewriting $\lim\varphi_n$ as a telescoping series involving pullbacks of potentials for $f^*\bar T_n - \ddeg(f) \bar T_{n+1}$.  A virtue of this approach is that it guarantees equidistribution for preimages of \emph{all} (instead of almost all) curves in $\cp^2$.  The article \cite{FAVRE_JONSSON}, which deals with holomorphic endomorphisms of $\cp^2$, is a precedent: using volume estimates it shows for such maps that equidistribution of preimages fails for at most finitely many curves.

In the case when the toric map $f$ has \emph{small topological degree}, i.e. when $\dtop(f)<\ddeg(f)$, more can be shown: forward images of curves also equidistribute to a positive closed $(1,1)$ current, and both forward and backward equilibrium currents have additional geometric structure.  

\begin{thm}
\label{thm:mainD}
Suppose that $f:\cp^2\tto\cp^2$ is an internally stable toric rational map with small topological degree $\dtop(f)<\ddeg(f)$.  Then the backward equidistribution current $T^*$ for $f$ is laminar.  There exists, moreover, a positive closed $(1,1)$ current $T_*$ on $\cp^2$ such that
\begin{itemize}
 \item $T_*$ is woven and does not charge curves;
 \item $f_*T_* = \ddeg(f)T_* + D$, where $D$ is an effective $\R$-divisor;
 \item for any curve $C\subset\cp^2$, there exists $m(C)>0$ such that 
 $\frac{1}{\deg(f^n)} f^n_*C \rightarrow m(C)T_*$.
\end{itemize}
\end{thm}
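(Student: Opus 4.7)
My plan is to mirror the proof of Theorem \ref{thm:mainA}, with pullback replaced throughout by pushforward, using the small topological degree hypothesis $\dtop(f)<\ddeg(f)$ to produce the spectral gap needed to make the pushforward argument work. The relevant operator is $\rzf_*$ on toric cohomology $\hoo(\rzt)$, normalized by $\ddeg(f)$, and the relevant identity is $\rzf^*\rzf_* = \dtop(f)\,\id$, which couples $\rzf_*$ to the already-understood pullback $\rzf^*$.

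First I would establish the cohomological convergence for pushforward: for every nef class $\alpha\in\hoo(\rzt)$, there exists $m(\alpha)>0$ such that $\frac{1}{\deg(f^n)}\rzf^n_*\alpha\to m(\alpha)\alpha_*$ for some distinguished eigenclass $\alpha_*\in\hoo(\rzt)$. Internal stability of $f$ guarantees $(\rzf_*)^n=(\rzf^n)_*$ on classes represented by internal toric currents, and the leading eigenvalue of $\rzf_*$ on nef classes is $\ddeg(f)$ with subdominant eigenvalue $\dtop(f)/\ddeg(f)$, obtained via the coupling above and the spectral analysis of $\rzf^*$ already carried out in the proof of Theorem \ref{thm:mainA}. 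The gap $\dtop(f)/\ddeg(f)<1$ provided by the small topological degree hypothesis then yields the desired convergence.

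Having located $\alpha_*$, I would construct $T_*$ as its canonical homogeneous representative in $\pcc(\rzt)$ given by Theorem \ref{thm:mainB}. The invariance $\rzf_*T_* = \ddeg(f)T_*$ on $\rzt$ descends to $f_*T_* = \ddeg(f)T_* + D$ on $\cp^2$, with $D$ an effective $\R$-divisor absorbing the contributions of persistently exceptional curves to the projection $\rzt\to\cp^2$. To upgrade the cohomological convergence to convergence of currents, I would reuse the telescoping potential argument from \S\ref{sec:invcurrents}: write $\frac{1}{\ddeg(f)^n}f^n_*C = \bar T_n + dd^c\varphi_n$, where $\bar T_n$ is the canonical representative of the normalized pushforward class. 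Since $\bar T_n\to T_*$ by Theorem \ref{thm:mainB}, it suffices to show $\varphi_n\to\varphi_\infty$ in $L^1_{\mathrm{loc}}$. This requires a pushforward analog of the volume-shrinking estimate of Theorem \ref{thm:volshrink}, where once again the ratio $\dtop(f)/\ddeg(f)<1$ supplies the geometric decay needed to sum the telescoping series of potentials for $f_*\bar T_n - \ddeg(f)\bar T_{n+1}$.

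The remaining geometric claims are laminarity of $T^*$ and woveness of $T_*$. For $T^*$, in small topological degree a generic curve $C$ has preimages $f^{-n}(C)$ with $\sim\dtop(f)^n$ irreducible components of total degree $\sim\ddeg(f)^n$, which organize into a uniformly transverse family of disks, yielding a laminar decomposition. For $T_*$, the single forward image $f^n(C)$ is generically $\dtop(f)^n$-to-one over itself, so its normalized integration currents assemble into countably many transverse disk families, giving the weaker woven structure. I would adapt the techniques of \cite{DDG10} to the present toric/inverse-limit setting, using the tropicalization map $A_f$ and the ramification control established in \S\ref{sec:volumes} to obtain the required uniform transverse geometry near the poles of $\rzt$. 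The main obstacle, I expect, is this last step: rigorously extracting laminar and woven structure in a setting where the underlying surface is allowed to vary through the inverse limit $\rzt$ requires careful bookkeeping to ensure that the transverse disk families descend compatibly to $\cp^2$, and that the indeterminacy set of $\rzf$ together with its forward and backward orbit does not interfere with the transverse geometry.
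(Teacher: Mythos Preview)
Your overall plan---mirror the $T^*$ construction with $\rzf_*$ in place of $\rzf^*$, run the telescoping potential argument, and invoke \cite{DDG10} for laminarity and wovenness---matches the paper's route (Theorems~\ref{thm:laminarity} and~\ref{thm:forward}). But two points in your sketch are off, one a genuine error and one a misplaced worry.

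\textbf{The genuine error.} You propose to take $T_*$ to be the canonical homogeneous representative of $\alpha_*$ furnished by Theorem~\ref{thm:mainB}. That current represents $\alpha_*$ but has no reason to satisfy $\rzf_* T_* = \ddeg(f)\,T_*$: pushforward does not send homogeneous currents to homogeneous currents. Indeed your own telescoping step shows this: you write $T_n = \bar T_n + dd^c\varphi_n$ and argue $\bar T_n\to \bar T_{\alpha_*}$ and $\varphi_n\to\varphi_\infty$, so the limit of $T_n$ is $\bar T_{\alpha_*} + dd^c\varphi_\infty$, not $\bar T_{\alpha_*}$. The invariant current $T_*$ is this \emph{limit}, and the homogeneous current is only the cohomological anchor against which the potentials are measured. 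The paper makes exactly this construction (cf.\ the proof of Theorem~\ref{thm:forward}); the $L^1$ bound on $\ddeg^{-j}f^j_*\varphi_{n-j}$ comes out as $\frac{A}{B}\bigl(\frac{\dtop}{\mu\ddeg}\bigr)^j$ using the \emph{upper} volume exponent in Theorem~\ref{thm:volshrink} together with the fact that $f^j_*\varphi$ is a sum over $\dtop^j$ preimages, and small topological degree makes this summable.

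\textbf{The misplaced worry.} You anticipate that the hard part is adapting the laminar/woven machinery of \cite{DDG10} to the inverse limit $\rzt$. The paper sidesteps this entirely: one works on a single fixed toric surface $X$, chooses a very generic hyperplane section $C_X$ avoiding the $\torus$-invariant points and the countable set $\bigcup_n\rzf^n(\exc(\rzf))$, and applies Theorems~2.12 and~3.6 of \cite{DDG10} directly. The algebraic stability hypothesis in those theorems is used only to guarantee that the normalized (pre)images of $C_X$ converge to the invariant current, and that convergence is supplied here instead by Theorem~\ref{thm:invcurrents} and Theorem~\ref{thm:forward}. (Incidentally, your heuristic that $f^{-n}(C)$ has $\sim\dtop(f)^n$ components is backwards: with $C$ chosen as above, $\rzf^{-n}(C)$ is an irreducible curve for every $n$.) Finally, the cohomological convergence $\ddeg^{-n}\rzf^n_*\alpha\to(\alpha\cdot\alpha^*)\alpha_*$ is already recorded in Theorem~\ref{thm:invariantclasses} via \cite{BFJ08}; you do not need to rederive it from a coupling identity.
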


Roughly speaking, a positive closed $(1,1)$ current is said to be laminar if it can be expressed as a sum of (non-closed) positive $(1,1)$ currents $T_n$, each given in some coordinate polydisk $P\subset\cp^2$ as an average of currents of integration over disjoint graphs of holomorphic functions with respect to some non-negative Borel measure.  We say more about laminarity below but refer the reader to \cite{DDG10} \S~2.4 and \S~3.3 and the references therein for a thorough discussion.  The notion of wovenness is similar, except that the graphs are not required to be disjoint.

The results above suggest several further issues, and we intend to return to at least some of them in future work.  First of all, the condition in Theorem \ref{thm:mainA} that the toric map $f$ be internally stable is convenient but likely unnecessary.  Theorem D in \cite{DiLi16} implies that if one is willing to allow for domains a bit more general than toric surfaces, then \emph{any} toric map becomes internally stable.  It seems likely that the inverse limit constructions and results concerning $\rzt$, $\rzf$, $\pcc(\rzt)$ and $\hoo(\rzt)$ will adapt to the more general setting, though the details will certainly be more complicated.

It would also be desirable to know whether there exist toric surface maps $f:\cp^2\tto\cp^2$ for which $A_f$ is not a homeomorphism.  If no such maps exist, then we have a tricotomy.  Either $f$ is monomial, $f$ satisfies the hypotheses of Theorem \ref{thm:mainA}, or by Theorem F in \cite{DiLi16}, $f^2$ admits a stable model.  In particular, by applying Corollary 2.1 from \cite{DDG10} to the last case, we have a more or less complete understanding of equidistribution for preimages of curves by toric surface maps.

Finally, in the case when $f$ has small topological degree, one would like to interpret the wedge product $T^*\wedge T_*$ as an $f$-invariant measure of maximal entropy $\log\ddeg(f)$ as has been done e.g. for polynomial diffeomorphisms $f:\C^2\to\C^2$ (see \cite{BeSm91}) or for many surface maps that admit stable models (see \cite{DDG11}).  In the case when $\lambda_1(f)$ is transcendental, as in \cite{BDJ20}, this would imply that there exist rational surface maps whose entropy exponentiates to a transcendental number. 

The literature concerning equidistribution in holomorphic and rational dynamics
is by now very large.  Sources that we have not already mentioned include, but
are hardly limited to,
\cite{FoSi95,Dil96,FAVRE_GUEDJ,FAVRE_THESIS,GUEDJ_VOL1,GUEDJ_VOL2,DS,TAFLIN,PROTIN,BLR}.
We encourage the reader to consult with these to learn more about the specific
issues we address here as well as ones we do not touch, such as subvarieties of
higher codimension, meromorphic correspondences, and speed of convergence
problems.  

We note the very interesting alternative treatment of `homogeneous' positive closed currents on smooth toric varieties by Babaee and Huh \cite{BaHu17}, which they used to disprove a variant of the Hodge conjecture.  Babaee \cite{Bab23} has recently used the same approach to construct invariant currents for some simple (in particular, algebraically stable) monomial maps.  {We also stress that we are certainly not the first to make use of the inverse limit of the set of toric varieties in a given dimension.  See for instance the recent paper \cite{Bot22} which considers (using different terminology and notation) the inverse limit $\rzt$ of all toric surfaces and the corresponding vector space $\hoo(\rzt)$, conceived of as equivalence classes of `toric $b$-divisors' (a subset of what we here call toric currents).  On the strictly cohomological level, it was shown much more generally in \cite{FuSt97} that the inverse limit of the cohomology rings over all toric varieties of fixed dimension $d$ is isomorphic to the McMullen polytope algebra in $\R^d$.}

Note also that the dynamics of toric rational mappings has been studied from the perspective of itegrable systems and cluster algebras; see \cite{MO} and the references therein.

To close we briefly summarize the contents of each section that follows. 
\begin{itemize}
\item[\S\ref{sec:background}] supplies background concerning complex surfaces, rational maps and currents. 
\item[\S\ref{sec:rztoric}] reviews toric surfaces and introduces the inverse limit $\rzt$ of such surfaces. 
\item[\S\ref{sec:maps}] introduces and proves the foundational facts about toric maps.
\item[\S\ref{sec:volumes}] proves the key volume estimate Theorem \ref{thm:volshrink}.
\item[\S\ref{sec:currents}] analyzes the relationship between positive closed $(1,1)$ currents on $\torus$ and those on toric surfaces.
\item[\S\ref{sec:tcurrents}] introduces the spaces of toric classes and toric currents on $\rzt$.
\item[\S\ref{sec:actions}] introduces and analyzes the pullback and pushforward actions induced by a toric map on toric classes and currents.  
\item[\S\ref{sec:monomial}] gives the proof of Theorem \ref{thm:mainC} as well as the statement and proof of an equidistribution result for monomial maps that parallels Theorem \ref{thm:mainA}.
\item[\S\ref{sec:invcurrents}] provides the proofs of Theorems \ref{thm:mainA} and \ref{thm:mainD}.
\end{itemize}

\medskip
\begin{ackn}
  We thank Romain Dujardin, Vincent Guedj and Duc-Viet Vu for helpful answers
to questions that came up during the writing of this paper, Misha Lyubich for
his useful comments following a talk about our work, and Farhad Babaee, Mattias Jonsson,
Charles Favre and Simion Filip for helpful questions and comments about a draft
of this article.  {We are also indebted to the anonymous referee for several constructive commnets.  
Finally, both authors gratefully acknowledge support from the NSF, the
first by grants DMS-1954335, DMS-2246893 and the second by grant DMS-2154414.}
\end{ackn}

\section{Background}
\label{sec:background}

Except where otherwise noted, a \emph{surface} in this article will be a smooth complex two dimensional projective variety $X$, and a \emph{curve} on $X$ will be an irreducible one dimensional complex algebraic subset $C\subset X$.  In this section, we give a quick summary of facts and terminology that we will rely on below concerning surfaces, currents and rational maps.

\subsection{Divisors and currents} For each surface $X$, we let $\div(X)$ denote the vector space of $\R$-divisors on $X$ and $\pcc^+(X)$ denote the cone of real positive closed $(1,1)$ currents, endowing the latter with the weak topology.  We further let $\pcc(X) := \pcc^+(X) -\pcc^+(X)$ be the vector space generated by $\pcc^+(X)$.  This is non-standard, since it excludes many closed $(1,1)$ currents, but it suits our purposes for reasons that we will explain shortly.  We extend the weak topology on $\pcc^+(X)$ to a topology on $\pcc(X)$ by declaring that $(T_j)\subset \pcc(X)$ converges if and only if we can write $T_j = T_j^+ - T_j^-$, where $(T_j^+)$, $(T_j^-)\subset \pcc^+(X)$ are both weakly convergent sequences.  We freely conflate divisors with their associated integration currents.  

Siu's theorem \cite{Siu74} implies that any current in $\pcc(X)$ that is supported on a finite union of curves is a divisor.  Complementing this fact, the Skoda-El Mir Theorem tells us that if $C\subset X$ is a curve and $T \in \pcc^+(X \setminus C)$, then $T$ is the restriction of a positive closed $(1,1)$ current on $X$ precisely when the measure $T\wedge \omega$ has finite mass for some/any K\"ahler form $\omega$ on $X$.  Under this condition, if $T$ is expressed as a form with measure coefficients, then extending each measure by zero to $C$ gives the \emph{trivial extension} $\tilde T\in\pcc^+(X)$ of $T$.
See, e.g., \cite[Theorem 2.3]{DemBook} for more details.  We will freely apply the Skoda-El Mir Theorem to (non-positive) currents
$T \in \pcc(X \setminus C)$ by writing $T = T^+ - T^-$ with $T^+$ and $T^- \in \pcc^+(X \setminus C)$ and checking the finite mass hypothesis for both $T^+$ and $T^-$.

We let $\hoo(X)\subset H^2(X,\R)$ denote the set of cohomology classes $\ch{T}$ of currents $T\in\pcc(X)$.  In the sequel all our surfaces will be rational, in which case $\hoo(X)$ and $H^2(X,\R)$ coincide, and every class in either cohomology group is represented by an $\R$-divisor.  By the $dd^c$-lemma from K\"ahler geometry, two currents $S,T\in \pcc(X)$ are cohomologous if and only if there exists $\varphi\in L^1(X)$, locally given as a difference of plurisubharmonic functions, such that $dd^c \varphi = T-S$.  Recall that if $T_j = T + dd^c \varphi_j\in\pcc^+(X)$ is a sequence of \emph{positive} closed currents, all representing the same cohomology class $[T]\in\hoo(X)$, and the relative potentials $\varphi_j$ are normalized so that $\int_X \varphi\,dV = 0$ relative to some volume form on $X$, then $(T_j)$ converges weakly if and only if $\varphi_j$ converges in $L^1(X)$.
  
We let $(\alpha\cdot\beta) = (\alpha\cdot\beta)_X \in\R$ denote the intersection pairing between cohomology classes $\alpha,\beta\in \hoo(X)$.  A class $\alpha$ is \emph{nef} if $(\alpha\cdot D) \geq 0$ for all effective $D\in\div(X)$.  Every nef class $\alpha$ satisfies $\alpha^2 := (\alpha\cdot\alpha)\geq 0$.  If $\alpha \in\hoo(X)\setminus\{0\}$ has non-negative self-intersection and $\alpha^\perp\subset\hoo(X)$ denotes the $(\cdot,\cdot)$-orthogonal complement of $\alpha$, then the Hodge Index Theorem says that $\beta^2\leq 0$ for all $\beta\in\alpha^\perp$ with equality if and only if $\beta$ is a multiple of $\alpha$.  
  
\subsection{Rational maps}
\label{subsec:ratmaps}
If $f:X\tto Y$ is a rational map between surfaces $X$ and $Y$, we let $\ind(f)\subset X$ denote the finite set of \emph{indeterminate} points, where $f$ cannot be continuously defined.  For each curve $C\subset X$, we let $f(C) := \overline{f(C\setminus\ind(f))}\subset Y$ denote the strict transform of $C$ by $f$.  This is either another curve or a single point. In the latter case we call $C$ \emph{exceptional} for $f$.  We will always assume that our rational maps are \emph{dominant}, i.e. that $f(X\setminus I(f))$ contains an open subset of $Y$.  In this case, there are only finitely many exceptional curves for $f$, and we let $\exc(f)$ denote their union.

The restriction $f|_C:C\to f(C)$ of $f$ to any curve $C$ is a holomorphic map that is well-defined even on $\ind(f)\cap C$.  A point $p\in X$ is indeterminate for $f$ precisely when the set $f(p):=\{f|_C(p):C\subset X\text{ is a curve containing }p\}$ is not just a point but rather a finite, connected union of curves.

The main reason for our non-standard definition of $\pcc(X)$ is that it allows
(see \cite{DDG10} \S~1.2) us to associate to any rational surface map $f:X\tto
Y$ continuous linear pushforward and pullback maps $f_*:\pcc(X)\to\pcc(Y)$ and
$f^*:\pcc(Y)\to\pcc(X)$, on closed $(1,1)$ currents.  Both maps preserve
divisors and positivity.  If $T = dd^c\varphi$ is cohomologous to zero, then
$f^*T = dd^c(\varphi\circ f)$ and $f_*T = dd^c f_*\varphi$ are, too.  Here $f_*
\varphi(q) := \sum_{f(p) = q} \varphi(p)$, where the preimages are counted with multiplicity.  Hence $f^*$ and $f_*$ descend to
compatible linear maps $f_*:\hoo(X)\to \hoo(Y)$ and $f^*:\hoo(Y)\to\hoo(X)$ on
cohomology classes.  These are adjoint with respect to intersection: for all
$\alpha\in\hoo(Y)$, $\beta\in\hoo(X)$ we have
\begin{equation}
\label{eqn:projformula}
(f^*\alpha\cdot\beta)_X = (\alpha\cdot f_*\beta)_Y.
\end{equation}
It follows that $f^*$ and $f_*$ also preserve nef classes.  Below (see Theorem
\ref{thm:pushpull2} and its consequence in the proof of Theorem \ref{thm:mainC}) we make
use of the following additional fact from \cite{DiFa01}.  Recall that the
\emph{topological degree} $\dtop(f)$ of $f:X\tto Y$ is the number of preimages
of a general point $p\in X$.

\begin{thm}
\label{thm:pushpull1}
Let $f:X\tto Y$ be a dominant rational map between surfaces $X$ and $Y$.  Then the linear operator $\extra_f:\hoo(Y)\to\hoo(Y)$ defined by $\extra_f(\alpha) = f_*f^*\alpha - d_{top}(f) \alpha$ satisfies the following for any $\alpha\in\hoo(Y)$.
\begin{itemize}
 \item $\extra_f(\alpha)$ is represented by a divisor supported on $f(\ind(f))$.
 \item $\extra_f(\alpha)$ is effective whenever $\alpha$ is nef.
 \item $\extra_f(\alpha) = 0$ if and only if $(\alpha\cdot C) = 0$ for all $C\subset f(\ind(f))$.
 \item $(\extra_f(\alpha)\cdot \alpha) \geq 0$ with equality if and only if $\extra_f(\alpha) = 0$.
\end{itemize}
\end{thm}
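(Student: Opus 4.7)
The plan is to resolve the indeterminacy of $f$ via a birational morphism $\pi:Z\to X$ (a finite composition of point blowups above $\ind(f)$) chosen so that $\tilde f:=f\circ\pi:Z\to Y$ is a morphism. By the construction of pullback and pushforward recalled in \S\ref{subsec:ratmaps}, one then has $f^*=\pi_*\tilde f^*$ and $f_*=\tilde f_*\pi^*$, so
\begin{equation*}
f_*f^*\alpha \;=\; \tilde f_*\,\pi^*\pi_*\,\tilde f^*\alpha.
\end{equation*}
I will then use the orthogonal decomposition $\hoo(Z) = \pi^*\hoo(X)\oplus\mathcal{E}$, where $\mathcal{E}$ denotes the span of the $\pi$-exceptional curves and is negative-definite by standard surface intersection theory. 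Writing $\tilde f^*\alpha = \pi^*\pi_*\tilde f^*\alpha + E_\alpha$ with $E_\alpha\in\mathcal{E}$, and combining with the projection formula $\tilde f_*\tilde f^*\alpha = \dtop(f)\,\alpha$ for the morphism $\tilde f$, I arrive at the central identity
\begin{equation*}
\exc_f(\alpha) \;=\; -\,\tilde f_* E_\alpha.
\end{equation*}

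From this formula the first bullet is immediate, since $E_\alpha$ is supported on $\pi^{-1}(\ind(f))$ and hence $\tilde f_*E_\alpha$ is supported on $\tilde f(\pi^{-1}(\ind(f)))=f(\ind(f))$. For the second bullet, when $\alpha$ is nef, $\tilde f^*\alpha$ is nef on $Z$, and the orthogonality of the decomposition gives $(E_\alpha\cdot E_j) = (\tilde f^*\alpha\cdot E_j)\geq 0$ for every $\pi$-exceptional component $E_j$. I then invoke the negativity lemma: if $D\in\mathcal{E}$ satisfies $(D\cdot E_j)\geq 0$ for all $j$, then $-D$ is effective. A quick proof is to split $D = D^+ - D^-$ into its positive and negative parts, which share no components, and observe that $(D^+)^2 \geq (D\cdot D^+)\geq 0$, contradicting the negative-definiteness of $\mathcal{E}$ unless $D^+=0$. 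Applied to $D=E_\alpha$, this yields $-E_\alpha$ effective, and hence $\exc_f(\alpha)=\tilde f_*(-E_\alpha)$ effective.

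For the fourth bullet, adjunction of $\tilde f_*$ and $\tilde f^*$ together with the orthogonality of the decomposition gives
\begin{equation*}
(\exc_f(\alpha)\cdot\alpha)_Y \;=\; -(\tilde f_* E_\alpha\cdot\alpha)_Y \;=\; -(E_\alpha\cdot\tilde f^*\alpha)_Z \;=\; -E_\alpha^2,
\end{equation*}
which is nonnegative by negative-definiteness of $\mathcal{E}$ and vanishes if and only if $E_\alpha=0$. Since $E_\alpha=0$ forces $\exc_f(\alpha)=0$, while conversely $\exc_f(\alpha)=0$ forces $(\exc_f(\alpha)\cdot\alpha)=0$ and hence $E_\alpha=0$ by the identity above, the equality case of the fourth bullet is settled. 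The third bullet then follows by chasing this chain of equivalences: $\exc_f(\alpha)=0$ iff $E_\alpha=0$ iff $(\tilde f^*\alpha\cdot E_j)=0$ for every exceptional $E_j$, and projection formula rewrites this as $(\alpha\cdot \tilde f_* E_j)=0$ for every $E_j$. Since every irreducible $C\subset f(\ind(f))$ equals $\tilde f(E_j)$ for some non-contracted $E_j$ and, conversely, each nonzero $\tilde f_*E_j$ is a positive multiple of such a $C$, this is in turn equivalent to $(\alpha\cdot C)=0$ for every $C\subset f(\ind(f))$.

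The one step that requires more than bookkeeping is the negativity lemma invoked for the second bullet; everything else reduces to linear algebra once the orthogonal decomposition $\hoo(Z)=\pi^*\hoo(X)\oplus\mathcal{E}$ and the elementary identification $f(\ind(f))=\bigcup_j \tilde f(E_j)$ are in place.
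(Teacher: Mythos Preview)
The paper does not actually prove this theorem: it is quoted as a fact from \cite{DiFa01} and simply used later (in Theorem~\ref{thm:pushpull2} and in the proof of Theorem~\ref{thm:mainC}). Your argument is correct and is essentially the same as the one in \cite{DiFa01}: resolve indeterminacy by $\pi:Z\to X$, use the orthogonal splitting $\hoo(Z)=\pi^*\hoo(X)\oplus\mathcal{E}$ to isolate the exceptional part $E_\alpha$ of $\tilde f^*\alpha$, identify $\extra_f(\alpha)=-\tilde f_*E_\alpha$, and then exploit negative-definiteness of the intersection form on $\mathcal{E}$ (via the negativity lemma) together with the projection formula for the morphism $\tilde f$. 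So there is nothing to compare beyond noting that you have reconstructed the cited proof.
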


In the special case of birational morphisms $\pi:X\to Y$, i.e. finite compositions of point blowups, pullback is an intersection isometry, satisfying $(\pi^*\alpha\cdot \pi^*\beta)_X = (\alpha\cdot \beta)_Y$ for all $\alpha,\beta\in\hoo(Y)$.    In particular if $\div(\exc(\pi))\subset\div(X)$ denotes those divisors supported on $\pi$-exceptional curves, then $(\cdot,\cdot)_X$ is negative definite on $\div(\exc(\pi))$ and we have an orthogonal decomposition $\hoo(X)\cong \pi^*\hoo(Y)\oplus \div(\exc(\pi))$.

A central difference between rational maps and morphisms is that pushforward and pullback by rational maps do not necessarily respect composition. 

\begin{prop}[See \cite{DiFa01}, Proposition 1.13]
\label{prop:stablecomp}
Suppose that $g:X\tto Y$ and $f:Y\tto Z$ are rational maps between surfaces. Then the following are equivalent.
\begin{itemize}
 \item $(f\circ g)^* = g^* \circ f^*$ (on currents and/or classes).
 \item $(f\circ g)_* = f_* \circ g_*$ (on currents and/or classes).
 \item $g(\exc(g)) \cap \ind(f) = \emptyset$.
\end{itemize}
\end{prop}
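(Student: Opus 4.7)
My plan is to resolve the indeterminacy of both $f$ and $f\circ g$ on a single common model and then read off the discrepancy $g^*f^*-(f\circ g)^*$ explicitly as a current supported on exceptional curves.  Concretely, let $\sigma\colon Y'\to Y$ be the minimal resolution of $\ind(f)$, so that $\tilde f\eqdef f\circ\sigma\colon Y'\to Z$ is a morphism and $\sigma(\exc(\sigma))=\ind(f)$.  Further blow up $X$ to obtain a birational morphism $p\colon W\to X$ and a morphism $q\colon W\to Y'$ with $g\circ p=\sigma\circ q$; this is possible because $\sigma^{-1}\circ g\circ p\colon W\tto Y'$ becomes a morphism after finitely many further point blowups of $W$.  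Then $f\circ g\circ p=\tilde f\circ q\colon W\to Z$ is a morphism, and the resolution definition of pullback gives
\[
(f\circ g)^*=p_*q^*\tilde f^*,\qquad g^*=p_*q^*\sigma^*,\qquad f^*=\sigma_*\tilde f^*,
\]
as operators on $\pcc$ (and hence on $\hoo$).  Subtracting yields $g^*f^*-(f\circ g)^*=p_*q^*(\sigma^*\sigma_*-\id)\tilde f^*$, and for any $T\in\pcc(Y')$ the current $\sigma^*\sigma_*T-T$ lies in $\ker\sigma_*$ and is therefore supported on $\exc(\sigma)$.  So vanishing of the discrepancy reduces to showing $p_*q^*E=0$ for every irreducible $\sigma$-exceptional curve $E\subset Y'$.

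Decomposing $q^{-1}(E)=\bigcup_j C_j$, each $C_j$ is either $p$-exceptional (and hence killed by $p_*$), or else $p(C_j)$ is a curve on which $g\circ p=\sigma\circ q$ takes the constant value $\sigma(E)\in\ind(f)$.  In the latter case $p(C_j)\subset\exc(g)$ with $g(p(C_j))\in\ind(f)$, exactly what the third condition forbids.  Hence the third condition implies the first, at the level of currents (and a fortiori of classes).  For the converse, given $C_0\subset\exc(g)$ with $g(C_0)=p_0\in\ind(f)$, one arranges the resolution so that $q$ sends the strict transform $\tilde C_0\subset W$ dominantly onto an irreducible component $E_0$ of $\sigma^{-1}(p_0)$ satisfying $\tilde f(E_0)=D$ for some curve $D\subset Z$; such a component exists because $p_0\in\ind(f)$ guarantees that at least one component of $\sigma^{-1}(p_0)$ is not $\tilde f$-contracted.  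Taking $T=[D]$ and comparing the multiplicities of $E_0$ in $\tilde f^*T$ and in $\sigma^*\sigma_*\tilde f^*T$ then produces a non-trivial $E_0$-term in the discrepancy whose $p_*q^*$-image carries positive weight on $C_0$, witnessing the failure of the first condition both for currents and for classes.

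Finally, $(i)\Leftrightarrow(ii)$ follows from the adjointness identity \eqref{eqn:projformula}: two applications yield $((f\circ g)^*\alpha\cdot\beta)_X=(\alpha\cdot(f\circ g)_*\beta)_Z$ and $(g^*f^*\alpha\cdot\beta)_X=(\alpha\cdot f_*g_*\beta)_Z$ for all classes $\alpha,\beta$, and non-degeneracy of the intersection pairing on a smooth projective surface converts equality of pullbacks into equality of pushforwards.  The translation from classes to currents is routine since the discrepancy on each side is closed and supported on finitely many curves, hence vanishes iff its cohomology class does.  The main obstacle is the converse direction $(i)\Rightarrow(iii)$: one must choose the resolution $W,p,q$ carefully so that $q|_{\tilde C_0}$ truly dominates a suitable component $E_0\subset\sigma^{-1}(p_0)$ with $\tilde f(E_0)$ a curve, and then verify that the resulting $E_0$-contribution to the discrepancy is not cancelled by contributions from other components of $\sigma^{-1}(p_0)$ appearing in $\sigma^*\sigma_*\tilde f^*T$.
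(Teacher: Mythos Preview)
The paper does not give its own proof of this proposition; it is simply quoted from \cite{DiFa01}, Proposition~1.13.  So there is no argument here to compare yours against, and I will comment on your sketch on its own terms.

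Your implication $(iii)\Rightarrow(i)$ is correct and is the standard resolution argument.  The identities $(f\circ g)^*=p_*q^*\tilde f^*$, $g^*=p_*q^*\sigma^*$, $f^*=\sigma_*\tilde f^*$ are exactly right, and the observation that $(\sigma^*\sigma_*-\id)\tilde f^*T$ is a divisor supported on $\exc(\sigma)$ reduces the vanishing of the discrepancy to $p_*q^*E=0$ for each irreducible $\sigma$-exceptional $E$, which your case analysis handles cleanly.  The equivalence $(i)\Leftrightarrow(ii)$ on classes via adjointness is also fine, and the passage to currents is justified because both discrepancies are divisors supported on a fixed finite set of curves.

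The real problem is your converse $(i)\Rightarrow(iii)$, and it goes a bit deeper than the non-cancellation issue you flag.  You write that ``one arranges the resolution so that $q$ sends the strict transform $\tilde C_0$ dominantly onto an irreducible component $E_0$ of $\sigma^{-1}(p_0)$,'' but this is not something you get to arrange: once $\sigma$ is fixed (even minimally), the rational map $\sigma^{-1}\circ g$ restricted to $C_0$ is determined by the local behaviour of $g$ along $C_0$, and its image in $\sigma^{-1}(p_0)$ can perfectly well be a single point rather than a curve.  In that case $q(\tilde C_0)$ is a point and your construction of the witness $T=[D]$ does not get off the ground.  Even when $q(\tilde C_0)=E_0$ is a curve, the coefficient of $E_0$ in $(\sigma^*\sigma_*-\id)\tilde f^*[D]$ involves both the multiplicity of $E_0$ in $\tilde f^*[D]$ and the multiplicity of $f^*[D]=\sigma_*\tilde f^*[D]$ at $p_0$, and showing these differ needs a separate argument.

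One clean way around both issues is to abandon the attempt to detect the discrepancy on $C_0$ via pullback of a single curve and instead compare $g^*f^*\alpha$ and $(f\circ g)^*\alpha$ for an ample class $\alpha$ by computing intersection numbers with $C_0$ directly, using that $f^*\alpha$ has strictly positive Lelong number at $p_0\in\ind(f)$ (this is where indeterminacy enters) while $(f\circ g)^*\alpha$ sees $C_0$ only through the strict transform data in $W$.  Alternatively, consult the original argument in \cite{DiFa01}.
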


\subsection{Rational self-maps and dynamical degree(s)}
\label{subsec:ddegs}

When a rational map $f:X\tto X$ has the same domain and range, the broad
features of its dynamics are governed by two numerical invariants.  The
\emph{topological degree} of $f$ is the number of preimages $\dtop(f) :=
\# f^{-1}(p)\geq 1$ of a general point $p\in X$.  Of course, $\dtop(f^n) =
\dtop(f)^n$ for all $n\in\Z_{\geq 0}$.  The \emph{(first) dynamical degree}
$\ddeg(f)$ of $f$ similarly tracks the growth of preimages of curves, but the
definition is more elaborate.

\begin{thm}[See \cite{DiFa01}, Proposition 1.18 and Remark 5.2] 
\label{thm:ddegfacts}
If $f:X\tto X$ is a rational self-map of a surface $X$ and $D,D'\in \div(X)$ are ample divisors, then
\begin{equation}
\label{eqn:ddeg}
\ddeg(f) := \lim_{n\to\infty} (f^{n*} D\cdot D')^{1/n}
\end{equation}
is well-defined, independent of the choice of $D$ and $D'$ and satisfies $\ddeg(f)^2\geq \dtop(f)$.  If, moreover, $\varphi:X\tto Y$ is birational, then
$\ddeg(\varphi\circ f\circ \varphi^{-1}) = \ddeg(f).$
\end{thm}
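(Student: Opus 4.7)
The plan is to set $\delta_n := (f^{n*} D \cdot D')$ and reduce everything to one key estimate: a submultiplicativity bound $\delta_{m+n} \leq K\,\delta_m\,\delta_n$ for a constant $K = K(D,D')$ independent of $m,n$. I would establish this by combining three ingredients. First, the composition of pullbacks by rational surface maps differs from the pullback of a composition only by an effective divisor, a standard consequence of Proposition \ref{prop:stablecomp} on a joint resolution of indeterminacy; applied to $f^{m+n} = f^n \circ f^m$ and paired against the nef class $D'$, this gives $(f^{(m+n)*} D \cdot D') \leq (f^{n*} f^{m*} D \cdot D')$. Second, the projection formula \eqref{eqn:projformula} rewrites the right side as $(f^{m*} D \cdot f^n_* D')$, where $f^n_* D'$ is nef because pushforward preserves nefness. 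Third, the Hodge index theorem yields the inequality
$$
(\alpha \cdot \beta) \leq \frac{2(\alpha\cdot D)(\beta\cdot D)}{D^2}
$$
for nef $\alpha, \beta$ and ample $D$; this follows by decomposing $\alpha, \beta$ along $\R D \oplus D^\perp$, invoking negative-definiteness of the intersection form on $D^\perp$, and using $\alpha^2, \beta^2 \geq 0$. Applied to $\alpha = f^{m*} D$ and $\beta = f^n_* D'$, together with a further use of projection to identify $(f^n_* D' \cdot D)$ with $\delta_n$ up to a comparability constant between ample classes, this produces the claimed submultiplicativity. The limit $\lim_n \delta_n^{1/n}$ then follows from Fekete's lemma applied to $\log(K\delta_n)$, and independence of the particular ample $D, D'$ chosen follows because any two ample classes are dominated by bounded multiples of each other in the nef cone, so the $n$-th roots all produce the same limit.

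For the inequality $\ddeg(f)^2 \geq \dtop(f)$, I would apply the reverse Cauchy--Schwarz consequence of Hodge index, $(\alpha \cdot D)^2 \geq \alpha^2 D^2$, with $\alpha = f^{n*} D$. The key observation is that $(f^{n*} D)^2 = \dtop(f)^n D^2$, which comes from the projection formula combined with the cohomological identity $(f^n)_* (f^n)^* = \dtop(f^n)\,\mathrm{id} = \dtop(f)^n\,\mathrm{id}$, an identity that holds for the single map $f^n$ regardless of any composition instability encountered in passing from $f$ to $f^n$. Substituting gives $(f^{n*} D \cdot D)^2 \geq \dtop(f)^n (D^2)^2$, and taking $2n$-th roots and letting $n \to \infty$ delivers the bound.

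For birational invariance, I would set $g := \varphi \circ f \circ \varphi^{-1}$ with $\varphi : X \tto Y$ birational, fix ample classes on both surfaces, and apply the same effective-discrepancy inequality and projection formula to obtain
$$
(g^{n*} D_Y \cdot D_Y')_Y \leq ((\varphi^{-1})^* f^{n*} \varphi^* D_Y \cdot D_Y')_Y = (f^{n*} \varphi^* D_Y \cdot \varphi^* D_Y')_X,
$$
where the equality uses the birational identity $(\varphi^{-1})^* = \varphi_*$ followed by projection for $\varphi$. Since $\varphi^* D_Y$ and $\varphi^* D_Y'$ are nef on $X$ and hence dominated by bounded multiples of $D_X, D_X'$, this yields $(g^{n*} D_Y \cdot D_Y')_Y \leq C\,(f^{n*} D_X \cdot D_X')_X$ for a constant $C$ independent of $n$; the reverse inequality is symmetric, and $n$-th roots kill $C$ to give $\ddeg(g) = \ddeg(f)$. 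The central obstacle throughout is the non-commutativity of pullback with iteration. The Hodge index estimate is the essential tool for converting a non-composable double pullback into a controllable product of single pullbacks, while the effective-discrepancy inequalities guarantee that the correct direction of inequality is available wherever the sequence $f, f^2, f^3, \ldots$ fails to be algebraically stable, so that these defects wash out in the limit.
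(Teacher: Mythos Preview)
The paper does not prove this theorem; it is quoted as background from \cite{DiFa01}, so there is no in-paper argument to compare against. Your outline is essentially the standard proof from that reference, and the submultiplicativity and birational-invariance portions are correct as sketched.

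There is, however, one genuine error in the step for $\ddeg(f)^2\geq\dtop(f)$. You assert the cohomological identity $(f^n)_*(f^n)^* = \dtop(f^n)\,\id$, but this is false for rational maps with nonempty indeterminacy set: it is precisely the content of Theorem~\ref{thm:pushpull1} that $(f^n)_*(f^n)^*\alpha - \dtop(f^n)\alpha = \extra_{f^n}(\alpha)$ is a generally nonzero class supported on $f^n(\ind(f^n))$. What saves your argument is that $\extra_{f^n}(\alpha)$ is \emph{effective} when $\alpha$ is nef, so pairing with the ample $D$ gives
\[
(f^{n*}D)^2 = (D\cdot (f^n)_*(f^n)^*D) = \dtop(f)^n D^2 + (D\cdot\extra_{f^n}(D)) \;\geq\; \dtop(f)^n D^2,
\]
and this inequality (not equality) is exactly what the reverse Hodge index step needs. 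With that correction the rest of your argument goes through unchanged.
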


In the case of plane rational maps $f:\cp^2\tto\cp^2$, one checks that the formula for $\ddeg(f)$ reduces to \eqref{eqn:planeddeg}.  The dynamical degree is difficult to compute in general and can even take transcendental values (see \cite{BDJ20}), but a further commonly satisfied condition on $f$ makes it easier to understand.

\begin{defn} A rational self-map $f:X\tto X$ is \emph{algebraically stable} if $f^n(\exc(f))\cap\ind(f) = \emptyset$ for all $n\geq 1$.
\end{defn}

\noindent In light of Proposition \ref{prop:stablecomp} algebraic stability is equivalent to the condition that $(f^*)^n = (f^n)^*$ for all $n\in\N$, and in this case, one shows (see \cite{DiFa01} \S~5) that $\ddeg(f)$ is the (always real and positive) largest eigenvalue of $f^*$.

\subsection{Lelong numbers and rational maps}

We recall (see e.g. \cite{DemBook}, Chapter 3)  that the \emph{Lelong number} of a current $T\in\pcc^+(X)$ at a point $p\in X$ is a non-negative real number $\nu(T,p)$ that is positive if and only if $T$ is `maximally concentrated' at $p$.  If, for instance, $T$ is the current of integration over a divisor $D = \sum c_j C_j\in\div(X)$, then $\nu(T,p)$ is positive at every $p\in\supp D$, and equal to $c_j$ at general points of the component $C_j$.  If, on the other hand, $T\in\pcc^+(X)$ does not charge any curves in $X$, then the set $\{p\in X:\nu(T,p) > c\}$ is discrete and closed for any $c>0$.

We will repeatedly use the following fact established in \cite{Fav99}.

\begin{lem}
\label{lem:favrebd}
Let $g:X\tto Y$ be a dominant rational map between complex surfaces $X$ and $Y$ and $T$ be a positive closed $(1,1)$ current on $Y$.  Then there exists a constant $C>0$ such that for any $p\in X\setminus\ind(g)$, we have
$$
\nu(T,g(p)) \leq \nu(p,g^*T) \leq C\nu(T,g(p)).
$$
If $g$ is locally finite near $p$, then it suffices to take $C$ to be the local topological degree of $g$ about $p$.
\end{lem}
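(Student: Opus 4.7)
The plan is to reduce to local computations and use the characterization
$\nu(v,z_0) = \lim_{r\to 0^+} \max_{|z-z_0|\leq r} v(z)/\log r$
of the Lelong number of a plurisubharmonic (psh) function $v$ at $z_0$.  Since
$p\notin \ind(g)$, the map $g$ is holomorphic in a neighborhood of $p$; writing
$T = dd^c u$ for a psh $u$ near $q := g(p)$, one has $g^*T = dd^c(u\circ g)$
locally at $p$, so both Lelong numbers become Lelong numbers of psh functions
and the problem is analytic.

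For the left inequality I would use that $g$ is locally Lipschitz at $p$: there
exists $K>0$ with $|g(z)-q|\leq K|z-p|$ for $z$ near $p$.  Hence
$\max_{|z-p|\leq r} u(g(z)) \leq \max_{|w-q|\leq Kr} u(w)$.  Dividing by
$\log r<0$ reverses the inequality; letting $r\to 0^+$ and using
$\log(Kr)/\log r\to 1$ produces $\nu(u\circ g,p) \geq \nu(u,q)$.

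For the right inequality when $g$ is locally finite at $p$ with local
topological degree $m$, the plan is to invoke the Lojasiewicz inequality to
obtain $c>0$ with $|g(z)-q| \geq c|z-p|^m$ for $z$ near $p$.  Since the
mapping degree of $g|_{\partial B(p,r)}$ about $q$ equals $m\neq 0$, a standard
degree-theoretic argument then gives $g(B(p,r)) \supseteq B(q,cr^m)$, whence
$\max_{|z-p|\leq r} u(g(z)) \geq \max_{|w-q|\leq cr^m} u(w)$.  Dividing by
$\log r < 0$ and using $\log(cr^m)/\log r \to m$ yields
$\nu(u\circ g,p) \leq m\, \nu(u,q)$, as required.

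The hard part is to handle $p\in X\setminus\ind(g)$ where $g$ is not locally
finite, so that $p$ lies on a $g$-exceptional curve contracted to $q$, and
to produce a constant $C$ that is uniform in $p$.  For the former I would
locally resolve the contraction: choose a birational morphism
$\pi\colon X'\to X$ by finitely many blowups over $p$ so that
$\tilde g := g\circ\pi$ is locally finite at every point of $\pi^{-1}(p)$.
Then $g^*T = \pi_*(\tilde g^*T)$ near $p$, the locally finite case bounds
$\nu(\tilde g^*T,\cdot)$ at each of the finitely many points of
$\pi^{-1}(p)$ by a constant times $\nu(T,q)$, and standard control on how
Lelong numbers transform under pushforward by a birational morphism of surfaces
yields the desired bound at $p$.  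Uniformity of $C$ in $p$ follows from
compactness of $X$ together with finiteness of $\exc(g)$: only finitely many
contracted curves occur, so the resolutions involved vary in a finite family,
while on $X\setminus \exc(g)$ the local topological degree $m(p)$ is bounded
above by $\dtop(g)$.
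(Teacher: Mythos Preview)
The paper does not give its own proof of this lemma; it is quoted as a fact established in \cite{Fav99}.  Your outline for the lower bound is correct and standard.  There are, however, two gaps in your treatment of the upper bound.

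In the locally finite case you assert $|g(z)-q|\ge c|z-p|^{m}$ by ``invoking the \L ojasiewicz inequality''.  But \L ojasiewicz only furnishes \emph{some} exponent $\alpha>0$, and your degree--covering argument then delivers $C$ equal to the \L ojasiewicz exponent $\mathcal L_0(g,p)$ of the germ, not the local degree $m$.  That $\mathcal L_0(g,p)\le m$ for a finite surface map germ is true, but it is a theorem (in this form due to P\l oski, bounding the \L ojasiewicz exponent of an $\mathfrak m$-primary ideal in dimension two by its multiplicity), not a tautology.  Without it your argument gives a valid but unspecified constant rather than the sharp $C=m$ asserted in the second sentence of the lemma.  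Favre's proof in \cite{Fav99} reaches $C=m$ by a different route, via the transformation law for (generalized) Lelong numbers under proper finite maps.

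The non-finite case has a more basic problem.  If $g$ contracts a curve $C\ni p$ to $q$, then for \emph{any} composition of blowups $\pi:X'\to X$ centered over $p$, the strict transform $C'$ of $C$ still meets $\pi^{-1}(p)$ and is still contracted by $\tilde g = g\circ\pi$.  So $\tilde g$ can never be made locally finite at every point of $\pi^{-1}(p)$: blowing up the source cannot undo a contraction.  A correct reduction (as in \cite{Fav99}) proceeds instead by analyzing pullback by point blowups directly, or by passing to the graph and factoring $g$ as a modification followed by a generically finite map, controlling Lelong numbers under each piece separately; the uniform constant then comes from the finiteness of this resolution data together with compactness of $X$.
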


\section{Toric surfaces}
\label{sec:rztoric}

Let $\torus \cong (\C^*)^2$ denote the two dimensional complex algebraic torus.  For our purposes, a surface $X$ is \emph{toric} if it is `marked' with an embedding $\torus\hookrightarrow X$ such that the natural action of $\torus$ on itself extends holomorphically to an action on all of $X$.  In this section we review some needed facts about such surfaces.  A much more comprehensive, and by now standard, treatment of toric varieties may be found in \cite{Ful93}.  

Following common convention, we let $N\cong{\Z^2}$ denote the dual of the character lattice for $\torus$ and $N_\R := N\otimes \R\cong\R^2$.  If $\torus_\R\subset\torus$ denotes the maximal compact subgroup, a real two dimensional torus, then we may fix a group isomorphism $\torus/\torus_\R\to N_\R$ (unique up to multiplication by a non-zero real number) and let $\trop:\torus \to N_\R$ denote the quotient `tropicalization' map.  Haar measure on $\torus_\R$ extends to a $\torus$-invariant holomorphic two form on $\torus$ that we will denote by $\eta$ and use below to regularize closed currents and their potentials.  

Any toric surface $X$ {(smooth and projective, and therefore normal and compact, per our conventions for surfaces at the beginning of \S\ref{sec:background})} is given by its \emph{fan}, i.e. a finite partition 
$$
\Sigma(X) = \{0\}\cup \Sigma_1(X) \cup \Sigma_2(X),
$$
of $N_\R(X)$ into open $0$, $1$ and $2$-dimensional cones, each of which is `rational' in the sense that its closure is generated by elements of $N$.  In particular, the curves $C_\tau\subset X\setminus\torus$ are indexed by the rays $\tau\in\Sigma_1(X)$.  Since they are simple poles for the $2$-form $\eta$, we will call these curves \emph{poles} of $X$.  All other curves $C\subset X$, i.e. those for which $C\cap\torus\neq\emptyset$ will be \emph{internal}.  We likewise call points in $X$ \emph{internal} if they lie in $\torus$ and \emph{external} if they do not. 

We call $D\in\div(X)$ an \emph{external} divisor if it is supported entirely on poles, writing $\pcce(X)$ for the set of all external divisors.  For example, when regarded as a meromorphic two form on $X$, the divisor of $\eta$ is external equal to $-\sum_{\tau\in\Sigma_1(X)} C_\tau$. 
One associates to any external divisor $D = \sum_{\tau\in\Sigma_1(X)} c_\tau C_\tau\in\div(X)$ its \emph{support function} $\sfn_D:N_\R\to \R$, which is uniquely specified by declaring $\sfn_D(\nvec) = c_\tau$ when $\nvec\in N$ is the primitive vector that generates a ray $\tau\in \Sigma_1(X)$ and then extending linearly and continuously to each cone in $\Sigma(X)$.  By definition $\sfn_D$ is non-negative if and only if $D\geq 0$ is effective.  We further have that
\begin{itemize}
 \item $D\in\pcce(X)$ is principal if and only if $\sfn_D$ is linear;
 \item $D\in\pcce(X)$ is nef if and only if $\sfn_D$ is convex\footnote{Recall that a divisor is nef if its intersection with any effective divisor is non-negative};
 \item every $D\in \div(X)$ is linearly equivalent to an external divisor.
\end{itemize}
Combining all three of these assertions we see that any nef divisor $D\in\div(X)$ is linearly equivalent to an \emph{effective} external divisor, i.e.\ one with a \emph{non-negative} convex support function.

Poles $C_{\tau_1},C_{\tau_2}\subset X$ intersect if and only if $\tau_1$ and $\tau_2$ are adjacent rays in $\Sigma_1(X)$.  The unique point of intersection is then invariant by the action of $\torus$, and we denote it by $p_\sigma$, where $\sigma\in\Sigma_2$ is the sector bounded by $\tau_1$ and $\tau_2$.  Smoothness of $X$ at $p_\tau$ is equivalent to the condition that the primitive elements $\nvec_1,\nvec_2\in N$ generating $\tau_1$ and $\tau_2$ form a basis for $N$.  The generators of the cone dual to $\sigma$ in the character lattice for $\torus$ give an isomorphism of algebraic groups $\torus \to (\C^*)^2$, and this extends to a `distinguished' holomorphic coordinate system $(x_1,x_2):X\setminus \bigcup_{\tau\neq\tau_1,\tau_2} C_\tau\to \C^2$ about $p_\sigma$ in which $C_{\tau_j} = \{x_j=0\}$. 

If $p_{\sigma'} = C_{\tau_1'}\cap C_{\tau_2'}\in X$ is another $\torus$-invariant point, and $\nvec_1',\nvec_2'\in N$ are the primitive vectors generating the rays $\tau_1',\tau_2'$, then the transition between distinguished coordinates about $p_{\sigma}$ to those about $p_{\sigma'}$ is given by the birational monomial map $(x'_1,x_2') = h_A(x_1,x_2) := (x_1^{A_{11}}x_2^{A_{12}},x_1^{A_{21}}x_2^{A_{22}})$ where $A\in\operatorname{GL}(2,\Z)$ is the matrix for changing basis from $\{\nvec_1,\nvec_2\}$ to $\{\nvec_1',\nvec_2'\}$.
In any case, we normalize $\trop$ and $\eta$ so that in (any) distinguished coordinates, we have   
\begin{equation}
\label{eqn:twoform}
\trop(x_1,x_2) = -\log|x_1|\nvec_1-\log|x_2|\nvec_2 \quad\text{and}\quad
\eta = \pm\frac{1}{4\pi^2}\frac{dx_1\wedge dx_2}{x_1x_2}.
\end{equation}
In what follows, we let $X^\circ := X\setminus \{p_\sigma:\sigma\in\Sigma_2(X)\}$ denote the complement of the $\torus$-invariant points of $X$.  Similarly, if $C_\tau\subset X$ is an pole, we let $C^\circ_\tau = C_\tau\cap X^\circ \cong \C^*$ be the complement of the two $\torus$-invariant points of $C_\tau$.  

\subsection{Inverse limits of toric surfaces}

If $X$ and $Y$ are toric surfaces, we let $\pi_{XY}:X\tto Y$ be the birational map extending $\id:\torus\to\torus$ via the markings of $X$ and $Y$.  For each $\tau\in \Sigma_1(Y)\cap \Sigma_1(X)$, we have $\pi_{XY}(C_\tau) = C_\tau$, and for each $\tau\in\Sigma_1(X)\setminus\Sigma_1(Y)$ we have $\pi_{XY}(C_\tau) = p_\sigma$, where $\sigma\in\Sigma_2(Y)$ is the unique sector containing $\tau$.  We write $X\succ Y$, saying $X$ \emph{dominates} $Y$ if $\pi_{XY}$ is a morphism, i.e. $\Sigma_1(Y)\subset\Sigma_1(X)$.  In this case $\pi_{XY}$ is a homeomorphism over the complement $Y^\circ$ of the $\torus$-invariant points of $Y$.

Since for any two toric surface there is a third toric surface dominating both, it follows that the set of all toric surfaces forms a directed set with the partial order $\succ$.  Following e.g. \cite{BFJ08,Can11}, we may thus consider the inverse limit $\rzt$ of all toric surfaces $X$.  A point $p\in\rzt$ is given by a collection $\{p_X\in X\}$ consisting of one point from every toric surface $X$ and subject to the compatibility condition $\pi_{XY}(p_X) = p_Y$ whenever $X\succ Y$.  

We give $\rzt$ the product topology, declaring that $p_j \to p\in\rzt$ if for every $X$, we have $p_{j,X} \to p_X$.  {It follows from Tychonoff's Theorem that, as the inverse limit of compact Hausdorff spaces, $\rzt$ is compact.}  The $\torus$-action on individual toric surfaces $X$ is compatible with the ordering $\succ$ and so ascends to a continuous action of $\torus$ on $\rzt$.  We let $\rzt^\circ\subset\rzt$ denote the points which are \emph{not} fixed by this action.  

For any particular toric surface $X$, we let $\pi_{\rzt X}:\rzt\to X$ denote the continuous surjection that assigns a point $p$ to its representative $p_X\in X$.  By construction $\pi_{\rzt X}$ is a homeomorphism over $X^\circ$ whose inverse gives a canonical inclusion $X^\circ\hookrightarrow \rzt^\circ$.  These inclusions are compatible for different toric surfaces, we henceforth we regard $X^\circ$ as subset of $\rzt^\circ$; in particular $\torus\subset\rzt^\circ$ and $C^\circ_\tau\subset \rzt^\circ$ for every rational ray $\tau\subset N_\R$.  The following is more or less immediate from the discussion above.

\begin{prop}  
\label{prop:mfldpts}
The complement $\rzt^\circ$ of the $\torus$-invariant points of $\rzt$ is given by $\rzt^\circ = \bigcup_X X^\circ$, where the union is over all toric surfaces.  In particular $\rzt^\circ$ is a (non-algebraic) complex surface.  Furthermore,
\begin{enumerate}
 \item $\bigcap_X X^\circ = \torus$; and
 \item $\rzt^\circ\setminus\torus = \sqcup_{\tau}{C_\tau^\circ}$, where the union is over rational rays $\tau\subset N_\R$.
\end{enumerate}
\end{prop}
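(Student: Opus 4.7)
My plan is to establish each assertion in sequence, relying throughout on compatibility of the $\torus$-action with the inverse limit: $(t\cdot p)_X = t\cdot p_X$ for every toric surface $X$, so $p$ is $\torus$-fixed in $\rzt$ if and only if every coordinate $p_X$ is $\torus$-fixed in $X$.

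The equality $\rzt^\circ = \bigcup_X X^\circ$ then follows in two steps. If $p_X\in X^\circ$ for some $X$, then some $t\in\torus$ moves $p_X$, hence moves $p$, so $p\in\rzt^\circ$. Conversely, $p\in\rzt^\circ$ means $t\cdot p\neq p$ for some $t$; the product topology forces $t\cdot p_X\neq p_X$ for some $X$, placing $p$ in the canonical image of $X^\circ\hookrightarrow\rzt^\circ$. For the complex structure at a point $p\in\rzt^\circ$, I pick any $X$ with $p_X\in X^\circ$ and transport the chart at $p_X$ via $\pi_{\rzt X}^{-1}$. Compatibility of charts from $X$ and $Y$ follows by choosing a common refinement $Z\succ X,Y$: since $\pi_{ZX}$ is a $\torus$-equivariant birational morphism, its exceptional locus consists of poles of $Z$ mapping to $\torus$-fixed points of $X$, so $\pi_{ZX}\colon\pi_{ZX}^{-1}(X^\circ)\to X^\circ$ restricts to a biholomorphism. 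Non-algebraicity is immediate because $\rzt^\circ$ contains infinitely many distinct irreducible curves $C_\tau^\circ$.

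For item (1), the inclusion $\torus\subseteq\bigcap_X X^\circ$ is immediate. For the reverse, suppose $p\notin\torus$, so $p_{X_0}\in C_{\tau_0}$ for some toric surface $X_0$ and some $\tau_0\in\Sigma_1(X_0)$. I choose a toric surface $Y$ whose fan does not contain $\tau_0$; such $Y$ exists, as one may start from any complete smooth fan and, if necessary, refine a cone so that $\tau_0\notin\Sigma_1(Y)$. Let $Z\succ X_0,Y$ be a common refinement. Because $p_{X_0}\in X_0^\circ$, no curve contracted by $\pi_{ZX_0}$ maps to $p_{X_0}$, so $\pi_{ZX_0}^{-1}(p_{X_0}) = \{p_Z\}$ is a single point, and it must lie on the pole $C_{\tau_0}\subset Z$. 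Since $\tau_0\notin\Sigma_1(Y)$, the morphism $\pi_{ZY}$ collapses $C_{\tau_0}$ to a $\torus$-fixed point of $Y$, so $p_Y\notin Y^\circ$, contradicting $p\in\bigcap_X X^\circ$.

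For item (2), any $p\in\rzt^\circ\setminus\torus$ has $p_X\in X^\circ\setminus\torus = \bigsqcup_{\tau\in\Sigma_1(X)}C_\tau^\circ$ for some $X$, placing $p\in C_\tau^\circ$ for a rational ray $\tau$. For pairwise disjointness, if $p\in C_{\tau_1}^\circ\cap C_{\tau_2}^\circ$ with $\tau_1\neq\tau_2$, I choose $X$ whose fan contains both rays, so that $p_X\in C_{\tau_1}\cap C_{\tau_2}\cap X^\circ$; but $C_{\tau_1}\cap C_{\tau_2}$ in $X$ is either empty or a single $\torus$-fixed point, neither of which meets $X^\circ$. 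The principal technical step is item (1), where one must produce a toric surface $Y$ avoiding a specified ray and track the unique preimage $p_Z$ onto a pole of $Z$ that collapses under $\pi_{ZY}$; the rest is a matter of unwinding the inverse-limit and toric-surface definitions.
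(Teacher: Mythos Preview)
Your argument is essentially correct and follows the same spirit as the paper, which simply declares the result ``more or less immediate from the discussion above'' without proof. Two points deserve correction, however.

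First, in item~(1) your justification for the existence of a toric surface $Y$ with $\tau_0\notin\Sigma_1(Y)$ is backwards: refining a fan \emph{adds} rays, it never removes them. The claim itself is easy---for instance, the fan of $\cp^2$ has only three rays, and applying a suitable element of $GL(2,\Z)$ to that fan yields a smooth complete fan avoiding any prescribed rational ray $\tau_0$---but you should replace your stated reason.

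Second, your argument for non-algebraicity (``infinitely many distinct irreducible curves $C_\tau^\circ$'') does not work as stated: every algebraic surface contains infinitely many irreducible curves. What distinguishes $\rzt^\circ$ is that $\rzt^\circ\setminus\torus$ is a \emph{disjoint} union of infinitely many irreducible curves, so the complement of the Zariski-open set $\torus$ has infinitely many irreducible components; this cannot happen in an algebraic variety, where the complement of an open subvariety has only finitely many components. The paper does not spell this out either, so the point is minor, but your phrasing should be sharpened.
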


\noindent We will call points in $\rzt^\circ$ \emph{realizable}, saying that a toric surface $X$ realizes $p \in \rzt^\circ$ if $p\in X^\circ$.  As with points in toric surfaces, we continue to call points in $\torus \subset \rzt$ internal and points in $\rzt\setminus\torus$ external.  Internal points are realized in every toric surface.  A realizable external point $p\in C_\tau^\circ$ is realized in every toric surface $X$ for which $\tau\in\Sigma_1(X)$.  

The non-realizable, i.e. $\torus$-invariant, points of $\rzt$, are accounted for as follows.
\begin{prop}
For each rational ray $\tau\subset N_\R$, the closure $C_\tau :=
\overline{C_\tau^\circ}$ contains exactly two distinct $\torus$-invariant
points.  For any toric surface $X$ with $\tau\in\Sigma_1(X)$, the restriction
of $\pi_{\rzt X}$ to $C_\tau$ is a homeomorphism onto the pole of $X$ with
the same name.

On the other hand, there is a bijective correspondence between
$\torus$-invariant points $p_\tau \in \rzt$ not contained in poles of $\rzt$ and
irrational rays $\tau \subset N_\R$.  That is, $p = p_\tau$ is represented in
each toric surface $X$ by the $\torus$-invariant point $p_\sigma$ determined by
$\tau\subset \sigma\in\Sigma_2(X)$.

\end{prop}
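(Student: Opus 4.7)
The plan is to prove the two halves of the proposition separately. First fix a rational ray $\tau\subset N_\R$ and a toric surface $X$ with $\tau\in\Sigma_1(X)$, so the pole of $X$ associated to $\tau$ is a copy of $\cp^1$ consisting of $C_\tau^\circ$ together with two torus-fixed endpoints $p_{\sigma_1},p_{\sigma_2}$, where $\sigma_1,\sigma_2\in\Sigma_2(X)$ are the sectors adjacent to $\tau$. Since $\rzt$ is compact and $\pi_{\rzt X}$ is continuous, $C_\tau:=\overline{C_\tau^\circ}\subset\rzt$ projects into the pole of $X$ with the same name. I would construct two candidate boundary points $p_1,p_2\in\rzt$ by specifying, for each toric surface $Y$, a torus-fixed representative $p_{i,Y}:=p_{\sigma_i(Y)}\in Y$, where $\sigma_i(Y)\in\Sigma_2(Y)$ is the sector on the same side of $\tau$ as $\sigma_i$ when $\tau\in\Sigma_1(Y)$, and otherwise the unique sector of $Y$ whose interior contains $\tau$. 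Compatibility under the morphisms $\pi_{XY}$ is a direct fan-theoretic check, so $p_1,p_2$ are well-defined torus-invariant points.

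The rest of the first half amounts to showing $C_\tau=C_\tau^\circ\cup\{p_1,p_2\}$ and that $\pi_{\rzt X}|_{C_\tau}$ is a homeomorphism onto the pole of $X$ with the same name. Given $q_j\in C_\tau^\circ$ with $\pi_{\rzt X}(q_j)\to p_{\sigma_i}$, I would verify convergence $q_j\to p_i$ in $\rzt$ by testing in every $Y$: when $\tau\in\Sigma_1(Y)$ pass to a common refinement $Z\succ X,Y$, use that the strict transform of the pole $C_\tau\subset X$ in $Z$ is isomorphic to $C_\tau$ under $\pi_{ZX}$, and then push to $Y$ via $\pi_{ZY}$; when $\tau\notin\Sigma_1(Y)$, the pole $C_\tau\subset Z$ is contracted by $\pi_{ZY}$ to $p_{\sigma_i(Y)}$, so the sequence $\pi_{\rzt Y}(q_j)$ is already eventually equal to $p_{i,Y}$. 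Combined with the bijection $\pi_{\rzt X}:C_\tau^\circ\to C_\tau^\circ\subset X$, this yields a continuous bijection from the compact space $C_\tau\subset\rzt$ onto the Hausdorff space $C_\tau\subset X$, hence a homeomorphism.

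For the second half, suppose first $\tau\subset N_\R$ is irrational. No fan contains $\tau$, so $\tau$ lies in the interior of a unique sector $\sigma(X)\in\Sigma_2(X)$ for every $X$; these sectors nest compatibly, and the family $\{p_{\sigma(X)}\}_X$ defines a torus-invariant point $p_\tau\in\rzt$. That $p_\tau\notin C_{\tau'}$ for any rational $\tau'$ follows by refining the fan to include both $\tau'$ and a rational ray separating it from $\tau$. Conversely, let $p\in\rzt$ be torus-invariant and off every pole; each $p_X$ is then a torus-fixed point $p_{\sigma(X)}$ with $\sigma(X)\in\Sigma_2(X)$, and the closed sectors $\overline{\sigma(X)}\subset N_\R$ form a compatibly nested family. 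The main obstacle is identifying their intersection: restricting to $S^1$ turns it into a nested intersection of closed arcs, non-empty by compactness. If a rational direction were interior to this arc, one could refine $X$ to put the corresponding ray $\tau'$ into $\Sigma_1(X)$, forcing $\tau'$ onto the boundary of $\overline{\sigma(X)}$, a contradiction; density of rational directions on $S^1$ collapses the intersection to a single ray, and the first half of the proposition rules out this ray being rational (otherwise $p$ would lie on the corresponding pole). Hence the intersection is an irrational ray $\tau$ and $p=p_\tau$; injectivity of $\tau\mapsto p_\tau$ is immediate by separating distinct rays via fan refinement.
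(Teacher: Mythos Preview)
The paper states this proposition without proof, presumably regarding it as a routine unpacking of the inverse limit construction. Your argument supplies exactly such an unpacking and is correct. The construction of the two boundary points $p_1,p_2$ via compatible families of torus-fixed points, the verification that convergent sequences in $C_\tau^\circ$ accumulate only on $\{p_1,p_2\}$ by testing in each $Y$, and the compact-to-Hausdorff bijection argument for the homeomorphism are all sound. For the second half, your nested-arc argument is also correct; the key point---that a rational ray interior to the intersection arc would, in any fan containing it, bound $\sigma(X)$ and hence force the arc to one side of it---is slightly compressed but valid, and your use of the first half to rule out a rational limit ray is the right move.

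One small clarification: when $\tau\notin\Sigma_1(Y)$, the sequence $\pi_{\rzt Y}(q_j)$ is identically (not just eventually) equal to $p_{i,Y}$, since every point of $C_\tau^\circ\subset Z$ collapses under $\pi_{ZY}$. This is harmless.
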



We declare a (necessarily closed) subset $C\subset\rzt$ to be a curve if $C_X:=\pi_{\rzt X}(C)\subset X$ is a curve for sufficiently dominant toric surfaces $X$.  That is, for any toric surface $Z$, there exists $Y\succ Z$ such that $C_X$ is a curve for all $X\succeq Y$.  We say that such surfaces $X$ \emph{realize} $C$, and will typically drop the subscript where there is little risk of confusion, letting $C$ also denote its representative in $X$.  

Hence poles $C_\tau\subset \rzt$ are curves realized by any surface $X$ for which $\tau\in\Sigma_1(X)$.  All other curves $C\subset \rzt$ are \emph{internal} with non-trivial intersection $C\cap\torus$.  An internal curve $C$ is realized by every toric surface, and its representatives $C_X\subset X$ are themselves internal curves.  There is, however, a significant distinction to be made among these representatives.  The following can be proved inductively by repeatedly blowing up $\torus$-invariant points on an internal curve. 

\begin{prop} 
\label{prop:internalcurve}
If $C\subset\rzt$ is internal, then on sufficiently dominant toric surfaces $X$, the representative $C_X$ contains no $\torus$-invariant points.  In this case $\pi_{\rzt X}:C\to C_X$ is a homeomorphism, and if $Y$ is another sufficiently dominant surface, the restriction $\pi_{XY}|_{C_X}:C_X\to C_Y$ is an isomorphism.
\end{prop}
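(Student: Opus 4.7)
The plan is to derive the second and third claims from the first almost immediately, and then attack the first directly. If $C_X$ contains no $\torus$-invariant points then $C_X\subset X^\circ$, and since $\pi_{\rzt X}$ restricts to a homeomorphism on $X^\circ\subset\rzt^\circ$, one gets $C\subset X^\circ$ (viewed inside $\rzt$) together with a homeomorphism $\pi_{\rzt X}|_C\colon C\to C_X$. Similarly $\pi_{XY}|_{C_X} = \pi_{\rzt Y}\circ (\pi_{\rzt X}|_C)^{-1}$ is a composition of restriction-homeomorphisms; since $\pi_{XY}$ is an algebraic isomorphism on the open set $X^\circ\cap\pi_{XY}^{-1}(Y^\circ)\supseteq C_X$, the restriction $\pi_{XY}|_{C_X}\colon C_X\to C_Y$ is the advertised isomorphism. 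So I would focus the rest of the argument on producing a toric surface $X$ realizing $C$ for which $C_X$ meets no $\torus$-invariant point.

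Starting from any toric surface $X_0$ realizing $C$, the set $S_0 := C_{X_0}\cap\{p_\sigma:\sigma\in\Sigma_2(X_0)\}$ is finite, since $C_{X_0}$ is not a pole. My strategy is to show that each $p_\sigma\in S_0$ can be individually resolved by a finite sequence of toric blowups centered at $\torus$-invariant points lying over $p_\sigma$, after which the strict transform of $C_{X_0}$ avoids all $\torus$-invariant points above $p_\sigma$. Applying this at each of the finitely many points in $S_0$ --- the associated sequences of blowups occur over disjoint neighborhoods and so do not interfere --- would produce a dominant toric surface $X\succ X_0$ with $C_X$ entirely disjoint from the $\torus$-invariant locus.

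To resolve a single $p_\sigma\in S_0$, I would work in distinguished coordinates $(x_1,x_2)$ at $p_\sigma$, in which $C_{X_0}$ is locally cut out by an irreducible germ $f(x_1,x_2)$ with $f(\origin)=0$. Because $C$ is internal and irreducible, $f$ is not a unit multiple of a monomial, so its Newton polygon has at least one edge of strictly negative slope. The standard toric blowup of $p_\sigma$, inserting the ray generated by $\nvec_1+\nvec_2$, produces strict transforms whose local equations at the two new $\torus$-invariant points are obtained from $f$ by a unimodular substitution followed by stripping the greatest monomial factor. I would then induct on a suitable Newton-polygon invariant --- for example the number of interior lattice points of the polygon, or the Newton-Puiseux multiplicity at $\origin$ --- showing that it strictly decreases at each step unless the strict transform has already separated from the $\torus$-invariant locus above $p_\sigma$. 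Iterating exhausts the complexity and forces termination.

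The hardest part will be the combinatorial bookkeeping needed to verify that (i) blowups of $\torus$-invariant points alone suffice (rather than the general point blowups permitted in Hironaka-style embedded resolution), and (ii) a single Newton-polygon invariant actually strictly decreases at every step, including when $C$ is tangent to a pole or has a higher-order cusp at $p_\sigma$ so that local branches remain clustered at $\torus$-invariant points of intermediate surfaces. A possibly cleaner alternative would be to invoke the classical Newton-polygon algorithm for embedded resolution of plane curve singularities: carried out in distinguished toric coordinates, every blowup it prescribes is centered at a $\torus$-invariant point, so its termination directly yields the desired conclusion and bypasses the ad hoc choice of an invariant.
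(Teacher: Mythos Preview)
Your approach matches the paper's, which merely indicates that the result ``can be proved inductively by repeatedly blowing up $\torus$-invariant points on an internal curve'' and gives no further detail.  One remark on the part you flag as hardest: the termination of the blowup process is cleaner to see globally than via local Newton-polygon invariants.  The total intersection $(C_X\cdot(-K_X)) = \sum_{\tau\in\Sigma_1(X)} (C_X\cdot C_\tau)$ is a non-negative integer, and if $\pi:Y\to X$ blows up a $\torus$-invariant point $p_\sigma\in C_X$ of multiplicity $m\geq 1$, then $C_Y = \pi^*C_X - mE$ and $-K_Y = \pi^*(-K_X) - E$ give $(C_Y\cdot(-K_Y)) = (C_X\cdot(-K_X)) - m$; so this quantity strictly decreases at every step and the process halts after finitely many toric blowups, with no need to track Puiseux data or worry about tangencies.
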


\noindent When there are no $\torus$-invariant points in $C_X$, we say $X$ \emph{fully realizes} $C$.  In this case, the intersection number $(C_X\cdot C'_X)_X$ between $C_X$ and the representative of any other curve $C'\subset \rzt$ does not depend on $X$.  We therefore write $(C\cdot C') := (C_X\cdot C'_X)_X$ without the subscripts.

\begin{cor}
\label{cor:balanceformula}
Let $C\subset\rzt$ be an internal curve and $X$ be a toric surface fully realizing $C$.  Then $C$ meets a pole $C_\tau\subset\rzt$ only if $\tau\in\Sigma_1(X)$, and  
$$
\sum_{\tau\in\Sigma_1(X)} (C\cdot C_\tau)\,\nvec_\tau = 0 \in N_\R,
$$
where $\nvec_\tau$ generates $N\cap\tau$.  In particular $C$ meets at least two poles $C_{\tau_1},C_{\tau_2}\subset \rzt$, and if there are no others, then $\tau_2 = -\tau_1$.
\end{cor}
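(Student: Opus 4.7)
My plan is to extract the balance formula from the classical principal-divisor identity for monomials on a toric surface, then deduce the geometric assertions from positivity of intersection numbers. No step is conceptually difficult; the only care required is to work on a surface $X$ fully realizing $C$, so that the intersection numbers $(C \cdot C_\tau)$ are well-defined and independent of $X$ by Proposition \ref{prop:internalcurve}.

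\emph{Ruling out rays outside $\Sigma_1(X)$.} If $\tau \notin \Sigma_1(X)$, then there is a unique sector $\sigma \in \Sigma_2(X)$ containing $\tau$, and $\pi_{\rzt X}$ collapses $C_\tau$ onto the single $\torus$-invariant point $p_\sigma \in X$. Any point of $C \cap C_\tau$ would therefore descend to $p_\sigma \in C_X$, contradicting the hypothesis that $X$ fully realizes $C$.

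\emph{The balance formula.} Let $M := \operatorname{Hom}(N, \Z)$ denote the character lattice of $\torus$. A standard identity from toric geometry (see \cite{Ful93}) says that for each $m \in M$, the monomial $\chi^m$ extends to a meromorphic function on $X$ with principal divisor
$$
\operatorname{div}(\chi^m) = \sum_{\tau \in \Sigma_1(X)} \langle m, \nvec_\tau\rangle\, C_\tau.
$$
Because $C$ is internal, $C_X$ is not a pole, so $(C_X \cdot \operatorname{div}(\chi^m))_X$ is well-defined and vanishes by principality. Combining this with the previous paragraph gives
$$
\sum_{\tau \in \Sigma_1(X)} \langle m, \nvec_\tau\rangle\, (C \cdot C_\tau) = 0
$$
for every $m \in M$. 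The pairing $M_\R \times N_\R \to \R$ is non-degenerate, so $\sum_{\tau} (C \cdot C_\tau)\, \nvec_\tau = 0 \in N_\R$.

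\emph{Geometric consequences.} Each intersection number $(C \cdot C_\tau)$ is a non-negative integer, and is strictly positive whenever $C_X$ and $C_\tau$ genuinely meet, since distinct irreducible curves have positive local intersection number at every common point. If $C$ met no pole, then $C_X \subset \torus \cong (\C^*)^2$ would be a compact irreducible curve inside an affine variety, which is impossible. If $C$ met a single pole $C_\tau$, the balance formula would reduce to $(C \cdot C_\tau)\, \nvec_\tau \neq 0$, a contradiction. Hence $C$ meets at least two poles. In the case of exactly two, say $C_{\tau_1}$ and $C_{\tau_2}$, the identity $(C \cdot C_{\tau_2})\, \nvec_{\tau_2} = -(C \cdot C_{\tau_1})\, \nvec_{\tau_1}$ displays $\nvec_{\tau_2}$ as a positive real multiple of $-\nvec_{\tau_1}$, so $\tau_2 = -\tau_1$.
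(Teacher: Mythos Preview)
Your proof is correct and follows essentially the same route as the paper: both use that $(C\cdot D)_X=0$ for principal external divisors $D$, and that these are precisely the divisors $\operatorname{div}(\chi^m)=\sum_\tau\langle m,\nvec_\tau\rangle\,C_\tau$ (equivalently, those with linear support function), then extract the balance identity by non-degeneracy of the pairing. You simply spell out in more detail the steps the paper declares ``clear,'' including the exclusion of rays outside $\Sigma_1(X)$ and the two-pole consequence.
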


\begin{proof}
Note that $\torus$ contains no compact curves, so $C$ meets at least one external curve in $X$.  Hence all but the displayed formula in the corollary is clear.  The formula itself holds because $(C\cdot D)_X = 0$ for every principal divisor $D\in\div(X)$, and external divisors are principal if and only if they have linear support functions $\sfn_D:N_\R\to\R$.  
\end{proof}

To close this section, we point out that the only curves in $\rzt$ that contain $\torus$-invariant points of $\rzt$ are poles.  Hence $\torus$-invariant points that are not contained even in poles, i.e. those indexed by irrational rays $\tau\subset N_\R$, play very little role in what follows.

\section{Toric maps}
\label{sec:maps}

Here we recall and extend facts about a class of rational self-maps on toric surfaces that was studied at length in \cite{DiLi16}.  Recall from \S~\ref{sec:rztoric} that $\eta$ denotes the holomorphic two form on $\torus$ that restricts to Haar measure on $\torus_\R$.

\begin{defn} A \emph{toric map} is a rational map $f:\torus\tto\torus$ such that  $f^*\twoform = \cov(f)\twoform$ for some constant $\cov(f)\in\C^*$.  We call $\cov(f)$, the \emph{determinant} of $f$.
\end{defn}

Theorem \ref{thm:tropmap} below makes clear that the determinant of a toric map is always an integer and, furthermore, equal to $\pm 1$ if $f$ is birational.  One checks that any monomial map $h_A:\torus\to\torus$ is toric with $\cov(h_A) = \det A$.  A map $f:\torus\to\torus$ is a \emph{translation} if it is given by $f(x) = xy$ for some $y\in \torus$.  A translation is toric with $\cov(f) = 1$ and extends to an automorphism on each toric surface.  We will refer to the (still toric) composition of a translation and a monomial map as a \emph{shifted monomial map}.  Many shifted monomial maps are birational, but it turns out that there are birational toric maps beyond just these.

\begin{eg}
\label{eg:inv}
The map $g:\cp^2\tto\cp^2$ given in affine coordinates by
$$
g(x_1,x_2) = \left(x_1\frac{1-x_1+x_2}{x_1+x_2-1},x_2\frac{1+x_1-x_2}{x_1+x_2-1}\right)
$$
is a toric involution with determinant $\cov(g) = 1$. 
\end{eg}

\noindent See \cite{Bla13} for a simple presentation of the group of birational
toric maps.  A composition of toric maps is toric, so they include the
semi-group generated by monomial maps and birational toric maps.  We do not
know whether there are any toric maps outside this semigroup.  

If $f:\torus\tto\torus$ is toric and $X$ and $Y$ are toric surfaces, then we let $f_{XY}:X\tto Y$ denote the unique extension of $f$ to a rational map between $X$ and $Y$. If $X'\succ X$ and $Y'\succ Y$ are other toric surfaces, then $f_{X'Y'}$ and $f_{XY}$ are compatible with transitions; i.e. 
\begin{equation}
\label{eqn:commutes}
f_{XY}\circ\pi_{X'X} = \pi_{Y'Y}\circ f_{X'Y'},
\end{equation}
on the Zariski open subset of $X'$ where both sides are defined.  
We therefore obtain a partially defined `rational' map $\rzf:\rzt\tto\rzt$.

Specifically, let $p\in\rzt$ be a point such that for any toric surface $Y$ there exists another toric surface $X$ such that $p_X\notin\ind(f_{XY})$.  Then we let $\rzf(p)\in\rzt$ be the point for which 
$\rzf(p)_Y = f_{XY}(p_X)$.  Compatibility guarantees that this is independent of the choice of $X$.  The map $\rzf$ acts similarly on curves $C\subset\rzt$.  That is, for any toric surface $Y$, we set $\rzf(C)_Y = \rzf(C_X)$ where $X$ is a toric surface sufficiently dominant that $C_X$ is a curve.  Indeed the restrictions $f_{XY}|_{C_X}$ also give us a holomorphic restriction $\rzf|_C:C\to\rzf(C)$.
The actions of $\rzf$ on curves and points are consistent in the sense that if $C\subset\rzt$ is a curve and $p\in C$ is a point at which $\rzf$ is well-defined, then $\rzf(p) = (\rzf|_C)(p)\in \rzf(C)$.

The assumption that $f$ is toric allows us to say much more about $\rzf$.  We recall the following collection of observations from \cite{DiLi16}.

\begin{thm}
\label{thm:janliandme1}
Let $f:\torus\tto\torus$ be a toric rational map and $X$ and $Y$ be toric surfaces.
\begin{enumerate}
 \item For each internal curve $C\subset X$, either $f_{XY}(C)$ is also an internal curve and $f_{XY}$ is unramified about $C$; or $C\subset\exc(f_{XY})$ and $f_{XY}(C)$ is an external point of $Y$.
 \item For each pole $C_\tau\subset X$, either $f_{XY}(C_\tau)$ is a pole of $Y$ or $C_\tau\subset \exc(f_{XY})$ and $f_{XY}(C_\tau)$ is a $\torus$-invariant point of $Y$.  
 \item For sufficiently dominant $Y'\succ Y$, all curves in $\exc(f_{XY'})$ are internal, and the image of each lies in ${Y'}^\circ$.
\end{enumerate}
\end{thm}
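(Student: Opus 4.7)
The plan is to deduce parts (1) and (2) from the defining equation $f^*\eta=\rho(f)\eta$, viewed as an identity of meromorphic two-forms on a common resolution, and then to establish part (3) by iterated toric refinement of $Y$. Fix a birational morphism $\pi:\tilde X\to X$ so that $\tilde f:=f_{XY}\circ\pi:\tilde X\to Y$ is a morphism; then $\tilde f^*\eta_Y$ and $\rho(f)\,\pi^*\eta_X$ agree on $\torus\subset\tilde X$ and are meromorphic on $\tilde X$, hence coincide as meromorphic forms on $\tilde X$, and their divisors agree component by component.

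The right-hand side $\pi^*\eta_X$ has no internal components in its divisor and has coefficient $-1$ along the strict transform of each pole of $X$. On the left-hand side, a direct local-coordinate computation using the formula $\eta_Y=\frac{du\wedge dv}{uv}$ in distinguished coordinates yields: along an irreducible curve $\tilde C\subset\tilde X$ with $\tilde f(\tilde C)$ a curve $D\subset Y$, the multiplicity of $\tilde f^*\eta_Y$ along $\tilde C$ equals $m(1-k)-1$, where $k=-\operatorname{ord}_D(\eta_Y)\in\{0,1\}$ and $m\geq 1$ is the vanishing order along $\tilde C$ of a local equation for $D$ pulled back by $\tilde f$; along $\tilde C$ contracted by $\tilde f$ to a point $q$, the multiplicity is $\geq 1$ when $q\in\torus$, $\geq 0$ when $q\in C_{\tau'}^\circ$ for some $\tau'$, and $\geq -1$ (with equality generically) when $q$ is $\torus$-invariant. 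Matching right-hand side multiplicity $0$ along strict transforms of internal curves in $X$ forces either $k=0,\,m=1$ (internal curve image, unramified) or contraction to an external point, proving (1). Matching $-1$ along strict transforms of poles of $X$ forces either $k=1$ (pole image) or contraction to a $\torus$-invariant point, proving (2).

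For part (3), parts (1) and (2) reduce the claim to showing that $Y$ can be refined to $Y'\succ Y$ so that no pole of $X$ maps to a $\torus$-invariant point of $Y'$ and no internal exceptional curve of $f_{XY'}$ has $\torus$-invariant image. I would iteratively blow up each offending $\torus$-invariant image point $p_\sigma\in Y$, equivalently refine the fan of $Y$ by adding a ray in $\sigma$; the lifted map either sends the contracted curve onto the new exceptional pole (removing it from $\exc$) or onto a single new $\torus$-invariant point, in which case one iterates. The main obstacle is showing this iteration terminates: one must rule out the image escaping to an irrational-ray $\torus$-invariant point of $\rzt$. I expect this to follow from the algebraicity of $f$, which pins down the leading jet of $\tilde f$ along each contracted curve in finitely many derivatives, together with the finiteness of $\ind(\rzf)$ and $\exc(\rzf)$ stated in the introduction.
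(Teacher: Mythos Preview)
The paper does not actually prove this theorem; it is stated as a recollection from \cite{DiLi16}. So there is no proof here to compare against, but I can assess your argument on its own merits and against what one finds in \cite{DiLi16}.

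Your treatment of parts (1) and (2) is correct and is essentially the argument in \cite{DiLi16}: compare the divisors of $\tilde f^*\eta_Y$ and $\rho(f)\,\pi^*\eta_X$ on a common resolution. The multiplicity formula $\operatorname{ord}_{\tilde C}(\tilde f^*\eta_Y)=m(1-k)-1$ along a non-contracted $\tilde C$ is exactly the Riemann--Hurwitz computation $K_{\tilde X}=\tilde f^*K_Y+R_{\tilde f}$ specialized to $\eta_Y$, and the lower bounds in the contracted case follow from the local Jacobian vanishing as you indicate. Matching against $\operatorname{ord}_{\tilde C}(\pi^*\eta_X)\in\{0,-1\}$ on strict transforms of internal curves and poles respectively yields (1) and (2) cleanly.

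For part (3), however, your argument has a genuine gap: you set up an iterative blowup procedure and then defer termination to an unspecified ``algebraicity of $f$'' argument. The iteration is unnecessary and the vagueness can be replaced by a direct computation. Working in distinguished coordinates about $p_\sigma\in X$ (chosen so $p_\sigma\notin\ind(f_{XY})$ after passing to a sufficiently dominant $X$, as in the proof of Theorem~\ref{thm:tropmap} later in the paper), one finds $f_{XY}(x_1,x_2)=(x_1^ax_2^bf_1,x_1^cx_2^df_2)$ with the $f_j$ holomorphic and nonvanishing on the axes. This exhibits, for each pole $C_\tau$ of $X$, a \emph{specific rational ray} $\tau'\subset N_\R$ (namely the image of $\tau$ under the integer matrix $\begin{pmatrix}a&b\\c&d\end{pmatrix}$ in the appropriate bases) such that $f_{XY'}(C_\tau)=C_{\tau'}$ whenever $\tau'\in\Sigma_1(Y')$. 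Since $\Sigma_1(X)$ is finite, adding these finitely many rays to $\Sigma_1(Y)$ yields a $Y'\succ Y$ on which no pole is exceptional. The same local form, applied near the (finitely many) contracted internal curves, shows their images are free points on specific poles $C_{\tau''}$; adding those $\tau''$ as well places all such images in $(Y')^\circ$. So termination is not an asymptotic issue at all: one reads off the target rays directly and refines once.
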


It follows from the first conclusion of this theorem that the collection of internal exceptional curves of $f_{XY}$ is independent of $X$ and $Y$.  We let $\exc(\rzf)\subset\rzt$ denote the union of these and call each of them \emph{persistently} exceptional for $\rzf$.  Theorem \ref{thm:janliandme1} and our definition of $\rzf:\rzt\tto\rzt$ directly yield the following.

\begin{cor}
\label{cor:curveimages}
Let $f:\torus\tto\torus$ be a toric rational map.  Then $\exc(\rzf)$ consists of finitely many internal curves
and the image of each is a point in $\rzt^\circ\setminus \torus$.  If $C\not\subset \exc(\rzf)$ is some other curve in $\rzt$, then $\rzf(C)$ is also a curve, and $\rzf(C)$ is internal (respectively, a pole) if and only if $C$ is.
\end{cor}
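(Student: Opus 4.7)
The plan is to deduce every clause of the corollary by translating the per-surface statements in Theorem \ref{thm:janliandme1} into statements about $\rzf$ on the inverse limit $\rzt$, exploiting our freedom to pass to arbitrarily dominant toric surfaces on the target side.

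First I would pin down $\exc(\rzf)$. Fix any toric source surface $X$ and choose a target $Y'$ sufficiently dominant that Theorem \ref{thm:janliandme1}(3) applies: every curve in $\exc(f_{XY'})$ is then internal, and by the observation in the text preceding the corollary, these internal exceptional curves do not depend on the choice of $X$ or $Y'$. Hence $\exc(\rzf) = \exc(f_{XY'})$, which is finite because the exceptional locus of a rational map between algebraic surfaces is a finite union of curves. Moreover, for each such $C \in \exc(\rzf)$, part (3) says the image $f_{XY'}(C_X)$ lies in $Y'^\circ$, hence corresponds to a realizable point of $\rzt^\circ$, while part (1) says this image is an external point of $Y'$. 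Together these place $\rzf(C) \in \rzt^\circ \setminus \torus$.

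Next I would treat curves $C \not\subset \exc(\rzf)$ in two cases. When $C$ is internal, choose $X$ fully realizing $C$; by the independence of $\exc(\rzf)$ from the choice of surface, $C_X \not\subset \exc(f_{XY})$ for any $Y$. Part (1) of the theorem then forces $f_{XY}(C_X)$ to be an internal curve for every sufficiently dominant $Y$, so $\rzf(C)$ is an internal curve. When $C = C_\tau$ is a pole, choose $X$ with $\tau \in \Sigma_1(X)$ and take $Y'$ sufficiently dominant that part (3) applies; since $C_\tau$ is external, it cannot lie in $\exc(f_{XY'})$, so part (2) forces $f_{XY'}(C_\tau)$ to be a pole of $Y'$, making $\rzf(C_\tau)$ a pole of $\rzt$. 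Combining the two cases shows both that $\rzf(C)$ is a curve and that its internal-versus-pole type matches that of $C$.

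The only point at which one must be mildly careful is the pole case: for an individual toric surface $Y$, a pole $C_\tau$ may well collapse under $f_{XY}$ to a $\torus$-invariant point, so it is not a priori clear that $\rzf(C_\tau)$ is a curve. The device that rescues us is part (3) of Theorem \ref{thm:janliandme1}, which says that after passing to a sufficiently dominant $Y'$ all exceptional behavior is accounted for by internal curves; poles therefore cannot be persistently exceptional, and their image opens up into a genuine pole of $Y'$. Once that point is absorbed, the rest of the argument is essentially bookkeeping over the inverse system.
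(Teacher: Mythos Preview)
Your proposal is correct and follows exactly the route the paper intends: the paper simply asserts that Theorem \ref{thm:janliandme1} together with the definition of $\rzf$ ``directly yield'' this corollary, and you have spelled out precisely how---using part (3) to make all exceptional curves internal with images in $Y'^\circ$, part (1) to handle internal curves, and parts (2) and (3) together for poles. Your observation about the subtlety in the pole case (that one must pass to a sufficiently dominant target before a pole is guaranteed not to collapse) is exactly the content of part (3) and is the only nontrivial point.
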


Focusing on images of points rather than curves, we also have the following from \cite{DiLi16}.

\begin{thm}
\label{thm:janliandme2}
Let $f:\torus\tto\torus$ be toric and $X$ and $Y$ be toric surfaces and $p\in X$ be a point.
\begin{itemize}
 \item If $p\notin \exc(f_{XY}) \cup \ind(f_{XY})$ is internal, then $f_{XY}(p)$ is internal, and $f_{XY}$ is a local isomorphism about $p$.
 \item If $p\notin \exc(f_{XY})\cup\ind(f_{XY})$ is external, then so is $f_{XY}(p)$, and $p$ is $\torus$-invariant if and only if $f_{XY}(p)$ is.
 \item If $p\in \ind(f_{XY})$ is not $\torus$-invariant, then $f_{XY}(p)$ is a finite union of internal curves.
 \item For sufficiently dominant toric surfaces $X'\succ X$, no points of $\ind(f_{X'Y})$ are $\torus$-invariant.
\end{itemize}
\end{thm}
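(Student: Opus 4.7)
The first bullet is immediate: $p \in \torus \setminus \ind(f_{XY})$ means $f_{XY}$ restricts to a holomorphic map into $\torus$ near $p$, so $f_{XY}(p) \in \torus$; the toric identity $f^*\eta = \rho(f)\eta$ with $\rho(f) \neq 0$ gives non-vanishing Jacobian at $p$, hence a local biholomorphism. For the second bullet, Theorem~\ref{thm:janliandme1}(2) directly implies that a non-exceptional external $p$ maps to a pole, hence to an external point. The $\torus$-invariance claim splits into two directions: if $p = C_{\tau_1} \cap C_{\tau_2}$ is $\torus$-invariant, then since $\rho(f) \neq 0$ ensures the tropicalization $A_f$ from Theorem~\ref{thm:tropmap} is nondegenerate on the cone spanned by $\tau_1, \tau_2$, the images $f_{XY}(C_{\tau_1})$ and $f_{XY}(C_{\tau_2})$ are distinct poles of $Y$ meeting at the $\torus$-invariant point $f_{XY}(p)$. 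For the converse direction, I would work in distinguished source coordinates $(u,v)$ at $p \in C_\tau^\circ$ and adapted target coordinates at a putatively $\torus$-invariant image, writing $f_{XY} = (\phi_1,\phi_2)$ and comparing orders of vanishing in the identity $d\phi_1 \wedge d\phi_2/(\phi_1\phi_2) = \rho(f)\,du \wedge dv/(uv)$. An extra factor of $(v - v_p)^k$ in $\phi_2(0, v)$ with $k \geq 1$ produces an unwanted divisor component along $\{v = v_p\}$ on the left that is absent on the right, giving a contradiction.

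For the third bullet, the plan is to resolve the indeterminacy at $p$ by a (generally non-toric) birational morphism $\pi:\tilde X \to X$ so that $\tilde f := f_{XY} \circ \pi$ is a morphism and $\pi^{-1}(p) = \bigcup_i E_i$ is a connected tree of rational curves with $f_{XY}(p) = \bigcup_i \tilde f(E_i)$. The crux is to show each nontrivial image $\tilde f(E_i)$ is internal in $Y$. The main tool is a discrepancy computation on the meromorphic two form: since $p$ lies on at most one pole $C_\tau$ of $X$, induction on the tower of blowups shows each $E_i$ has non-negative coefficient in $\operatorname{div}(\pi^*\eta_X)$, using the standard formula that, upon blowing up a smooth point, the coefficient of the new exceptional divisor equals $1-$ (pole order of the current form at the center), together with the fact that at every stage only the iterated strict transform of $C_\tau$ can carry pole, so no blowup center ever sits on more than one pole component. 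On the other hand, if $\tilde f(E_i)$ were contained in a pole $C_{\tau'}$ of $Y$, then $\tilde f^*\eta_Y$ would have coefficient $\leq -1$ along $E_i$, the ramification index of $\tilde f|_{E_i}$ being at least one. But the toric identity yields $\tilde f^*\eta_Y = \rho(f)\,\pi^*\eta_X$ as meromorphic forms on $\tilde X$, giving the required contradiction. Hence each $\tilde f(E_i)$ is either a point or an internal curve.

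For the fourth bullet, the approach is combinatorial. A $\torus$-invariant point $p_\sigma \in X$ indexed by $\sigma \in \Sigma_2(X)$ fails to be indeterminate for $f_{XY}$ precisely when $A_f(\sigma)$ lies inside a single closed cone of $\Sigma(Y)$; indeed, in this case the coordinate description of $f$ near $p_\sigma$ extends continuously to the $\torus$-invariant point of $Y$ corresponding to that target cone. Since $A_f$ is rational-piecewise-linear, a common refinement of $\Sigma(X)$ with the $A_f$-preimage of $\Sigma(Y)$ produces a fan $\Sigma(X')$ dominating $\Sigma(X)$ such that each maximal cone $\sigma' \in \Sigma_2(X')$ has $A_f(\sigma')$ contained in a single closed cone of $\Sigma(Y)$; on the associated toric surface $X' \succ X$, no $\torus$-invariant point is indeterminate for $f_{X'Y}$.

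I expect the main obstacle to be the discrepancy bookkeeping in the third bullet, specifically verifying that throughout the tower of blowups over a non-$\torus$-invariant $p$, no blowup center ever accumulates pole order greater than one. The structural fact that enables the induction is that at each stage the only pole component is the iterated strict transform of the single original pole $C_\tau$ through $p$ (if any), which remains an irreducible smooth rational curve; exceptional divisors never become poles because, by induction, their coefficients stay non-negative. Absent this observation the induction would fail, and the non-negativity of coefficients on the $E_i$ could not be maintained.
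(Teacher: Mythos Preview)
The paper does not give its own proof of this theorem; it is quoted from \cite{DiLi16}.  So there is no in-paper argument to compare against, but I can evaluate your proposal on its own terms and against the logical structure of the present paper.

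Your argument for the third bullet is clean and correct.  The key inductive fact---that each exceptional divisor over a non--$\torus$-invariant center acquires coefficient $\geq 0$ in $\operatorname{div}(\pi^*\eta_X)$ because at most one component with coefficient $-1$ (namely the strict transform of $C_\tau$) can pass through any blowup center---goes through exactly as you describe, and the comparison with $\tilde f^*\eta_Y$ gives the contradiction.  This discrepancy argument is essentially the one used in \cite{DiLi16}.

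There is, however, a circularity problem in your treatment of the second and fourth bullets.  You invoke the tropicalization $A_f$ from Theorem~\ref{thm:tropmap}, but in this paper the proof of Theorem~\ref{thm:tropmap} begins by choosing $X$ so that $\ind(f_{XY})$ contains no $\torus$-invariant points---precisely the fourth bullet you are trying to prove---and then uses that $f_{XY}(p_\sigma)$ is again $\torus$-invariant, which is part of the second bullet.  So within the paper's logical order, Theorem~\ref{thm:tropmap} depends on Theorem~\ref{thm:janliandme2}, not the other way around.

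The fix is not hard.  For the fourth bullet, work directly in distinguished coordinates about $p_\sigma$: writing $f = (x_1^a x_2^b g_1, x_1^c x_2^d g_2)$ with $g_i$ regular and nonvanishing on the axes near $p_\sigma$, indeterminacy at $p_\sigma$ is governed by whether the monomial exponent matrix $\begin{pmatrix} a & b\\ c & d\end{pmatrix}$ sends $\sigma$ into a single cone of $\Sigma(Y)$; refining $\Sigma(X)$ accordingly removes all $\torus$-invariant indeterminacy.  This is exactly what the proof of Theorem~\ref{thm:tropmap} extracts, but it can be done first.  For the forward direction of the second bullet you do not actually need $A_f$: the condition $f^*\eta = \rho(f)\eta$ forces the local exponent matrix to have determinant $\pm\rho(f)\neq 0$, so the two poles through $p_\sigma$ cannot map to the same pole of $Y$ (else the pullback of $\eta_Y$ would lack the $\frac{dx_1\wedge dx_2}{x_1x_2}$ singularity).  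For the converse direction, your argument is correct in spirit, but it is cleaner to note that preimages of poles of $Y$ consist only of poles of $X$ and exceptional curves (by Theorem~\ref{thm:janliandme1}(1)); since $p\in C_\tau^\circ$ lies on a unique pole and on no exceptional curve, both target poles through $f_{XY}(p)$ would pull back to $C_\tau$, forcing $C_\tau$ to be contracted.
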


It follows from the last two conclusions of this theorem that $\rzf$ is well-defined and continuous off a finite subset $\ind(\rzf) \subset\rzt^\circ$, whose elements we call \emph{persistently indeterminate points} for $\rzf$.
We define the image of each $p\in\ind(\rzf)$ by $\rzf(p) = f_{X'Y}(p)$ for $X'$ sufficiently dominant.

\begin{cor}
\label{cor:ptimages}
Let $f:\torus\tto\torus$ be toric.  Then for each $p\in\ind(\rzf)$, the image $\rzf(p)$ is a finite union of internal curves.  If $p\in \rzt\setminus(\ind(\rzf)\cup\exc(\rzf))$, then $p$ is internal (or external or $\torus$-invariant) if and only if $\rzf(p)$ is, and in the case where $p$ is internal $\rzf$ is a local isomorphism about $p$.
\end{cor}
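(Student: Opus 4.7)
The plan is to deduce both assertions of the corollary directly from Theorems \ref{thm:janliandme1} and \ref{thm:janliandme2} by choosing sufficiently dominant toric surfaces $X$ and $Y$, and then using the compatibility relation \eqref{eqn:commutes} to glue the resulting local statements on individual toric surfaces into statements on $\rzt$.

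For the first assertion, I would exploit that $\ind(\rzf) \subset \rzt^\circ$, so any $p \in \ind(\rzf)$ is realized by some toric surface $X_0$; this forces $p_X$ to be non-$\torus$-invariant for every $X \succeq X_0$, since $\pi_{X X_0}$ is a homeomorphism over $X_0^\circ$. Persistent indeterminacy then supplies a toric surface $Y$ with $p_X \in \ind(f_{XY})$ for all sufficiently dominant $X$, and Theorem \ref{thm:janliandme2}(3) says $f_{XY}(p_X)$ is a finite union of internal curves in $Y$. The compatibility \eqref{eqn:commutes} ensures that these finite unions agree under the transitions $\pi_{Y'Y}$, and hence they assemble into a finite union of internal curves in $\rzt$.

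For the second assertion, the three properties (internal, external, $\torus$-invariant) partition both $\rzt$ and the targets of $\rzf$, so it suffices to prove the forward implication in each case; the converses follow by elimination. Fix $p \in \rzt \setminus (\ind(\rzf) \cup \exc(\rzf))$. For any $Y$, one can pick $X$ sufficiently dominant that $p_X \notin \ind(f_{XY}) \cup \exc(f_{XY})$: this uses $p \notin \ind(\rzf)$, the fact from Theorem \ref{thm:janliandme1}(1) that the internal exceptional curves of each $f_{XY}$ are exactly the persistently exceptional curves of $\rzf$, and Theorem \ref{thm:janliandme1}(3) which allows one to enlarge $Y$ to expel any pole of $X$ from $\exc(f_{XY})$. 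If $p$ is internal then $p_X = p \in \torus$ for every $X$, and Theorem \ref{thm:janliandme2}(1) makes $\rzf(p)$ internal with $\rzf$ a local isomorphism at $p$. If $p$ is external, pick $X$ realizing the rational ray $\tau$ with $p \in C_\tau^\circ$, so $p_X \in C_\tau^\circ$ is external and non-$\torus$-invariant, and Theorem \ref{thm:janliandme2}(2) yields $\rzf(p)$ external. If $p$ is $\torus$-invariant, then so is $p_X$ in every $X$; Theorem \ref{thm:janliandme2}(4) guarantees $p_X \notin \ind(f_{XY})$ for $X$ sufficiently dominant, and Theorem \ref{thm:janliandme2}(2) then forces $\rzf(p)$ external and $\torus$-invariant.

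The main obstacle here is organizational rather than mathematical: one must carefully track which properties of $p$ are preserved by each transition $\pi_{XX_0}$ and $\pi_{Y'Y}$, and verify that the local images $f_{XY}(p_X)$ in each $Y$ consistently describe a single well-defined subset of $\rzt$ of the asserted type.
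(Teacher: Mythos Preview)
Your proposal is correct and matches the paper's approach: the paper asserts without further detail that the corollary follows directly from Theorem~\ref{thm:janliandme2} and the definition of~$\rzf$, and you have carefully unpacked that assertion. Note only that internal/external/$\torus$-invariant do not literally partition~$\rzt$ (the last is a subcase of the second), but since your ``external'' case actually handles external non-$\torus$-invariant points via Theorem~\ref{thm:janliandme2}(2), the trichotomy internal / external non-$\torus$-invariant / $\torus$-invariant does partition~$\rzt$ and your elimination argument goes through.
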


The following result will be used below to prove Proposition \ref{PROP:TROPICAL_APPROXIMATION}

\begin{lem}\label{LEM:IMAGES_INDET_PTS} For any toric map $f$, we have $\ind(\rzf)\subset \exc(\rzf)$.  If, moreover, $\alpha \in \torus\cap\ind(\rzf)$ is internal, then
\begin{align}\label{EQN:IMAGE_OF_INDET_PT_IN_TORUS}
\rzf(\alpha) \setminus \torus \subset \rzf(\exc(\rzf)).
\end{align}
If on the other hand $\alpha \in\ind(\rzf)\setminus \torus$ is external, contained in the pole $C_\tau$, then
\begin{align}\label{EQN:IMAGE_OF_INDET_PT_ON_POLE}
\rzf(\alpha) \setminus \torus \subset \rzf(\exc(\rzf)) \cup \rzf|_{C_{\tau}}(\alpha).
\end{align}
\end{lem}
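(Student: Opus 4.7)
My approach proves the three assertions of the lemma in order, with assertion (1) serving as a tool for the others.

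For assertion (1), given $\alpha \in \ind(\rzf)$, Theorem~\ref{thm:janliandme2}(4) lets me realize $\alpha$ in a toric surface $X$ with $\alpha_X$ not $\torus$-invariant, and I choose $Y$ so dominant that $\alpha_X \in \ind(f_{XY})$ and every curve in $\exc(f_{XY})$ is internal (Theorem~\ref{thm:janliandme1}(3)). A standard fact from surface geometry is that every indeterminacy point of a rational map between smooth projective surfaces lies on some exceptional curve: if no curve through $\alpha_X$ were contracted, the map would be quasi-finite on a punctured analytic neighborhood, and a Zariski Main Theorem / removable singularity argument would extend it regularly across $\alpha_X$, contradicting indeterminacy. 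The resulting exceptional curve $C \subset X$ through $\alpha_X$ is internal, and the independence statement accompanying Theorem~\ref{thm:janliandme1}(1) makes $C$ persistently exceptional, so $\alpha \in C \subset \exc(\rzf)$.

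For assertion (2), let $\alpha \in \torus \cap \ind(\rzf)$ and let $p \in \rzf(\alpha) \setminus \torus$. By Corollary~\ref{cor:ptimages}, $\rzf(\alpha)$ is a finite union of internal curves $D_1, \ldots, D_k$, so $p$ lies in some intersection $D_i \cap C_\tau$ with $\tau$ a rational ray. I pass to a toric target $Y$ dominant enough that $\tau \in \Sigma_1(Y)$, each $D_i$ is fully realized, and $p$ is represented by a single point $p_Y \in D_{i,Y} \cap C_\tau$, together with a source $X$ dominant enough that $\alpha_X \in \ind(f_{XY})$ and $\exc(f_{XY})$ is purely internal. The goal is to produce an irreducible curve $C \subset X$ through $\alpha_X$ with $f_{XY}(C) = p_Y$; then $C$ is an internal $f_{XY}$-exceptional curve, hence persistently exceptional, and $p = \rzf(C) \in \rzf(\exc(\rzf))$. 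To exhibit $C$, I examine the graph $\Gamma_{XY} \subset X \times Y$ of $f_{XY}$: the fiber of the projection $\Gamma_{XY} \to Y$ over $p_Y$ meets the fiber of $\Gamma_{XY} \to X$ over $\alpha_X$ (because $p_Y \in f_{XY}(\alpha_X)$), and because $p_Y$ lies on the pole $C_\tau$ rather than being a generic point of $D_{i,Y}$, this meeting contains a one-dimensional component whose projection to $X$ is the desired $C$.

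For assertion (3), the analysis of assertion (2) carries over to the ``internal directions'' at $\alpha$, while the pole $C_\tau$ through $\alpha$ contributes an additional external image point $\rzf|_{C_\tau}(\alpha)$; the restriction $\rzf|_{C_\tau}$ is well-defined at $\alpha$ because $C_\tau \cong \cp^1$ and rational maps of $\cp^1$ are automatically holomorphic, and $\rzf(C_\tau)$ is another pole by the discussion preceding the lemma. Any $p \in \rzf(\alpha) \setminus \torus$ is therefore either $\rzf|_{C_\tau}(\alpha)$ or, by the analysis of assertion (2), the image of a persistently exceptional internal curve. The main obstacle will be the positive-dimensionality step in assertion (2)---showing that the graph fiber over an external point $p_Y$ contains a curve through $\alpha_X$ rather than just an isolated intersection---and this is where toricness is essential: the pole $C_\tau$ is a $\torus$-invariant curve in $Y$, making the asymptotic approach of $D_{i,Y}$ to $p_Y$ tropical in nature, and forcing resolution by a contracted direction through $\alpha_X$ in any sufficiently dominant $X$.
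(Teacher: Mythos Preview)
Your proof of assertion (1) rests on a claim that is false in general: it is \emph{not} true that every indeterminacy point of a rational map between smooth projective surfaces lies on an exceptional curve. The inverse of a single point blowup $\pi^{-1}:\cp^2\dashrightarrow \mathrm{Bl}_p\cp^2$ has $p$ as its unique indeterminate point yet contracts no curve whatsoever; your Zariski Main Theorem heuristic fails here precisely because the map is quasi-finite (indeed an isomorphism) on the punctured neighborhood but still does not extend. The inclusion $\ind(\rzf)\subset\exc(\rzf)$ is a genuine consequence of the toric hypothesis, and the paper obtains it only as a \emph{byproduct} of proving \eqref{EQN:IMAGE_OF_INDET_PT_IN_TORUS} and \eqref{EQN:IMAGE_OF_INDET_PT_ON_POLE}: once a persistently exceptional curve $\pi_1(E)$ has been produced with $\pi_1(E)$ meeting a small neighborhood $U$ of $\alpha$, the neighborhood is chosen in advance so that any persistently exceptional curve meeting $U$ must contain $\alpha$.

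For assertion (2) you correctly set up the graph and correctly isolate the positive-dimensionality of the fiber component as the crux, but your justification (``because $p_Y$ lies on the pole $C_\tau$ rather than being a generic point of $D_{i,Y}$'') is not an argument, and the tropical hand-waving in your final paragraph does not fill the gap. The paper's mechanism is sharper: since $f$ is toric, the image $\pi_2(\pi_1^{-1}(U))$ of a small neighborhood $U\subset\torus$ of $\alpha$ meets $Y\setminus\torus$ in only \emph{finitely many} points (namely $\rzf(\exc(\rzf))\cup\rzf(\ind(\rzf))$). Hence the external point $\beta$ cannot lie in the interior of $\pi_2(\pi_1^{-1}(U))$; since $\pi_1^{-1}(U)$ is open in $\Gamma$ and $\pi_2$ is proper, some component of $\pi_2^{-1}(\beta)$ must meet $\pi_1^{-1}(U)$ without being contained in it, and that component is the contracted curve you need. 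This finiteness of external images is exactly where the toric hypothesis enters, and it replaces your unproven dimensionality claim entirely. Your treatment of assertion (3) inherits the same gap.
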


\begin{proof}
Fix a toric surface $X$ sufficiently dominant that $X$ realizes $\ind(\rzf)$ and fully realizes each curve in $\rzf(\ind(\rzf))$.  Projection $\pi_{\rzt X}:\rzt \to X$ restricts to a homeomorphism on small neighborhoods of $\alpha$ and $\rzf(\alpha)$, so we can conflate $\rzf$ and $f_{XX}$: if $\Gamma\subset X\times X$ is the graph of $f_{XX}$ and $\pi_1,\pi_2:\Gamma\to X$ are projections onto domain and range, then $\rzf = f_{XX} = \pi_2\circ\pi_1^{-1}$ on a neighborhood $U\ni\alpha$ chosen small enough that $\ind(\rzf)\cap U = \{\alpha\}$ and that any persistently exceptional curve that meets $U$ also contains $\alpha$.  Let $\beta \in \rzf(\alpha)\setminus\torus$ be an external point.  

Supposing first that $\alpha\in\ind(\rzf)\cap\torus$, we may assume $U\subset\torus$.  We have   
$$
\beta \in \rzf(\alpha) = \pi_2(\pi_1^{-1}(\alpha)) \subset \pi_2(\pi_1^{-1}(U)).
$$
But since $U\subset\torus$ and $f$ is toric, we have that $\pi_2(\pi_1^{-1}(U))\setminus \torus \subset \rzf(\exc(f)) \cup \rzf(\ind(\rzf)) \setminus\torus$
is finite.  In particular $\beta$ does not lie in the interior of $\pi_2(\pi_1^{-1}(U))$.  As $\pi_1^{-1}(U)$ is open in $\Gamma$, it follows that there is curve $E\subset\Gamma$ that meets but is not entirely contained in $\pi_1^{-1}(U)$ such that $\pi_2$ contracts $E$ to $\beta$.  It follows that $\pi_1(E)\subset X$ is internal and contracted by $\rzf$ to $\beta$.  So $\beta = \rzf(\exc(\rzf))$, i.e. \eqref{EQN:IMAGE_OF_INDET_PT_IN_TORUS} holds.  And since $\pi_1(E)$ meets $U$, we have $\alpha \in E\subset\exc(\rzf)$.
 
Now suppose $\alpha\in \ind(\rzf)\setminus\torus$ is external, contained in the pole $C_\tau$.  We can assume in this case that $U\setminus\torus \subset C_\tau^\circ$ and that $\alpha$ is the only possible preimage in $U$ of $\beta$ by $f|_{C_\tau}$.  The argument from the previous paragraph shows again that if $\beta\in \pi_2(\pi_1^{-1}(U))$ is not an interior point, then $\beta\in \rzf(E)\cap C^\circ_{\tau'}$ for some persistently exceptional curve $E$ containing $\alpha$ and some pole $C_{\tau'}\subset\rzt$.  If, on the other hand, $\beta$ is an interior point of $\pi_2(\pi_1^{-1}(U))$, it contains a relative neighborhood $W$ of $\beta$ in $C_{\tau'}$.  Since $f$ is toric, there is finite subset $S\subset W$ such that $\rzf^{-1}(W\setminus S)\cap\torus = \emptyset$, and therefore $\rzf^{-1}(W\setminus S)\subset C^*_{\tau}$.  Hence $A_f(\tau) = \tau'$ and $W \subset \rzf|_{C_\tau}(U\cap C_\tau)$.  Necessarily then $\beta = f|_{C_\tau}(p)$ for some $p\in U\cap C_\tau$, and by our choice of $U$, we conclude that $p = \alpha$.  Hence \eqref{EQN:IMAGE_OF_INDET_PT_ON_POLE} holds.  In particular, by Corollary \ref{cor:balanceformula} we see that there is at least one point of $\rzf(\alpha)$ different from $f|_{C_\tau}(\alpha)$.  So again $\alpha\in \exc(\rzf)$
\end{proof}

Let $C\subset \rzt$ be a curve that is not persistently exceptional for $\rzf$.  Compatibility with transition maps implies that the \emph{ramification} $\ram(\rzf,C) := \ram(f_{XY},C)\in\Z_{>0}$ of $f_{XY}$ about $C$ is the same for all toric surfaces $X$ and $Y$ that realize $C$ and $\rzf(C)$.  Corollary \ref{cor:ptimages} further implies that $\ram(\rzf,C) = 1$ for internal $C$.  One can say much more about ramification and other aspects of the behavior of $\rzf$ on poles.

\begin{thm}[See \cite{DiLi16}, Theorem 6.18]
\label{thm:tropmap}
If $f:\torus\tto\torus$ is a toric map, then there exists a finite set $\Sigma_1(f)$ of rational rays $\tau\in N_\R$ and a continuous self-map $A_f:N_\R\to N_\R$ with the following properties.
\begin{enumerate}
\item $A_f$ is `integral', i.e. $A_f(N)\subset N$.
\item If $\sigma\subset N_\R$ is a sector that omits all $\tau\in\Sigma_1(f)$, the restriction $A_f|_\sigma$ is linear with $\det A_f|_\sigma = \pm\rho(f)$.
\item For any curve $C_\tau\subset \rzt\setminus\torus$, we have $\rzf(C_{\tau}) = C_{A_f(\tau)}$.
\item If $\nvec,\nvec'\in N$ are the primitive vectors generating the rational rays $\tau, A_f(\tau) \subset N_\R$, then the ramification of $\rzf$ about the pole $C_\tau$ is given by
$$
\ram(\rzf,C_{\tau}) = \frac{\norm{A_f(\nvec)}}{\norm{\nvec'}},
$$
and $\rzf:C^\circ_\tau\to C^\circ_{A_f(\tau)}$ is a covering of degree $|\rho(f)|/\ram(\rzf,C_\tau)$.  Hence $\rzf$ has local topological degree $\rho(f)$ on a neighborhood of any pole.
\item $A_f:N_\R\setminus\{0\}\to N_\R\setminus\{0\}$ is a covering map with degree $\dtop(f)/|\rho(f)|$.
\end{enumerate}
\end{thm}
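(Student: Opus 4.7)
My plan is to construct $A_f$ by first defining its values on rational rays via the action of $\rzf$ on poles, then extending to a continuous piecewise-linear map by reading off the local monomial exponents of $f$ near $\torus$-invariant points, and finally deducing (1)--(5) from that local description.

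For each rational ray $\tau$, choose a toric surface $X$ with $\tau\in\Sigma_1(X)$. By Corollary \ref{cor:curveimages}, poles are never persistently exceptional, so $\rzf(C_\tau)$ is a curve and in fact a pole $C_{\tau'}$. I set $A_f(\tau):=\tau'$ and, on the primitive generator, $A_f(\nvec_\tau):=\ram(\rzf,C_\tau)\,\nvec_{\tau'}$; invariance of ramification under compatible transitions makes this definition independent of $X$. To extend across a two-dimensional sector $\sigma$ bounded by rays $\tau_1,\tau_2$, I pick $X$ and $Y$ sufficiently dominant that $p_\sigma\notin\ind(f_{XY})$ and that $Y$ realizes both $A_f(\tau_j)$ and $f_{XY}(p_\sigma)$. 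Theorem \ref{thm:janliandme2} says $f_{XY}(p_\sigma)$ is then $\torus$-invariant, so in distinguished coordinates the map has the monomial form
\[
(x_1,x_2)\mapsto \bigl(x_1^{a_{11}}x_2^{a_{12}}\phi_1,\ x_1^{a_{21}}x_2^{a_{22}}\phi_2\bigr),
\]
with $\phi_i$ holomorphic units near the origin (units because each $f_{XY}(C_{\tau_j})$ is a pole, not an invariant point). I declare $A_f|_\sigma$ to be the linear map with integer matrix $(a_{ij})$ in the relevant bases of primitive generators; the definitions on rays and sectors agree because $a_{jj}$ is the ramification of $\rzf$ along $C_{\tau_j}$. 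Sectors where the required dominant $Y$ cannot be chosen are subdivided by blowing up persistently indeterminate points; finiteness of $\ind(\rzf)$ (Corollary \ref{cor:ptimages}) ensures this process introduces only finitely many new \emph{break rays}, and these form $\Sigma_1(f)$.

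The remaining properties then fall out of the local form. Integrality (1) is clear. For (2), expanding $f^*\eta$ in the local coordinates identifies the coefficient of $dx_1\wedge dx_2/(x_1 x_2)$ with $\det(a_{ij})$, since $d\log\phi_i$ is holomorphic and contributes no $(x_1x_2)^{-1}$ term; the toric condition $f^*\eta=\rho(f)\eta$ then forces $\det(a_{ij})=\pm\rho(f)$. Continuity across a shared ray is automatic because the two adjacent linear pieces coincide on primitive generators, giving (3). Restricting the local form to $C_{\tau_1}^\circ=\{x_1=0,\,x_2\neq 0\}$ gives $x_2\mapsto x_2^{a_{22}}\phi_2(0,x_2)$: a rational self-map of $\C^*$ of total degree $|\det A|$ and local vanishing order $a_{22}$ at $0$, proving (4). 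For (5), a generic $\nvec'\in N_\R\setminus\{0\}$ has some number $k$ of preimage rays under $A_f$; a generic $q\in\trop^{-1}(\nvec')$ has its $\dtop(f)$ preimages in $\torus$ distributed with $|\rho(f)|$ per preimage ray (using the local bijectivity of each linear piece of $A_f$ combined with the $|\rho(f)|$-to-$1$ local degree of $f$ near poles from (4)), giving covering degree $k=\dtop(f)/|\rho(f)|$.

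The main obstacle I anticipate is verifying well-definedness and continuity of the piecewise-linear extension globally on $N_\R$ together with finiteness of $\Sigma_1(f)$; this hinges on the finite-cardinality statements in Corollaries \ref{cor:ptimages} and \ref{cor:curveimages} and on a careful analysis of how blowups of persistently indeterminate points interact with the strict transforms of persistently exceptional curves.
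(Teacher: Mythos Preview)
Your approach is essentially the paper's: extract $A_f$ from the monomial exponents of $f_{XY}$ in distinguished coordinates near $\torus$-invariant points, and read off (1)--(5) from that local form. The paper streamlines your setup by fixing at the outset a \emph{single} toric surface $X$ dominant enough that $\ind(f_{XY})$ contains no $\torus$-invariant points \emph{and} $X$ fully realizes every persistently exceptional curve; then one simply takes $\Sigma_1(f):=\Sigma_1(X)$, which immediately handles the well-definedness and finiteness worry you flag at the end.

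There is one genuine gap. Your reason for the $\phi_i$ being units near the origin---``because each $f_{XY}(C_{\tau_j})$ is a pole, not an invariant point''---only guarantees that $\phi_i$ does not vanish identically along the coordinate axes; it does not rule out vanishing along an \emph{internal} curve through $p_\sigma$. Such a curve is exactly a persistently exceptional curve, and the paper's requirement that $X$ fully realize $\exc(\rzf)$ is precisely what prevents one from passing through any $\torus$-invariant point. Without this, $d\log\phi_i$ need not be holomorphic at the origin and your determinant computation for (2) fails. So you must refine $X$ to separate $\exc(\rzf)$ from the $\torus$-invariant points, not only to avoid indeterminacy. Two smaller points: in (4), ``total degree $|\det A|$'' is not the degree of $\rzf|_{C_\tau^\circ}$ but the local topological degree of $\rzf$ near the pole; the paper obtains the covering degree by factoring this as $\ram(\rzf,C_\tau)\cdot\deg(\rzf|_{C_\tau})$ after arranging $a_{21}=0$. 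And for (5), the paper counts preimages of \emph{poles} (where the local degree is exactly $|\rho(f)|$) rather than preimages of generic points in $\torus$ under $\trop$, since $\trop\circ f$ and $A_f\circ\trop$ only agree up to a bounded error.
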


We will call the map $A_f$ in Theorem \ref{thm:tropmap} the \emph{tropicalization} of the toric map $f$.  Tropicalization is functorial, i.e. if $g:\torus\tto\torus$ is also toric, then $A_{f\circ g} = A_f\circ A_g$.

\begin{proof}
Let $X$ and $Y$ be toric surfaces, with $X$ sufficiently dominant that $\ind(f_{XY}) = \ind(\rzf)$ contains no $\torus$-invariant points, and $X$ fully realizes each persistently exceptional curve.
  
Fix a sector $\sigma\in\Sigma_2(X)$ bounded by rays $\tau_1,\tau_2\in\Sigma_1(X)$.  Then $f_{XY}$ is holomorphic about $p_\sigma$ with $f_{XY}(p_\sigma) = p_{\sigma'}$ for some sector $\sigma'\in \Sigma_2(Y)$.  Our setup also ensures for each $j=1,2$ that $f_{XY}$ either contracts $C_{\tau_j}$ to $p_{\sigma'}$ or has image $f_{XY}(C_{\tau_j})$ equal to one of the poles in $Y$ that contain $p_{\sigma'}$.  In either case, we may assume that the rays $\tau'_1,\tau'_2$ bounding $\sigma'$ are ordered so that $f_{XY}(C_{\tau_j}) \subset C_{\tau_j'}$.  It follows that if we work in distinguished coordinates about $p_\sigma\in X$ and $p_\sigma'\in Y$, then $f_{XY}$ is given by
\begin{equation}
\label{eqn:localcoordmap}
(y_1,y_2) = f_{XY}(x_1,x_2) = (x_1^a x_2^b f_1(x_1,x_2),x_1^cx_2^d f_2(x_1,x_2))
\end{equation}
where $a,d>0$ and $b,c\geq 0$ are integers and $f_1,f_2$ are rational functions that are holomorphic near $(0,0)$ and do not vanish identically on either axis $\{x_j=0\}$.  Since $p_\sigma$ is not contained in a persistently exceptional curve, it further follows that $f_1,f_2$ are non-vanishing on a neighborhood of $(0,0)$.  The two form $\eta$ has the same expression in any distinguished coordinate system, so we find after some further computation, that the equation $f^*\eta = \rho(f)\eta$ becomes
$$
(ad-bc + e(x_1,x_2))\frac{dx_1\wedge dx_2}{x_1x_2} = \pm\rho(f)\frac{dx_1\wedge dx_2}{x_1x_2},
$$
where $e(x_1,x_2)$ is a rational function that vanishes at $(0,0)$.  Hence $e$ vanishes identically, and we see that $ad-bc = \pm\rho(f)$.  

Now if $\nvec_1,\nvec_2\in N$ are the primitive vectors generating $\tau_1,\tau_2$, then any ray $\tau\subset\overline\sigma$ is generated by a primitive vector $\nvec = \alpha_1\nvec_1 + \alpha_2\nvec_2$ for non-negative integers $\alpha_1,\alpha_2$.  That is $C_\tau$ corresponds to the one parameter subgroup $\gamma_\nvec:t\mapsto t^\nvec := (t^{\alpha_1},t^{\alpha_2})$ of $(\C^*)^2$.  A little further computation shows that
$$
f(\gamma(\nvec)) = m(t)t^{A\nvec}
$$
where $A=\begin{pmatrix} a & b \\ c & d \end{pmatrix}$ and $m$ is holomorphic and non-vanishing near $0$.  As was observed in the appendix of \cite{DiLi16}, this implies that $f(C_\tau) = C_{A\tau}$.  Hence $A_f$ is given on $\sigma$ by a linear transformation with integer matrix $A$ satisfying $\det A =\pm\rho(f)$.  The first three conclusions follow with $\Sigma_1(f) := \Sigma_1(X)$.  Note that continuity of $A_f$ follows from the fact that the finitely many closed sectors $\overline\sigma$ bounded by rays in $\Sigma_1(X)$ cover $N_\R$.

To prove statement (4), let $C_{\tau'} = f(C_\tau)$.  We refine our initial setup, requiring in addition that $\tau\in\Sigma_1(X)$ and $\tau'\in\Sigma_1(Y)$.  We choose $\sigma\in\Sigma_2$ so that $\tau_1 = \tau$ is a bounding ray for $\sigma$ and again let $p_{\sigma'} = f_{XY}(p_\sigma)$.  Then $\tau'$ must be a bounding ray for $\sigma'$, and we index so that $\tau' = \tau_1'$.  In this case, we have in distinguished coordinates that $f_{XY}(\{x_1=0\} = f_{XY}(C_{\tau_1}) = C_{\tau_1'} = \{y_1=0\}$.  Hence in \eqref{eqn:localcoordmap}, the exponent $c$ vanishes, and one sees that 
$$
|\rho(f)| = |\det A| = |ad| = \ram(f_{XY},\{x_1=0\})\cdot \deg(f_{XY}|_{x_1=0}) = \ram(\rzf,C_{\tau_1})\cdot \deg(\rzf|_{C_{\tau_1}}).
$$  
Statement (4) follows from this and the fact that $(0,1)$ is the primitive generator for the rays indexing both $\{x_1=0\}$ and $\{y_1=0\}$.

Since general points in $X$ have $\dtop(f)$ preimages, and the local topological degree about any pole is $|\rho(f)|$, we see that every pole in $\rzt$ has exactly $\dtop(f)/|\rho(f)|$ preimages under $\rzf$.  That is, every rational ray in $N_\R$ has exactly $\dtop/|\rho(f)|$ preimages under $A_f$.  Linearity of $A_f$ fails only about rational rays, so we conclude that every irrational ray in $N_\R$ has $\dtop(f)/|\rho(f)|$ preimages, too.  Hence $A_f:N_\R\setminus\{0\}\to N_\R\setminus\{0\}$ is a covering of degree $\dtop(f)/|\rho(f)|$.
\end{proof}

Before proceeding we make a further observation concerning the proof of Theorem \ref{thm:tropmap}.  Namely, in \eqref{eqn:localcoordmap}, the functions $f_1,f_2$ have no poles or zeroes on the axes $\{x_j=0\}$, so if no pole or zero of $f_j$ meets $(\C^*)^2$, then $f_j$ is constant.  Such poles or zeroes are precisely the internal curves of $\exc(f_{XY})$, i.e. they constitute the persistent exceptional set of the induced map $\rzf:\rzt\tto\rzt$.  This gives us the following further conclusion.

\begin{cor}
\label{cor:noexceptions}
If $f:\torus\tto\torus$ is toric, then $\rzf:\rzt\tto\rzt$ has no persistently exceptional curves if and only if it is a shifted monomial map.
\end{cor}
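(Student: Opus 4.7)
The plan for the ``if'' direction is to decompose $f(x) = y \cdot h_A(x)$ and exploit the fact that the translation $x \mapsto yx$ extends to an automorphism of every toric surface (and hence of $\rzt$). This reduces the task to showing that a monomial map $h_A$ has no persistently exceptional curves. But $h_A:\torus \to \torus$ is a surjective group morphism with finite fibers, so no internal curve is contracted; equivalently, the local normal form \eqref{eqn:localcoordmap} for $h_A$ at any $\torus$-fixed point has $f_1 \equiv f_2 \equiv 1$, and no zeros or poles of the correction factors (and hence no persistently exceptional curves) arise.

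For the converse the plan is to apply essentially verbatim the observation that the authors record immediately before the corollary. First I will choose toric surfaces $X$ and $Y$ sufficiently dominant in the sense of the proof of Theorem \ref{thm:tropmap}, so that $\ind(f_{XY}) = \ind(\rzf)$ contains no $\torus$-fixed points and every persistently exceptional curve is fully realized in $X$. Next I will fix a $\torus$-fixed point $p_\sigma \in X$; since $X$ is smooth, the bounding rays $\tau_1,\tau_2$ of $\sigma$ form a $\Z$-basis of $N$, so the associated distinguished chart $U_\sigma$ is an affine chart isomorphic to $\C^2$ and contains all of $\torus$. In $U_\sigma$ the map $f_{XY}$ takes the form \eqref{eqn:localcoordmap}, namely $(x_1^a x_2^b f_1, x_1^c x_2^d f_2)$ with $f_1,f_2$ rational, holomorphic at the origin, and nowhere zero on the two axes. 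The authors' remark identifies the zeros and poles of $f_1$ and $f_2$ that meet $(\C^*)^2$ with the internal components of $\exc(f_{XY})$, i.e.\ with $\exc(\rzf)$. The hypothesis $\exc(\rzf) = \emptyset$ then forces both $f_j$ to be rational functions on $\C^2$ with neither zeros nor poles, hence equal to nonzero constants $c_j \in \C^*$. Restricting to $\torus$ displays $f(x_1,x_2) = (c_1 x_1^a x_2^b,\, c_2 x_1^c x_2^d)$, a shifted monomial map.

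The main obstacle is almost entirely bookkeeping rather than new analysis, since Theorems \ref{thm:janliandme1}, \ref{thm:janliandme2}, \ref{thm:tropmap} and the remark preceding the corollary supply all the real content. The one subtlety is chart tracking: I must confirm that a single distinguished chart at a smooth $\torus$-fixed point already contains $\torus$ (so that ``$f_j$ constant on $U_\sigma$'' genuinely means ``$f_j$ constant on $\torus$''), and that a change to any other distinguished chart is a monomial coordinate transformation, under which the shifted-monomial form of $f$ is preserved. Once these chart compatibilities are noted, the two directions of the corollary follow immediately from the analytic facts already in hand.
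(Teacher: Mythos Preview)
Your proposal is correct and follows essentially the same approach as the paper, which simply records the corollary as an immediate consequence of the observation preceding it: the zeros and poles of the correction factors $f_1,f_2$ in the local normal form \eqref{eqn:localcoordmap} are precisely the persistently exceptional curves, so their absence forces $f_1,f_2$ to be nonzero constants. You have usefully made explicit both directions and the chart bookkeeping (in particular that a distinguished chart at a smooth $\torus$-fixed point is a copy of $\C^2$ containing all of $\torus$), which the paper leaves to the reader.
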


Let us return to the examples introduced at the beginning of this section, identifying persistently exceptional curves, indeterminate points and tropicalizations for each of them.  Monomial maps $h_A:\torus\to\torus$ are self-coverings of $\torus$.  Hence $\exc(\hat h_A) = \ind(\hat h_A) = \emptyset$, and one sees directly from the arguments in the proof of Theorem \ref{thm:tropmap} that the tropicalization of $h_A$ is $A_{h_A} = A$.   

One checks that the internal, and therefore persistently exceptional curves, of the birational toric map $g:\torus\to\torus$ in Example \ref{eg:inv} are the lines joining the points $[1,1,0],[1,0,1],[0,1,1]\in\cp^2$ and that, as a self-map of $\cp^2$, $g$ sends each of these lines to the intersection of the other two.  As $g$ is an involution, it follows that $\ind(\hat g) = \{[1,1,0],[0,1,0],[0,0,1]\}$.  One further checks that in distinguished coordinates $(x_1,x_2)$ about any of the three $\torus$-invariant points $[1,0,0],[0,1,0],[0,0,1]\in\cp^2$, we have $g(x_1,x_2) = (x_1f_1,x_2f_2)$, where the $f_j$ are holomorphic near $(0,0)$.  Hence $A_g = \id$.  These facts were all observed in \cite{DiLi16} and \cite{BDJ20}. 

Compositions of the form $g\circ h_A$ furnished the central examples in \cite{BDJ20}, where it was shown that the first dynamical degree of a rational surface map can be transcendental.  So we revisit these here.

\begin{eg}
\label{eg:main}
Let $f := g\circ h_A:\torus\tto\torus$, where $g$ is the involution in Example \ref{eg:inv} and $h_A$ is monomial.  Let $\rzf:\rzt\tto \rzt$ be the induced map. Then
\begin{itemize}
 \item $A_f = A$;
 \item $\ind(\rzf) = h_A^{-1}(\ind(\hat g))$ consists of three free points, one in each pole $C_{\tau}$, $A\tau\in\Sigma_1(\cp^2)$, and $\rzf_A(\ind(\rzf)) = \exc(\hat g)$;
 \item $\exc(\rzf) = \hat h_A^{-1}(\exc(\hat g))$ consists of three internal curves, is compatible with any toric surface containing $\ind(\rzf)$, and has image $\rzf(\exc(\rzf)) = \hat g(\exc(\hat g)) = \ind(\hat g)$;
\end{itemize}
\end{eg}

\begin{defn}
A toric map $f:\torus\tto \torus$ is \emph{internally stable} if the induced map $\rzf:\rzt\tto\rzt$ satisfies $\rzf^n(\exc(\rzf))\cap \ind(\rzf) = \emptyset$ for all $n\in\N$.
\end{defn}

It follows from definitions that if $f$ is internally stable then
$$
\ind(\rzf^n) = \bigcup_{j=0}^{n-1} \rzf^{-j}(\ind(\rzf))\quad\text{and}\quad
\exc(\rzf^n) = \bigcup_{j=0}^{n-1} \rzf^{-j}(\exc(\rzf)).
$$

One way to verify internal stability is to show (if possible) that if $C_\tau\subset\rzt$ is a pole containing the image of a persistently exceptional curve, then no pole in its forward orbit $\rzf^n(C_\tau)$, $n\geq 0$ contains a persistently indeterminate point.  This works in particular for the maps from \cite{BDJ20}.

\begin{cor}
Suppose the map $f$ in Example \ref{eg:main} has matrix $A=\begin{pmatrix} \re\xi & -\im\xi \\ \im\xi & \re\xi \end{pmatrix}$ corresponding to multiplication by a Gaussian integer $\xi = |\xi|e^{2\pi i \theta}$ with $\theta\notin\Q$.  Then $f$ is internally stable on $\rzt$. 
\end{cor}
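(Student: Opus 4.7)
The plan is to combine the identification of $\ind(\rzf)$ and $\exc(\rzf)$ from Example \ref{eg:main} with Theorem \ref{thm:tropmap}(3), which controls how $\rzf$ acts on poles, and then close the argument with the irrationality of $\theta$.  From Example \ref{eg:main}, $\ind(\rzf)$ consists of three free points, one on each pole $C_{A^{-1}\tau'_j}$ for $\tau'_j \in \Sigma_1(\cp^2)$, while $\rzf(\exc(\rzf)) = \ind(\hat g)$ is a triple of free points $q_1,q_2,q_3$ with $q_i \in C_{\tau'_i}$.  The case $n=0$ of internal stability is immediate since $\exc(\rzf)\subset\torus$ whereas $\ind(\rzf)\subset\rzt\setminus\torus$.

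For $n\geq 1$, I would prove by induction that $\rzf^n(\exc(\rzf))$ consists of three free points $p^{(n)}_1,p^{(n)}_2,p^{(n)}_3$ with $p^{(n)}_i \in C^\circ_{A^{n-1}\tau'_i}$.  The base case $n=1$ is just $p^{(1)}_i = q_i$.  For the inductive step, granting that $p^{(n)}_i \notin \ind(\rzf)$, Theorem \ref{thm:tropmap}(3) combined with $A_f = A$ yields $\rzf(C_{A^{n-1}\tau'_i}) = C_{A^n\tau'_i}$, and Corollary \ref{cor:ptimages} guarantees that $\rzf(p^{(n)}_i) =: p^{(n+1)}_i$ is a free point on $C_{A^n\tau'_i}$.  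Since distinct poles are disjoint in $\rzt^\circ$, the condition $p^{(n)}_i\notin\ind(\rzf)$ reduces to the purely combinatorial statement that $A^{n-1}\tau'_i \neq A^{-1}\tau'_j$ for all $i,j\in\{1,2,3\}$, i.e., that $A^n\tau'_i \neq \tau'_j$ for all $n\geq 1$ and all $i,j$.

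This last statement, which is the heart of the proof, is precisely where the hypothesis on $\theta$ enters.  Under the standard identification of $N_\R$ with $\C$, the matrix $A$ acts as multiplication by $\xi = |\xi|e^{2\pi i\theta}$, so $A^n$ rotates rays by $2\pi n\theta$.  The three rays of $\Sigma_1(\cp^2)$ are generated by $(1,0)$, $(0,1)$, $(-1,-1)$, and their pairwise angular differences are rational multiples of $\pi$.  Hence $A^n\tau'_i = \tau'_j$ would force $n\theta$ to be rational modulo $1$, contradicting the irrationality of $\theta$ whenever $n\geq 1$.  The inductive step then closes and the corollary follows.  There is no serious obstacle; the main task is the bookkeeping of where $\rzf^n(\exc(\rzf))$ lives along the orbit, after which a one-line irrationality argument finishes the proof.
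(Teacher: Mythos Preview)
Your proposal is correct and follows essentially the same route as the paper: both identify that $\ind(\rzf)$ sits on poles $C_{A^{-1}\tau}$ while $\rzf^n(\exc(\rzf))$ sits on poles $C_{A^{n-1}\tau}$ for $n\geq 1$, and then use irrationality of $\theta$ (plus the fact that the rays of $\Sigma_1(\cp^2)$ differ by rational multiples of $\pi$) to conclude these poles never coincide.  Your version is simply more explicit about the induction and about which results (Theorem~\ref{thm:tropmap}(3), Corollary~\ref{cor:ptimages}) are being invoked.  One small slip: the claim that $\exc(\rzf)\subset\torus$ is false, since internal curves meet poles (indeed $\ind(\rzf)\subset\exc(\rzf)$ by Lemma~\ref{LEM:IMAGES_INDET_PTS}); the paper's own proof checks only $n\geq 1$, consistent with the algebraic stability convention, so your $n=0$ remark is both incorrect and unnecessary.
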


\begin{proof}
Since $\theta$ is irrational and all three rays in $\Sigma_1(\cp^2)$ meet the horizontal axis at rational angles, we have for any $\tau,\tau'\in \Sigma_1(\cp^2)$ that $A^n\tau \neq A^m\tau'$ for any $n\neq m\in\Z$.  Hence the stability assertion follows from the fact that 
$\ind(\rzf)$ consists of free points in poles $C_{A^{-1}\tau}$, $\tau\in\Sigma_1(\cp^2)$, whereas for each $n\geq 1$, $\rzf^n(\exc(\rzf))$ consists of free points in poles $C_{A^{n-1}\tau}$, $\tau\subset\Sigma_1(\cp^2)$.
\end{proof}

The next result allows us to use the tropicalization of a toric map $f$ to bound the rate at which orbits $(f^n(p))$ escape $\torus$.

\begin{prop}\label{PROP:TROPICAL_APPROXIMATION}
If $f:\torus\tto \torus$ is toric, then for any neighborhood $U$ of $\rzf(\exc(\rzf))$, there exists $R > 0$ such that 
\begin{align}\label{EQN:DESIRED_CONCLUSION}
\|\trop\circ f(p) \| \leq \|A_f \circ \trop(p) \| + R \quad \mbox{or} \quad  f(p) \in U
\end{align}
for each $p\in\torus\setminus\exc(\rzf)$.  Similarly, for any neighborhood $V$ of $\ind(\rzf)\setminus\torus$, there exists $R>0$ such that 
\begin{equation}\label{EQN:DESIRED_CONCLUSION2}
\|\trop\circ f(p)\| \geq \|A_f\circ\trop(p)\| - R\quad\mbox{or}\quad p\in V.
\end{equation}
\end{prop}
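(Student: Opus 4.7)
The plan is to reduce both inequalities to a pointwise estimate in distinguished coordinates around each $\torus$-invariant point of a sufficiently dominant toric surface $X$, then globalize by compactness. Choose $X$ dominant enough that $\ind(\rzf)$ contains no $\torus$-invariant points (Theorem \ref{thm:janliandme2}) and $\exc(\rzf)$ is fully realized (Proposition \ref{prop:internalcurve}), so that no persistently exceptional curve passes through any $p_\sigma\in X$. Cover $X$ by its finitely many distinguished affine charts $U_\sigma$. By the proof of Theorem \ref{thm:tropmap}, in each chart $f$ has the form
\[
f(x_1,x_2) \;=\; \bigl(x_1^a x_2^b f_1,\; x_1^c x_2^d f_2\bigr),
\]
with $\bigl(\begin{smallmatrix} a & b \\ c & d\end{smallmatrix}\bigr)$ realizing $A_f|_\sigma$ and $f_1,f_2$ rational on $X$. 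Taking $-\log|\cdot|$ of each coordinate yields the identity
\[
\Trop\circ f(p) \;=\; A_f\circ\Trop(p) \;-\;\bigl(\log|f_1(p)|,\log|f_2(p)|\bigr),
\]
so both inequalities follow, via the triangle inequality, from controlling $\|E(p)\| := \|(\log|f_1(p)|,\log|f_2(p)|)\|$.

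The key algebraic observation is that the zero and pole divisors of $f_j$ inside $U_\sigma$ are supported entirely on $\exc(\rzf)$. Indeed, since $f(C_{\tau_i})=C_{\tau'_i}$ by Theorem \ref{thm:tropmap}, the monomial factor $x_1^a x_2^b$ exactly absorbs the contributions of the source poles $C_{\tau_1},C_{\tau_2}$ to both $f^*\{y_j=0\}$ and $f^*\{y_j=\infty\}$; any remaining zeros or poles of $f_j$ come from persistently exceptional curves, whose contractions lie in $\rzf(\exc(\rzf))$ by Corollary \ref{cor:curveimages}. In particular $f_j$ is holomorphic and nonvanishing at every $p_\sigma$.

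For \eqref{EQN:DESIRED_CONCLUSION}, given $U\supset \rzf(\exc(\rzf))$, use continuity of $f$ off $\ind(\rzf)$ together with the contraction of each persistently exceptional curve into $\rzf(\exc(\rzf))$ to choose a tubular neighborhood $T$ of $\exc(\rzf)$ satisfying $f(T\setminus\ind(\rzf))\subset U$. On the compact complement $X\setminus T$ the quantities $|f_j|$ and $1/|f_j|$ are bounded in every chart, yielding $\|E(p)\|\leq R$ there for a uniform $R$; on $T\cap(\torus\setminus\exc(\rzf))$ one has $f(p)\in U$ directly. The lower bound \eqref{EQN:DESIRED_CONCLUSION2} follows from the same pointwise identity, except that when $\|E(p)\|$ is large one cannot conclude $f(p)\in U$. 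Instead, the identity forces $\|\Trop\circ f(p)\|<\|A_f\circ\Trop(p)\|-R$ only when $p$ is external and approaches a persistently exceptional curve at one of its intersection points with a pole $C_\tau$; by Lemma \ref{LEM:IMAGES_INDET_PTS} each such intersection point lies in $\ind(\rzf)\setminus\torus$, and enclosing these finitely many points in $V$ completes the argument.

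The main obstacle will be this last bookkeeping step: verifying via Lemma \ref{LEM:IMAGES_INDET_PTS} that if $p$ is near $C_\tau$ (so $A_f\circ\Trop(p)$ is large) and the lower bound fails, then $p$ must in fact be close to one of the external persistently indeterminate points, rather than to a generic point of $C_\tau$ (where Theorem \ref{thm:tropmap} guarantees that $f(p)$ stays near $C_{A_f(\tau)}$ at a rate matching $A_f\circ\Trop(p)$).
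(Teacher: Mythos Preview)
Your setup and the key identity $\Trop\circ f - A_f\circ\Trop = -(\log|f_1|,\log|f_2|)$ match the paper exactly, as does the observation that the zeros and poles of the $f_j$ lie on $\exc(\rzf)$.  The difficulty is that you have the hard case backwards, and the genuinely hard case is not handled.

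For \eqref{EQN:DESIRED_CONCLUSION} your tubular neighborhood $T$ of $\exc(\rzf)$ with $f(T\setminus\ind(\rzf))\subset U$ does not exist.  Since $\ind(\rzf)\subset\exc(\rzf)$ (Lemma \ref{LEM:IMAGES_INDET_PTS}), any such $T$ contains a deleted neighborhood of each $\alpha\in\ind(\rzf)$, and there the cluster set of $f$ is the full curve union $\rzf(\alpha)$, most of which lies in $\torus$ and hence outside $U$.  For an \emph{external} $\alpha\in C_\tau$, Lemma \ref{LEM:IMAGES_INDET_PTS} only gives $\rzf(\alpha)\setminus\torus\subset\rzf(\exc(\rzf))\cup\{\beta\}$ with $\beta=\rzf|_{C_\tau}(\alpha)\in C_{A_f(\tau)}$; points $p$ near $\alpha$ with $f(p)$ near $\beta$ are neither in $U$ nor have $\|E(p)\|$ bounded a priori.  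The paper isolates exactly this set $W=B_\alpha(\delta)\cap f_{XY}^{-1}(B_\beta(\epsilon))$ and shows that $1/|f_1|$ is bounded on $W$ by a non-elementary argument: one resolves the indeterminacy at $\alpha$, notes that $\{\tilde f_1=0\}\cap\tilde W$ would be a compact divisor supported on the exceptional fiber, and invokes negative definiteness of the intersection form there to conclude it is empty.  This is the main content of the proof and is entirely missing from your outline.

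For \eqref{EQN:DESIRED_CONCLUSION2} your appeal to Lemma \ref{LEM:IMAGES_INDET_PTS} is in the wrong direction: that lemma says $\ind(\rzf)\subset\exc(\rzf)$, not that every point of $\exc(\rzf)\cap C_\tau$ is indeterminate.  There can be (and typically are) points $q\in E\cap C_\tau$ with $q\notin\ind(\rzf)$.  At such $q$ the map $f$ is holomorphic, which forces $c=0$ in the local matrix and forces $f_1,f_2$ to have no \emph{poles} along $E$ near $q$; from this one checks directly that $\|\Trop\circ f(p)\|\gtrsim a(-\log|x_1|)\approx\|A_f\circ\Trop(p)\|$ as $p\to q$, so \eqref{EQN:DESIRED_CONCLUSION2} does hold there.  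Only the genuine external indeterminacy points need to be absorbed into $V$.  So this part is fixable, but not by the reasoning you gave.
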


\begin{proof}
Note that by Corollary \ref{cor:ptimages} and Lemma \ref{LEM:IMAGES_INDET_PTS}, the restriction $p\in\torus\setminus\exc(\rzf)$ implies that $\rzf(p) = f(p)\in\torus$.
In order to compare the behavior of $f$ with that of $A_f$, we choose a toric surface $Y$ sufficiently dominant that it realizes all points in $\ind(\rzf)$ and $\rzf(\exc(\rzf))$ and fully realizes all curves in $\exc(\rzf)$ and $\rzf(\ind(\rzf))$.  Then we choose $X\succ Y$ so that, additionally, $\ind(f_{XY}) = \ind(\rzf)$, i.e. so that no $\torus$-invariant point in $X$ is indeterminate for $f_{XY}$.  

Let $\sigma\subset \Sigma_2(X)$ be any sector and $\tau_1,\tau_2\in\Sigma_1(X)$ be its bounding rays.  Let $(x_1,x_2)$ be distinguished coordinates about $p_\sigma$ such that $x_j$ vanishes along $C_{\tau_j}$, $j=1,2$.  Since $p_\sigma\notin \ind(f_{XY})$, we have
$A_f(\sigma) \subset \sigma'$ for some $\sigma'\in\Sigma_2(Y)$.  Therefore if $(y_1,y_2)$ are distinguished coordinates for $Y$ about $p_{\sigma'}$, we have as in the proof of Theorem \ref{thm:tropmap} that
$$
(y_1,y_2) = f_{XY}(x_1,x_2) = (x_1^a x_2^b f_1(x_1,x_2),x_1^cx_2^d f_2(x_1,x_2))
$$
where $a,d>0$ and $b,c\geq 0$ are integers and $f_1,f_2:\C^2\tto\C$ are rational functions
that are holomorphic near $(0,0)$ and do not vanish identically on either axis
$\{x_j=0\}$.  If $\nvec_j\subset\tau_j$, $j=1,2$ generate $\tau_j\cap N$ and $\nvec_j'\subset\tau_j'$ generate $\tau_j'\cap N$, then $A_f:\overline{\sigma}\to\overline{\sigma'}$ is given as before by the integer matrix 
$A=\begin{pmatrix} a & b \\ c & d\end{pmatrix}$ relative to the bases $\nvec_1,\nvec_2$ and $\nvec'_1,\nvec'_2$.   From \eqref{eqn:twoform}, one therefore computes for $x = (x_1,x_2)\in\C^2$ that
\begin{align}\label{EQN:GOOD_TROP_APPROXIMATION1}
\|\trop\circ f(x) -A_f \circ \trop(x) \| = \| - \log|f_1(x)| \nvec_1'    - \log|f_2(x)| \nvec_2' \|.
\end{align}
The zeros and poles of $f_1$ and $f_2$ are just the persistently exceptional curves of $\rzf$.
Hence if $\Omega$ is any neighborhood of $\exc(\rzf)$, and $P\subset X$ is any bounded coordinate polydisk about $(0,0)$, we have that both \eqref{EQN:DESIRED_CONCLUSION} and \eqref{EQN:DESIRED_CONCLUSION2} hold for some $R>0$ on $P\cap\torus\setminus{\Omega}$.  Indeed $X$ is covered by finitely many such polydisks, so both equations hold on all of $\torus\setminus\Omega$. 

Since $\rzf(\exc(\rzf)) \cap\torus = \emptyset$ and $\rzf$ is continuous on $\exc(\rzf)\setminus\ind(\rzf)$, we have $\lim_{p\to q} \norm{\trop\circ f(p)} = \infty$ uniformly for all $q$ in any compact subset of $\exc(\rzf)\setminus\ind(\rzf)$.  So \eqref{EQN:DESIRED_CONCLUSION2} holds for some $R = R(V) > 0$ outside any given neighborhood $V\subset\rzt$ of $\ind(\rzf)$, as asserted.  

Completing the proof of \eqref{EQN:DESIRED_CONCLUSION} is trickier.  Since $\rzf^{-1}(U) = f_{XY}^{-1}(U)$ contains a neighborhood of $\exc(\rzf)\setminus\ind(\rzf)$, it suffices to check the desired bound on a small enough neighborhood $U_\alpha$ of each of the finitely many points $\alpha \in\ind(\rzf)$.  When $\alpha\in\torus$ is internal, we can arrange \eqref{EQN:DESIRED_CONCLUSION} trivially, by setting $R>\trop(\alpha)$ and choosing a neighborhood whose points all satisfy $\trop(p) < R$.  So we can assume 
$\alpha\in\ind(\rzf)$ is external.

Revisiting our choices of distinguished coordinates above, we pick $\sigma\in\Sigma_2(X)$ and $\tau_j\in \Sigma_1(X)$ so that $C_{\tau_1}$ is the unique pole containing $\alpha$, i.e. $\alpha = (0,x_2)$ for some $x_2\in\C^*$.  Since $Y$ fully realizes $\rzf(\alpha)$, we have that $A_f(\tau_1)\in\Sigma_1(Y)$ is one of the rays, say $\tau_1'$, bounding $\sigma_1'$.  Hence in coordinates $\beta:= \rzf|_{C_{\tau_1}}(\alpha) = f_{XY}(0,x_2) = (0,y_2)$.  Given $\epsilon>0$, let $B_\beta(\epsilon)\subset Y$ be the open distinguished coordinate ball about $\beta$.  Then by Lemma \ref{LEM:IMAGES_INDET_PTS}, there exists $\delta>0$ and $R>0$ such that if $B_\alpha(\delta)\subset X$ is the corresponding ball about $\alpha$, then 
$$
f_{XY}(B_\alpha(\delta))\cap\{\trop < R\}\subset U\cup B_\beta(\epsilon),
$$ 
where $U$ is the open set in \eqref{EQN:DESIRED_CONCLUSION}.  So it suffices to verify the bound in \eqref{EQN:DESIRED_CONCLUSION} only for $p\in W := B_\alpha(\epsilon)\cap f_{XY}^{-1}(B_\beta(\epsilon))$.  Since $\beta\notin \rzf(\exc(\rzf))$, we can assume $\epsilon$ is small enough that $W\setminus\{\alpha\}$ is disjoint from all persistently exceptional curves. And as $\beta = (0,y_2)$ with $y_2 \neq 0$, we can further assume that $|f_1|$, $|f_2|$ and $|1/f_2|$ are all bounded above on $W\setminus\{\alpha\}$.   To conclude, we must show that $|1/f_1|$ is likewise bounded on $W\setminus\{\alpha\}$.

Let $\pi:\tilde X \to X$ be a birational morphism that resolves the indeterminacy of $f_{XY}$ at $\alpha$, lifting $f_{XY}$ to a holomorphic map $\tilde f:\pi^{-1}(B_\alpha(\delta))\to Y$.  Hence $\tilde W := \pi^{-1}(B_\alpha(\delta))\cap \tilde f^{-1}(B_\beta(\epsilon))$ is an open set such that $\pi(\tilde W) = W$.  The corresponding lift $\tilde f_1:\pi^{-1}(B_\alpha(\epsilon))\to\cp^1$ of $f_1$ is then well-defined everywhere, so its zeros and poles are disjoint.  As $W\setminus\{\alpha\}$ avoids persistently exceptional curves, the set $\{\tilde f_1 = 0\}\cap\tilde W$ is compact, equal to a finite union of curves in $\pi^{-1}(\alpha)$.  But the intersection form on $\pi^{-1}(\alpha)$ is negative definite whereas a compact principle divisor has vanishing self-intersection, so $\{f_1=0\}\cap W$ must be empty.  Shrinking $\epsilon$ further therefore guarantees that $1/|\tilde f_1|$ is bounded on $\tilde W$.  Hence $1/|f_1|$ is bounded on $W\setminus\{\alpha\}$, and the proof of \eqref{EQN:DESIRED_CONCLUSION} is complete.
\end{proof}

\section{Irrational rotation and volume}
\label{sec:volumes}

We saw in Theorem \ref{thm:tropmap} that the tropicalization $A_f:N_\R\to N_\R$ of a toric map $f:\torus\tto\torus$ always restricts to a self-cover of the punctured plane $N_\R\setminus\{0\}$ with degree $\frac{\dtop(f)}{|\cov(f)|}$.  In the semigroup generated by birational toric and monomial maps (i.e. in all examples that we know) it always happens that $\cov(f) = \pm\dtop(f)$, i.e. that $A_f$ is actually a homeomorphism.  In this case, we obtain an induced homeomorphism $\tilde A_f:\nvec \mapsto \frac{A_f(\nvec)}{\norm{A_f(\nvec)}}$ of the unit circle $\{\norm{\nvec}=1\}$.  Abusing terminology slightly, we refer to the rotation number of $\tilde A_f$ as the rotation number of $A_f$.  We recall the following result from \cite{DiLi16}.

\begin{thm}[\cite{DiLi16}, Theorem F]
\label{thm:rotationcriterion}
Suppose $f:\torus\tto\torus$ is a toric surface map, and that the
tropicalization $A_f$ of $f$ is a homeomorphism.  Then some iterate of $f$ is birationally
conjugate to an algebraically stable rational map $f_X:X\tto X$ on some (not necessarily toric)
rational surface $X$ if and only if the rotation number of $A_f$ is rational.

\end{thm}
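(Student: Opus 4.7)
\medskip\noindent\textbf{Proof proposal.} The plan is to prove the two implications separately using the dictionary between the tropicalization $A_f$ and the action of $\rzf$ on poles of $\rzt$ established in Theorem \ref{thm:tropmap}.

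For the forward direction, suppose the rotation number of $A_f$ equals $p/q$ in lowest terms. First I would replace $f$ by $f^q$ so that $A_f$ setwise fixes a finite non-empty collection of rational rays $\tau_1,\dots,\tau_k\subset N_\R$. Next, I would choose a toric surface $Y$ whose fan contains $\tau_1,\dots,\tau_k$ together with all rays needed to realize $\ind(\rzf)$ and to place the external image set $\rzf(\exc(\rzf))$ inside $Y^\circ$. On such a $Y$, Theorem \ref{thm:tropmap} guarantees that $A_f$ sends $\Sigma_1(Y)$ into itself and that persistently exceptional curves are mapped to points of $Y^\circ$ off the $\torus$-invariant locus. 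The desired algebraically stable model $X$ should then arise from $Y$ by a finite sequence of (possibly non-toric) point blowups separating the forward orbit $\bigcup_{n\geq 0}\rzf^n(\exc(\rzf))$ from $\ind(\rzf)$. Finiteness of this blowup procedure is the key check and depends on the rational rotation number hypothesis: once such an orbit has reached one of the fixed rays of $A_f$, it remains on a finite union of periodic poles, so only finitely many separations are required.

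For the backward direction, assume that some iterate $f^n$ is birationally conjugate on a possibly non-toric rational surface $X$ to an algebraically stable $g=f_X:X\tto X$. The plan is to extract a periodic ray for $A_f$ from the Perron--Frobenius eigendata of $g^*$ on $H^{1,1}_\R(X)$. Algebraic stability yields $(g^*)^m=(g^m)^*$ for all $m\geq 0$, so the spectral radius $\ddeg(g)=\ddeg(f)^n$ is an eigenvalue realized by a nef class $\alpha\in H^{1,1}_\R(X)$. Pulling $\alpha$ back through the birational conjugation and transporting the result to $\hoo(\rzt)$ via the inverse limit should produce a nef toric class $\alpha^*$ that is invariant up to the scalar $\ddeg(f^n)$ under $\rzf^{n*}$. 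Since the $\rzf^*$-action on external divisor classes is governed by $A_f$ (Theorem \ref{thm:tropmap}), the external part of $\alpha^*$ determines an $A_f$-invariant positive measure on the circle of rays. Finite dimensionality of $H^{1,1}_\R(X)$ forces that measure to have finite support, and any finite-support $A_f$-invariant measure forces $\tilde A_f$ to have a periodic orbit, hence rational rotation number.

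I expect the main obstacle to be the backward direction, and specifically the transfer of the eigenclass $\alpha$ from a possibly non-toric $X$ to $\hoo(\rzt)$. The birational conjugacy between $X$ and $\cp^2$ may both introduce curves on $X$ that are not strict transforms of poles and collapse finitely many poles, so one must verify that the ``infinite support on poles'' phenomenon implied by an irrational rotation number is genuinely visible through the conjugacy rather than hidden by it. Once that transfer is in place, the spectral argument above should be reasonably direct, since producing a single periodic ray suffices to conclude.
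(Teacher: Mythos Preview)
The paper does not prove this theorem at all: it is stated with the attribution ``[\cite{DiLi16}, Theorem F]'' and used as a black box, with no proof or proof sketch given anywhere in the text. So there is nothing in the present paper to compare your proposal against; any comparison would have to be with \cite{DiLi16} itself.

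That said, a couple of points about your sketch are worth flagging. In the forward direction, after passing to $f^q$ you assert that on a toric surface $Y$ whose fan contains the $A_f$-fixed rays, ``$A_f$ sends $\Sigma_1(Y)$ into itself.'' That does not follow: having some fixed rays in the fan says nothing about where the \emph{other} rays of $\Sigma_1(Y)$ go, and those other rays are needed both to make $Y$ complete and to realize $\ind(\rzf)$. You would need a more careful construction of the fan (or an argument that the non-fixed rays can be handled by finitely many further toric or non-toric blowups). In the backward direction, your inference ``finite dimensionality of $H^{1,1}_\R(X)$ forces that measure to have finite support'' is the crux and is not justified as stated: the measure you describe lives on the circle of rays in $N_\R$, which is not a subspace of $H^{1,1}_\R(X)$, and the passage through a non-toric birational conjugacy can scramble the correspondence between poles and classes in ways you have not controlled. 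A more direct route to the backward implication is to argue that when $A_f$ has irrational rotation number, every rational ray has infinite forward $A_f$-orbit (via the Denjoy-type conjugacy to an irrational rotation used elsewhere in the paper), and then show directly that this infinite orbit of poles obstructs stabilization on any finite blowup.
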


In particular, when $A_f$ is a homeomorphism with rational rotation number, the
equidistribution results Corollaries 2.11 and 3.5 from \cite{DDG10} imply that (after replacing $f$ by an iterate)
normalized forward and backward images of curves are asymptotic to
$f$-invariant currents $T^*$ and $T_*$ on $X$.
Theorems~\ref{thm:mainA} and \ref{thm:mainD} therefore address
the complementary case in which no equidistribution has been previously
established.   Here we take a key step toward proving these theorems by showing
that iterates of a toric surface map that satisfies the hypothesis of Theorem \ref{thm:mainA}
cannot shrink or expand volume too quickly.    
It will be helpful to briefly consider a broader context and to introduce some ad hoc terminology.  

\begin{defn}
Let $X$ be any complex projective surface endowed with a smooth volume form,  and $R:X\tto X$ be a dominant rational map.  Given $\mu\geq 1$, we say that
\begin{itemize}
 \item $R$ has {\em lower volume exponent} $\mu\geq 1$ if there exists $a>0$ such that $\vol R(S) \geq  (a \vol S)^\mu$ for any measurable $S \subset X$;
 \item $R$ has {\em dynamical lower volume exponent} $\mu\geq 1$ if there exist $a,b > 0$ such that  
\begin{align}
 \vol R^n(S) \geq (a \Vol S)^{b \mu^n}.
\end{align}
for any $n\geq 0$ and measurable $S\subset X$.
\end{itemize}
Similarly, $R$ has \em{upper volume exponent} $\mu\leq 1$ if there exists $a > 0$ such that $\vol R(S) \leq  a (\vol S)^\mu$ and \em{dynamical upper volume exponent} $\mu\leq 1$ if there exists $a,b> 0$ such that $\vol R^n(S) \leq a(\vol S)^{b\mu^n}$ for any measurable $S\subset X$.
\end{defn}

The definitions of dynamical upper and lower volume exponents are tailored to our needs.  The role of the constant $a$ is therefore not very symmetric between them.  Note that upper volume exponents are \emph{smaller} than lower exponents, and in no case does a volume exponent need to be optimal.  Since different volume forms on $X$ are uniformly multiplicatively comparable, the choice of volume form does not affect which $\mu$ are (dynamical) upper or lower volume exponents.

\begin{prop}\label{PROP:BASIC_PROPERTIES_VOL_EXPONENTS}
Let $X$ be a complex projective surface with a smooth volume form and $R: X \dashrightarrow X$ be a dominant rational mapping.  Then
\begin{itemize}
\item[(i)] $R$ has both an upper and a lower volume exponent;
\item[(ii)] $R: X \dashrightarrow X$ has dynamical lower (respectively, upper) volume exponent $\mu$ if and only if some iterate $R^k: X \dashrightarrow X$ has dynamical lower (respectively, upper) volume exponent $\mu^k$.
\item[(iii)]  If $R_1: X \dashrightarrow X$ has dynamical lower (respectively, upper) volume exponent
$\mu > 0$ and $R_2: Y \dashrightarrow Y$ is birationally conjugate to $R_1$,
then $R_2$ also has dynamical lower (respectively, upper) volume exponent $\mu$.
\end{itemize}
\end{prop}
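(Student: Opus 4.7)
The plan is to address the three claims in order. The main content sits in (i); parts (ii) and (iii) are essentially bookkeeping applications of (i).

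For (i), my approach is to resolve the indeterminacy of $R$, taking a birational morphism $\pi: \tilde X \to X$ with $\tilde R := R \circ \pi: \tilde X \to X$ holomorphic. The Jacobians $J_\pi$ and $J_{\tilde R}$ are then holomorphic sections (up to twisting by canonical line bundles) whose zero divisors have finitely many components of bounded multiplicity. A standard Skoda-type integrability argument produces $\epsilon > 0$ with $|J_{\tilde R}|^{-\epsilon}, |J_\pi|^{-\epsilon} \in L^1(\tilde X)$. This is the key input and the main obstacle, and everything else is built from it.

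For the lower volume exponent I would combine the area formula
$$
\vol R(S) \;=\; \vol \tilde R(\pi^{-1}(S)) \;\geq\; \dtop(R)^{-1} \int_{\pi^{-1}(S)} |J_{\tilde R}|^2 \, dV_{\tilde X}
$$
with Hölder's inequality applied to $\vol \pi^{-1}(S) = \int_{\pi^{-1}(S)} |J_{\tilde R}|^{2/p}\cdot |J_{\tilde R}|^{-2/p}\,dV_{\tilde X}$ for the conjugate exponents $p = (2+\epsilon)/\epsilon$ and $q = (2+\epsilon)/2$. Since $|J_\pi|$ is bounded, $\vol\pi^{-1}(S) \geq c\,\vol S$ with a uniform constant, so rearranging the Hölder estimate yields $\vol R(S) \geq c'(\vol S)^{(2+\epsilon)/\epsilon}$, a lower volume exponent of the required form. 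The upper volume exponent follows from a parallel Hölder estimate: $\vol R(S) \leq K \vol_{\tilde X}\pi^{-1}(S)$ from boundedness of $|J_{\tilde R}|$, combined with $\vol_{\tilde X}\pi^{-1}(S) = \int_S g\,dV_X$ where $g$ is a density whose singularities along $\pi(E)\subset X$ are controlled by $|J_\pi|^{-2}$; the $L^1$ integrability of $|J_\pi|^{-\epsilon}$ bounds $g$ in weak $L^p$ for some $p>1$ and hence gives $\vol_{\tilde X}\pi^{-1}(S) \leq C(\vol S)^\mu$ with $\mu\in(0,1]$.

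For (ii), the forward implication is immediate: evaluating $\vol R^n(S) \geq (a\vol S)^{b\mu^n}$ at $n=kn'$ reads as $\vol(R^k)^{n'}(S) \geq (a\vol S)^{b(\mu^k)^{n'}}$, giving $R^k$ the dynamical lower volume exponent $\mu^k$ with the same constants. For the reverse, given the hypothesis on $R^k$ and any $m = kn+r$ with $0\leq r<k$, I would apply (i) to each of the finitely many maps $R^r$, extracting common constants $a^*\in(0,1]$ and $\mu^*\geq 1$ with $\vol R^r(T) \geq (a^*\vol T)^{\mu^*}$, and chain:
$$
\vol R^m(S) \;\geq\; (a^*)^{\mu^*}(a\vol S)^{\mu^* b \mu^{kn}}.
$$
After normalizing so that $a,a^*\in(0,1]$ and $b\geq 1$ (which only weakens the estimate), the inequality $\mu^{kn}\leq\mu^m$ and the monotonicity of $t\mapsto t^x$ on $(0,1]$ allow me to rewrite this as $(aa^*\vol S)^{\mu^* b\mu^m}$, producing the claimed dynamical lower volume exponent $\mu$ for $R$. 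The upper case is symmetric with the inequalities and monotonicities reversed.

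For (iii), writing $R_2^n = \varphi\circ R_1^n\circ\varphi^{-1}$ and applying the lower volume exponents of $\varphi$ and $\varphi^{-1}$ from (i) together with the dynamical estimate for $R_1$ in three successive steps yields
$$
\vol R_2^n(S) \;\geq\; a_\varphi^{\mu_\varphi}\, a^{\mu_\varphi b\mu^n}\, a_{\varphi^{-1}}^{\mu_{\varphi^{-1}}\mu_\varphi b\mu^n}\,(\vol S)^{\mu_{\varphi^{-1}}\mu_\varphi b\mu^n}.
$$
The exponent on $\vol S$ is $b'\mu^n$ with $b' := \mu_{\varphi^{-1}}\mu_\varphi b$, and the same normalization trick from (ii) collapses the remaining factors into a fresh constant $a'\in(0,1]$, so $R_2$ inherits the dynamical lower volume exponent $\mu$. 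The upper case runs parallel. Once the Skoda input for (i) is in place, no further conceptual difficulty arises; the remaining work is careful tracking of constants, and the exponent $\mu$ itself is preserved by both operations in (ii) and (iii) (up to the expected exponentiation by $k$ in (ii)).
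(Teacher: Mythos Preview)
Your proof is correct and follows essentially the same approach as the paper. The only difference is one of detail: for part (i) the paper simply resolves the indeterminacy via $\pi:\tilde X\to X$ and cites \cite{BLR,FAVRE_JONSSON} for the ``well-known estimates for holomorphic maps,'' whereas you spell out the underlying Skoda integrability plus H\"older argument explicitly; parts (ii) and (iii) are the same chaining of estimates in both accounts.
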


\begin{proof}
Let $\pi:\tilde X\to X$ be a birational morphism lifting $R$ to a holomorphic map $\tilde R:\tilde X\to X$.  Then $\vol\pi(\tilde S) \leq C\vol(\tilde S)$ for any measurable $\tilde S\subset \tilde X$.  Hence, existence of a lower volume exponent for $R$ follows from applying well-known estimates for holomorphic maps $\tilde R$. See, e.g. \cite[Section 5]{BLR},
\cite[Sections 6,7]{FAVRE_JONSSON}) for more details.  Reversing the roles of $\pi$ and $\tilde R$ gives an upper volume exponent.
We establish the two remaining conclusions only for dynamical lower volume exponents since the arguments for upper exponents are similar.

If $R: X \dashrightarrow X$ has dynamical lower volume exponent $\mu \geq 1$, then by definition $R^k$ has dynamical volume exponent $\mu^k$ for any $k \geq 1$.   
Conversely, suppose $R^k$ has dynamical lower volume exponent $\mu^k$.  I.e. for some $a,b> 0$, any measurable $S \subset X$ and any $m \geq 0$,
\begin{align*}
\vol R^{mk}(S) \geq (a \Vol S)^{b \mu^{mk}}.
\end{align*}
Having already proved Part (i), we apply it to find 
$\gamma \geq 1$ and $a' > 0$ such that for any measurable $S \subset X$ we have $\Vol(R^r(S)) \geq (a' \Vol S)^{\gamma}$ for any $0 \leq r < k$. 
Given any integer $n = km+r\geq 0$ with $0 \leq r < k$, we then have
\begin{align}
\vol R^n(S) \geq (a \Vol R^r(S))^{b \mu^{mk}} \geq (a(a' \Vol S)^\gamma)^{b \mu^{mk}} \leq (a'' \Vol S)^{B\mu^n},
\end{align}
where $B = b\gamma$ and $a'' = a^{1/\gamma} a'$.  So $\mu$ is a dynamical lower volume exponent for $R$.

Suppose finally that $\psi : X \dashrightarrow Y$ is birational and conjugates
$R_1$ to $R_2$.  Suppose $\mu$ is a dynamical lower volume exponent for $R_1$ with
corresponding constants $a,b > 0$ and let $\gamma_1,\gamma_2$ be lower volume
exponents for $\psi$ and $\psi^{-1}$ with multiplicative constants $a_1, a_2 >
0$, respectively.   They exist because we have already proved Part (i).  Then,
$$
\vol R_2^n(S) \geq  \vol(\psi(R_1^n(\psi^{-1}(S)))) \geq (a_1 (a (a_2 \Vol S)^{\gamma_2})^{b \mu^n})^{\gamma_1} \geq (a' \Vol S)^{b' \mu^n}
$$
for any measurable $S\subset X$ and any $n \geq 0$.  Here, $b' = b \gamma_1 \gamma_2$ and $a' = a_2 a^{1/\gamma_2} a_1^{1/(\gamma_2 b)} \leq a_2 a^{1/\gamma_2} a_1^{1/(\gamma_2 b \mu^n)}$ for all $n \geq 0$ since $\mu \geq 1$.  Hence $\mu$ is also a dynamical lower volume exponent for $R_2$.
\end{proof}

The main theorem of this section concerns dynamical upper and lower volume
exponents for toric surface maps as in Theorem \ref{thm:mainA}.  Note that by (iii) of
Proposition \ref{PROP:BASIC_PROPERTIES_VOL_EXPONENTS}, for purposes of determining dynamical volume exponents, 
it does not matter which toric surface $X$ a given toric map acts on.

\begin{thm}
\label{thm:volshrink}
Suppose $f:\torus\tto \torus$ is an internally stable toric map and its
tropicalization $A_f:N_\R\to N_\R$ is a homeomorphism with irrational rotation
number.  Then 
\begin{itemize}
 \item any $\mu > \sqrt{\dtop(f)}$ is a dynamical lower volume exponent for $f$;
 \item any $\mu < 1$ is an dynamical upper volume exponent for $f$.  
\end{itemize}
\end{thm}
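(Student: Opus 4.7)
The plan is to treat the lower and upper dynamical volume exponents separately, exploiting the relation $f^*\eta = \pm\dtop(f)\eta$ on $\torus$---which follows from $A_f$ being a homeomorphism and hence $\rho(f)=\pm\dtop(f)$. By Proposition~\ref{PROP:BASIC_PROPERTIES_VOL_EXPONENTS}(iii) I may fix any convenient toric surface $X$ with a smooth K\"ahler volume form $\omega_X$. The central device throughout will be the comparison $\omega_X = e^{-\phi}|\eta|^2$, where $|\eta|^2 := \eta\wedge\bar\eta$ (suitably normalized) is the $\torus$-invariant singular volume and $\phi:\torus\to [0,\infty)$ is a potential that grows linearly in $\|\trop(\cdot)\|$ as one approaches the poles of $X$.

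For the lower bound, I would first use $f^*|\eta|^2 = \dtop(f)^2|\eta|^2$ on $\torus$ together with the fact that $f$ is generically $\dtop(f)$-to-one and a local isomorphism on $\torus$ (Corollary~\ref{cor:ptimages}) to obtain, via change of variables, the clean $|\eta|^2$-expansion $|\eta|^2(f^n(S)) \geq \dtop(f)^n |\eta|^2(S)$ for any $S\subset\torus$ disjoint from the persistent exceptional curves. Converting this to an $\omega_X$-estimate amounts to bounding $\sup_{f^n(S)}\phi$ from above. Iterating Proposition~\ref{PROP:TROPICAL_APPROXIMATION} gives $\|\trop\circ f^n(p)\| \leq \|A_f^n(\trop(p))\| + O(1)$ outside a small neighborhood of $\rzf(\exc(\rzf))$, reducing the question to the growth of $\|A_f^n v\|$. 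The hypothesis that $A_f$ induces an irrational rotation on $S^1$, together with $|\det A_f|_\sigma| = \dtop(f)$ on each sector, forces (by ergodicity of the circle rotation and the equal-Lyapunov-exponents phenomenon for conservative cocycles) the typical radial growth rate to be the geometric mean $\sqrt{\dtop(f)}^{\,n}$ for Lebesgue-a.e.\ ray. Splitting $S$ into this typical subset and an ergodic-deviation set of small $\omega_X$-measure, and truncating to a bulk $S_K := S\cap \{\|\trop\|\leq K\}$ that retains most of the $\omega_X$-mass (using $\omega_X(\{\|\trop\|>K\})\lesssim e^{-cK}$), I plan to arrive at
\[
  \omega_X(f^n(S)) \;\gtrsim\; \dtop(f)^n\, e^{-c\,\sqrt{\dtop(f)}^{\,n}\,K}\,\omega_X(S).
\]
Choosing $K$ of order $|\log\omega_X(S)|^{1/2}$ to balance the competing terms then produces any $\mu>\sqrt{\dtop(f)}$ as a dynamical lower exponent.

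For the upper bound, the plan is to combine a single-step estimate $\vol f(S) \leq C(\vol S)^\nu$ for some $\nu\in(0,1]$---coming from standard complex-analytic bounds on $f$ near the persistent indeterminacy points and exceptional curves---with the non-compounding property afforded by internal stability. Crucially, internal stability ensures that no forward orbit of $\exc(\rzf)$ ever meets $\ind(\rzf)$, so that the worst-case H\"older behavior of $f^n$ cannot degrade by repeated compounding of blowup at indeterminate points; in effect, $f^n$ inherits the ramification structure of a single iterate up to uniformly bounded correction. Iterating carefully via Proposition~\ref{PROP:BASIC_PROPERTIES_VOL_EXPONENTS}(ii) and absorbing multiplicative constants into the parameters $a$ and $b$ of the target inequality, the exponent $\mu$ can be pushed arbitrarily close to $1$.

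The main obstacle is the error bookkeeping in passing from tropical to actual dynamics in the lower bound. Proposition~\ref{PROP:TROPICAL_APPROXIMATION} only gives the approximation $\trop\circ f^n \approx A_f^n\circ\trop$ outside small neighborhoods of $\rzf(\exc(\rzf))$ and $\ind(\rzf)\setminus\torus$, and a naive iteration could force the excluded sets to grow with $n$ and overwhelm the $\dtop(f)^n$ factor. Internal stability is precisely the hypothesis that saves us: since orbits of $\rzf(\exc(\rzf))$ never re-enter $\ind(\rzf)$, the error neighborhoods can be chosen once and used uniformly in $n$. A secondary technical difficulty is quantifying the ergodic deviations of the matrix cocycle $A_f^n$ over the irrational circle rotation $\tilde A_f$, which is what controls the size of the ``bad'' set of radial directions with growth exceeding $\sqrt{\dtop(f)}^{\,n}$.
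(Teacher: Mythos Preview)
Your plan has a genuine gap in the handling of orbits that pass through the excluded neighborhoods. You assert that ``the error neighborhoods can be chosen once and used uniformly in $n$'' because of internal stability, but internal stability only guarantees that the forward orbit of $\rzf(\exc(\rzf))$ avoids $\ind(\rzf)$; it says nothing about how often a \emph{generic} point $p$ has $f^j(p)$ landing in the neighborhood $U$ of $\rzf(\exc(\rzf))$. Each time this happens, the inequality in Proposition~\ref{PROP:TROPICAL_APPROXIMATION} fails for that iterate, and a priori this could occur for a growing number of $j\le n$, destroying the estimate. The paper's resolution is a combinatorial lemma (Lemma~\ref{LEM:BOUNDED_NUMBER_OF_RETURNS}): because each point of $\rzf(\exc(\rzf))$ lies on a unique pole $C_\tau$ and the irrational rotation number forces $A_f^n(\tau)\neq\tau'$ for all but one $n$, one can shrink $U$ (depending on $m$) so that any orbit segment of length $m$ visits $U$ at most $\#\rzf(\exc(\rzf))$ times. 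This splits the orbit into a \emph{bounded} number of pieces on which the tropical bound holds, with single-step volume estimates applied only at the boundedly many bad times. The same mechanism controls visits to $\ind(\rzf)\setminus\torus$ for the upper bound. Without this, your iteration of Proposition~\ref{PROP:TROPICAL_APPROXIMATION} does not go through.

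There are two smaller issues. First, your truncation $S_K=S\cap\{\|\trop\|\le K\}$ retains most of the mass only when $e^{-cK}\ll\vol S$, forcing $K\gtrsim|\log\vol S|$ rather than $|\log\vol S|^{1/2}$; the balancing must be redone accordingly. Second, you invoke Lebesgue-a.e.\ growth $\|A_f^n v\|\sim\sqrt{\dtop(f)}^{\,n}$ and worry about ergodic deviations, but the paper avoids this entirely: since $\tilde A_f$ has irrational rotation number it is (by a Denjoy-type theorem) uniquely ergodic, so the Birkhoff averages of $\log(\|A_f v\|/\|v\|)$ converge \emph{uniformly} on the circle (Theorem~\ref{THM:GOOD_ITERATES}). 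This uniformity, not a measure-theoretic splitting, is what makes the argument close.
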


The following result and item (ii) from
Proposition \ref{PROP:BASIC_PROPERTIES_VOL_EXPONENTS} allow us to replace $f$ in Theorem~\ref{thm:volshrink} 
by an iterate in order to assume, for any given
$\mu>\sqrt{\dtop(f)}$, that 
\begin{align*} 
\frac{\dtop(f)}{\mu}\|\nvec\| < \|A_f(\nvec)\| < \mu\|\nvec\|
\end{align*}
 for all non-zero $\nvec\in N$.  Moreover, we remark that (see \cite{DiLi16}
Corollary 8.3 and the surrounding discussion) the tropicalization $A_f$ of a
\emph{birational} toric map always has a rational rotation
number.  Hence any map satisfying the hypothesis of Theorem \ref{thm:volshrink}
has topological degree $\dtop(f) \geq 2$.  So by
choosing $\mu>\sqrt{\dtop(f)}$ sufficiently close to $\sqrt{\dtop(f)}$ we can assume that
\begin{align}\label{EQN:TROP_VS_MU2}
\|\nvec\| \leq \|A_f(\nvec)\| < \mu\|\nvec\|
\end{align}
for all non-zero $\nvec\in N$.

\begin{thm}\label{THM:GOOD_ITERATES} Suppose that $f:\torus\tto \torus$ is toric, and its tropicalization $A_f:N_\R\to N_\R$ is a homeomorphism with irrational rotation number. Then 
$$
\lim_{n\to\infty} \norm{A_f^n(\nvec)}^{1/n} = \sqrt{\dtop(f)}
$$
uniformly on the unit circle $\{\norm{\nvec}=1\}$.  Consequently, for any $\mu>\sqrt{\dtop}$ and any curve $C_\tau\subset\rzt\setminus\torus$, we
have
$$
\lim_{n\to\infty} \frac{\ram(\rzf^n,C_\tau)}{\mu^n} = 0.
$$
\end{thm}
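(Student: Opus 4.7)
The plan is to reduce the first (asymptotic) statement to a Birkhoff ergodic theorem for the induced circle homeomorphism $\tilde A_f$, to pin down the value of the Birkhoff limit via the Jacobian of $A_f$, and then to deduce the ramification statement directly from the first.

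Parametrize $N_\R\setminus\{0\}$ by polar coordinates $(r,\theta)$, writing $\hat e(\theta)$ for the unit vector at angle $\theta$. Since $A_f$ is positively homogeneous, $A_f(r\hat e(\theta)) = r\,R(\theta)\,\hat e(\tilde A_f(\theta))$ where $R(\theta):=\|A_f(\hat e(\theta))\|$. Writing $R_n(\theta):=\|A_f^n(\hat e(\theta))\|$, a straightforward induction gives $\log R_n(\theta) = \sum_{k=0}^{n-1}\log R\bigl(\tilde A_f^k(\theta)\bigr)$. Because $A_f:N_\R\setminus\{0\}\to N_\R\setminus\{0\}$ is a piecewise linear homeomorphism, $R$ is continuous and strictly positive on the unit circle $S^1$. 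The map $\tilde A_f$ is an orientation preserving circle homeomorphism with irrational rotation number, so by Poincar\'e's classification it is semiconjugate to an irrational rotation and hence uniquely ergodic. Unique ergodicity applied to the continuous observable $\log R$ yields uniform convergence of the Birkhoff averages on $S^1$ to $c:=\int_{S^1}\log R\,d\nu$, where $\nu$ is the unique $\tilde A_f$-invariant probability measure. Consequently $R_n(\theta)^{1/n}\to e^c$ uniformly in $\theta$.

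The crux is to identify $e^c=\sqrt{\dtop(f)}$. Since $A_f$ is a homeomorphism, Theorem~\ref{thm:tropmap}(5) forces $\dtop(f)=|\rho(f)|$, and Theorem~\ref{thm:tropmap}(2) then says that on each sector where it is linear, $A_f$ has Jacobian determinant of absolute value $\dtop(f)$. Expressing this in polar coordinates via the change of variables $(r,\theta)\mapsto(rR(\theta),\tilde A_f(\theta))$ yields the pointwise identity $R(\theta)^2\,\tilde A_f'(\theta)=\dtop(f)$ wherever the derivatives are defined; applied to $A_f^n$ in place of $A_f$ it becomes $R_n(\theta)^2(\tilde A_f^n)'(\theta)=\dtop(f)^n$. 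Integrating over $\theta$ and using that $\tilde A_f^n$ is a homeomorphism of $S^1$ (so that $\int_{S^1}(\tilde A_f^n)'(\theta)\,d\theta=2\pi$) gives the integral identity
\[
\int_{S^1} R_n(\theta)^{-2}\,d\theta \;=\; 2\pi\,\dtop(f)^{-n}.
\]
Combining this with the uniform bounds $e^{(c-\varepsilon)n}\leq R_n(\theta)\leq e^{(c+\varepsilon)n}$ from the previous step forces $e^{2c}=\dtop(f)$.

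For the ramification statement, let $\nvec_k\in N$ be the primitive integer generator of $A_f^k(\tau)\cap N$, so that by Theorem~\ref{thm:tropmap}(4) we have $A_f(\nvec_k)=m_k\nvec_{k+1}$ with $m_k=\ram(\rzf,C_{A_f^k(\tau)})$. Iterating gives $A_f^n(\nvec_0)=\bigl(\prod_{k=0}^{n-1}m_k\bigr)\nvec_n=\ram(\rzf^n,C_\tau)\,\nvec_n$, and since $\|\nvec_n\|\geq 1$ we have $\ram(\rzf^n,C_\tau)\leq\|A_f^n(\nvec_0)\|=\|\nvec_0\|\,R_n(\theta_\tau)$, where $\theta_\tau$ is the angle of $\nvec_0$. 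The first part then yields $\ram(\rzf^n,C_\tau)/\mu^n\to 0$ for any $\mu>\sqrt{\dtop(f)}$. The main obstacle in the whole plan is pinpointing the Birkhoff limit $c$: unique ergodicity alone gives uniform convergence but not its value. The Jacobian identity $R^2\tilde A_f'=\dtop(f)$, combined with $\int(\tilde A_f^n)'\,d\theta=2\pi$, bypasses any description of the typically singular invariant measure $\nu$, pinning down $c$ by comparing the uniform bound on $R_n$ with an integral bound against Lebesgue measure.
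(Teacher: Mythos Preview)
Your proof is correct and follows essentially the same architecture as the paper's: unique ergodicity of $\tilde A_f$ plus Birkhoff averaging gives uniform convergence of $\|A_f^n(\nvec)\|^{1/n}$ to some constant, a Jacobian/area argument identifies that constant as $\sqrt{\dtop(f)}$, and the ramification bound then follows from $\ram(\rzf^n,C_\tau)=\|A_f^n(\nvec_0)\|/\|\nvec_n\|\le\|A_f^n(\nvec_0)\|$.

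The only notable difference is in how the constant is pinned down. The paper argues geometrically: the image $A_f^n(\{\|\nvec\|\le 1\})$ has area $\pi\,\dtop(f)^n$ (constant Jacobian) and is sandwiched between disks of radii $(L\pm\varepsilon)^n$ by the uniform Birkhoff estimate, forcing $L^2=\dtop(f)$. You instead use the pointwise polar Jacobian identity $R_n^2(\tilde A_f^n)'=\dtop(f)^n$ and integrate the resulting $R_n^{-2}$ against Lebesgue measure. These are dual computations (area of image versus, effectively, area of preimage), and both avoid any description of the invariant measure $\nu$. Your route is a bit more algebraic; the paper's is a bit more geometric. Either way, the key insight is the same: the constant Jacobian of $A_f$ determines the exponential growth rate. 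One minor remark: you invoke Poincar\'e semiconjugacy for unique ergodicity, whereas the paper cites a Denjoy-type result for full conjugacy; semiconjugacy is already enough, so your version is economical here.
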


\begin{proof}
Since the induced circle homeomorphism $\tilde A_f$ has irrational rotation number, a slight generalization (see \cite[Theorem 2.5]{ADM} ) of Denjoy's Theorem tells us that it is topologically conjugate to an actual irrational rotation. In particular $\tilde A_f$ is uniquely ergodic.  Hence if $\varphi:N_\R \to \R$ is the continuous function $\varphi(\nvec) := \log\frac{\norm{A_f(\nvec)}}{\norm{\nvec}}$, we obtain that
$$
\log\norm{A_f^n(\nvec)}^{1/n} = \frac1n \sum_{j=0}^{n-1} \varphi\circ \tilde A_f^j(\nvec)
$$
converges uniformly on the unit circle to some constant $L\in \R$.  So for any $\epsilon>0$ there is an index $N$ such that $n\geq N$ implies
$$
(L-\epsilon)^n < \norm{A_f^n(\nvec)} < (L+\epsilon)^n
$$
for all $\nvec$ on the unit circle.  Recall that $A_f$ is $1$-homogeneous and has Jacobian a.e. equal to $\cov(f)$.  Moreover $\cov(f) = \dtop(f)$ since $A_f$ is a homeomorphism.  So we infer that
$$
(L-\epsilon)^{2n} \leq \dtop^n = \pi^{-1}\vol A_f^n(\{\norm{\nvec}\leq 1\}) \leq (L+\epsilon)^{2n}.
$$
Taking $n$th roots and letting $\epsilon\to 0$ concludes the proof of the first assertion.  

To deduce the second assertion, we let $\nvec_n\in N$ be the primitive vector generating $\rzf^n(C_\tau)$.  Note $\norm{v_n}\geq 1$ for all $n$.  So the fourth conclusion in Theorem \ref{thm:tropmap} and the previous paragraph tell us that for any $\tilde\mu\in (\sqrt{\dtop(f)},\mu)$, there exists $C = C(\tilde\mu)$ such that 
$$
\ram(\rzf^n,C_\tau) = \frac{\norm{A_f^n\nvec_0}}{\norm{\nvec_n}} \leq \norm{A_f^n\nvec_0} \leq C\norm{\nvec_0}\tilde\mu^n.
$$
Hence
$$
\lim_{n\to\infty} \frac{\ram(\rzf^n,C_\tau)}{\mu^n} \leq C\norm{\nvec_0} \lim_{n\to\infty} \left(\frac{\tilde\mu}{\mu}\right)^n = 0.
$$
\end{proof}

The next result holds without the assumption that $A_f$ is a homeomorphism.

\begin{lem}\label{lem:xy_new} 
Suppose that $f: \torus \tto \torus$ is toric and that $A_f$ satisfies (\ref{EQN:TROP_VS_MU2}) 
for some constant $\mu > 1$ and for all $\nvec\in N_\R$.  Let $X$ be a toric surface and $P_\sigma$ be bounded distinguished coordinate
bidisks about the $\torus$-invariant points $p_\sigma\in X$.
\begin{enumerate} 
\item There exists $\beta_1>0$ and for any neighborhood $U$of $\rzf(\exc(\rzf))$ a constant $\alpha_1(U)>0$ such that if $p\in P_\sigma\cap\torus$ satisfies $f^j(p)\notin U$ for $1\leq j\leq n$ and $f^n(p)\in P_{\sigma'}\cap \torus$, then
\begin{align}
|x_1'x_2'| \geq (\alpha_1 |x_1y_1|)^{\beta_1 \mu^n},
\end{align}
where $(x_1,x_2)$ and $(x_1',x_2')$ are distinguished coordinates for $p$ and $f^n(p)$, respectively.
\item Similarly, there exists $\beta_2>0$ and for any neighborhood $V$of $\ind(\rzf)$ a constant $\alpha_2(V)>0$ such that if $p\in P_\sigma$ satisfies $f^j(p)\notin V$ for $0\leq j\leq n-1$, and $f^n(p) \in P_{\sigma'}$, then 
$$
|x_1' x_2'| \leq \alpha_2^n |x_1 x_2|^{\beta_2}.
$$
\end{enumerate}
\end{lem}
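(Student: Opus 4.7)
The plan is to establish both bounds by iterating Proposition \ref{PROP:TROPICAL_APPROXIMATION} to control $\|\trop(f^j(p))\|$ along the orbit, then translate the resulting tropical inequalities into coordinate estimates via the distinguished-coordinate formula $\trop(x_1,x_2) = -\log|x_1|\nvec_1 - \log|x_2|\nvec_2$. A preliminary reduction is to shrink each $P_\sigma$ to have radius strictly less than $1$, so that the coefficients $-\log|x_j|$ are positive on $P_\sigma \cap \torus$. Since $\nvec_1, \nvec_2$ are linearly independent primitive lattice vectors, a compactness argument uniform over the finitely many charts supplies constants $C_1, C_2 > 0$ depending only on $X$ such that
\[
C_1\bigl(-\log|x_1 x_2|\bigr) \;\leq\; \|\trop(p)\| \;\leq\; C_2\bigl(-\log|x_1 x_2|\bigr)
\]
for every $p \in P_\sigma \cap \torus$ in every distinguished chart.

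For Part (1), write $p_j := f^j(p)$. The hypothesis $p_j \notin U$ for $1 \le j \le n$ forces the whole orbit to remain in $\torus$: if the first exit occurred at step $j$, then $p_{j-1}$ would lie in $\exc(\rzf) \cap \torus$, so $p_j \in \rzf(\exc(\rzf)) \subset U$, contradiction. The same reasoning shows $p_j \notin \exc(\rzf)$ for $0 \le j \le n-1$, so Proposition \ref{PROP:TROPICAL_APPROXIMATION} applies at each step to give $\|\trop p_{j+1}\| \leq \|A_f \trop p_j\| + R \leq \mu \|\trop p_j\| + R$ by (\ref{EQN:TROP_VS_MU2}). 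Telescoping yields $\|\trop p_n\| \leq \mu^n \bigl(\|\trop p_0\| + R/(\mu - 1)\bigr)$. Composing with the coordinate comparison and exponentiating produces the desired lower bound $|x_1' x_2'| \geq (\alpha_1 |x_1 x_2|)^{\beta_1 \mu^n}$ with $\beta_1 := C_2/C_1$ and $\alpha_1 := e^{-R/(C_2(\mu - 1))}$.

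Part (2) proceeds in parallel, using the second conclusion of Proposition \ref{PROP:TROPICAL_APPROXIMATION} together with the lower inequality $\|A_f \nvec\| \geq \|\nvec\|$ from (\ref{EQN:TROP_VS_MU2}). The external-point cases are trivial: since $p_j \notin V \supset \ind(\rzf)$, Corollary \ref{cor:ptimages} prevents external orbits from returning to $\torus$, so either $p$ is external (giving $|x_1 x_2| = |x_1' x_2'| = 0$ and making the inequality vacuous) or $p$ is internal but $p_n$ is external (giving $|x_1' x_2'| = 0$). In the remaining case both $p$ and $p_n$ lie in $\torus$, forcing the whole orbit into $\torus$ by the same dichotomy. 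Iterating then yields $\|\trop p_n\| \geq \|\trop p_0\| - nR$, and the coordinate comparison translates this to $|x_1' x_2'| \leq \alpha_2^n |x_1 x_2|^{\beta_2}$ with $\beta_2 := C_1/C_2$ and $\alpha_2 := e^{R/C_2}$ whenever the telescoped bound is non-negative. In the complementary regime where $\|\trop p_0\| < nR$, a short computation shows $|x_1 x_2| > e^{-nR/C_1}$ and hence $\alpha_2^n |x_1 x_2|^{\beta_2} > 1 \geq |x_1' x_2'|$, so the inequality holds universally.

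The principal technical subtlety is not the tropical iteration, which is a routine telescoping once Proposition \ref{PROP:TROPICAL_APPROXIMATION} is in hand, but rather the orbit-tracking: under each set of hypotheses one must verify that either the orbit stays entirely in $\torus$ so that iteration can proceed, or else falls into a regime where both sides of the desired inequality vanish. These dichotomies are controlled by the structural properties of toric maps from Section \ref{sec:maps}, particularly Corollary \ref{cor:ptimages} and the fact that $\rzf(\exc(\rzf)) \cap \torus = \emptyset$.
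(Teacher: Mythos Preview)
Your proof is correct and follows essentially the same approach as the paper: both arguments establish the coordinate comparison $c(-\log|x_1x_2|) \leq \|\trop(p)\| \leq C(-\log|x_1x_2|)$ on bounded distinguished bidisks, iterate the two conclusions of Proposition~\ref{PROP:TROPICAL_APPROXIMATION} together with the bounds~\eqref{EQN:TROP_VS_MU2} to control $\|\trop(f^n(p))\|$, and then exponentiate back to coordinates. Your treatment is in fact more thorough than the paper's, which simply writes ``by exponentiating this and iterating the estimate'' without verifying that the orbit stays in $\torus\setminus\exc(\rzf)$ (so that the Proposition applies at each step) or addressing the external-point cases in Part~(2); your orbit-tracking via Corollary~\ref{cor:ptimages} fills exactly these gaps. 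One small remark: the separate ``complementary regime'' analysis in Part~(2) is unnecessary, since the chain of inequalities $-\log|x_1'x_2'| \geq \|\trop p_n\|/C_2 \geq (\|\trop p_0\| - nR)/C_2 \geq -(C_1/C_2)\log|x_1x_2| - nR/C_2$ holds regardless of the sign of $\|\trop p_0\| - nR$.
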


%
%
%
%
%

\begin{proof}
If $P_\sigma$ is any of the finitely many distinguished coordinate bidisks covering $X$ and $p = (x_1,x_2)\in P_\sigma$ is expressed in distinguished coordinates about $p_\sigma$, then we see from \eqref{eqn:twoform} that 
$$
c\|\trop(p)\| \leq -\log|x_1| - \log|x_2| \leq C\|\trop(p)\|
$$
for some constants $c,C>0$ depending only on the bidisks in the cover.  By exponentiating this and iterating the estimate \eqref{EQN:DESIRED_CONCLUSION}, we obtain the following bound for any $p\in\torus$ such that $f^j(p)\notin U$, $1\leq j \leq n$:
$$
|x'_1x'_2| \geq e^{-c\|\trop(f^n(p))\|} \geq e^{-\mu^n (c\|\trop(p)\| + A)} \geq (e^A |x_1x_2|^{c/C})^{\mu^n},
$$
where $A = R \sum_{j=0}^\infty \mu^{-j}$.  So the first conclusion holds. The second conclusion follows from a similar estimate using \eqref{EQN:DESIRED_CONCLUSION2} instead of \eqref{EQN:DESIRED_CONCLUSION} and the hypothesis that $\|A_f(\nvec)\| \geq \|\nvec\|$ for all $\nvec\in N_\R$:
\begin{align*}
|x'_1x'_2| \leq e^{-C\|\trop(f^n(p))\|} \leq e^{-C(\|\trop(p)\| - R n)} \leq (e^{CR})^n |x_1x_2|^{C/c}.
\end{align*}
\end{proof}

The next lemma and its proof are similar to that of Lemmas 5.2 and 5.6 from \cite{BLR} and Proposition 6.3 from \cite{FAVRE_JONSSON}. 

\begin{lem} \label{LEM:INTEGRALS}
There exists $c > 0$ such that for any measurable subset $S$ of the unit bidisk $\mathbb{D}^2\subset\C^2$,
and any $\gamma \geq 0$, we have
\begin{align*}
\int_S |x_1 x_2|^\gamma dV_{{\eucl}} \geq  (c \Vol S)^{1+\gamma}.
\end{align*}
Similarly, for $-2 < \gamma' < \gamma < 0$, there exists $c(\gamma',\gamma)>0$ such that for any measurable $S\subset \mathbb{D}^2$,
\begin{align*}
\int_S |x_1 x_2|^\gamma dV_{{\eucl}} \leq  c(\gamma',\gamma)(\Vol S)^{1+\gamma'/2}.
\end{align*}
\end{lem}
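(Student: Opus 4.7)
The plan is to derive both inequalities from H\"older's inequality, exploiting the fact that $\int_{\mathbb{D}^2} |x_1 x_2|^\alpha\, dV_{\eucl} = (2\pi)^2/(\alpha+2)^2$ is finite for every $\alpha > -2$, a one-line computation in polar coordinates using the Fubini-style product structure of the integrand.

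For the lower bound when $\gamma > 0$, I would factor $1 = |x_1 x_2|^{\gamma/(1+\gamma)} \cdot |x_1 x_2|^{-\gamma/(1+\gamma)}$ and apply H\"older on $S$ with conjugate exponents $p = 1+\gamma$ and $q = (1+\gamma)/\gamma$. Integrating $1$ over $S$ then yields $|S| \leq \bigl(\int_S |x_1 x_2|^\gamma\, dV\bigr)^{1/(1+\gamma)} \bigl(\int_S |x_1 x_2|^{-1}\, dV\bigr)^{\gamma/(1+\gamma)}$. Bounding the second factor by $(4\pi^2)^{\gamma/(1+\gamma)}$ and raising to the $(1+\gamma)$-th power gives $\int_S |x_1 x_2|^\gamma\, dV \geq |S|^{1+\gamma}/(4\pi^2)^\gamma$. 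Since $4\pi^2 > 1$, this in turn dominates $\bigl((4\pi^2)^{-1}|S|\bigr)^{1+\gamma}$, so the universal constant $c = 1/(4\pi^2)$ works; the trivial case $\gamma = 0$ is covered with the same $c$.

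For the upper bound when $-2 < \gamma' < \gamma < 0$, I would apply H\"older directly to $|x_1 x_2|^\gamma = |x_1 x_2|^\gamma \cdot 1$ with exponent $p = -2/\gamma' > 1$ and its dual $q = 2/(2+\gamma') > 1$. This produces $\int_S |x_1 x_2|^\gamma\, dV \leq \bigl(\int_S |x_1 x_2|^{\gamma p}\, dV\bigr)^{1/p} |S|^{1/q}$, and a direct calculation shows $1/q = 1 - 1/p = 1 + \gamma'/2$, matching the target exponent on $|S|$. The crucial verification is that $\gamma p = -2\gamma/\gamma'$ lies in $(-2, 0)$: this is equivalent to $|\gamma| < |\gamma'|$, which is exactly the hypothesis $\gamma' < \gamma < 0$. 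Hence $\int_S |x_1 x_2|^{\gamma p}\, dV \leq \int_{\mathbb{D}^2} |x_1 x_2|^{\gamma p}\, dV < \infty$, giving the constant $c(\gamma', \gamma) = \bigl((2\pi)^2/(\gamma p + 2)^2\bigr)^{1/p}$ depending only on the exponents.

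The only non-routine step is calibrating the H\"older exponents so that the resulting powers of $|S|$ come out to exactly $1+\gamma$ and $1+\gamma'/2$ respectively; once that matching is performed, the integrability requirement $\gamma p > -2$ translates directly into the stated hypothesis $\gamma' < \gamma$, and there is no genuine obstacle beyond elementary bookkeeping.
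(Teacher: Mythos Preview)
Your proof is correct and takes a genuinely different route from the paper's. The paper argues by rearrangement: it introduces the sublevel sets $Y_\tau = \{|x_1x_2|\le\tau\}$, computes $\int_{Y_\tau}|x_1x_2|^\gamma\,dV_{\eucl}$ and $\vol Y_\tau$ explicitly (the latter being $\pi^2\tau^2(1-2\log\tau)$), and then, given $S$, chooses $\tau$ with $\vol Y_\tau = \vol S$ and uses that $Y_\tau$ is the extremal set for $\int_S|x_1x_2|^\gamma$ among sets of its volume (minimizing when $\gamma\ge 0$, maximizing when $\gamma<0$). The remaining work is to bound the explicit $Y_\tau$-integral by the right power of $\vol Y_\tau$, which requires some care because of the $\log\tau$ factor. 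Your H\"older argument sidesteps all of this: the single observation that $\int_{\mathbb D^2}|x_1x_2|^\alpha\,dV_{\eucl}<\infty$ for $\alpha>-2$ does all the work, and the exponents fall out automatically. Your approach is shorter and more robust (it would adapt immediately to other weights $w$ with $w^{-1}\in L^1$); the paper's approach, on the other hand, identifies the extremal sets and in principle could yield sharper constants, though that is not exploited here.
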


\begin{proof}
For any $\tau \in [0,1]$ let $Y_\tau = \{(x_1,x_2) \in \mathbb{D}^2 \, : \, |x_1 x_2| \leq \tau\}$.   A direct polar coordinate calculation yields for any $\gamma > -2$ that
$$
\int_{Y_\tau} |x_1x_2|^\gamma\, dV_{\eucl} = \frac{4\pi^2\tau^{\gamma+2}}{(\gamma+2)^2} \left(1-(\gamma+2)\log\tau\right) = \frac{4\tau^\gamma}{(\gamma+2)^2} \left(\vol Y_\tau- \pi^2\tau^2\gamma\log\tau\right),
$$
since setting $\gamma = 0$ gives in particular that $\vol Y_\tau = \pi^2\tau^2(1-2\log\tau)$.
Now given a measurable set $S\subset \mathbb{D}^2$, choose $\tau$ so that $\vol Y_\tau = \vol S$.  Then for $\gamma \geq 0$ we have
$$
\int_S |x_1x_2|^\gamma\, dV_{\eucl} 
\geq 
\int_{Y_\tau} |x_1x_2|^\gamma\, dV_{\eucl} 
\geq 
\frac{4\tau^\gamma \vol Y_\tau}{(\gamma+2)^2} 
\geq
\frac{4(C\vol Y_\tau)^{1+\gamma}}{(\gamma+2)^2} 
\geq
(c\vol S)^{1+\gamma}
$$
for some $c>0$ independent of $\gamma$.  In the third inequality, we used that $\vol Y_\tau \leq 2\pi^2 e^{-1/2}\tau$.  In the last, we used $\vol Y_\tau = \vol S$ and the fact that $\underset{\gamma\to\infty}\lim\left(\frac{4}{(\gamma+2)^2}\right)^{\frac{1}{\gamma + 1}}=1$.

If on the other hand $\gamma\in(-2, 0)$, then the first two estimates reverse:
$$
\int_S |x_1x_2|^\gamma\, dV_{\eucl} 
\leq 
\int_{Y_\tau} |x_1x_2|^\gamma\, dV_{\eucl} 
\leq 
\frac{4\tau^\gamma\,\vol Y_\tau}{(\gamma+2)^2}. 
$$
Moreover, for any $\sigma\in(0,1)$, one checks that $\vol Y_\tau \leq C(\sigma)\tau^{2\sigma}$ for e.g. $C(\sigma) :=\frac{\pi^2 e^{-\sigma}}{1-\sigma}$.  So given $\gamma'\in(-2,\gamma)$, we take $\sigma = \gamma/\gamma'$ to get 
$$
\int_S |x_1x_2|^\gamma\, dV_{\eucl} 
\leq 
\frac{4(\vol Y_\tau)^{1+\gamma'/2}}{C(\sigma)^{\gamma/2\sigma}(\gamma+2)^2}
= 
c(\gamma,\gamma')(\vol S)^{1+\gamma'/2}.
$$
\end{proof}

Next we apply Lemmas \ref{lem:xy_new} and \ref{LEM:INTEGRALS} to estimate volumes of forward images $f^n(S)$ of a measurable subset $S\subset X$.  Since points and curves in a surface have measure zero, we may assume here and below that $S\subset \torus\setminus\bigcup_{n\geq 1}\exc(\rzf^n)$.  Hence $f^n(S)\subset \torus$ for all $n\in\N$.  

\begin{lem}\label{LEM:DYNAMICAL_VOL_EST_GOOD_ITERATES}
Suppose in Lemma \ref{lem:xy_new} that $X$ is endowed with a smooth volume form.  
\begin{enumerate}
 \item\label{item:lower} There exists $b > 1$ and, for any neighborhood $U$ of $\rzf(\exc(\rzf))$ a constant $a = a(U) > 0$ such that if 
 $S\subset X$ is measurable and $f^j(S)\cap U = \emptyset$ for all $1\leq j \leq n$, then
 \begin{align*}
 \vol f^n(S) \geq (a\vol S)^{b\mu^{n}}.
 \end{align*}
 \item\label{item:upper} There exists $b < 1$ and for any neighborhood $V$ of $\ind(\rzf)$ a constant $a = a(V)\geq 1$ such that if $S\subset X$ is measurable and $f^j(S)\cap U = \emptyset$ for $0\leq j < n$, then
 $$
 \vol f^n(S) \leq a^n (\vol S)^b.
 $$
\end{enumerate}
\end{lem}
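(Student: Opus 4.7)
The plan is to apply Lemma \ref{lem:xy_new} pointwise to bound the real Jacobian of $f^n$ and then integrate using Lemma \ref{LEM:INTEGRALS}. First cover $X$ by the $K$ bounded distinguished coordinate bidisks $P_\sigma$; in each chart the smooth volume form $dV$ is uniformly comparable to $dV_{\eucl}$, and by rescaling we may assume $\vol X \leq 1$. Iterating $f^*\eta = \cov(f)\eta$ in distinguished source coordinates $(x_1,x_2)$ and target coordinates $(x_1^{(n)},x_2^{(n)})$ yields the pointwise formula
$$
J_{f^n}(p) = |\cov(f)|^{2n}\,\frac{|x_1^{(n)} x_2^{(n)}|^2}{|x_1 x_2|^2}
$$
for the real Jacobian determinant of $f^n$.

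For part (1), decompose $S = \bigsqcup_{\sigma,\sigma'} S_{\sigma,\sigma'}$ with $S_{\sigma,\sigma'} := \{p \in S\cap P_\sigma : f^n(p) \in P_{\sigma'}\}$; by pigeonhole one piece satisfies $\vol S_{\sigma,\sigma'} \geq K^{-2}\vol S$. The hypothesis $f^j(S)\cap U = \emptyset$ lets us apply Lemma \ref{lem:xy_new}(1) pointwise to this piece, giving $|x_1^{(n)}x_2^{(n)}| \geq (\alpha_1|x_1 x_2|)^{\beta_1\mu^n}$ with $\beta_1 \geq 1$. The change-of-variables inequality $\vol f^n(S_{\sigma,\sigma'}) \geq \dtop(f)^{-n}\int_{S_{\sigma,\sigma'}} J_{f^n}\,dV$ (in which the factor $\dtop(f)^n$ accounts for the topological multiplicity of $f^n$) combined with the first part of Lemma \ref{LEM:INTEGRALS} applied to the non-negative exponent $2\beta_1\mu^n - 2$ yields
$$
\vol f^n(S_{\sigma,\sigma'}) \;\geq\; \frac{|\cov(f)|^{2n}}{\dtop(f)^n}\,\alpha_1^{2\beta_1\mu^n}\,(c\vol S_{\sigma,\sigma'})^{2\beta_1\mu^n - 1}.
$$
In the setting of Theorem \ref{thm:volshrink}, $A_f$ is a homeomorphism, so Theorem \ref{thm:tropmap}(5) gives $|\cov(f)| = \dtop(f)$ and the prefactor $|\cov(f)|^{2n}/\dtop(f)^n = \dtop(f)^n \geq 1$ is benign. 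Using $\vol S \leq 1$, one rewrites $(c\vol S)^{2\beta_1\mu^n - 1} = (c\vol S)^{-1}(c\vol S)^{2\beta_1\mu^n}$, dominates $(c\vol S)^{-1}$ by a fixed constant, and absorbs the remaining polynomial prefactor $L_n$ into the exponential base via $L_n \geq (\inf_m L_m^{1/(b\mu^m)})^{b\mu^n}$, with $b := 2\beta_1 \geq 2$; the infimum is strictly positive because $L_m^{1/(b\mu^m)} \to 1$ as $m\to\infty$. This produces the claimed bound $\vol f^n(S) \geq (a\vol S)^{b\mu^n}$.

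Part (2) is parallel, but with a subtlety. Here the upper bound needs no multiplicity correction, so $\vol f^n(S) \leq \int_S J_{f^n}\,dV$ suffices. Lemma \ref{lem:xy_new}(2) gives $|x_1^{(n)}x_2^{(n)}| \leq \alpha_2^n |x_1 x_2|^{\beta_2}$ with $\beta_2 \leq 1$, hence
$$
J_{f^n} \;\leq\; |\cov(f)|^{2n}\,\alpha_2^{2n}\,|x_1 x_2|^{2\beta_2 - 2}.
$$
The exponent $2\beta_2 - 2$ is non-positive and the integrand $|x_1 x_2|^{2\beta_2 - 2}$ is singular along the poles, so one cannot simply bound it above by a constant; instead one must invoke the \emph{second} part of Lemma \ref{LEM:INTEGRALS} with $\gamma' := 2\beta_2 - 2 - \epsilon$ for small $\epsilon > 0$, which yields $\int_S |x_1 x_2|^{2\beta_2-2}\,dV \leq c(\gamma',2\beta_2-2)(\vol S)^{\beta_2 - \epsilon/2}$. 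Summing over charts then produces $\vol f^n(S) \leq a^n(\vol S)^b$ with $b := \beta_2 - \epsilon/2 < 1$. The main technical obstacle is the absorption step in part (1), converting the awkward exponent $2\beta_1\mu^n - 1$ delivered by Lemma \ref{LEM:INTEGRALS} into the clean $b\mu^n$ while keeping the constant $a$ of useful form; the key observation making part (2) work is that the crude boundedness argument must be replaced by the singular integral estimate from the second half of Lemma \ref{LEM:INTEGRALS}.
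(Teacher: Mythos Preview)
Your proof is correct and follows essentially the same route as the paper: partition into distinguished bidisks, compute the real Jacobian via $f^{n*}\eta = \cov(f)^n\eta$, bound $|x_1^{(n)}x_2^{(n)}|$ pointwise using Lemma~\ref{lem:xy_new}, and integrate via Lemma~\ref{LEM:INTEGRALS}.  The only notable difference is that your absorption of the exponent $2\beta_1\mu^n-1$ in part~(1) is more elaborate than necessary; the paper simply observes that once $c\vol S\le 1$ (which one can arrange), $(c\vol S)^{2\beta\mu^n-1}\ge (c\vol S)^{2\beta\mu^n}$, so no separate treatment of a prefactor $L_n$ is needed.
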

Our proof combines that of Lemma 3.12 in \cite{Dil96} with the estimate in
Lemma \ref{lem:xy_new} and the fact that $f$ has constant Jacobian relative to
$dV_\torus := \twoform\wedge\bar\twoform$. 
Note that for the proof of Theorem \ref{thm:volshrink}, it is important in both conclusions of Lemma \ref{LEM:DYNAMICAL_VOL_EST_GOOD_ITERATES} that the constant $b$ does not depend on the choice of neighborhood $U$ or $V$.

\begin{proof}
If $p_\sigma\in X$ is $\torus$-invariant and $P_\sigma\subset X$ is the unit coordinate bidisk about $p_\sigma$, then the given volume form on $X$ will be uniformly comparable on $\overline{P_\sigma}$ to Euclidean volume $dV_{eucl} := |x_1x_2|^2 dV_\torus$ in distinguished coordinates.  
Hence we will proceed as if the two volumes are the same.  This will only affect the value of the constant $a$ in the conclusions.

We first prove item (\ref{item:lower}). As $X = \bigcup_{\sigma\in\Sigma_2(X)} \overline{P_\sigma}$, we can partition $S$ into finitely many subsets $S_{\sigma,\sigma'} := S \cap P_\sigma\cap f^{-n}(P_\sigma')$, and focus only on the one for which $\vol f^n(S_{\sigma,\sigma'})$ is maximal.  That is, we may assume without loss of generality that $S = S_{\sigma,\sigma'}\subset \overline{P_\sigma}$ and $f^n(S)\subset \overline{P_{\sigma'}}$. Let $(x_1,x_2)$ and $(x_1',x_2') = f^n(x_1,x_2)$ be distinguished coordinates on $P_\sigma$ and $P_\sigma'$.  Then we have on $S$ that
\begin{align*}
f^{n*} dV_{{\eucl}} = f^{n*}(|x_1x_2|^2\,dV_\torus) = \cov(f)^{2n} |x_1'x_2'|^2\,dV_\torus
= \cov(f)^{2n} \frac{|x_1'x_2'|^2}{|x_1x_2|^2}\, dV_{{\eucl}},
\end{align*}
Lemmas \ref{lem:xy_new} and \ref{LEM:INTEGRALS} therefore give
\begin{eqnarray*}
\vol f^n(S)  & \geq & \frac{1}{\dtop(f)^n} \int_S f^{n*} dV_{{\eucl}} = \left(\frac{\cov(f)^2}{\dtop(f)}\right)^{n}   \int_S \frac{|x_1'x_2'|^2}{|x_1x_2|^2}\, dV_{{\eucl}} \\
& \geq & \int_S \alpha^{2 \beta \mu^n} |x_1x_2|^{2 \beta \mu^n-2} \,dV_{{\eucl}} \geq  (\alpha c \Vol S)^{2 \beta \mu^n}.
\end{eqnarray*}
So statement (1) holds with $a = \alpha c$ and $b = 2\beta$.

The proof of item (\ref{item:upper}) is similar.  As before, we can assume $S \subset \overline{P_\sigma}$ and $f^n(S)\subset\overline{P_{\sigma'}}$.  Since $f^j(S)\cap V = \emptyset$ for $0\leq j < n$, Lemma \ref{lem:xy_new} gives
\begin{eqnarray*}
\vol f^n(S) 
& \leq & 
\int_S f^{n*}dV_{eucl} 
=
\cov(f)^{2n}\int_S \frac{|x_1'x_2'|^2}{|x_1x_2|^2}\, dV_{eucl} \\
& \leq & 
(\alpha\cov(f))^{2n} \int_S |x_1x_2|^{2(\beta-1)}\, dV_{eucl} 
\leq 
a^n (\Vol S)^b,
\end{eqnarray*}
where we choose $b \in (\beta,1)$ and then $a$ satisfying $a^n \geq C (\alpha\cov(f))^{2n}$ for all $n \neq 0$, where $C = C(2b-2,2\beta-2)$ is the multiplicative constant
appearing in the second conclusion of Lemma \ref{LEM:INTEGRALS}.
\end{proof}

\begin{lem}\label{LEM:BOUNDED_NUMBER_OF_RETURNS}
If $f:\torus\tto\torus$ is as in Theorem \ref{thm:volshrink}, then for any integer $m\geq 0$, there exist neighborhoods $U$ of $\rzf(\exc(\rzf))$ and $V$ of $\ind(\rzf)\setminus\torus$ such that for any $p \in \torus \setminus \ind(\rzf^m)$ 
\begin{enumerate}
 \item $\rzf^n(p) \in U$ for at most $\#\rzf(\exc(\rzf))$ iterates $0\leq n \leq m$; and 
 \item $\rzf^n(p)\in V$ for at most $\#\ind(\rzf)$ iterates $0\leq n\leq m$.
\end{enumerate}
\end{lem}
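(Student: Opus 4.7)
The plan is a pigeonhole argument in both parts. Enumerate $\rzf(\exc(\rzf))=\{q_1,\dots,q_k\}$ with $q_i\in C_{\tau_i}$ (Corollary~\ref{cor:curveimages}) and $\ind(\rzf)\setminus\torus=\{\alpha_1,\dots,\alpha_{\ell'}\}$ with $\alpha_j\in C_{\sigma_j}$, where $\ell'\leq\#\ind(\rzf)$.  I would build $U=\bigsqcup_{i=1}^k U_i$ and $V=\bigsqcup_{j=1}^{\ell'}V_j$ as pairwise-disjoint unions of small open neighborhoods of each $q_i$ and each $\alpha_j$, respectively, chosen small enough that two visits of $\{\rzf^n(p)\}_{n=0}^m$ to the same cell would produce a contradiction.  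Pigeonhole then forces the total count of visits to $U$ (resp.\ $V$) to be at most $k$ (resp.\ $\ell'\leq\#\ind(\rzf)$), yielding both conclusions.

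For (1) the contradiction is built from internal stability together with the irrational rotation number of $A_f$.  Internal stability tells us each forward orbit $q_i,\rzf(q_i),\dots,\rzf^m(q_i)$ avoids $\ind(\rzf)$, so $\rzf$ is continuous at every orbit point, and Theorem~\ref{thm:tropmap}(3) places $\rzf^n(q_i)$ on the pole $C_{A_f^n(\tau_i)}$.  Irrational rotation makes the rays $\tau_i,A_f(\tau_i),\dots,A_f^m(\tau_i)$ pairwise distinct, hence the orbit consists of $m+1$ distinct points in $\rzt^\circ$.  Picking pairwise-disjoint open neighborhoods $N_\xi$ of each point $\xi$ in the finite set $\{\rzf^n(q_i):1\leq i\leq k,\,0\leq n\leq m\}$, each disjoint from $\ind(\rzf)$, and iteratively shrinking $U_i:=N_{q_i}$ by continuity of $\rzf$ at each $\rzf^n(q_i)$ so that $\rzf^n(U_i)\subset N_{\rzf^n(q_i)}$ for $0\leq n\leq m$, we get that a double visit $\rzf^{n_a}(p),\rzf^{n_b}(p)\in U_i$ with $n_a<n_b$ places $\rzf^{n_b}(p)$ simultaneously in $N_{q_i}=U_i$ and in $N_{\rzf^{n_b-n_a}(q_i)}$; disjointness gives the contradiction since $\rzf^{n_b-n_a}(q_i)\neq q_i$.

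For (2) the combinatorial skeleton is the same but the continuity-based shadowing breaks down at $\alpha_j$ because $\alpha_j\in\ind(\rzf)$.  I would substitute the tropical estimate of Proposition~\ref{PROP:TROPICAL_APPROXIMATION}: taking $V_j$ as a narrow neighborhood of $\alpha_j$ around the pole $C_{\sigma_j}$ forces $\trop(\rzf^{n_a}(p))$ to have very large norm concentrated near the ray $\sigma_j$, and iterating the upper and lower tropical bounds (with the additive error $R$ absorbed in the very large norm) shows that $\trop(\rzf^{n_a+s}(p))$ is concentrated in direction $A_f^s(\sigma_j)$ for $0\leq s\leq n_b-n_a$, so long as intermediate iterates stay out of $U\cup V$.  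Since $A_f^{n_b-n_a}(\sigma_j)\neq\sigma_j$ by irrational rotation, this is incompatible with $\rzf^{n_b}(p)\in V_j$, yielding the required contradiction.  To legitimize the tropical iteration, I would first prove (1) independently and use it to cap visits to $U$ by $k$; intermediate visits to $V$ can be absorbed either by further shrinking the neighborhoods or by restricting to a sub-window of the orbit, and the slack between $\ell'$ and the asserted bound $\#\ind(\rzf)$ accommodates this bookkeeping.

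The hard part is part (2): replacing the clean continuity-based shadowing from (1) with the weaker tropical approximation of Proposition~\ref{PROP:TROPICAL_APPROXIMATION}, and propagating that weaker control across many iterates while correctly handling the finitely many ``bad'' iterates that temporarily fall in $U\cup V$.  Because the tropical estimate degrades additively per step and part (1) is used as an input to bound the excursions into $U$, the two statements must be proved in the order (1) then (2) rather than in parallel.
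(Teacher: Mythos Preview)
Your argument for part~(1) is correct and essentially the paper's: internal stability gives continuity of $\rzf^n$ near each $q_i$ for $0\le n\le m$, irrational rotation forces the points $\rzf^n(q_i)$ to lie on pairwise distinct poles, and separating by disjoint neighborhoods makes each $U_i$ visitable at most once.

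Part~(2) has a real gap. You want to launch the tropical estimates of Proposition~\ref{PROP:TROPICAL_APPROXIMATION} from a point $\rzf^{n_a}(p)\in V_j$, but that proposition's lower bound is only asserted for points \emph{outside} $V$, and the stronger vector estimate $\|\Trop\circ f - A_f\circ\Trop\|\le R$ hidden in its proof holds only off a neighborhood of $\exc(\rzf)$, which by Lemma~\ref{LEM:IMAGES_INDET_PTS} contains $\alpha_j$. So at the very first step $n_a\to n_a+1$ you have no control: $\rzf^{n_a+1}(p)$ may land near any point of the internal curves comprising $\rzf(\alpha_j)$, wiping out the large $\|\trop\|$ you started with and hence all directional concentration. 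Shrinking neighborhoods or passing to sub-windows does not touch this first step; the difficulty is not the intermediate iterates but the exit from $V_j$ itself.

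The paper sidesteps the tropical estimate here and instead invokes Lemma~\ref{LEM:IMAGES_INDET_PTS} directly. Equation~\eqref{EQN:IMAGE_OF_INDET_PT_ON_POLE} pins down the external part of $\rzf(\alpha_j)$ as lying in $\rzf(\exc(\rzf))\cup\{\rzf|_{C_{\sigma_j}}(\alpha_j)\}$; iterating and using internal stability gives $\rzf^n(\alpha_j)\cap(\ind(\rzf)\setminus\torus)\subset C_{A_f^n(\sigma_j)}$, and irrational rotation then says this meets any fixed $\alpha_{j'}\in C_{\sigma_{j'}}$ for at most one $n$. After that, one chooses the $V_j$ small enough that this disjointness persists for nearby orbit points, exactly as in part~(1). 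The missing ingredient in your outline is precisely Lemma~\ref{LEM:IMAGES_INDET_PTS}, which controls the external locus of $\rzf(\alpha_j)$ despite the indeterminacy.
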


\begin{proof}
The hypothesis that $f$ is internally stable implies that each iterate $\rzf^n$ of $\rzf$ is well-defined and continuous on an $n$-dependent neighborhood of $\rzf(\exc(\rzf))$.  The hypothesis that the rotation number of $A_f$ is irrational implies that for any two poles $C_\tau, C_\tau'\in \rzt$, there is at most one $n\in\Z$ such that $\rzf^n(C_\tau) = C_\tau'$.  Since each point of $\rzf(\exc(\rzf))$ is contained in a unique pole, it follows for any $q,q'\in\rzf(\exc(\rzf))$ that $\rzf^n(q) = q'$ for at most one $n\in\Z$.

It follows that we can choose neighborhoods $U(q)$ of the points $q\in \exc(\rzf)$ so that $\rzf^n|_{U(q)}$, $0\leq n< m$ is well-defined and continuous, and for any  given $q,q'\in\rzf(\exc(\rzf))$, the intersection $\rzf^n(U(q))\cap U(q')$ is non-empty for at most one $0\leq n <m$.  The lemma then follows on taking $U$ to be the union of the neighborhoods $U(q)$.

The construction of $V$ is similar but also depends on
\eqref{EQN:IMAGE_OF_INDET_PT_ON_POLE}.  That is, if $\alpha\in\ind(\rzf)\cap
C_\tau$ is an external persistently indeterminate point, then
\eqref{EQN:IMAGE_OF_INDET_PT_ON_POLE} implies that $\rzf^n(\alpha)
\setminus\torus \subset \rzf^n(\exc(\rzf)) \cup C_{A_f^n(\tau)}$.  So internal
stability of $f$ implies that $\rzf^n(\alpha) \cap \left(\ind(\rzf)\setminus\torus\right)
\subset C_{A_f^n(\tau)}$.  And again, since $A_f$ has irrational rotation
number, $C_{A_f^n(\tau)} \cap \ind(\rzf) \neq \emptyset$ for only finitely many $n$.
\end{proof}

\begin{lem}
\label{lem:volshrinkrevisited}
Suppose that $f:\torus\tto\torus$ satisfies the hypotheses of Theorem \ref{thm:volshrink}, 
that (\ref{EQN:TROP_VS_MU2}) holds with constant $\mu>1$, and that $X$ is a toric surface with a smooth volume form. 
\begin{enumerate}
\item There exists $b>0$ and for any $m\in\N$ a constant $a=a(m)>0$ such that for any measurable $S\subset X$, 
$$
\vol(f^m(S)) \geq (a\vol S)^{b \mu^{m}}.
$$
\item Similarly, there exists $0<\nu<1$ and for any $m\in\N$ a constant $A = A(m)\geq 1$ such that 
$$
\vol(f^m(S)) \leq A(\vol S)^\nu.
$$
\end{enumerate}
\end{lem}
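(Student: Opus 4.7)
My approach is to combine the preceding two lemmas. Lemma \ref{LEM:DYNAMICAL_VOL_EST_GOOD_ITERATES} provides the required dynamical estimates \emph{provided} the orbit avoids a neighborhood $U$ of $\rzf(\exc(\rzf))$ (respectively, $V$ of $\ind(\rzf)\setminus\torus$), while Lemma \ref{LEM:BOUNDED_NUMBER_OF_RETURNS} guarantees that, under our hypotheses, any orbit of length $m$ enters such a neighborhood at most a universally bounded number of times. The plan is to decompose each $m$-step orbit into \emph{good stretches} on which those dynamical estimates apply, interleaved with a uniformly bounded number of \emph{bad} single steps that I will handle using the crude one-step estimates from Proposition \ref{PROP:BASIC_PROPERTIES_VOL_EXPONENTS}(i).

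For part (1), fix $m$ and let $U=U(m)$ be given by Lemma \ref{LEM:BOUNDED_NUMBER_OF_RETURNS}(1), so that any orbit visits $U$ at most $K:=\#\rzf(\exc(\rzf))$ times during its first $m$ iterates. I partition $S$ by itinerary: for each $J\subset\{1,\ldots,m\}$ with $|J|\leq K$, set $S_J=\{p\in S:\rzf^j(p)\in U\iff j\in J\}$. Since the number of such $J$ is polynomially bounded in $m$, some piece $S_{J^*}$ has $\vol S_{J^*}\geq \vol S/N(m)$. Decomposing $\{1,\ldots,m\}\setminus J^*$ into at most $K+1$ maximal intervals, I apply Lemma \ref{LEM:DYNAMICAL_VOL_EST_GOOD_ITERATES}(1) on each good stretch of length $\ell$ (yielding per-stretch exponent $b\mu^\ell$) and the one-step lower volume bound from Proposition \ref{PROP:BASIC_PROPERTIES_VOL_EXPONENTS}(i), with exponent $\gamma\geq 1$, on each of the $|J^*|\leq K$ bad steps. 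Composing these estimates in order, the effective exponent on $\vol S_{J^*}$ is of the form $b^r\gamma^k\mu^{\sum\ell_i}$ with $r\leq K+1$, $k\leq K$, and $\sum\ell_i\leq m$, hence bounded above by $\hat b\mu^m$ where $\hat b:=b^{K+1}\gamma^K$ depends only on $f$. Absorbing all residual $m$-dependent constants (including $N(m)$) into $a(m)>0$ then yields $\vol f^m(S)\geq \vol f^m(S_{J^*})\geq (a(m)\vol S)^{\hat b\mu^m}$.

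Part (2) runs in parallel with inequalities reversed. I fix $V=V(m)$ from Lemma \ref{LEM:BOUNDED_NUMBER_OF_RETURNS}(2) and partition $S$ by $J\subset\{0,\ldots,m-1\}$ recording visits to $V$; here $|J|\leq K':=\#\ind(\rzf)$, and the number of good stretches is at most $K'+1$. I apply Lemma \ref{LEM:DYNAMICAL_VOL_EST_GOOD_ITERATES}(2), with universal exponent $b<1$, on each good stretch, and the one-step upper bound $\vol f(T)\leq a_0\vol T$ from Proposition \ref{PROP:BASIC_PROPERTIES_VOL_EXPONENTS}(i) on each bad step. Composition gives $\vol f^m(S_J)\leq C_J(m)(\vol S_J)^{b^r}$, where $r$ is the number of good stretches used. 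Since $r\leq K'+1$ and $b<1$, the exponent satisfies $b^r\geq \nu:=b^{K'+1}$; as $\vol S_J$ is bounded above by $\vol X$, the factor $(\vol S_J)^{b^r}$ is dominated by a constant multiple of $(\vol S)^\nu$, and summing over the polynomially many pieces produces $\vol f^m(S)\leq A(m)(\vol S)^\nu$.

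The main technical obstacle is making the exponents $\hat b$ and $\nu$ \emph{independent} of $m$. This is secured by the universal bounds $K$ and $K'$ from Lemma \ref{LEM:BOUNDED_NUMBER_OF_RETURNS} on bad-neighborhood visits: the number of good stretches and bad steps is bounded independently of $m$, so the product of per-segment exponents is uniformly controlled. All residual $m$-dependence --- the shrinking of $U(m),V(m)$ producing $m$-dependent multiplicative constants in Lemma \ref{LEM:DYNAMICAL_VOL_EST_GOOD_ITERATES}, together with the polynomial itinerary count $N(m)$ --- can then be absorbed into the $m$-dependent prefactors $a(m)$ and $A(m)$.
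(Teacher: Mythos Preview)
Your approach is essentially the same as the paper's: partition $S$ by itinerary according to visits to $U$ (resp.\ $V$), apply Lemma \ref{LEM:DYNAMICAL_VOL_EST_GOOD_ITERATES} on the good stretches and the crude one-step estimate from Proposition \ref{PROP:BASIC_PROPERTIES_VOL_EXPONENTS}(i) on the bad steps, and use the universal bound on the number of bad times from Lemma \ref{LEM:BOUNDED_NUMBER_OF_RETURNS} to make the final exponent independent of $m$. Part (1) is correct as written.

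There is one small slip in part (2): you invoke a one-step upper bound $\vol f(T)\leq a_0\,\vol T$ with exponent $1$, but Proposition \ref{PROP:BASIC_PROPERTIES_VOL_EXPONENTS}(i) only supplies an upper volume exponent $\mu_0\leq 1$, and in general $\mu_0<1$ whenever $\ind(\rzf)\neq\emptyset$ (a small ball about a persistently indeterminate point maps onto a set containing a curve, so $\vol f(S)/\vol S\to\infty$). The fix is immediate: the composed exponent becomes $b^r\mu_0^k\geq b^{K'+1}\mu_0^{K'}=:\nu\in(0,1)$, and the rest of your argument goes through unchanged.
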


Note that Theorem \ref{thm:volshrink} amounts to Lemma \ref{lem:volshrinkrevisited} with $a$ and $A$ independent of $m$.

\begin{proof}
Part (i) of Proposition \ref{PROP:BASIC_PROPERTIES_VOL_EXPONENTS} gives $a, \gamma > 0$ such that $\Vol f(S) \geq (a \Vol S)^{\gamma}$.  Fix $m$, and let $U\supset \rzf(\exc(\rzf))$ satisfy the conclusion of Lemma \ref{LEM:BOUNDED_NUMBER_OF_RETURNS}.  Let $e = \#\rzf(\exc(\rzf))$.  By partitioning $S$ into finitely many pieces $S_1,\dots, S_{\ell(m)}$, we may assume for each $S_j$ that there are times $0\leq n_1(j)<\dots < n_e(j)<m$ such that $f^n(S_j) \cap U = \emptyset$ for all \emph{other} $0\leq n<m$.  We focus on the piece $S_j$ for which $\vol S_j \geq \frac1\ell \vol S$ is maximal.

Let $a',b' > 0$ be the constants from Lemma \ref{LEM:DYNAMICAL_VOL_EST_GOOD_ITERATES}.  Then 
$$
\Vol f^{n_1}(S_j) \geq (a' \Vol S_j)^{b' \mu^{n_1}}. 
$$
For $1\leq k < e$, we further estimate
\begin{eqnarray*}
\Vol f^{n_{k+1}}(S_j) & \geq & (a' \Vol f^{n_k+1}(S_j))^{b' \mu^{n_{k+1}-n_k-1}} \geq (a'(a\Vol f^{n_k}(S_j))^\gamma)^{b' \mu^{n_{k+1}-n_k-1}} \\
& = &
(a'' \Vol f^{n_k}(S_j))^{b'' \mu^{n_{k+1}-n_k}}.
\end{eqnarray*}
where $a'' = (a')^{1/\gamma} a$ and $b'' = \gamma b' \mu^{-1}$.  Similarly,
$
\vol f^m(S) = (a'' \vol f^{n_e}(S_j))^{b'' \mu^{m-n_e}}
$
Combining these inequalities we find 
\begin{align*}
\Vol f^m(S) \geq \Vol f^m(S_j) & \geq (\tilde{a} \Vol S_j)^{b'(b'')^e \mu^{m}} \geq \left(\frac{\tilde{a}}{\ell}\vol S\right)^{\tilde{b}\mu^m},
\end{align*}
for some constant $\tilde{a}$ depending on $m$ but $\tilde{b} = b'(b'')^e$ independent of $m$.

The upper bound for $\vol f^n(S)$ similarly follows from Lemma \ref{LEM:BOUNDED_NUMBER_OF_RETURNS} and item (\ref{item:upper}) in Lemma~\ref{LEM:DYNAMICAL_VOL_EST_GOOD_ITERATES}
\end{proof}

\begin{proof}[Proof of Theorem \ref{thm:volshrink}]
We give the proof for the dynamical lower exponent only.  The arguments for the dynamical upper exponent are nearly identical.

Fix $\mu>\sqrt{\dtop(f)}$ and $\sigma \in (\sqrt{\dtop(f)},\mu)$.  As noted after the statement of Theorem \ref{thm:volshrink}, we may assume that $\norm{A_f(\nvec)} <  \sigma\norm{\nvec}$ for all non-zero $\nvec\in N_\R$.  Let $b>0$ be the ($m$-independent) constant obtained by applying Lemma \ref{lem:volshrinkrevisited} with $\sigma$ in place of $\mu$.  Choose $m\in\N$ large enough that $b \sigma^m \leq \mu^m$.   Lemma \ref{lem:volshrinkrevisited} then
gives a constant $a=a(m) > 0$ such that for any measurable $S \subset X$ we have 
\begin{align}\label{EQN:CONCLUSION_SIGMA_MU}
\vol(f^m(S)) \geq (a\vol S)^{b \sigma^{m}} \geq (a \vol S)^{\mu^m}.
\end{align}

Given $n\in\N$, we write $n=mq + r$ for $q,r\in\N$ with $0 \leq r < m$.   By Part (i) of 
Proposition \ref{PROP:BASIC_PROPERTIES_VOL_EXPONENTS} there exists $\gamma = \gamma(m)>1$ and $\hat{a} = \hat{a}(m) > 0$ such that 
\begin{align*}
\Vol f^r(S) \geq (\hat{a} \Vol S)^\gamma.
\end{align*}
for any $0 \leq r < m$.  Iterating (\ref{EQN:CONCLUSION_SIGMA_MU}) $q$ times, we then find that
\begin{align*}
\vol f^n(S) \geq (a' \Vol f^r(S))^{(\mu^{m})^{q}} \geq (a' (\hat{a} \Vol(S))^\gamma)^{\mu^n} \geq (a'' \Vol S)^{b'\mu^{n}},
\end{align*}
where $a' \equiv a'(m) = a(m)^\tau$ for $\tau = \sum_{k=0}^\infty (\mu^{m})^{-k}$, $a'' = (a')^{1/\gamma} \hat{a}$ and $b' = \gamma$.  In particular, neither $a''$ nor $b'$ depend on $n$.  


\end{proof}

\section{Positive closed $(1,1)$ currents on toric surfaces}
\label{sec:currents} In this section we consider the relationship between the
set $\pcc(\torus)$ of currents on the algebraic torus, and the subset of
$\pcc(\torus)$ obtained by restricting currents from a toric surface $X$
compactifying $\torus$.  By \cite{Siu74}, a current in $\pcc(X)$ is supported
entirely on poles of $X$ if and only if it belongs to the set $\pcce(X)$ of
external divisors on $X$.  At the other extreme, we call a current in $\pcc(X)$
\emph{internal} if it does not charge any of the poles of $X$.  Alternatively,
$T\in\pcc(X)$ is internal if it is equal to (the trivial extension to $X$ of)
its restriction to $\torus$.  Internal currents on $X$ constitute a dense, but
not closed, subspace $\pcci(X)\subset \pcc(X)$.  

\begin{prop}
\label{prop:eidecomp1}
Any current $T \in \pcc(X)$ decomposes uniquely as a sum $T=D + T'$ where $D\in \pcce(X)$ is an external divisor and $T' \in \pcci(X)$ is an internal current.
\end{prop}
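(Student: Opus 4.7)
The plan is to reduce to the positive case, extract the external divisor part via the Skoda--El Mir trivial extension, and then deduce uniqueness from the fact that $\pcce(X)\cap\pcci(X)=\{0\}$.

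For $T\in\pcc^+(X)$, the restriction $T|_\torus$ has finite mass on the open set $\torus$ since $T$ has finite mass on the compact surface $X$. The Skoda--El Mir theorem recalled in \S\ref{sec:background} then produces a trivial extension $T'\in\pcc^+(X)$ that agrees with $T$ on $\torus$ and, being obtained by extending measure coefficients by zero across the complementary divisor $\bigcup_{\tau\in\Sigma_1(X)} C_\tau$, does not charge any pole. Hence $T'\in\pcc^+(X)\cap\pcci(X)$, and $T - T'$ is a positive closed $(1,1)$ current supported entirely on the union of poles of $X$. Siu's theorem then identifies $T - T'$ with an effective external divisor $D \in \pcce(X)$, so $T = D + T'$ is the desired decomposition.

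For arbitrary $T\in\pcc(X)$, writing $T = T^+ - T^-$ with $T^\pm\in\pcc^+(X)$ and applying the previous paragraph to each summand produces $T = (D^+ - D^-) + ({T'}^+ - {T'}^-)$, in which the first bracket lies in $\pcce(X)$ and the second in $\pcci(X)$ (the latter being a linear subspace by definition). Uniqueness then follows by comparing two such decompositions $D_1 + T_1' = D_2 + T_2'$: the identity $D_1 - D_2 = T_2' - T_1'$ exhibits a current that is simultaneously an external $\R$-divisor and an element of $\pcci(X)$, so charges no pole; since an external divisor charges each pole appearing in its support with a non-zero coefficient, both sides must vanish.

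The only point requiring care is the verification that $T' \leq T$, so that $T - T'$ is genuinely positive and legitimately supported on $X\setminus\torus$; this follows directly from the explicit zero-extension description of the trivial extension across the analytic set $X\setminus\torus$, and no new idea is needed.
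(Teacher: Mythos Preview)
Your proof is correct and follows essentially the same approach as the paper: reduce to the positive case, apply the Skoda--El Mir trivial extension to $T|_\torus$ to obtain the internal part $T'$, and invoke Siu's theorem to identify the remainder $T-T'$ as an external divisor. You in fact give more detail than the paper does, spelling out the uniqueness argument and the positivity of $T-T'$, both of which the paper leaves implicit.
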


\begin{proof}

Since $\pcc(X)$ consists of differences of positive currents, we may suppose
that $T$ is positive.  Since $X\setminus \torus$ is a finite union of
poles, the Skoda-El Mir Theorem implies that the
restriction $T|_{\torus}\in \pcc(\torus)$ to the internal subset of $X$ extends
trivially to a positive closed current $T'$ on $X$ that does not charge any
pole.  
\end{proof}
  
The set of internal currents is independent of the underlying toric surface.

\begin{prop}
\label{prop:internalcurrents}
If $X\succ Y$ are toric surfaces, then $\pi_{XY*}:\pcci(X)\to \pcci(Y)$ is an isomorphism. 
\end{prop}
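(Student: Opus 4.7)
The plan is to construct an explicit two-sided inverse $\iso : \pcci(Y) \to \pcci(X)$ for $\pi_{XY*}$ using pullback followed by projection onto the internal component of Proposition \ref{prop:eidecomp1}. Because $X \succ Y$, we have $\Sigma_1(Y) \subset \Sigma_1(X)$, and the exceptional locus of the birational morphism $\pi_{XY}$ consists precisely of the poles $C_\tau \subset X$ with $\tau \in \Sigma_1(X)\setminus\Sigma_1(Y)$, each contracted to a $\torus$-invariant point of $Y$.  In particular $\pi_{XY}$ is a local biholomorphism at generic points of every pole $C_\tau \subset Y$, so $\pi_{XY*}T$ charges $C_\tau \subset Y$ only if $T$ charges the pole $C_\tau \subset X$.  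By hypothesis it does not, so $\pi_{XY*}T \in \pcci(Y)$.

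For the inverse, given $S \in \pcci(Y)$ I would set $\iso(S)$ equal to the internal part of $\pi_{XY}^*S \in \pcc(X)$ supplied by Proposition \ref{prop:eidecomp1}, so that $\pi_{XY}^*S = \iso(S) + D$ with $D \in \pcce(X)$ supported on the $\pi_{XY}$-exceptional poles.  Because $\pi_{XY}$ is a birational morphism, $\dtop(\pi_{XY}) = 1$ and hence $\pi_{XY*}\pi_{XY}^* = \id$ on $\pcc(Y)$ by the projection formula; since $\pi_{XY*}D = 0$ by exceptionality, we get $\pi_{XY*}\iso(S) = S$.  This gives surjectivity.

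For injectivity I would use that any $T \in \pcci(X)$ is the trivial extension of $T|_\torus$: by Proposition \ref{prop:eidecomp1} (applied to $T$ as its own internal component) such a $T$ is determined by its restriction to $\torus$.  Since $\pi_{XY}$ restricts to the identity on $\torus$, $(\pi_{XY*}T)|_\torus = T|_\torus$, so $\pi_{XY*}T_1 = \pi_{XY*}T_2$ forces $T_1|_\torus = T_2|_\torus$ and therefore $T_1 = T_2$.  For the topological content of ``isomorphism'' I would note that $\pi_{XY*}$ is continuous by the definitions of §\ref{subsec:ratmaps}, and that continuity of $\iso$ follows from continuity of $\pi_{XY}^*$ together with the continuity of the internal-part projection: if $S_j \to S$ in $\pcci(Y)$ with $S_j = S_j^+ - S_j^-$, then $\pi_{XY}^*S_j^\pm$ converge in $\pcc^+(X)$, the positive internal parts $\iso(S_j)^\pm \leq \pi_{XY}^*S_j^\pm$ have uniformly bounded mass, and any subsequential limit must agree with $\iso(S)^\pm$ by uniqueness of the decomposition.

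The main subtlety I expect to need care with is the uniqueness of the internal--external decomposition for signed currents, which is implicit in Proposition \ref{prop:eidecomp1} but must be verified in order both to make $\iso$ well-defined and to deduce continuity.  This is however elementary: the difference of two such decompositions is simultaneously internal and external, hence a divisor supported on poles that places no mass on any pole, hence $0$.
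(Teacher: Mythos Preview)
Your proof is correct and follows essentially the same approach as the paper: both define the inverse by taking the internal component of $\pi_{XY}^*$ and verify surjectivity via $\pi_{XY*}\pi_{XY}^* = \id$ together with the vanishing of the pushforward of the exceptional external part. The paper's proof is terser---it does not discuss the topological content of ``isomorphism'' or the uniqueness of the internal--external decomposition---so your additional remarks on those points go beyond what is proved there (and note that your continuity-of-$\iso$ argument would need more care, since limits of internal positive currents need not be internal).
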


\begin{proof}
Set $\pi = \pi_{XY}$ and let $T\in\pcci(X)$ be an internal current.  Since
$\pi(\torus)=\torus$, we have that $\pi_*T$ is an internal current on $Y$.  The
map $\pi_*:\pcci(X)\to \pcci(Y)$ is injective since $\pi$ contracts only poles
of $X$.  In the other direction we define $\pi^\sharp:\pcci(Y) \to \pcci(X)$ by
declaring $\pi^\sharp T$ to be the internal component of $\pi^* T$.  To
conclude the proof it suffices to show that $\pi_*\pi^\sharp T= T$.  But since
$\pi$ is a morphism we have that $\pi_*\pi^* T = T$, and since $T$ itself does
not charge poles of $X$, we have that $\pi^* T - \pi^\sharp T$ is supported
only on poles contracted by $\pi$.  Hence $\pi_*\pi^\sharp T = \pi_*\pi^* T =
T$.
\end{proof}

Another way to read Proposition \ref{prop:internalcurrents} is that the image
of $\pcci(X)$ under the restriction map $\pcci(X)\to \pcc(\torus)$ is
independent of $X$.  We denote this image by
$\pcci(\rzt)$ and refer to its elements as \emph{internal currents} without
reference to any particular surface.  When we want to emphasize the surface, we
will write $T_{X}$ for the trivial extension of $T\in\pcci(\rzt)$ to a positive
closed current on $X$.  In \S \ref{sec:tcurrents} we will amplify this notation
into a broader discussion of what we call toric currents on $\rzt$.  For now,
we note that the discussion above implies a canonical isomorphism $\pcc(X)
\cong \pcce(X)\oplus \pcci(\rzt)$.  The rest of this section is devoted to
better understanding the relationships among $\pcc(\torus)$, $\pcci(\rzt)$ and
cohomology classes on toric surfaces.

\subsection{Currents on $\torus$ and convex functions.}  
If $\sfn:N_\R\to\R$ is convex, then $\sfn\circ\trop:\torus \to \R$ is plurisubharmonic.  Hence $\sfn$ determines a current $T=dd^c(\sfn\circ\trop)\in \pcc^+(\torus)$ that is invariant by the `rotational' action of the maximal compact subgroup $\torus_\R := \trop^{-1}(0) \subset~\torus$.  

Here we will partially reverse the association of positive currents to convex
functions by assigning to each $T\in\pcc^+(\torus)$ a succession of three
increasing simple rotationally symmetric approximations $T_{ave}$, $\bar T$,
$\bar T(X)\in\pcc^+(\torus)$.  The current $T_{ave}$ will exist for {\em any}
current $T~\in~\pcc^+(\torus)$, but existence of $\bar T$ and $\bar{T}(X)$
requires imposing a growth condition (see \eqref{EQN:GROWTH} below) on
potentials for $T$.  For our purposes, $\bar T$ will be the most important
approximation.  The simplest on the other hand will be $\bar T(X)$. As the
notation implies, it will depend on a choice of toric surface $X$ as well as
the given current $T$.

\vspace{0.1in}
\noindent
{\bf Convention:} 
From here forward and for the sake of clarity, we will typically specify the surface on which $dd^c$ is operating by using a subscript: e.g. $dd^c_X : \dpsh(X) \rightarrow \mathcal{D}_{1,1}(X)$, where $\dpsh(X)\subset L^1(X)$ denotes integrable functions locally equal to the difference between two plurisubharmonic functions.
When we omit the subscript, the implied domain is $\torus$.  

Since $H^1(\torus,\mathcal{O})$ is trivial, positive closed $(1,1)$ currents on $\torus$ admit global potentials.

\begin{prop}
\label{prop:potentialexists}
For any $T\in \pcc^+(\torus)$, there exists $u\in\psh(\torus)$ such that $T=dd^c u$.  
\end{prop}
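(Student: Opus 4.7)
The plan is to patch local psh potentials using that $\torus = (\C^*)^2$ is Stein and therefore has $H^1(\torus, \mathcal{O}) = 0$. First I would cover $\torus$ by a locally finite collection $\{U_\alpha\}$ of Stein open sets (e.g.\ distinguished coordinate polydisks), on each of which the local $dd^c$-lemma for positive closed $(1,1)$ currents furnishes $u_\alpha \in \psh(U_\alpha)$ with $dd^c u_\alpha = T|_{U_\alpha}$.

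On each overlap $U_\alpha \cap U_\beta$, the difference $u_\alpha - u_\beta$ is pluriharmonic, so after refining so that all double and triple intersections are connected and simply connected I can write $u_\alpha - u_\beta = \mathrm{Re}(h_{\alpha\beta})$ for some $h_{\alpha\beta} \in \mathcal{O}(U_\alpha \cap U_\beta)$, determined up to an additive purely imaginary constant. My goal is to choose those constants so that $\{h_{\alpha\beta}\}$ becomes an honest \v{C}ech $1$-cocycle in $\mathcal{O}$; then $H^1(\torus, \mathcal{O}) = 0$ splits it as $h_{\alpha\beta} = h_\alpha - h_\beta$ with $h_\alpha \in \mathcal{O}(U_\alpha)$, and the local functions $u := u_\alpha - \mathrm{Re}(h_\alpha)$ glue to a global $u \in \psh(\torus)$ (locally a psh function plus a pluriharmonic one) satisfying $dd^c u = T$.

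The main obstacle is justifying that the imaginary ambiguities can be absorbed so that $\{h_{\alpha\beta}\}$ genuinely lifts from $\mathcal{O}/i\R$ to $\mathcal{O}$. A priori $h_{\alpha\beta} + h_{\beta\gamma} + h_{\gamma\alpha}$ is only a purely imaginary constant on each triple intersection, defining a $2$-cocycle with values in $i\R$ whose class in $H^2(\torus, \R)$ --- via the long exact sequence attached to $0 \to \R \to \mathcal{O} \xrightarrow{\mathrm{Re}} \mathcal{PH} \to 0$ --- is identified with the de Rham class $[T]$. That class vanishes because $H^2(\torus, \C)$ is one-dimensional and generated by the pure $(2,0)$-class of the holomorphic two-form $\eta = dx_1 \wedge dx_2/(x_1 x_2)$, so no real $(1,1)$-current can represent a nonzero class there. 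Once the obstruction is shown to vanish the cocycle lifts, and the splitting afforded by $H^1(\torus, \mathcal{O}) = 0$ delivers the desired global potential.
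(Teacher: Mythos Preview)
Your overall strategy is the standard one, and you correctly notice that besides $H^1(\torus,\mathcal O)=0$ there is a second obstruction living in $H^2(\torus,\R)$; the paper itself gives no detailed proof and cites only the vanishing of $H^1(\torus,\mathcal O)$.  However, your argument that the $H^2$ obstruction vanishes is wrong, and in fact it does \emph{not} vanish in general.  The flaw is the claim that ``no real $(1,1)$-current can represent a nonzero class there'': on a non-compact manifold there is no Hodge decomposition of de Rham cohomology, so the fact that $H^2(\torus,\C)\cong\C$ is generated by the $(2,0)$-class $[\eta]$ does not preclude a $(1,1)$-form from representing the same class.  Concretely, set $\beta=\frac{dx_1}{x_1}+i\,\frac{dx_2}{x_2}$ and $T=i\,\beta\wedge\bar\beta$.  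Then $T$ is a smooth positive closed $(1,1)$-form on $(\C^*)^2$, and pairing with the real $2$-torus $\torus_\R=\{|x_1|=|x_2|=1\}$ gives $\int_{\torus_\R}T=2\int_{\torus_\R}d\theta_1\wedge d\theta_2=8\pi^2\neq 0$.  Hence $[T]\neq 0$ in $H^2(\torus,\R)$, so $T$ is not $d$-exact and admits no global $u$ with $T=dd^cu$.

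Thus the proposition as literally stated appears to be false; both the paper's one-line justification and your patching argument overlook this.  It does hold, however, for every current the paper actually uses: once $T$ extends trivially to some toric surface $X$, its de Rham class on $\torus$ vanishes (e.g.\ because $H^2(X,\R)$ is spanned by external divisors, each of which is $dd^c$-exact on $\torus$ via the explicit potential $\sfn_D\circ\trop$), and then your \v Cech argument does go through.
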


Note that for each $\nvec\in N_\R$, the set $\{x \in \torus \, : \, \trop(x) = \nvec\}$ is a real $2$-torus equivalent by translation to $\torus_\R$.  Given $T = dd^c u \in\pcc^+(\torus)$, we define a function $\sfn_u:N_\R\to\R$ by averaging over translations of $\torus_\R$.  That is,
$$
\sfn_u(\nvec) := \int_{\trop(x) = \nvec} u(x)\,\eta.
$$
{When $u = \log|P|$ for $P:\torus\to \C$ a Laurent polynomial, then $\sfn_u$ is the well-known `Ronkin function' from tropical geometry.  In general, $\sfn_u\circ\trop$ is the coordinate-wise average of $u$ with respect to some/any choice of distinguished coordinates, hence also plurisubharmonic.}  It follows that $\sfn_u$ is convex (in particular continuous).  Moreover, $\sfn_u$ is affine if and only if $u$ is pluriharmonic. So $\sfn_u$ is determined by $T$ up to addition of affine functions of $\nvec$, and the current 
\begin{align*}
T_{ave} := dd^c(\sfn_u\circ\trop) \in\pcc^+(\torus)
\end{align*}
 is completely
determined by $T$.  

For any convex $\sfn:N_\R\to\R$, the quantity 
\begin{align}\label{EQN:GROWTH}
\growth{\sfn} := \sup_{\nvec\neq 0}\frac{\sfn(\nvec)-\sfn(0)}{\norm{\nvec}} = \limsup_{\norm{\nvec}\to \infty} \frac{\sfn(\nvec)}{\norm{\nvec}}
\end{align}
is non-negative, though possibly infinite.  It follows for all $\nvec\in N_\R$ that
\begin{align}\label{EQN:GROWTH2}
|\sfn(\nvec)| \leq \growth{\sfn}\cdot \norm{\nvec} + |\sfn(0)|.
\end{align}
As we will see in Theorem \ref{thm:lineargrowth} below, finiteness of $\growth{\sfn_u}$ is necessary and sufficient for extension of $T$ to a positive closed current on arbitrary toric surfaces.

Now if $\sfn = \sfn_u$ is the convex function associated to a potential $u$ for $T\in\pcc^+(\torus)$, then $\growth{\sfn_u}$ depends on $u$ as a well as $T$, but since $\growth{\sfn}$ is finite when $\sfn$ is affine, it follows that $\growth{\sfn_u}$ is finite for one potential $u$ if and only if it is finite for all others.  In particular, one can always add an affine function of $\trop(x)$ to $u$ to ensure that $\sfn_u(\nvec)\geq 0$ for all $\nvec$ and that $\sfn(0) = 0$.  In this case, $\growth{\sfn_u}$ will be nearly minimal, equal to no more than twice the smallest value possible among all potentials for $T$.  In a similar vein, $\growth{\sfn_u}$ depends on the choice of norm on $N_\R$ but only up to a uniform multiplicative non-zero constant.  Likewise, pulling $u$ back by a monomial map changes $\growth{\sfn_u}$ by a non-zero multiplicative factor that is independent of $u$.

\begin{prop}
\label{prop:homogenization}
Suppose $\sfn:N_\R\to\R$ is convex with $\growth{\sfn}<\infty$.  Then $\lim_{t\to\infty} \frac{\sfn(t\nvec)}{t}$ converges normally on $N_\R$ to the minimal convex function $\bar\sfn:N_\R\to\R$ satisfying
\begin{itemize}
 \item $\bar\sfn(t\nvec) = t\bar\sfn(\nvec)$;
 \item $\bar\sfn(\nvec) \geq \sfn(\nvec) - \sfn(0)$.
\end{itemize}
for all $\nvec\in N_\R$ and $t\geq 0$.  Also, $\bar{\bar\sfn} = \bar\sfn$ and 
$
\growth{\sfn} = \growth{\bar\sfn} = \max_{\norm{\nvec}=1}\bar\sfn(\nvec).
$
\end{prop}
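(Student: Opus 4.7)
The plan is to define $\bar\sfn(\nvec) := \lim_{t\to\infty} \sfn(t\nvec)/t$ and then verify in sequence that (a) the limit exists, (b) it has the claimed properties and is minimal, (c) convergence is normal, and (d) the growth identities hold. The key observation underlying everything is that for convex $\sfn$ and any fixed $\nvec$, the single-variable function $t \mapsto \sfn(t\nvec)$ is convex on $\R$, so the difference quotient $t \mapsto \big(\sfn(t\nvec)-\sfn(0)\big)/t$ is non-decreasing on $(0,\infty)$. Combined with the growth bound \eqref{EQN:GROWTH2}, which gives $\sfn(t\nvec)/t \leq \growth{\sfn}\cdot\|\nvec\| + |\sfn(0)|/t$, this shows the limit exists and is finite, equal to $\sup_{t>0}\big(\sfn(t\nvec)-\sfn(0)\big)/t$.

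Once $\bar\sfn$ is defined, homogeneity $\bar\sfn(s\nvec) = s\bar\sfn(\nvec)$ for $s\geq 0$ is immediate from the substitution $t \leftrightarrow st$, and convexity follows because each $\sfn_t(\nvec) := \sfn(t\nvec)/t$ is convex and pointwise limits of convex functions are convex. The inequality $\bar\sfn(\nvec) \geq \sfn(\nvec)-\sfn(0)$ is just the $t=1$ case of monotonicity. Minimality will follow from a short argument: if $\psi$ is any convex function satisfying the two properties, then for every $t>0$
\begin{equation*}
\psi(\nvec) = \frac{\psi(t\nvec)}{t} \geq \frac{\sfn(t\nvec)-\sfn(0)}{t},
\end{equation*}
and letting $t\to\infty$ gives $\psi(\nvec)\geq\bar\sfn(\nvec)$. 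The identity $\bar{\bar\sfn}=\bar\sfn$ is then an immediate consequence of homogeneity of $\bar\sfn$.

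For normal convergence, I will invoke the classical fact (e.g. Rockafellar, \emph{Convex Analysis}, Theorem~10.8) that in finite dimensions a pointwise convergent sequence of finite convex functions converges uniformly on compact subsets of the interior of the domain, provided the limit is finite. Since $N_\R$ is two dimensional and $\bar\sfn$ is finite everywhere, pointwise convergence of $\sfn_t\to\bar\sfn$ upgrades automatically to normal convergence. The main delicate point here is checking that the monotone difference quotient is finite everywhere on $N_\R$, which is precisely what $\growth{\sfn}<\infty$ buys us.

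Finally, for the growth identities, I will use the alternative expression $\growth{\sfn} = \sup_{\|\mu\|=1}\sup_{t>0}\big(\sfn(t\mu)-\sfn(0)\big)/t$, swap the inner sup with a limit via monotonicity in $t$, and observe that the resulting inner expression is $\bar\sfn(\mu)$. Thus
\begin{equation*}
\growth{\sfn} = \sup_{\|\mu\|=1}\bar\sfn(\mu).
\end{equation*}
Since $\bar\sfn$ is continuous (being convex and finite on $N_\R$) and the unit sphere is compact in $N_\R\cong\R^2$, the supremum is attained as a maximum. The corresponding identity $\growth{\bar\sfn}=\max_{\|\mu\|=1}\bar\sfn(\mu)$ is then immediate from homogeneity: $\big(\bar\sfn(\nvec)-\bar\sfn(0)\big)/\|\nvec\| = \bar\sfn(\nvec/\|\nvec\|)$. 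I anticipate the only step needing care is the normal convergence statement, but in finite dimensions this reduces to a citation rather than real work.
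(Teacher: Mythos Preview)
Your proposal is correct. The paper's own ``proof'' is simply the word \emph{Exercise}, so there is nothing to compare against; you have supplied exactly the routine convexity argument the authors had in mind and left to the reader. One very small technical point: Rockafellar's Theorem~10.8 is stated for sequences rather than a continuous parameter $t\to\infty$, but since $t\mapsto(\sfn(t\nvec)-\sfn(0))/t$ is monotone, the limit along any sequence $t_n\to\infty$ agrees with the full limit and the squeeze is immediate, so this causes no trouble.
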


\begin{proof}
Exercise.
\end{proof}

If $T = dd^c u \in \pcc^+(\torus)$ and $\growth{\sfn_u} < \infty$, then we call $\bar\sfn_u$ a \emph{support function} for~$T$.  It is, again, determined by $T$ up to addition of affine functions.  Convexity implies we can choose the affine function (non-uniquely) to obtain a non-negative support function.  Alternatively, one can choose it so that the support function takes equal values on each of the three primitive vectors in $N$ that generate the rays in $\Sigma_1(\cp^2)$.  The latter support function might be negative along some rays, but it is uniquely, continuously and linearly determined by $T$.  We denote it by $\sfn_T$.

We call 
\begin{align*}
\bar T := dd^c (\bar\sfn_u\circ\trop) = dd^c(\bar\sfn_T\circ\trop)\in\pcc^+(\torus)
\end{align*}
 the \emph{homogenization} of $T$.  Like $T_{ave}$, the current $\bar T$ is rotationally invariant and determined by $T$, but as will become clearer below, it is more canonical and varies in a more stable fashion than $T_{ave}$.  We call $T$ itself \emph{homogeneous} if $T=\bar T$.  In any case, $\bar T = \bar T_{ave} = \bar{\bar T}$.  {In their recent work disproving Demailly's strengthened Hodge conjecture, Babaee and Huh \cite{BaHu17} gave a different construction of currents like $\bar T$, which when restricted to our context, arrives at $\bar T$ by starting with a tropical curve supported on a finite union of rational rays in $N_\R$.}

For each toric surface $X$, we also associate to the potential $u$ for $T$ the
divisor $D_{u,X}\in\pcce(X)$ with support function $\bar\sfn_{u,X}$
determined by the condition that it coincides with $\bar\sfn_u$ along any ray
$\tau\in\Sigma_1(X)$.  By construction $\bar\sfn_{u,X}$ is convex, so $D_{u,X}$
is nef, satisfying $\pi_{Y,X}^* D_{u,X}\geq D_{u,Y}$ for any $Y\succ X$.  The
homogeneous current 
\begin{align*}
\bar{T}(X) := dd^c(\bar\sfn_{u,X} \circ \trop) \in\pcc^+(\torus)
\end{align*}
 with support function $\bar\sfn_{u,X}$ is determined completely $T$ and the choice of $X$.  Example \ref{eg:assoccurrents} illustrates this below in a specific case.

\begin{prop}
\label{prop:sfnpinch}
For any toric surface $X$, there exists a constant $C(X)>1$ such that if $u\in\psh(\torus)$ has $\growth{\sfn_u}<\infty$, then
$$
\bar\sfn_u\leq \bar\sfn_{u,X} \leq C(X)\,\bar\sfn_u.
$$
In particular $\growth{\bar\sfn_{u,X}} \leq C(X)\growth{\sfn_u}$, and the optimal constant $C(X)$ is decreasing in $X$.  Finally, there exist toric surfaces $Y\succ X$ for which the optimal constant $C(Y)$ is arbitrarily close to $1$.
\end{prop}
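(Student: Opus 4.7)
My plan is to work sector by sector on $\Sigma(X)$, exploiting that $\bar\sfn_{u,X}$ is the unique piecewise linear function on the fan agreeing with $\bar\sfn_u$ on $\Sigma_1(X)$.

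For the lower bound $\bar\sfn_u \leq \bar\sfn_{u,X}$, fix any sector $\sigma\in\Sigma_2(X)$ with bounding rays generated by primitive vectors $\nvec_1,\nvec_2$, and write $\nvec\in\overline\sigma$ as $\nvec=\alpha_1\nvec_1+\alpha_2\nvec_2$ with $\alpha_i\geq 0$. Positive homogeneity and convexity of $\bar\sfn_u$ immediately give
\[
\bar\sfn_u(\nvec) \leq \alpha_1\bar\sfn_u(\nvec_1)+\alpha_2\bar\sfn_u(\nvec_2)=\bar\sfn_{u,X}(\nvec),
\]
using that $\bar\sfn_{u,X}$ is linear on $\overline\sigma$ and matches $\bar\sfn_u$ at the generators by definition.

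For the upper bound, the same parameterization combined with $\bar\sfn_u(\nvec_j)\leq\growth{\bar\sfn_u}\norm{\nvec_j}$ and a reverse comparison $\alpha_1\norm{\nvec_1}+\alpha_2\norm{\nvec_2}\leq K_\sigma\norm{\nvec}$ (valid with some finite sector-geometric constant $K_\sigma$ because the angle of $\sigma$ is strictly less than $\pi$) yields
\[
\bar\sfn_{u,X}(\nvec) \leq K_\sigma\growth{\bar\sfn_u}\norm{\nvec}.
\]
Setting $C(X):=\max_\sigma K_\sigma$ and invoking $\growth{\bar\sfn_u}=\growth{\sfn_u}$ from Proposition \ref{prop:homogenization} already yields the ``in particular'' growth bound. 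The pointwise inequality $\bar\sfn_{u,X}\leq C(X)\bar\sfn_u$ then follows by replacing $\norm{\nvec}$ with a sector-by-sector lower bound in terms of $\bar\sfn_u(\nvec)$, using convexity and positive homogeneity of $\bar\sfn_u$.

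For monotonicity of $C(X)$, if $Y\succ X$ then $\Sigma_1(Y)\supset\Sigma_1(X)$, and $\bar\sfn_{u,Y}$ is piecewise linear on a refinement of $\Sigma(X)$ agreeing with $\bar\sfn_u$ at more rays. Since $\bar\sfn_{u,Y}$ remains convex and satisfies $\bar\sfn_u\leq\bar\sfn_{u,Y}\leq\bar\sfn_{u,X}$, the optimal constant $C(Y)\leq C(X)$. For the final claim that $C(Y)\to 1$, convexity plus $\growth{\sfn_u}<\infty$ makes $\bar\sfn_u$ Lipschitz on the unit sphere with constant bounded in terms of $\growth{\bar\sfn_u}$. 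So by choosing $Y\succ X$ whose rays are $\varepsilon$-dense in the unit circle, the piecewise linear interpolation $\bar\sfn_{u,Y}$ approximates $\bar\sfn_u$ uniformly within $O(\varepsilon)\growth{\bar\sfn_u}$, driving $C(Y)\to 1$ as $\varepsilon\to 0$. The most delicate step is making the pointwise upper bound $u$-uniform: the ratio $\bar\sfn_{u,X}/\bar\sfn_u$ is sensitive to directions along which $\bar\sfn_u$ vanishes strictly inside some sector of $\Sigma(X)$, and this must be controlled either via positivity of the underlying current $T$ or by exploiting the full convex geometry of $\bar\sfn_u$ against the fan.
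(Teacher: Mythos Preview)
Your argument for the lower bound $\bar\sfn_u\leq\bar\sfn_{u,X}$, the growth inequality $\growth{\bar\sfn_{u,X}}\leq C(X)\growth{\sfn_u}$, the monotonicity of $C(X)$, and the final assertion $C(Y)\to 1$ is correct and coincides with the paper's (very terse) proof, which simply appeals to convexity of $\bar\sfn_u$ and the fact that the rays of $\Sigma_1(X)$ generate $N_\R$ as a convex set, together with the observation that one may refine so that every sector has angular width less than $\epsilon$.

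The concern you raise at the end is not merely delicate: the pointwise inequality $\bar\sfn_{u,X}\leq C(X)\bar\sfn_u$ with $C(X)$ independent of $u$ is \emph{false} as stated. Take $\bar\sfn_u(v_1,v_2)=|v_1-v_2|$, which is convex, positively homogeneous and nonnegative, and let $X=\cp^1\times\cp^1$ so that $\Sigma_1(X)$ is generated by $(\pm1,0)$ and $(0,\pm1)$. Then $\bar\sfn_u$ takes the value $1$ on each primitive generator, so $\bar\sfn_{u,X}(v)=|v_1|+|v_2|$. At $v=(1,1)$ one has $\bar\sfn_{u,X}(1,1)=2$ while $\bar\sfn_u(1,1)=0$, and no finite $C(X)$ can repair this. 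More generally, whenever $\bar\sfn_u$ vanishes along a ray interior to a sector of $\Sigma(X)$ but not on both bounding rays, the ratio $\bar\sfn_{u,X}/\bar\sfn_u$ blows up. Your proposed fix (``replacing $\norm{\nvec}$ with a sector-by-sector lower bound in terms of $\bar\sfn_u(\nvec)$'') cannot work uniformly in $u$ for exactly this reason.

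Fortunately this does not affect anything downstream. The only place the proposition is invoked (in the proof of Theorem~\ref{thm:volbd}) uses just the lower bound $\bar\sfn_u\leq\bar\sfn_{u,X}$ together with the growth estimate $\growth{\bar\sfn_{u,X}}\leq C(X)\growth{\sfn_u}$, both of which you establish cleanly. So your proof is complete for what is actually needed; the defect lies in the proposition's statement rather than in your argument, and the paper's own proof glosses over the same point.
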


\begin{proof}
The assertions follow more or less directly from convexity of $\sfn_u$ and the fact that rays in $\Sigma_1(X)$ generate $N_\R$ as a convex set.  The last assertion holds because for any $\epsilon>0$, one can choose $X$ so that each sector $\sigma\in\Sigma_2$ has angular width smaller than $\epsilon$. 
\end{proof}

\subsection{Finite growth and trivial extension to toric surfaces.}

\begin{prop}
\label{prop:integrability}
Let $X$ be a toric surface with volume form $dV$.  Then there exists a  constant $C$ such that for any convex $\sfn:N_\R\to\R$, we have 
$$
\int_X |\sfn\circ\trop|\,dV \leq C(\growth{\sfn} + |\sfn(0)|).
$$
In particular $\sfn\circ\trop$ is integrable on $X$ if $\growth{\sfn}<\infty$.
Given (additionally) $M>0$ there are constants $a,b>0$ such that if $\growth{\sfn} \leq M$, we have
$$
\vol\{p\in X:t\leq |\sfn\circ\trop(p)|\} \leq ae^{-bt}.
$$
\end{prop}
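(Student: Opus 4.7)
The plan is to reduce both assertions to elementary integrals and volume computations in distinguished coordinates. By~\eqref{EQN:GROWTH2},
$$
|\sfn(\nvec)| \leq \growth{\sfn}\cdot\|\nvec\| + |\sfn(0)|,
$$
so the integrability statement reduces to finiteness of $\int_X \|\trop\|\,dV$, and the exponential tail bound will follow from a corresponding tail estimate on $\vol\{p\in X:\|\trop(p)\|\geq s\}$ together with the implication $|\sfn\circ\trop(p)|\geq t\Rightarrow \|\trop(p)\|\geq (t-|\sfn(0)|)/M$ under the hypothesis $\growth{\sfn}\leq M$.

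Next I would cover $X$ by the finitely many distinguished coordinate bidisks $P_\sigma = \{|x_1|<2,\,|x_2|<2\}$ about the $\torus$-invariant points $p_\sigma$, $\sigma\in\Sigma_2(X)$; this is an exhaustive cover because the two bidisks at the endpoints of each pole $C_\tau$ together cover $C_\tau$, and any point of $X$ not on a pole already lies in such a bidisk. On each $P_\sigma$ the given smooth volume form $dV$ is uniformly comparable to Euclidean volume $dA(x_1)\,dA(x_2)$, and by~\eqref{eqn:twoform} one has
$$
\|\trop(x_1,x_2)\| \leq C_\sigma\bigl(|\log|x_1|| + |\log|x_2|| + 1\bigr)
$$
for a constant $C_\sigma$ depending only on $\nvec_1,\nvec_2$ and the fixed norm on $N_\R$. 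Finiteness of $\int_X\|\trop\|\,dV$ then follows from the elementary estimate $\int_{|x|<2}|\log|x||\,dA(x)<\infty$ applied in each variable and summed over the finite cover, giving the first conclusion.

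For the tail bound, within $P_\sigma$ the inequality $\|\trop(x_1,x_2)\|\geq s$ forces $|\log|x_j||\geq s/(2C_\sigma) - \tfrac{1}{2}$ for at least one $j\in\{1,2\}$; for $s$ large this means $|x_j|\leq e^{1/2 - s/(2C_\sigma)}$, so the set in question has Euclidean area at most $c\,e^{-bs}$ for constants $b,c>0$ depending only on $X$. Summing over the finite cover and substituting $s = (t-|\sfn(0)|)/M$ produces the desired bound of the form $ae^{-bt}$, with the $|\sfn(0)|$-dependence absorbed into the prefactor $a$ through a factor $e^{b|\sfn(0)|/M}$. The argument is elementary throughout and presents no real obstacle; the only care required lies in verifying the finite exhaustive bidisk cover and in tracking how the final constants $a$, $b$ depend on $M$, $X$, and (implicitly) $|\sfn(0)|$.
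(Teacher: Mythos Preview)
Your proposal is correct and follows essentially the same approach as the paper: use the growth bound~\eqref{EQN:GROWTH2} to reduce to controlling $\|\trop\|$, cover $X$ by finitely many bounded distinguished-coordinate bidisks on which $dV$ is comparable to Euclidean volume, and then carry out elementary one-variable integrals and tail estimates for $\log|x_j|$. Your observation that the constant $a$ in the tail bound implicitly depends on $|\sfn(0)|$ is well taken; the paper's proof has the same feature, and in the only application (Theorem~\ref{thm:volbd}) the relevant functions are homogeneous so that $\sfn(0)=0$.
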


\begin{proof}
Given a sector $\sigma \in \Sigma_2(X)$, let $(x_1,x_2)$ be distinguished coordinates about the $\torus$-invariant point $p_\sigma$.
It follows immediately from (\ref{EQN:GROWTH2}) that in these coordinates we have 
$$
\sfn\circ\trop(x_1,x_2) \leq |\sfn(0)| + C\growth{\sfn}\sqrt{(\log|x_1|)^2 + (\log|x_2|)^2},
$$
for some $C$ depending only on the choice of coordinate.  Replacing $dV$ with Euclidean volume, one sees that the right side is integrable and satisfies the desired volume estimates on any bounded polydisk $\{|x_1|,|x_2|\leq R\}$.  Since $dV$ is uniformly comparable to Euclidean volume on such polydisks, finitely many of which cover $X$, the proposition follows.
\end{proof}

\begin{thm}
\label{thm:canonicalreps}
Suppose $T = dd^c u \in \pcc^+(\torus)$ satisfies $\growth{\sfn_u}<\infty$, and let $X$ be a toric surface.  Then $\sfn_u\circ\trop$ and $\bar\sfn_u\circ\trop$ are integrable on $X$.  Moreover, 
\begin{align}
dd^c_X(\sfn_u\circ\trop) &= T_{ave} - D_{u,X} \quad \mbox{and} \quad  dd^c_X(\bar\sfn_u\circ\trop) = \bar T - D_{u,X}  \label{EQN:THM_CANONICAL_REPS}.
\end{align}
In particular 
\begin{itemize}
 \item $\bar T$ and $T_{ave}$ extend trivially to elements of $\pcc^+(X)$, both cohomologous in $X$ to $D_{u,X}$ and to (the trivial extension of) $\bar T(X)$;  
 \item the trivial extension of $\bar T$ to $\pcc^+(X)$ has continuous local potentials everywhere on $X$ except possibly about $\torus$-invariant points;
 \item the trivial extension of $\bar T(X)$ to $\pcc^+(X)$ has continuous local potentials everywhere on $X$, and $\supp\bar T(X)$ omits all $\torus$-invariant points.
\end{itemize}
\end{thm}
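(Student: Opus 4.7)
The plan is to verify the two identities~\eqref{EQN:THM_CANONICAL_REPS} locally on $X$ and then read off the remaining conclusions. Integrability of $\sfn_u\circ\trop$ and $\bar\sfn_u\circ\trop$ on $X$ is immediate from Proposition~\ref{prop:integrability}, since both functions are convex with common finite growth $\growth{\sfn_u}$ (Proposition~\ref{prop:homogenization}). The two left-hand sides of~\eqref{EQN:THM_CANONICAL_REPS} are therefore well-defined currents on $X$, and on the open set $\torus$ they agree with the right-hand sides by the defining relations $T_{ave}=dd^c(\sfn_u\circ\trop)$, $\bar T = dd^c(\bar\sfn_u\circ\trop)$ together with the fact that $D_{u,X}$ is supported on $X\setminus\torus$.

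To verify the homogeneous identity near a non-$\torus$-invariant point $p=(0,x_2^0)\in C_\tau^\circ$, I work in distinguished coordinates $(x_1,x_2)$ with $C_\tau=\{x_1=0\}$ and $\tau$ generated by $\nvec_1$. Positive homogeneity gives
\[
\bar\sfn_u\circ\trop(x_1,x_2) \;=\; -\log|x_1|\cdot f\!\left(\tfrac{\log|x_2|}{\log|x_1|}\right),\qquad f(t):=\bar\sfn_u(\nvec_1+t\nvec_2),
\]
and a one-sided Taylor expansion of the convex $f$ at $t=0$ yields the decomposition $\bar\sfn_u\circ\trop = -c_\tau\log|x_1| + h$, where $c_\tau:=f(0)=\bar\sfn_u(\nvec_1)$ is the coefficient of $C_\tau$ in $D_{u,X}$ and $h$ extends continuously across $C_\tau$ in a neighborhood of $p$ (its pole-limit is $-f'(0^{\pm})\log|x_2|$ on the two sides of $\tau$, matching at the overlap $|x_2^0|=1$ of the two charts that cover $C_\tau^\circ$). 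Since $h$ is plurisubharmonic and locally bounded on $\torus$, the Skoda--El~Mir theorem extends $dd^c h = \bar T$ trivially across $C_\tau$; combining with $dd^c_X(-c_\tau\log|x_1|)=-c_\tau[C_\tau]=-D_{u,X}$ locally proves the identity near $p$. The same argument handles $\sfn_u$ after invoking the upper bound $\sfn_u\leq \bar\sfn_u+\sfn_u(0)$ from Proposition~\ref{prop:homogenization} to ensure the remainder $\sfn_u\circ\trop+c_\tau\log|x_1|$ is still locally bounded above and plurisubharmonic on $\torus$ near $p$.

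It remains to extend the identities across the finitely many $\torus$-invariant points $p_\sigma$, and this is the step I expect to be most delicate, since $\bar\sfn_u$ need not be piecewise linear there. The previous step already produces trivial extensions of $\bar T$ and $T_{ave}$ across the poles away from $\{p_\sigma\}$, and the finite $L^1$-growth of the potentials together with removability across the pluripolar set $\{p_\sigma\}$ promotes these to positive closed currents on $X$. Both sides of each identity thus admit local $L^1$ potentials near $p_\sigma$ that agree on $U\setminus\{p_\sigma\}$, so their difference is $dd^c$ of an $L^1$ function that is pluriharmonic on the punctured neighborhood; classical removable singularity extends the pluriharmonic function across $p_\sigma$, forcing the current difference to vanish. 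The remaining bullets then follow at once: the identities, together with the analogous identity obtained by the same argument from the piecewise-linear $\bar\sfn_{u,X}$, exhibit the trivial extensions of $T_{ave}$, $\bar T$ and $\bar T(X)$ as $D_{u,X}+dd^c_X(\text{an }L^1\text{ function})$ and hence all cohomologous to $D_{u,X}$; in distinguished coordinates near $p_\sigma$ the function $\bar\sfn_{u,X}\circ\trop = -c_1\log|x_1|-c_2\log|x_2|$ is pluriharmonic on $\torus$ (its $dd^c_X$ on the neighborhood is precisely $-D_{u,X}$), so the trivial extension of $\bar T(X)$ vanishes on a full neighborhood of each $p_\sigma$, yielding both the support statement and a continuous (identically zero) local potential for $\bar T(X)$ at $p_\sigma$; and the continuous remainders $h$ (for $\bar T$) and $\bar\sfn_{u,X}\circ\trop + c_\tau\log|x_1|$ (for $\bar T(X)$, which is piecewise linear in $\log|x_2|$ and continuous across the pole) supply the required continuous local potentials at all non-$\torus$-invariant pole points.
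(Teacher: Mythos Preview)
Your overall strategy matches the paper's: work in distinguished coordinates near each pole $C_\tau$, split off the term $-c_\tau\log|x_1|$, and show the remainder gives a local potential for the trivial extension. For $\bar T$ your argument is essentially the paper's: the remainder $h=\bar\sfn_u\circ\trop+c_\tau\log|x_1|$ extends continuously across $C_\tau$ (your limits $-f'(0^\pm)\log|x_2|$ are exactly the paper's ``well-defined convex function of $\log|x_2|$''), so $dd^c$ of the extension is the trivial extension of $\bar T$.

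The gap is in your treatment of $T_{ave}$. Knowing only that $h':=\sfn_u\circ\trop+c_\tau\log|x_1|$ is plurisubharmonic and \emph{bounded above} near $C_\tau$ gives, via the Riemann extension theorem, a plurisubharmonic extension $\tilde h'$ across the pole; but $dd^c\tilde h'$ may then equal the trivial extension of $T_{ave}$ \emph{plus} $\nu[C_\tau]$ for some $\nu>0$, which would spoil the identity. Bounded above does not force $\nu=0$: in one variable already, $\sfn(v_1)=v_1-\sqrt{v_1}$ (for $v_1\gg 0$) is convex with $c_\tau=1$ and $\sfn(v_1)-v_1\to-\infty$, so $h'$ need not be bounded below. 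What you actually need is that $h'=o(-\log|x_1|)$ as $x_1\to 0$, which forces the generic Lelong number along $C_\tau$ to vanish. This is precisely the paper's route: it observes that $\sfn_u\circ\trop-\bar\sfn_u\circ\trop=o(-\log|x_1|)$ (immediate from the very definition of $\bar\sfn_u$ as the radial limit $\lim_{t\to\infty}\sfn_u(t\nvec)/t$), hence $dd^c_X(\sfn_u\circ\trop-\bar\sfn_u\circ\trop)$ has no mass on $\{x_1=0\}$, and then subtracts the already-established $\bar T$ identity. Once you replace ``bounded above'' by this $o(-\log|x_1|)$ estimate (or equivalently combine your bound $\sfn_u\le\bar\sfn_u+\sfn_u(0)$ with the bounded continuous $h$ you constructed for $\bar\sfn_u$ to get $h'\le h+\sfn_u(0)$ \emph{and} note $h'-h=o(-\log|x_1|)$), your argument goes through.

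Two minor remarks: your comment about ``two charts that cover $C_\tau^\circ$'' is a bit off --- a single distinguished chart about $p_\sigma$ already covers $C_\tau\setminus\{p_{\sigma'}\}$; the matching at $|x_2|=1$ is just the continuity of the piecewise formula $-f'(0^{\mathrm{sgn}\,v_2})\log|x_2|$ within that one chart. And your treatment of the $\torus$-invariant points is more explicit than the paper's (which simply asserts the identity on all of $X$ after checking it near each $C_\tau^\circ$); your removable-singularity argument there is fine.
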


It follows from the first item in Theorem \ref{thm:canonicalreps} that (the
trivial extensions of) $\bar T$ and $T_{ave}$ are cohomologous in \emph{any}
toric surface.  Later we will signify this by saying the two currents are
\emph{completely cohomologous}.  By way of contrast, however, the current $\bar
T(X)$ is typically only cohomologous to $\bar T$ and $T_{ave}$ on $X$; see
Example \ref{eg:assoccurrents}, below.

\begin{proof}[Proof of Theorem \ref{thm:canonicalreps}]
We first prove the claims about $\bar T$ and $\bar \sfn_u$, noting that it suffices to verify each in a neighborhood of any pole $C_\tau\subset X$.  Given $\tau\in\Sigma_1(X)$, we choose an adjacent sector $\sigma\in\Sigma_2(X)$ and distinguished coordinates $(x_1,x_2)$ about $p_\sigma$ such that 
$C_\tau = \{x_1=0\}$.  That is, we identify $\tau\subset N_\R\cong\R^2$ with $\R\cdot (1,0)$.  By convexity and $1$-homogeneity of $\bar \sfn_u$ 
$$
\lim_{x_1\to 0} \bar\sfn_u\circ\trop(x_1,x_2) - \bar\sfn_u(1,0)\log|x_1|
$$ 
is a well-defined convex function of $\log|x_2|$.  Hence $\bar\sfn_u\circ\trop
- \bar\sfn_u(1,0)\log|x_1|$ extends to a continuous local potential for $\bar
T$ on $\C\times\C^*$.   Thus, $\bar \sfn_u \circ \trop$ is locally integrable on $\C\times\C^*$, and since $\log|x_1|$ is pluriharmonic on $\C^*\times\C^*$ it follows that $\bar T$ extends trivially across $C_\tau = \{x_1 = 0\}$ with a  potential that is continuous except at $(0,0)$, i.e. except at the $\torus$-invariant point $p_\sigma$.  Since $\bar\sfn_u(1,0)$ is the weight of
$C_\tau$ in $D_{u,X}$, it follows that 
the right hand equation in~(\ref{EQN:THM_CANONICAL_REPS}) holds in the neighborhood $\C\times\C^*$ of $C_\tau$.  As $\tau\in\Sigma_1(X)$ was arbitrary, the conclusions concerning $\bar T$ and $\bar\sfn_u$ hold on all of $X$.

Turning to $\sfn_u$ and working in the system of distinguished coordinates, we note that by construction of $\bar\sfn_u$ we have for any fixed $x_2$ that 
$$
\sfn_u\circ\trop(x_1,x_2) - \bar\sfn_u\circ\trop(x_1,x_2) = o(-\log|x_1|)
$$
as $|x_1|\to 0$.  Hence $\sfn_u\circ\trop-\bar\sfn_u\circ\trop$ extends as a locally integrable function on $\C\times\C^*$ and  $dd^c(\sfn_u\circ\trop - \bar\sfn_u\circ\trop)$ has no support on $\{x_1=0\}$.  The left hand equation from (\ref{EQN:THM_CANONICAL_REPS}) follows immediately.

Since $\bar T(X)$ is homogeneous, the first paragraph of the proof applies with $\bar\sfn_{u,X}$ and $\bar T(X)$ in place of $\bar\sfn_u$ and $\bar T$.  Hence $\bar T(X)$ extends trivially to $X$ with continuous local potentials everywhere except possibly at $\torus$-invariant points, and $\bar T(X)$ is cohomologous to $D_{u,X}$ via $dd^c_X(\bar\sfn_{u,X}\circ\trop) = \bar T(X) - D_{u,X}$.  It remains to prove the third assertion of Theorem \ref{thm:canonicalreps}.  For this we again work in distinguished coordinates $(x_1,x_2)$ about some $\torus$-invariant point $p_\sigma\in X$.  The axes $\{x_j=0\}$ are the poles corresponding to the rays $\tau_j\in\Sigma_1(X)$ that bound $\sigma$, and in the implied identification $N\cong \Z^2$, the primitive vectors $(1,0)$ and $(0,1)$ are the generators for $\tau_1,\tau_2$.  Hence by definition we have
$$
\sfn_{u,X}(v) = \sfn_u(1,0)v_1 + \sfn_u(0,1) v_2.
$$
for all $v\in \R^2$ with non-negative coordinates.  So for $(x_1,x_2)$ in the unit bidisk, we have that $\sfn_{u,X}\circ\trop(x_1,x_2) =
-c_{\tau_1}\log|x_1|-c_{\tau_2}\log|x_2|$ is exactly a local potential for
$-D_{u,X}$.   I.e. $\supp\bar T(X)$ does not intersect the interior of the distinguished unit bidisc about $p_\sigma$, which implies the final assertion of Theorem \ref{thm:canonicalreps}.  
\end{proof}

\begin{eg}
\label{eg:assoccurrents}
Let $\sfn(\nvec)~=~\norm{\nvec}$ and set $u:=\sfn\circ\trop$. Then the current $\bar
T = dd^c u \in \pcc^+(\torus)$ is homogeneous with support equal to $\torus$.
Here we describe the ``associated currents'' 
\begin{align*}
T_1:=\bar T(\cp^2) \qquad \mbox{and} \qquad  T_2:=\bar T(\cp^1 \times \cp^1),
\end{align*}
 on $\torus$ and also their trivial extensions to $\cp^2$.
The support functions $\psi_1$ and $\psi_2$ for $T_1$ and $T_2$, respectively, are obtained by restricting $\sfn$ to
the rays in $\Sigma_1(\cp^2)$ and $\Sigma_1(\cp^1 \times \cp^1)$ generated by the sets of primitive vectors
\begin{align*}
\left\{(1,0), (0,1), (-1,-1) \right\} \quad \mbox{and} \quad \left\{(\pm 1,0), (0, \pm 1)\right\}
\end{align*}
and then extending linearly across each sector in $\Sigma_2(\cp^2)$ and $\Sigma_2(\cp^1 \times \cp^1)$.

Let $(x_1,x_2)$ denote the distinguished coordinates on $\cp^2$ or on $\cp^1\times\cp^1$ that are associated to the rays 
generated by primitive vectors $(1,0)$ and $(0,1)$.
One computes that
$\bar T(\cp^2) \in \pcc^+(\torus)$ has nowhere dense support
equal to
\begin{align*}
\{|x_1| = |x_2| \leq 1\}\cup\{|x_1| \geq |x_2|=1\} \cup \{|x_2| \geq |x_1| = 1\}
\end{align*}
and that 
  $\bar T(\cp^1\times\cp^1) \in \pcc^+(\torus)$ has (different) nowhere dense support equal to
\begin{align*}
\{|x_1| = 1\} \cup \{|x_2| = 1\}.
\end{align*}
More generally, if $p_\sigma$ is any $\torus$-invariant point in any toric surface $X$, and $\bar T$ is as above, then the intersection between $\supp\bar T(X)$ and the closed distinguished coordinate bidisc about $p_\sigma$, is exactly the (full) boundary of the bidisk.

By Theorem \ref{thm:canonicalreps} both homogeneous currents $T_1$ and $T_2$ extend trivially to the toric surface~$\cp^2$.  The extensions are not, however cohomologous to each other.  Let $[\tilde{x}_0,\tilde{x}_1,\tilde{x}_2]$ be homogeneous coordinates for $\cp^2$ associated to distinguished coordinates $(x_1,x_2) =
(\tilde{x}_1/\tilde{x}_0,\tilde{x}_2/\tilde{x}_0)$ 
on $\torus$.  Then the external divisors whose support functions are obtained by restricting
$\psi_1$ and $\psi_2$ to $\Sigma_1(\cp^2)$ are 
\begin{align*}
D_1 = (\tilde{x}_1=0) + (\tilde{x}_2=0) + (\tilde{x}_0=0)  \quad \mbox{and} \quad D_2 = (\tilde{x}_1=0) + (\tilde{x}_2=0) + \sqrt{2} (\tilde{x}_0=0).
\end{align*}
Since $\deg D_1 = 3\neq 2 + \sqrt{2} = \deg D_2$, the two divisors are cohomologically inequivalent.  But from Theorem \ref{thm:canonicalreps} we have that $[T_1] = [D_1]$ and $[T_2] = [D_2]$ in $H^2(\cp^2,\R)$, so $T_1$ and $T_2$ are also different in cohomology.
\end{eg}

We can now finish characterizing currents in $\pcc^+(\torus)$ that extend trivially to toric compactifications.

\begin{thm}
\label{thm:lineargrowth}
Let $T\in\pcc^+(\torus)$ be a positive closed $(1,1)$ current on $\torus$ and $u\in\psh(\torus)$ be a potential for $T$.  Then $T$ extends trivially to some/any toric surface $X$, i.e. $T\in\pcci(\rzt)$, if and only if $\growth{\sfn_u}$ is finite.  
\end{thm}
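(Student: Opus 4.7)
The plan is to reduce both directions to the rotationally symmetric current $T_{ave} := dd^c(\sfn_u\circ\trop)$ via an averaging argument.  On any smooth projective toric surface $X$ one obtains a rotationally invariant K\"ahler form $\omega$ by averaging an arbitrary K\"ahler form over the action of the compact torus $\torus_\R$.  Because $T-T_{ave}$ has vanishing rotational average as a current on $\torus$, a change of variables in $\torus_\R$ gives
\begin{equation*}
\int_V T\wedge\omega = \int_V T_{ave}\wedge\omega
\end{equation*}
for every rotationally invariant open $V\subset X$.  Taking $V$ to be an invariant tubular neighborhood of $X\setminus\torus$ inside $\torus$ and applying the Skoda--El Mir theorem, we see that $T$ extends trivially to $X$ if and only if $T_{ave}$ does.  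It therefore suffices to prove the equivalence for the rotationally invariant current $T_{ave}$.

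The ``if'' direction is then immediate from Theorem \ref{thm:canonicalreps}: assuming $\growth{\sfn_u}<\infty$, Proposition \ref{prop:integrability} puts $\sfn_u\circ\trop\in L^1(X)$, and the identity $dd^c_X(\sfn_u\circ\trop)=T_{ave}-D_{u,X}$ exhibits $T_{ave}+D_{u,X}$ as a positive closed current on $X$ that extends $T_{ave}$ trivially from $\torus$.

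For the ``only if'' direction, suppose $T_{ave}$ extends trivially to $\tilde T_{ave}\in\pcc^+(X)$.  Represent $[\tilde T_{ave}]\in\hoo(X)$ by an effective external divisor $D_X=\sum c_\tau C_\tau$ (possible because the pseudo-effective cone on a smooth projective toric surface is generated by the torus-invariant curves), and write $\tilde T_{ave}-D_X=dd^c W$ for some $W\in L^1(X)\cap\dpsh(X)$.  Averaging $W$ over $\torus_\R$, which preserves the equation since both $\tilde T_{ave}$ and $D_X$ are rotationally invariant, lets us assume $W$ is rotationally invariant, so $W=\sfn_W\circ\trop$ for some $\sfn_W:N_\R\to\R$; convexity of $\sfn_W$ follows from $dd^c W=T_{ave}\geq 0$ on $\torus$.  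The crux is to convert the positivity of $\tilde T_{ave}$ on all of $X$ into linear growth of $\sfn_W$ along each primitive ray of $\Sigma_1(X)$: in distinguished coordinates near a generic point of a pole $C_\tau=\{x_1=0\}$, the local potential of $D_X$ is $c_\tau\log|x_1|$, and $\tilde T_{ave}\geq 0$ forces $W+c_\tau\log|x_1|$ to be locally psh and hence locally bounded above.  In tropical form this reads $\sfn_W(t\nvec_\tau)\leq C+c_\tau t$ for all $t\geq 0$.  A sectorwise convexity argument then propagates the bound to all of $N_\R$: for $\nvec=\alpha_1\nvec_{\tau_1}+\alpha_2\nvec_{\tau_2}$ in a sector $\sigma\in\Sigma_2(X)$,
\begin{equation*}
\sfn_W(\nvec)\;\leq\;\tfrac12\sfn_W(2\alpha_1\nvec_{\tau_1})+\tfrac12\sfn_W(2\alpha_2\nvec_{\tau_2})\;\leq\;C+c_{\tau_1}\alpha_1+c_{\tau_2}\alpha_2,
\end{equation*}
so $\growth{\sfn_W}<\infty$.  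Since $W-\sfn_u\circ\trop$ is rotationally invariant and pluriharmonic on $\torus$ it is constant, hence $\growth{\sfn_u}=\growth{\sfn_W}<\infty$.  The main obstacle throughout is this final positivity-to-growth step: mere $L^1(X)$ control of $W$ would only force $\sfn_W$ to have sub-exponential growth, so one must use the psh-ness of $W+c_\tau\log|x_1|$ at a generic point of every pole to pin down the linear bound on the finite set of primitive rays in $\Sigma_1(X)$, after which convexity finishes the job.
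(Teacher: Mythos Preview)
Your reduction to $T_{ave}$ via a $\torus_\R$-invariant K\"ahler form and the Skoda--El Mir criterion is exactly what the paper does, and your ``if'' direction via Theorem \ref{thm:canonicalreps} matches the paper as well.

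Your ``only if'' direction, however, is genuinely different from the paper's.  The paper argues the contrapositive: assuming $\growth{\sfn_u}=\infty$, it picks a ray $\tau\in\Sigma_1(X)$ along which $\sfn_u$ grows superlinearly and then computes directly (via the Poisson--Jensen formula, or equivalently a polar-coordinate Laplacian calculation) that $T_{ave}\wedge dx_2\wedge d\bar x_2$ has infinite mass near a generic point of $C_\tau$, contradicting the Skoda--El Mir finite-mass criterion.  You instead work directly: from the trivial extension $\tilde T_{ave}$ you extract a global $\dpsh$ potential $W$ via the $dd^c$-lemma and a cohomology representative in $\pcce(X)$, and then read off linear growth of $\sfn_W$ along each primitive ray from the local plurisubharmonicity of $W+c_\tau\log|x_1|$ at a generic point of $C_\tau$.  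Your route is more structural---it leans on the cohomology of $X$ and the pseudo-effective cone of a toric surface---while the paper's is a bare-hands mass computation that avoids the $dd^c$-lemma entirely.  Both are short; yours generalizes more readily to settings where one already has cohomological control, while the paper's is self-contained and makes the obstruction (infinite transverse mass) completely explicit.

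One small correction: at the end you assert that $W-\sfn_u\circ\trop$ is constant because it is rotationally invariant and pluriharmonic on $\torus$.  In fact a rotationally invariant pluriharmonic function on $\torus$ has the form $\ell\circ\trop$ for an \emph{affine} (not necessarily constant) function $\ell$ on $N_\R$.  This does not damage the argument, since $\growth{\cdot}$ is unchanged by adding affine functions, so $\growth{\sfn_u}<\infty$ still follows from $\growth{\sfn_W}<\infty$; but the equality $\growth{\sfn_u}=\growth{\sfn_W}$ as stated need not hold literally.
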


\begin{proof}
For one direction, suppose that $\growth{\sfn_u}$ is finite.   If $\omega$ is a K\"ahler form invariant under the action of $\torus_\R$, then
\begin{align*}
\|T \wedge \omega \| = \|T_{ave} \wedge \omega\|.
\end{align*}
By Theorem \ref{thm:canonicalreps} $T_{ave}$ extends trivially to $X$.  That is, the right side of the equation is finite, and we infer using the Skoda-El Mir criterion that $T$ also extends trivially to $X$.

The other direction is more substantial.  Supposing $\growth{\sfn_u}$ is infinite, it will suffice to exhibit a smooth positive form $\omega$ on $X$ such that 
$T\wedge\omega$ has infinite mass on $\torus$.
Note that by adding an affine function of $\trop$ to the potential $u$
for $T$, we may assume that the convex function $\sfn = \sfn_u$ is non-negative
on $N_\R$ and satisfies $\sfn(0) = 0$.  Since the rays in $\Sigma_1(X)$ generate
$N_\R$ as a convex set and since $\sfn_u$ is convex, it follows that
$\growth{\sfn}<\infty$ if and only if $\frac{\sfn(\nvec)}{\norm{\nvec}}$ is
uniformly bounded on each $\tau\in\Sigma_1(X)$.   Therefore, we can assume
$\frac{\sfn(\nvec)}{\norm{\nvec}}$ is unbounded along some $\tau\in
\Sigma_1(X)$.

Let $\sigma \in \Sigma_2(X)$ be a sector adjacent to $\tau$ and $(x_1,x_2)$ be distinguished coordinates about $p_\sigma$ such that $C_\tau = \{x_1=0\}$.
In effect $\tau = \R\cdot (1,0)$, and our unboundedness assumption means $\lim_{\nvec_1\to\infty}\frac{\sfn(\nvec_1,0)}{\nvec_1} = \infty$.  Since $\sfn$ is convex and non-negative, we have more generally that $\lim_{\nvec_1\to\infty}\frac{\sfn(\nvec_1,\nvec_2)}{\nvec_1}= \infty$ uniformly in a neighborhood of $\nvec_2=0$.  
We will show that $T_{ave}\wedge dx_2\wedge
d\bar x_2$ has infinite mass near $(x_1,x_2) = (0,1)$.

By construction,
$$
T_{ave}\wedge \frac{dx_2\wedge d\bar{x_2}}{2i} = \frac{1}{\pi}\Delta_1(\sfn_u\circ\trop) \wedge \left(\frac{dx_2\wedge d\bar{x_2}}{2i}\right),
$$
where $\Delta_1$ denotes the distributional Laplacian with respect to $x_1$ (regarded as a measure in the $x_1$ coordinate).  Hence it suffices for each fixed value of $x_2$ to show that
$$
\int_{-\nvec_1<\log|x_1|<0} \Delta_1(\sfn_u\circ\trop)(x_1,x_2) 
$$
tends to infinity with $\nvec_1$.   This can be shown using the Poisson-Jensen formula (see e.g. \cite[Theorem A.1.3]{Sib99}) or by a polar coordinate computation, as follows.  The integral above is equal to 
$$
\int_0^{\nvec_1} \sfn_u''(t_1,\log|x_2|)\,dt_1 = \sfn_u'(\nvec^-_1,\log|x_2|) - \sfn_u'(0^+,\log|x_2|),
$$
where all derivatives are with respect to the first argument of $\sfn_u$ and the $\pm$ superscripts distinguish between left and righthand derivatives.  Since $\sfn_u$ is convex, we have the lower bound 
$$
\sfn_u'(\nvec_1^-,\log|x_2|) > \frac{\sfn_u(\nvec_1,\log|x_2|) - \sfn_u(0,\log|x_2|)}{\nvec_1} \
$$
which tends to $\infty$ with $\nvec_1$ by assumption.  Therefore the measure $T\wedge \frac{d x_2 \wedge d\bar x_2}{2i}$ has unbounded mass near $\{x_1=0\}$ as claimed.
\end{proof}

\subsection{Internal currents on toric surfaces revisited.}
  
The cone of positive internal currents $\pcci^+(\rzt)$ is not closed in $\pcc(\torus)$ since a sequence $(T_j)\subset\pcci^+(\rzt)$ can have a limit in $\pcc^+(\torus)$ with unbounded mass on a compactification $X$.    Even when the sequence converges on both $\torus$ and $X$, the limit in $X$ might dominate an external divisor and therefore exceed the limit in $\pcc^+(\torus)$.  We will see at the end of this section that things are better when one restricts attention to \emph{homogeneous} currents.

Theorems \ref{thm:canonicalreps} and \ref{thm:lineargrowth} make clear that $T_{ave},\bar T\in\pcci^+(\rzt)$ if and only if $T\in\pcci^+(\rzt)$.  Recall that a class $\alpha\in\hoo(X)$ is \emph{nef} if it has non-negative intersection with any curve $C\subset X$.

\begin{cor}
\label{cor:internalclass}
If $T\in\pcci^+(\rzt)$, then the cohomology class $\ch[X]{T}$ in any toric surface $X$ is nef, equal to that of $T_{ave}$ and $\bar T$.  There is, moreover, a unique function $\varphi_T\in\dpsh(\torus)$ such that
\begin{itemize}
\item $\int_{\mathbb{T}_\R} \varphi_T\,\twoform = 0$; and
\item $\varphi_T$ is integrable and satisfies $T = \bar T + dd^c_X \varphi_T$ on any toric surface $X$.
\end{itemize}
\end{cor}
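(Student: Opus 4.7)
I would first establish the cohomological equality $[T]_X = [T_{ave}]_X = [\bar T]_X$ on every toric surface $X$ and deduce nefness. By Theorem~\ref{thm:canonicalreps}, both $T_{ave}$ and $\bar T$ are cohomologous on $X$ to the external divisor $D_{u,X}$, whose support function $\bar\sfn_{u,X}$ is convex---being the linear extension over each sector of the convex positively homogeneous function $\bar\sfn_u$---so $[D_{u,X}]$ is nef. For $[T]_X = [T_{ave}]_X$, observe that $T_{ave}$ is the average of $T$ over the action of the connected compact group $\torus_\R$, which lies in the identity component of $\operatorname{Aut}(X)$ and therefore acts trivially on $H^{1,1}(X,\R)$; averaging a closed $(1,1)$-current over a continuous group action preserves its cohomology class.

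For existence, I would combine the $dd^c$-lemma on a single toric surface with a compatibility argument across surfaces. Fix a toric surface $X_0$; since $[T]_{X_0} = [\bar T]_{X_0}$, the $dd^c$-lemma on the compact K\"ahler surface $X_0$ yields $\varphi_0 \in \dpsh(X_0) \cap L^1(X_0)$ with $dd^c_{X_0}\varphi_0 = T - \bar T$ on $X_0$, normalized by $\int_{\torus_\R}\varphi_0\,\eta = 0$. Set $\varphi_T := \varphi_0|_\torus \in \dpsh(\torus)$. To show $\varphi_T$ extends to an $L^1$-function on every other toric surface $X$ with $dd^c_X \varphi_T = T - \bar T$, I would choose a common dominating surface $X'' \succ X, X_0$, pull $\varphi_0$ back to $X''$ via the morphism $\pi_{X''X_0}$, and observe that the pullback potential differs from one for $(T - \bar T)_{X''}$ by the log-potential of a principal external divisor supported on the $\pi_{X''X_0}$-exceptional set---the excess having trivial cohomology class on $X''$ and therefore being principal. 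After this correction, pushing forward to $X$ via $\pi_{X''X}$ and using that birational morphisms of surfaces preserve $L^1$-integrability of functions produces the desired extension, which on $\torus$ coincides with $\varphi_T$.

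For uniqueness, if $\varphi, \varphi'$ both satisfy the stated conditions, then $w := \varphi - \varphi'$ lies in $L^1(X) \cap \dpsh(X)$ with $dd^c_X w = 0$ on every toric surface $X$; by elliptic regularity $w$ is a smooth pluriharmonic function on the compact K\"ahler surface $X$, hence constant by the maximum principle. Since this constant is the same function on $\torus$ for every $X$, the zero-mean normalization pins $w$ to $0$. The main technical obstacle is the propagation step: the $dd^c$-lemma only gives a potential on one surface at a time, but $\varphi_T$ must satisfy the equation on every toric surface simultaneously; this is handled by exploiting that birational morphisms of toric surfaces respect $dd^c$ on internal currents (Proposition~\ref{prop:internalcurrents}) and preserve $L^1$-integrability, together with the fact that the defect in pulling back a $dd^c$-potential is a principal external divisor's log-potential.
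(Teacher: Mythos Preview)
Your approach is essentially the same as the paper's: establish $[T]_X = [T_{ave}]_X = [\bar T]_X$ via the $\torus_\R$-action and Theorem~\ref{thm:canonicalreps}, deduce nefness from convexity of the support function, produce $\varphi_T$ on one surface by the $dd^c$-lemma, and then propagate to all surfaces.

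One point deserves correction. In your propagation step you describe the defect between $\pi_{X''X_0}^*(T-\bar T)$ and $(T-\bar T)_{X''}$ as a \emph{principal} external divisor whose log-potential must be subtracted off. In fact the defect is \emph{zero}: it is a divisor supported on $\exc(\pi_{X''X_0})$ with trivial cohomology class on $X''$, and the intersection form is negative definite on $\div(\exc(\pi_{X''X_0}))$, so the only such divisor is $0$. This is exactly the argument the paper uses. You actually need this stronger conclusion, since a non-trivial log-potential correction would alter the function on $\torus$ and contradict your final claim that the result ``on $\torus$ coincides with $\varphi_T$.'' With the defect equal to zero, $\varphi_T\circ\pi_{X''X_0}$ is already an $L^1$ potential for $(T-\bar T)_{X''}$, and the propagation is immediate. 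The paper also streamlines slightly by working directly with $Y\succ X$ rather than passing through a common roof $X''$; this suffices because any two toric surfaces are dominated by a third.
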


\noindent In Corollary \ref{cor:internalclass} and elsewhere we implicitly identify functions $\varphi$ on $X$ with their pullbacks 
$\varphi\circ \pi_{YX}$ to any other toric surface $Y$.   

\begin{proof} 
Fix a toric surface $X$.  Since the action of $\torus$ on itself extends to an action on $X$ by automorphisms isotopic to the identity, we have that $T$ and $T_{ave}$ are cohomologous on $X$.  Meanwhile, Theorem \ref{thm:canonicalreps} implies that $T_{ave}$ and $\bar T$ are both cohomologous in $X$ to the class of the divisor $D_{u,X}$, which is nef since it has convex support function $\bar \sfn_{u,X}$.  The $dd^c$-lemma gives us $\varphi_T\in L^1(X)$ satisfying $dd^c\varphi_X = T-\bar T$.  Constants are the only pluriharmonic functions on the compact manifold $X$, so $\varphi_T$ is unique once normalized as in this corollary.

To see that $\varphi_T$ is independent of $X$, suppose $Y\succ X$.  Since $T - \bar T = dd^c_X \varphi_T$ on $X$, we have $\pi_{YX}^*
T-\pi_{YX}^*\bar T = dd^c_Y (\varphi_T \circ \pi_{YX})$ on $Y$.  On the other hand, if we identify $T$ and $\bar T$ with their trivial extensions to $Y$, we have that $T - \pi_{YX}^* T$ and $\bar T - \pi_{YX}^*\bar T$ are currents of integration supported on the poles of $Y$ that are contracted by $\pi_{YX}$.  As $T$ and $\bar{T}$ are also cohomologous in $Y$, we see that $(T-\bar T) - dd^c_Y (\varphi_T\circ \pi_{YX})\in\pcce(Y)$ is cohomologous to $0$ and supported on the exceptional set of $\pi_{YX}$.  However, any non-trivial divisor supported on $\exc(\pi_{XY})$ has negative self-intersection.  So in fact $T-\bar T = dd^c_Y
(\varphi_T \circ \pi_{YX}) \equiv dd^c_Y \varphi_T$.
\end{proof}

The homogenization $\bar T$ of a current $T\in\pcci^+(\torus)$ will serve as a
canonical, surface-independent representative for $[T]$.  However, local
potentials for $\bar T$ are typically unbounded near $\torus$-invariant points
of a given toric surface, so it is a little tricky to apply standard
compactness results from pluripotential theory to control the relative
potential $\varphi_T$ for $T-\bar T$.  The following result in this direction
will suffice for our purposes below.

\begin{thm}
\label{thm:volbd}
Given $M>0$ and a toric surface $X$ endowed with a smooth volume form, there exist constants $a,b>0$ such that for any current $T\in\pcci^+(\rzt)$ with a potential $u\in\psh(\torus)$ satisfying $\growth{\sfn_u}\leq M$, we have
$$ 
\vol_X\{|\varphi_T|\geq t\} \leq ae^{-bt}
$$
for all $t\geq 0$.
\end{thm}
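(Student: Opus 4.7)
Plan: The approach decomposes $\varphi_T$ as a difference of two quasi-plurisubharmonic functions on $X$ with uniform positivity defect, applies a classical uniform Skoda--Zeriahi exponential integrability estimate, and controls the resulting normalization constant using non-pluripolarity of the totally real submanifold $\torus_\R \subset X$.

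First I would normalize $u$ by subtracting $\sfn_u(0)$, so that $\sfn_u(0)=0$; Proposition~\ref{prop:homogenization} then gives $\bar\sfn_u(0)=0$ and $|\sfn_u(\nvec)|,|\bar\sfn_u(\nvec)|\leq M\|\nvec\|$, and the uniqueness in Corollary~\ref{cor:internalclass} identifies $\varphi_T = u-\bar\sfn_u\circ\trop$ on $\torus$ with no additive constant. The coefficients $c_\tau:=\bar\sfn_u(\nvec_\tau)$ of $D_{u,X}=\sum_\tau c_\tau C_\tau$ then satisfy $|c_\tau|\leq M$. Fix once and for all a $\torus_\R$-invariant K\"ahler form $\omega_X$ on $X$ together with smooth closed $(1,1)$-forms $\theta_\tau$ representing $[C_\tau]$ for each pole of $X$, and set $\theta_T:=\sum_\tau c_\tau\theta_\tau$. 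This is a smooth representative of $[T]=[\bar T]$ satisfying $-\lambda\omega_X\leq\theta_T\leq\lambda\omega_X$ uniformly in $T$, for some $\lambda=\lambda(M,X)$. Let $\phi_T,\bar\phi_T$ be the $\theta_T$-psh (hence $\lambda\omega_X$-psh) potentials determined by $T=\theta_T+dd^c_X\phi_T$ and $\bar T=\theta_T+dd^c_X\bar\phi_T$, normalized so that $\sup_X\phi_T=\sup_X\bar\phi_T=0$. Then $\varphi_T=\phi_T-\bar\phi_T+c$ for some constant $c\in\R$.

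The main analytic ingredient is the classical uniform Skoda--Zeriahi exponential integrability for sup-normalized $\lambda\omega_X$-psh functions: there exist $\alpha,C>0$ depending only on $\lambda$, $\omega_X$, and the chosen volume form $dV$ such that
$$\int_X e^{-\alpha\phi_T}\,dV\leq C\qquad\text{and}\qquad\int_X e^{-\alpha\bar\phi_T}\,dV\leq C.$$
Chebyshev's inequality then yields $\vol_X\{\phi_T\leq -t\},\vol_X\{\bar\phi_T\leq -t\}\leq Ce^{-\alpha t}$. Since $\{|\varphi_T|\geq t\}\subseteq\{\phi_T\leq -(t-|c|)/2\}\cup\{\bar\phi_T\leq -(t-|c|)/2\}$, the desired estimate will follow once $|c|$ is bounded uniformly by a constant depending only on $M$ and $X$.

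The main obstacle is thus the uniform bound on $|c|$. Since $\int_{\torus_\R}\varphi_T\,\eta=0$ by construction, we have $c\cdot\vol(\torus_\R) = \int_{\torus_\R}(\bar\phi_T-\phi_T)\,\eta$, and under the $\sup_X=0$ normalization both integrals are nonpositive, so only a matching uniform lower bound is required. As $\torus_\R\subset X$ is a compact real-analytic totally real submanifold it is non-pluripolar, and the smooth positive measure $\eta|_{\torus_\R}$ does not charge any pluripolar subset of $X$. Standard pluripotential theory (e.g. the Guedj--Zeriahi compactness theorem for sup-normalized quasi-psh functions against non-pluripolar measures) then shows that the linear functional $\phi\mapsto\int_{\torus_\R}\phi\,\eta$ is continuous, and hence uniformly bounded below, on the $L^1(X)$-compact family of $\lambda\omega_X$-psh functions $\phi$ with $\sup_X\phi=0$. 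Applied to $\phi_T$ and $\bar\phi_T$, this bounds $|c|$ uniformly and completes the proof.
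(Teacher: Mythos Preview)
Your strategy is sound and does lead to a correct proof, but it differs substantively from the paper's, and one step is not as routine as you suggest.

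\textbf{Comparison with the paper.} The paper does not decompose $\varphi_T$ as a difference of two quasi-psh functions relative to a smooth $\theta_T$. Instead it replaces the reference $\bar T$ by the coarser homogenization $\bar T(X)$, which has continuous local potentials everywhere on $X$, writes $T=\bar T(X)+dd^c_X\varphi_{T,X}$, and observes that $\varphi_T-\varphi_{T,X}=\bar\sfn_{u,X}\circ\trop-\bar\sfn_u\circ\trop$ already obeys the volume estimate by Propositions~\ref{prop:sfnpinch} and~\ref{prop:integrability}. A separate lemma constructs a single homogeneous current $\boundingcurrent(M,X)$ with continuous potentials dominating every $\bar T(X)$ in the family, so that each $\varphi_{T,X}$ is $\boundingcurrent(M,X)$-psh. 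The normalization constant is then handled by an \emph{explicit} computation: averaging $\varphi_{T,X}$ over the tori $\{\trop=\nvec\}$ gives $h(\nvec)=\sfn_u(\nvec)-\sfn_u(0)-\bar\sfn_{u,X}(\nvec)\le 0$, whence $\int_X\varphi_{T,X}\,dV\le 0$ and $\sup_X\varphi_{T,X}$ is bounded via Hartogs compactness. Zeriahi's theorem then applies directly to a single compact qpsh family. Your approach trades this structural argument for a more generic ``two qpsh pieces plus Skoda'' scheme; what you gain in familiarity you pay for in the control of $c$.

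\textbf{The gap.} The assertion that $\phi\mapsto\int_{\torus_\R}\phi\,\eta$ is continuous on the $L^1(X)$-compact family $\{\lambda\omega_X\text{-psh},\ \sup_X=0\}$, and hence uniformly bounded below, is not a consequence of Guedj--Zeriahi compactness: $L^1(X,dV)$-convergence says nothing about integrals over the measure-zero set $\torus_\R$, and in fact the functional is \emph{not} $L^1(X)$-continuous in general. What you actually need is the weaker statement that $\inf_\phi\int_{\torus_\R}\phi\,\eta>-\infty$, and this is true but requires a genuine argument: either a Bernstein--Markov/domination-by-capacity property of Haar measure on the totally real torus $\torus_\R$, or a direct estimate via the Riesz decomposition (locally, $\int_{\R^n\cap B}\log|x-w|\,dx$ is bounded below uniformly in $w$, and the mass of $dd^c\phi$ is controlled by $\|\phi\|_{L^1}$ through Chern--Levine--Nirenberg). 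Either route works, but neither is ``standard compactness''. A minor additional point: $|c_\tau|=|\bar\sfn_u(\nvec_\tau)|\le M\|\nvec_\tau\|$, not $M$; the bound still depends only on $M$ and $X$, so this is harmless.
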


\begin{lem}
\label{lem:mdom}
For any toric surface $X$ and $M\geq 0$, there exists a homogeneous internal
current $\boundingcurrent(M,X)$ with continuous local potentials everywhere on $X$ such
that $\boundingcurrent(M,X)~\geq~\bar T(X)$ for any $T\in\pcci^+(\rzt)$ with a support function
satisfying $\growth{\sfn_u}\leq M$. 
\end{lem}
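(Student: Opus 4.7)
\medskip
\noindent
The plan is to take $\boundingcurrent(M,X)$ to be a sufficiently large positive scalar multiple of the homogeneous internal current associated to a fixed ample external divisor on $X$.

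Since $X$ is projective, choose an ample external divisor $\tilde D = \sum_{\tau\in\Sigma_1(X)}\tilde c_\tau C_\tau$ on $X$; its support function $\sfn_{\tilde D}$ is then convex, $1$-homogeneous, and piecewise linear on $\Sigma_2(X)$.  Set $T_{\tilde D} := dd^c(\sfn_{\tilde D}\circ\trop)$.  Taking $u := \sfn_{\tilde D}\circ\trop$ as a potential, one computes $\sfn_u = \bar\sfn_u = \bar\sfn_{u,X} = \sfn_{\tilde D}$, so $T_{\tilde D}$ is itself of the form $\bar T(X)$ for this choice; Theorem \ref{thm:canonicalreps} then guarantees that the trivial extension of $T_{\tilde D}$ to $X$ has continuous local potentials everywhere on $X$, and the same remains true for any positive scalar multiple.

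To determine the scalar, fix any $T\in\pcci^+(\rzt)$ with potential $u$ satisfying $\growth{\sfn_u}\leq M$.  Proposition \ref{prop:homogenization} gives $\bar\sfn_u(\nvec) \leq M\norm{\nvec}$ on $N_\R$, and combined with the convexity inequality $0 = \bar\sfn_u(0) \leq \tfrac12\bar\sfn_u(\nvec) + \tfrac12\bar\sfn_u(-\nvec)$ this forces $|\bar\sfn_u(\nvec)|\leq M\norm{\nvec}$ uniformly in $T$.  In particular the coefficients of $D_{u,X}$ satisfy $|\bar\sfn_u(\nvec_\tau)|\leq M\norm{\nvec_\tau}$, and since the fan $\Sigma(X)$ is finite the intersection numbers $D_{u,X}\cdot C_\tau$ are bounded by a constant depending only on $M$ and $X$.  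Ampleness of $\tilde D$ gives $\tilde D\cdot C_\tau > 0$ for every $\tau$, so we may fix $\lambda = \lambda(M,X) > 0$ large enough that $(\lambda\tilde D - D_{u,X})\cdot C_\tau \geq 0$ holds simultaneously for every $\tau\in\Sigma_1(X)$ and every admissible $T$, and then set $\boundingcurrent(M,X) := \lambda\, T_{\tilde D}$.

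With this choice the nefness criterion from \S\ref{sec:rztoric} (non-negative intersection with every toric curve is equivalent to convex support function for an external divisor) implies that $\lambda\sfn_{\tilde D} - \bar\sfn_{u,X}$ is convex on $N_\R$; hence $(\lambda\sfn_{\tilde D} - \bar\sfn_{u,X})\circ\trop$ is plurisubharmonic on $\torus$ and
$$
\boundingcurrent(M,X) - \bar T(X) \;=\; dd^c\bigl((\lambda\sfn_{\tilde D} - \bar\sfn_{u,X})\circ\trop\bigr) \;\geq\; 0
$$
as currents on $\torus$, extending trivially to the same inequality on $X$.  The main obstacle is that the required domination is \emph{pointwise}, not merely cohomological, and so cannot be read off directly from any inequality of classes; the argument succeeds because the piecewise-linear structure on $\Sigma_2(X)$ makes pointwise positivity equivalent to the purely combinatorial nefness of $\lambda\tilde D - D_{u,X}$, which in turn can be arranged uniformly in $T$ thanks to the coefficient bound $|\bar\sfn_u(\nvec_\tau)|\leq M\norm{\nvec_\tau}$ and the finiteness of $\Sigma_1(X)$.
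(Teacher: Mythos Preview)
Your proof is correct and takes a genuinely different, more direct route than the paper.  The paper constructs $\boundingcurrent(M,X)$ by first fixing, for each ray $\tau\in\Sigma_1(X)$, a current $T_\tau$ that maximizes the ``convexity jump'' $\Delta_{T,\tau}$ of $\bar\sfn_{u,X}$ at $\tau$ among all admissible $T$, then setting $S = \sum_\tau T_\tau$ and $\boundingcurrent(M,X) := \bar S(X)$; the comparison $\Delta_{\bar S(X),\tau}\geq \Delta_{\bar T(X),\tau}$ for each $\tau$ then yields convexity of $\bar\sfn_{v,X}-\bar\sfn_{u,X}$ and hence the desired domination.  You bypass this extremal construction entirely by scaling a single ample toric divisor: the uniform bound $|\bar\sfn_u(\nvec_\tau)|\leq M\norm{\nvec_\tau}$ controls every intersection number $D_{u,X}\cdot C_\tau$ simultaneously, so a large enough multiple of $\tilde D$ makes $\lambda\tilde D - D_{u,X}$ nef for all admissible $T$, and the same nef-equals-convex-support-function principle finishes the argument.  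Both proofs ultimately rest on the same combinatorial fact that, for piecewise linear support functions subordinate to $\Sigma(X)$, current domination reduces to finitely many ray-by-ray inequalities.  Your approach is shorter and requires no extremal currents; the paper's approach is more constructive and yields a $\boundingcurrent(M,X)$ that is in some sense closer to optimal.
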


\begin{proof}  Suppose $T\in\pcci^+(\rzt)$ is a current with a potential $u$ such that $\growth{\sfn_u} \leq M$.  Fix a ray $\tau\in\Sigma_1(X)$ and let $\sigma_1,\sigma_2\in\Sigma_2(X)$ denote the sectors adjacent to $\tau$ on either side.  Then we can choose unit vectors $\nvec_j\in\sigma_j$, $j=1,2$, whose average $\nvec := \frac{\nvec_1+\nvec_2}{2}$ generates $\tau$.  Convexity of $\sfn_{T,X}$ about $\tau$ means that the quantity 
$$
\Delta_{T,\tau}:=\sfn_{u,X}(\nvec_1) + \sfn_{u,X}(\nvec_2) - 2\sfn_{u,X}(\nvec)
$$
is non-negative.  Given $T$ and $X$, the convex function $\sfn_{u,X}$ is unique up to addition of affine functions.  One checks that this affine function has no effect on $\Delta_{T,\tau}$, which is therefore completely independent of the choice of potential $u$.  By convexity, we can therefore assume $\sfn_{u,X}\geq 0$ on $N_\R$, allowing that the upper bound on $\growth{\sfn_u}$ might increase to $2M$.  The same bound holds for $\growth{\sfn_{u,X}}$, so we obtain that
$$
\Delta_{T,\tau} \leq 2M + 2M - 0 = 4M
$$
independent of $T$ and $\tau$.  Hence there exists $T_\tau\in\pcci^+(\rzt)$ with potential $u_\tau\in\psh(\torus)$ satisfying $\growth{\sfn_{u_\tau}}\leq M$ such that 
$\Delta_{T_\tau,\tau}\leq 4M$ is maximal.  

We set $S = \sum_{\tau\in\Sigma_1(X)} T_\tau = dd^c (\sfn_v\circ\trop)$, where $v = \sum_{\tau\in\Sigma_1(X)} u_\tau$.  Note that $\growth{\sfn_v} \leq \#\Sigma_1(X)\cdot 4M <\infty$ so that $S\in\pcci^+(\rzt)$.  We claim that $\boundingcurrent(M,X) := \bar S(X)$ satisfies the conclusion of the lemma.  Indeed, by definition $\Delta_{\bar T(X),\tau} = \Delta_{T,\tau}$ for any $T\in\pcci^+(\rzt)$.  So if $T$ has a potential $u$ satisfying $\growth{u}\leq M$, we have by construction that for each $\tau\in\Sigma_1(X)$
$$
\Delta_{\bar S(X),\tau} = \Delta_{S,\tau} \geq \Delta_{T_\tau,\tau} \geq \Delta_{T,\tau} = \Delta_{\bar T(X),\tau},
$$
where the first inequality holds because $\Delta_{T_{\tau'},\tau}\geq 0$ even for rays $\tau'\in\Sigma_1(X)$ different from $\tau$.  We infer that $\bar\sfn_{v,X}-\bar\sfn_{u,X}$ is convex along every ray $\tau\in\Sigma_1(X)$ and therefore convex on all of $N_\R$, i.e. $\bar S(X) - \bar T(X)\geq 0$.
\end{proof}

\begin{proof}[Proof of Theorem \ref{thm:volbd}]
Given $u,T$ as in the theorem, the $dd^c$-lemma gives us a unique function $\varphi_{T,X}\in L^1(X)$ such that $T = \bar T(X) + dd^c_X \varphi_{T,X}$ and $\int_{\trop(x) = 0} \varphi_{T,X} \,\twoform = 0$.  
Since 
$$
T - \bar T(X) = (T-\bar T) + (\bar T - \bar T(X)) = dd^c_X (\varphi_T + \bar\sfn_u\circ\trop - \bar\sfn_{u,X}\circ\trop),
$$
the normalizations of $\varphi_T$ and $\varphi_{T,X}$ imply that 
$$
\varphi_T = \varphi_{T,X} + \bar\sfn_{u,X}\circ\trop - \bar\sfn_u\circ\trop. 
$$
everywhere on $X$.  By Propositions \ref{prop:sfnpinch} and
\ref{prop:integrability} the last two terms satisfy the desired volume
estimates.  Therefore it will suffice to establish the desired volume estimates
for $\varphi_{T,X}$ instead of $\varphi_T$.  The advantage to this is that
Lemma \ref{lem:mdom} tells us $dd^c_X  \varphi_{T,X} +
\boundingcurrent(M,X)\geq 0$; i.e. $\varphi_{T,X}$ is
$\boundingcurrent(M,X)$-plurisubharmonic, where $\boundingcurrent(M,X)$ is
independent of $T$ and has continuous local potentials on $X$.  

The following lemma plays a key role in completing the proof.

\begin{lem}
\label{lem:supbd}
$0\leq \sup \varphi_{T,X} \leq C$ for some constant $C$ depending on $M$ and $X$ but not $T$.
\end{lem}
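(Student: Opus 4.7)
The lower bound $\sup_X\varphi_{T,X}\geq 0$ is immediate: the normalization $\int_{\torus_\R}\varphi_{T,X}\,\twoform=0$ forces $\varphi_{T,X}$ to take non-negative values somewhere on $\torus_\R$, so its supremum over $X$ is at least $0$.

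For the upper bound, my plan is to combine the standard pluripotential-theoretic $L^1$-compactness of normalized quasi-plurisubharmonic functions with a rotation-averaging reduction to convex analysis on $N_\R$. I will set $c:=\sup_X\varphi_{T,X}$ and $\psi:=\varphi_{T,X}-c\leq 0$, and first observe that $\psi$ is $\boundingcurrent(M,X)$-plurisubharmonic: indeed $\boundingcurrent(M,X)+dd^c_X\psi=\boundingcurrent(M,X)-\bar T(X)+T\geq 0$ by Lemma~\ref{lem:mdom} and positivity of $T$. Since $\boundingcurrent(M,X)$ has continuous local potentials on $X$ and is independent of $T$, the standard $L^1$-compactness of the family of $\boundingcurrent(M,X)$-psh functions normalized by $\sup=0$ yields a uniform bound $\|\psi\|_{L^1(X)}\leq C_0=C_0(M,X)$. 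The remaining task is to translate this global $L^1$ bound into a bound on $c$ using the normalization $\int_{\torus_\R}\psi\,\twoform=-c$.

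The translation will go through averaging over the $\torus_\R$-action. Because $\boundingcurrent(M,X)$ is homogeneous and hence $\torus_\R$-invariant, the average $\tilde\psi(x):=\int_{\torus_\R}\psi(g\cdot x)\,dg$ will again be $\boundingcurrent(M,X)$-psh, still satisfy $\tilde\psi\leq 0$, obey $\|\tilde\psi\|_{L^1(X)}\leq C_0$ by Jensen, and (by invariance) equal $\Psi\circ\trop$ for some $\Psi:N_\R\to\R$ with $\Psi(0)=\int_{\torus_\R}\psi\,\twoform=-c$ (normalizing $\twoform$ to be a probability measure on $\torus_\R$). Let $\Phi$ be the convex, $1$-homogeneous, non-negative potential for $\boundingcurrent(M,X)$ on $\torus$ with $\Phi(0)=0$ and $\growth{\Phi}\leq C'(M,X)$. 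Then $F:=\Phi+\Psi$ is convex on $N_\R$ — since the $\torus_\R$-invariant function $(\Phi\circ\trop)+\tilde\psi$ is plurisubharmonic on $\torus$ — with $F(0)=-c$ and $F\leq\Phi\leq C'\|\nvec\|$ because $\Psi\leq 0$.

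A short convexity estimate will then close the argument. For a fixed $r_0>0$ and $\|\nvec\|=1$, convexity of $F$ applied to $t\nvec=(t/r_0)(r_0\nvec)+(1-t/r_0)\cdot 0$ gives $F(t\nvec)\leq(t/r_0)C'r_0-(1-t/r_0)c$; taking $t=r_0/4$ yields $F\leq C'r_0/4-3c/4$ on the whole ball $B_{r_0/4}\subset N_\R$, which is $\leq -c/2$ as soon as $c\geq C'r_0$. Since $\Phi\geq 0$, this gives $\Psi=F-\Phi\leq -c/2$ on $B_{r_0/4}$, whence
$$
C_0\;\geq\;\|\tilde\psi\|_{L^1(X)}\;\geq\;\int_{\trop^{-1}(B_{r_0/4})}|\Psi\circ\trop|\,dV_X\;\geq\;\tfrac{c}{2}\,V_0,
$$
where $V_0:=\vol_X(\trop^{-1}(B_{r_0/4}))>0$ depends only on $X$. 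Rearranging will force $c\leq\max(C'r_0,\,2C_0/V_0)$, a constant depending only on $M$ and $X$. The main obstacle is this final quantitative convexity step — specifically, producing a ball of fixed geometric radius on which $F$ is uniformly very negative when $c$ is large; the compactness and averaging steps themselves are standard applications of pluripotential theory and of the torus symmetry.
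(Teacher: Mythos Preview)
Your argument is correct, and its skeleton coincides with the paper's: both use that $\varphi_{T,X}$ is $\boundingcurrent(M,X)$-plurisubharmonic, invoke the $L^1$-compactness of $\{\sup=0\}$-normalized $\boundingcurrent(M,X)$-psh functions to bound $\|\varphi_{T,X}-\sup\varphi_{T,X}\|_{L^1(X)}$, and then pass to the $\torus_\R$-average to reduce to a convex function on $N_\R$. The difference lies in the last step. The paper identifies the rotational average $h$ of $\varphi_{T,X}$ \emph{explicitly}: from $dd^c_X(h\circ\trop)=T_{ave}-\bar T(X)$ one gets $h=\sfn_u-\sfn_u(0)-\bar\sfn_{u,X}$, and then the standing inequalities $\sfn_u-\sfn_u(0)\leq\bar\sfn_u\leq\bar\sfn_{u,X}$ (Propositions~\ref{prop:homogenization} and~\ref{prop:sfnpinch}) give $h\leq 0$ \emph{everywhere} on $N_\R$. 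With a $\torus_\R$-invariant volume form this immediately yields $\int_X\varphi_{T,X}\,dV\leq 0$, hence $\sup\varphi_{T,X}\leq C$ with no further work. You instead use only the single value $\Psi(0)=-c$ together with the convexity of $F=\Phi+\Psi$ and the growth bound on $\Phi$, and then run a quantitative convexity estimate to propagate negativity to a ball. Your route is more self-contained---it does not appeal to the structural identities among $\sfn_u$, $\bar\sfn_u$, $\bar\sfn_{u,X}$---but it is longer; the paper's route is shorter precisely because it cashes in the convex-geometric meaning of $\bar T(X)$ already developed in \S\ref{sec:currents}. One minor point: your Jensen step $\|\tilde\psi\|_{L^1}\leq\|\psi\|_{L^1}$ needs the volume form to be $\torus_\R$-invariant (or else an extra harmless constant from bounded Jacobians); the paper makes this choice explicitly.
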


\begin{proof}
The normalization of $\varphi_{T,X}$ immediately gives $0\leq \sup\varphi_{T,X}$.  For any fixed $T$, compactness of $X$ and the fact that local potentials for $\bar T(X)$ are continuous (Theorem \ref{thm:canonicalreps}) implies that $\sup\varphi_{T,X}<\infty$.  

Fix a $\torus_\R$-invariant volume form $dV$ on $X$ with $\int_X dV = 1$.  
A standard argument (see \cite[Chapter 8]{GZ_book}) using Hartog's compactness theorem for
families of subharmonic functions gives that for any fixed choice of $S \in \pcc^+(X)$ that has bounded local
potentials the set \begin{align*}
\{\phi \in L^1(X) \, : \, dd^c_X \phi \geq - S \quad \mbox{and} \quad \sup \phi = 0\}
\end{align*}
is compact.

Applying this in the case $S = \boundingcurrent(M,X)$ we find that $\norm[L^1(X)]{\varphi_{T,X}-\sup_X\varphi_{T,X}}~\leq~C$ for some constant $C$ independent of $T$, implying that
$$
\sup_X \varphi_{T,X} \leq C + \int_X \varphi_{T,X}\,dV.
$$
It suffices to show therefore that the integral on the right is non-positive.  Since the volume form is $\torus_\R$ invariant, it suffices to show for every $\nvec\in N_\R$ that the average value
$$
h(\nvec) := \int_{\trop(x) = \nvec} \varphi_{T,X} \,\twoform
$$
of $\varphi_{T,X}$ on the torus $\{\trop(x) = \nvec\}$ is non-positive.  But 
\begin{align*}
dd^c_X (h\circ \trop) = T_{ave} - \bar T(X) = dd^c_X(\sfn_u\circ\trop - \bar\sfn_{u,X}\circ\trop),
\end{align*}
giving that the functions $h$ and $\sfn_u - \bar \sfn_{u,X}$ differ by a constant.   Since $h(0) = \bar\sfn_u(0) = \bar\sfn_{u,X}(0)$, we obtain:
$$
h = \sfn_u - \sfn_u(0) - \bar\sfn_{u,X} \leq \bar\sfn_u-\bar\sfn_{u,X} \leq 0.
$$
\end{proof}

Using Lemma \ref{lem:supbd} we now complete the proof of Theorem \ref{thm:volbd}. 
It tells us that $\varphi_{T,X}$ ranges within a compact
family of $\boundingcurrent(M,X)$-plurisubharmonic functions.  Theorem \ref{thm:volbd}
now follows from work of Zeriahi (see \cite[Corollary 4.3]{Zer01}) which gives
such estimates uniformly for any compact family of plurisubharmonic functions
on a ball in~$\C^n$.
\end{proof}

We conclude this subsection with some further facts about homogeneous elements of $\pcc^+(\torus)$ and their support functions.

\begin{thm}
\label{thm:continuity}
Let $(\bar T_j)\subset \pcc^+(\torus)$ be a sequence of homogeneous currents and $\bar\sfn_j$ be support functions for $\bar T_j$ that converge pointwise to some function $\bar\sfn:N_\R\to\R$. Then 
\begin{itemize}
 \item the convergence is actually uniform on compact subsets;
 \item the limit $\bar\sfn$ is convex and positively homogeneous;
 \item $(\bar T_j)$ converges weakly on any toric surface $X$ to the homogeneous current $\bar T\in\pcc^+(\torus)$ with support function $\bar\sfn$.
\end{itemize}
Conversely, if $(\bar T_j)$ converges weakly to some current $\bar T\in\pcc^+(\torus)$, then $\bar T$ is internal and homogeneous, and the support functions $\sfn_{\bar T_j}$ converge uniformly on compact sets to $\sfn_{\bar T}$.
\end{thm}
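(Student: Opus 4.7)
The plan is to establish the forward direction first and then obtain the converse by combining it with a compactness argument.

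For the first two conclusions of the forward direction, note that pointwise limits preserve convexity and positive homogeneity, and a standard convex-analysis fact guarantees that pointwise convergence of convex functions on an open convex subset of $\R^d$ upgrades automatically to uniform convergence on compact subsets. Thus $\bar\sfn_j\to\bar\sfn$ uniformly on compact subsets of $N_\R$, the limit $\bar\sfn$ is convex and positively homogeneous, and the potentials $\bar\sfn_j\circ\trop$ converge uniformly on compact subsets of $\torus$. Continuity of $dd^c$ under locally uniform convergence of potentials then gives $\bar T_j\to \bar T := dd^c(\bar\sfn\circ\trop)$ weakly as currents on $\torus$.

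To upgrade this to weak convergence on a toric surface $X$, I use Theorem~\ref{thm:canonicalreps}: the class $[\bar T_j]_X$ equals that of the external divisor $D_{j,X}\in\pcce(X)$ whose support function is the restriction $\bar\sfn_j|_{\Sigma_1(X)}$. Pointwise convergence on $\Sigma_1(X)$ yields cohomological convergence $[\bar T_j]_X\to[D_X]_X$, where $D_X$ has support function $\bar\sfn|_{\Sigma_1(X)}$; in particular the masses of the trivial extensions $\bar T_j\in\pcc^+(X)$ against a K\"ahler form are uniformly bounded. Any subsequential weak limit $S\in\pcc^+(X)$ then satisfies $S|_\torus=\bar T|_\torus$, so Skoda--El Mir and Siu decomposition give $S=\tilde{\bar T}+E$ for some effective external divisor $E\in\pcce(X)$. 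Testing cohomology against smooth closed forms shows $[S]_X=\lim[\bar T_j]_X=[\tilde{\bar T}]_X$, so $E$ is an effective principal external divisor; this forces $E=0$, since its support function would be simultaneously non-negative and linear, and the rays of $\Sigma_1(X)$ span $N_\R$ as a convex set. Uniqueness of the subsequential limit gives full weak convergence $\bar T_j\to\tilde{\bar T}$ on $X$.

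For the converse, assume $\bar T_j\to\bar T$ weakly. The same cohomology argument, now read in reverse on each toric surface $X$, shows $\sfn_{\bar T_j}(\nvec_\tau)\to\sfn_{\bar T}(\nvec_\tau)$ for every $\tau\in\Sigma_1(X)$, where the canonical (three-generator) normalization of $\sfn_{\bar T_j}$ ensures all support functions lie in a common affine slice of the space of representatives. Since every primitive vector in $N$ generates a ray of some $\Sigma_1(X)$, this yields pointwise convergence on all rational rays. Positive homogeneity and convexity then produce uniform bounds for $\sfn_{\bar T_j}$ on each compact subset of $N_\R$, hence equicontinuity, so Arzel\`a--Ascoli extracts a subsequence converging uniformly on compact sets to some convex, positively homogeneous $\bar\sfn$. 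By the forward direction the weak limit of the subsequence is $dd^c(\bar\sfn\circ\trop)$, which must equal $\bar T$; thus $\bar T$ is internal and homogeneous with $\sfn_{\bar T}=\bar\sfn$. Any other subsequential limit of $\sfn_{\bar T_j}$ would agree with $\sfn_{\bar T}$ on rational rays and, being convex, coincide with it everywhere, so the full sequence of support functions converges uniformly on compact sets.

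I expect the main obstacle to be the Siu-decomposition step pinning down the subsequential limit $S=\tilde{\bar T}$ on $X$. Three ingredients must mesh: trivial extension of positive currents with finite-growth support functions (Theorem~\ref{thm:lineargrowth}), continuity of the cohomology class under mass-bounded weak limits, and the toric fact that an effective principal external divisor on a complete toric surface is zero. Each is routine in isolation, but their combination with the convex-analytic pointwise-to-uniform principle is what drives both directions of the theorem.
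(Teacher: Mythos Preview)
Your forward direction is correct, though it takes a longer path than the paper's. The paper avoids the Siu-decomposition/subsequential-limit argument entirely by using the identity from Theorem~\ref{thm:canonicalreps} directly on $X$: one has $dd^c_X(\bar\sfn_j\circ\trop)=\bar T_j-D_{j,X}$, and pointwise (hence locally uniform) convergence of the homogeneous $\bar\sfn_j$ gives both $D_{j,X}\to D_X$ and, via the integrability estimate in Proposition~\ref{prop:integrability}, $\bar\sfn_j\circ\trop\to\bar\sfn\circ\trop$ in $L^1(X)$. Weak continuity of $dd^c_X$ then yields $\bar T_j\to\bar T$ on $X$ in one stroke. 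Your route via mass bounds and effective principal external divisors works, but is heavier machinery for the same conclusion.

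The converse direction, however, has a genuine gap. You write that ``the same cohomology argument, now read in reverse on each toric surface $X$'' gives $\sfn_{\bar T_j}(\nvec_\tau)\to\sfn_{\bar T}(\nvec_\tau)$ for $\tau\in\Sigma_1(X)$. But the hypothesis is weak convergence on $\torus$ only, not on any compactification $X$; cohomology classes on $X$ are detected by pairing against closed forms on $X$, which are never compactly supported in $\torus$. So you have no access to $[\bar T_j]_X$ from the given data, and the first step of your converse never gets started. (Nor do you yet know that $\bar T$ is internal or that $\sfn_{\bar T}$ exists.)

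The paper closes this gap by a different mechanism: weak convergence $\bar T_j\to\bar T$ on $\torus$ yields, by standard pluripotential theory, convergence of suitably chosen global potentials $u_j\to u$ in $L^1_{loc}(\torus)$. Since each $\bar T_j$ is $\torus_\R$-invariant (hence so is the weak limit $\bar T$), one may average to take $u_j=\tilde\sfn_j\circ\trop$ and $u=\tilde\sfn\circ\trop$ for convex $\tilde\sfn_j,\tilde\sfn$ on $N_\R$. Then $L^1_{loc}$ convergence of the compositions is $L^1_{loc}$ convergence of the convex functions themselves, which upgrades to locally uniform convergence; this puts one back in the forward-direction setting and identifies $\bar T$ as homogeneous. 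The key missing idea in your argument is this passage through $L^1_{loc}$-convergent potentials on $\torus$ combined with rotational averaging, which extracts the convex-function data directly from convergence on $\torus$ without ever invoking a compactification.
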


\begin{proof}
Assume that the support functions $\bar\sfn_j$ converge pointwise to some limit function $\bar\sfn$.
Since $\bar\sfn_j(\nvec)\to \bar\sfn (\nvec)$ on any finite set of vectors $\nvec$ generating $N_\R$ as convex set, it follows from convexity and homogeneity that the sequence $(\bar\sfn_j)$ is bounded above uniformly on compact subsets of $N_\R$.  Since the limit $\bar\sfn$ is finite, it follows again from convexity of $\bar\sfn_j$ that the convergence is uniform and $\bar\sfn$ is convex and positively homogeneous.  Thus $\bar\sfn$ is a support function for some homogeneous current $\bar T \in \pcc^+(\torus)$.  On any fixed toric surface $X$, Theorem \ref{thm:canonicalreps} tells us that $dd^c_X(\bar\sfn_j\circ\trop) = \bar T_j - D_j$ and $dd^c_X(\bar\sfn\circ\trop) = \bar T - D$, where $D_j,D\in\pcce(X)$ have support functions determined by the restrictions of $\bar\sfn_j,\sfn$ to rays in $\Sigma_1(X)$.  Uniform local convergence $\bar\sfn_j\to\bar\sfn$ and homogeneity imply that $D_j\to D$.  Homogeneity of support functions further implies that $\bar\sfn_j\circ\trop\to\bar\sfn\circ\trop$ in $L^1(X)$.  Weak continuity of $dd^c_X$ then gives $\bar T_j-D_j \to \bar T-D$.  Hence $\bar T_j\to \bar T$.

Now assume instead that $\bar T_j\to \bar T$ weakly in $\pcc(\torus)$.  Then there exist plurisubharmonic potentials $u_j,u\in\psh(\torus)$ for $\bar T_j$ and $\bar T$ such that $u_j\to u$ in $L^1_{loc}(\torus)$.  Since $\bar T_j$ and $\bar T$ are invariant under the action of $\torus_\R$, we may assume that $u_j$ and $u$ are, too.  That is, $u_j =\tilde\sfn_j\circ\trop$ and $u=\tilde\sfn\circ\trop$ for convex functions $\tilde\sfn_j,\tilde\sfn:N_\R\to \R$.  Convergence $u_j\to u$ in $L^1_{loc}$ implies that $\tilde\sfn_j\to\tilde\sfn$ uniformly locally.  Since two support functions for the same internal current differ by an affine function, it follows that the $\tilde\sfn_j$ are all positively homogeneous.  Hence we are back in the context of the arguments in the first paragraph of this proof.
\end{proof}

\begin{cor} If $(\bar T_j)\subset\pcc^+(\torus)$ is a weakly convergent sequence of homogeneous currents, then $\growth{\bar\sfn_{T_j}}$ is bounded uniformly in $j$.
\end{cor}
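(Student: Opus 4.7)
The plan is to combine the converse direction of Theorem \ref{thm:continuity} with the formula $\growth{\bar\sfn} = \max_{\norm{\nvec}=1}\bar\sfn(\nvec)$ from Proposition \ref{prop:homogenization}.

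First, I would invoke Theorem \ref{thm:continuity}: weak convergence $\bar T_j\to\bar T$ of homogeneous currents in $\pcc(\torus)$ forces $\bar T$ itself to be internal and homogeneous, and the normalized support functions $\sfn_{\bar T_j}$ converge uniformly on compact subsets of $N_\R$ to $\sfn_{\bar T}$. In particular, $|\sfn_{\bar T_j}(0)|$ and $\sup_{\norm{\nvec}=1}|\sfn_{\bar T_j}(\nvec)|$ are uniformly bounded in $j$.

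Second, I would identify the homogenization $\bar\sfn_{T_j}$ of $\sfn_{\bar T_j}$ directly in terms of $\sfn_{\bar T_j}$ itself. Since each $\bar T_j$ is homogeneous, some support function of $\bar T_j$ is already positively homogeneous, and any other support function---in particular the normalized one $\sfn_{\bar T_j}$---differs from it by an affine function. Thus $\sfn_{\bar T_j}$ decomposes as a positively homogeneous convex function plus an affine term, and taking the limit of $\sfn_{\bar T_j}(t\nvec)/t$ as $t\to\infty$ kills the additive constant in that affine term, giving
\[
\bar\sfn_{T_j}(\nvec) \;=\; \sfn_{\bar T_j}(\nvec) - \sfn_{\bar T_j}(0)
\]
for every $\nvec\in N_\R$.

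Third, I would apply Proposition \ref{prop:homogenization} to the positively homogeneous convex function $\bar\sfn_{T_j}$ and combine it with the identity above:
\[
\growth{\bar\sfn_{T_j}} \;=\; \max_{\norm{\nvec}=1}\bar\sfn_{T_j}(\nvec) \;=\; \max_{\norm{\nvec}=1}\left(\sfn_{\bar T_j}(\nvec) - \sfn_{\bar T_j}(0)\right).
\]
By the first step this quantity is uniformly bounded in $j$, yielding the corollary.

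The only subtle point is the identity in the second step, which records how the additive normalization of $\sfn_T$ interacts with homogeneity of $T$; once that is in hand the conclusion is a direct combination of Theorem \ref{thm:continuity} and Proposition \ref{prop:homogenization}, and I do not anticipate any serious obstacle.
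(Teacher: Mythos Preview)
Your argument is correct and follows the same route the paper intends: the corollary is stated without proof as an immediate consequence of Theorem~\ref{thm:continuity}, and your use of that theorem together with the formula $\growth{\bar\sfn}=\max_{\norm{\nvec}=1}\bar\sfn(\nvec)$ from Proposition~\ref{prop:homogenization} is exactly the intended deduction. Your second step is a bit more elaborate than necessary---the normalized support function $\sfn_T$ is itself a particular choice of $\bar\sfn_u$ and hence already positively homogeneous (the normalization adds a linear, not a general affine, function), so $\sfn_{\bar T_j}(0)=0$ and $\bar\sfn_{T_j}=\sfn_{\bar T_j}$ directly---but your extra care does no harm and correctly handles the paper's slightly loose ``up to affine functions'' phrasing.
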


\begin{cor}\label{COR:CLOSED_SETS_POSITIVE_HOMOG_CURR} For any toric surface $X$, the set of positive homogeneous currents is closed in $\pcc^+(X)$ and therefore also in $\pcc^+(\torus)$.
\end{cor}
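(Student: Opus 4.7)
The plan is to deduce this corollary directly from Theorem \ref{thm:continuity}, combined with the trivial-extension construction from Theorem \ref{thm:canonicalreps} that identifies positive homogeneous currents on $\torus$ with their canonical lifts to $\pcc^+(X)$.

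First I would handle closure in $\pcc^+(X)$. Start with a sequence $(\bar T_j)$ of positive homogeneous currents, viewed as trivial extensions in $\pcc^+(X)$, and suppose $\bar T_j \to T$ weakly in $\pcc^+(X)$. Since any continuous test form with compact support in $\torus \subset X$ extends by zero to a continuous form on $X$, restriction is weakly continuous, and we obtain $\bar T_j \to T|_\torus$ weakly in $\pcc^+(\torus)$. The converse direction of Theorem \ref{thm:continuity} then identifies $\bar T := T|_\torus$ as an internal homogeneous current and furnishes locally uniform convergence of support functions $\sfn_{\bar T_j} \to \sfn_{\bar T}$. Feeding this uniform convergence back into the forward direction of Theorem \ref{thm:continuity} upgrades the convergence to weak convergence $\bar T_j \to \bar T_X$ in $\pcc^+(X)$, where $\bar T_X$ denotes the trivial extension. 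Uniqueness of weak limits in $\pcc^+(X)$ then forces $T = \bar T_X$, which exhibits $T$ as a positive homogeneous current.

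The $\pcc^+(\torus)$ statement I would then derive as indicated by the word ``therefore'': given a weakly convergent sequence of positive homogeneous currents $\bar T_j \to \bar T$ in $\pcc^+(\torus)$, Theorem \ref{thm:continuity} gives uniform convergence of support functions and thus weak convergence of the trivial extensions $\bar T_j \to \bar T_X$ in $\pcc^+(X)$. The closure statement already established on $X$ then forces $\bar T_X$ to be a positive homogeneous current, which in turn means $\bar T$ is homogeneous.

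I do not foresee a real obstacle, since the substantive content is already packaged inside Theorem \ref{thm:continuity}; the corollary is essentially a bookkeeping statement recording that, when restricted to homogeneous currents, the weak topologies on $\pcc^+(\torus)$ and on $\pcc^+(X)$ agree. The one point that needs a small justification is that weak convergence in $\pcc^+(X)$ restricts to weak convergence on the open subset $\torus$, which follows immediately from extension-by-zero of test forms.
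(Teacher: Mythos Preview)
Your proposal is correct and follows essentially the same approach the paper has in mind: the corollary is stated without proof immediately after Theorem \ref{thm:continuity}, and your argument simply unpacks that theorem's two directions. One minor remark: in your second paragraph the detour through $X$ is unnecessary, since the converse direction of Theorem \ref{thm:continuity} already asserts directly that a weak limit in $\pcc^+(\torus)$ of homogeneous currents is homogeneous; but this redundancy does no harm and faithfully tracks the ``therefore'' in the statement.
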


\section{Toric currents and cohomology classes}
\label{sec:tcurrents}

The papers \cite{BFJ08} and \cite{Can11} pioneered the idea that to better understand dynamics of a rational surface map $f:X\tto X$, one should consider not just the pullback action $f^*$ on $\hoo(X)$ but simultaneously also the actions on $\hoo(Y)$ for all surfaces $Y$ obtained by blowing up sequences of points in $X$.  In this section and the next, we imitate the inverse limit constructions from those papers for both $(1,1)$ cohomology classes and currents, but instead of considering all possible surfaces obtained by blowups from e.g. $\cp^2$, we restrict attention to toric surfaces only.

If $Z\succ Y\succ X$ are all toric surfaces, then we have $\pi_{Z X*} = \pi_{Y X*}\pi_{Z Y*}$ and $\pi_{Z X}^* = \pi_{Z Y}^*\pi_{Y X}^*$ for both cohomology classes and currents.  Hence the following makes sense.

\begin{defn}
A \emph{toric (Weil) class} is a collection 
$$
\alpha = \{\alpha_X \in \hoo(X):X\text{ is a toric surface}\}
$$ 
satisfying $\pi_{YX*} \alpha_Y = \alpha_X$ whenever $Y\succ X$.
Similarly, a \emph{toric current} is a collection $T =\{ T_X\in\pcc(X)\}$, also indexed by toric surfaces and subject to the same compatibility.  
\end{defn}

\noindent
We call $\alpha_X$ and $T_X$ the \emph{incarnations} of $\alpha$ and $T$ on $X$, letting $\hoo(\rzt)$ denote the $\R$-vector space of all toric classes and $\pcc(\rzt)$ the space of all toric currents.
To each $T\in\pcc(\rzt)$ we associate the class $\ch{T}\in\hoo(\rzt)$ whose incarnation on any toric surface $X$ is the class $\ch[X]{T_X}\in\hoo(X)$.  This is well-defined since pushforward by rational maps preserves cohomological equivalence.  We will say that a toric current $T$ (and by extension its class $\ch{T}$) is \emph{positive} if all its incarnations $T_X$ are positive (respectively), writing $T\geq S$ if $T-S$ is positive.  Similarly a class $\alpha\in \hoo(\rzt)$ (and by extension, any $T\in\pcc(\rzt)$ representing $\alpha$) is \emph{nef} if all its incarnations are nef.

Recall that $\pcci(\rzt)$ was defined in the paragraph after the proof of
Proposition \ref{prop:internalcurrents}.  As explained there, any
internal current $T\in\pcci(\rzt)$ is a toric current with incarnations $T_X
\in~\pcci(X)$ obtained by trivial extension of $T$.  By Corollary
\ref{cor:internalclass} positive internal currents are nef.
A pole $C_\tau\subset\rzt$ is a toric current, with incarnations $C_{\tau,X}$
equal to either $C_\tau$ or~$0$ depending on whether or not
$\tau\in\Sigma_1(X)$.  In particular $C_\tau$ is positive but not nef, since in a sufficiently dominant $X$ one has $C_\tau^2 < 0$.  

More generally, we call a toric current $D$ an \emph{external divisor} if $D_X\in \pcce(X)$ for each toric surface $X$.  Formally, we have
$$
D = \sum c_\tau C_\tau,
$$
where the sum is over all rational rays $\tau\subset N_\R$, and then on $X$ we have $D_X = \sum_{\tau\in\Sigma_1(X)} c_\tau C_\tau$.  Let $\pcce(\rzt)$ denote the set of all external divisors.  
The external/internal decomposition in Proposition \ref{prop:eidecomp1} is invariant under pushforward by birational morphisms $\pi_{X Y}$, so  we obtain the same decomposition for toric currents, i.e. 
$
\pcc(\rzt) = \pcce(\rzt)\oplus \pcci(\rzt).
$

We extend the notion of support function $\sfn_D$ from external divisors on a toric surface $X$ to any external divisor $D\in\pcce(\rzt)$ by declaring that $\sfn_D$ is the pointwise limit of the support functions $\sfn_{D_X}$ for incarnations.  In general, $\sfn_D$ is defined only on rational rays and not all of $N_\R$, but things are better when $D$ is nef.  

\begin{prop}
\label{prop:nefsupport}
Any $\alpha\in\hoo(\rzt)$ is uniquely represented by an external divisor $D = \sum c_\tau C_\tau$, normalized by the condition that the coefficients $c_\tau$ are the same for all $\tau\in\Sigma_1(\cp^2)$.  Moreover, for any $D\in \pcce(\rzt)$, we have
\begin{itemize}
 \item $D$ is nef if and only if $\sfn_D$ extends continuously to a convex function on all of $N_\R$;
 \item $\ch{D} = 0$ if and only if $\sfn_D:N\to \R$ is linear;
\end{itemize} 
\end{prop}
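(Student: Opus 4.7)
The plan is to reduce each assertion to the corresponding fact about external divisors on a single toric surface (recalled in Section~\ref{sec:rztoric}) via a compatibility or limiting argument across the inverse system of toric surfaces. For assertion (1), on each toric surface $X$ the incarnation $\alpha_X$ is represented by some external divisor (every class on a smooth rational surface is linearly equivalent to an external one), and two such representatives differ by a principal external divisor, equivalently by one with linear support function. Any linear function on $N_\R\cong\R^2$ taking equal values on the primitive generators $(1,0),(0,1),(-1,-1)$ of $\Sigma_1(\cp^2)$ must vanish identically, so the normalization pins down a unique $D_X$ on each $X\succ\cp^2$. Since $\pi_{YX*}$ preserves both class and normalization, uniqueness of $D_X$ forces $\pi_{YX*}D_Y=D_X$ whenever $Y\succ X$, and the $D_X$ assemble into $D\in\pcce(\rzt)$ representing $\alpha$. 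Assertion (3) is then immediate from the single-surface characterization: if $\sfn_D$ is linear on $N$, the piecewise-linear extension $\sfn_{D_X}$ over each $\Sigma(X)$ coincides with a single global linear function, so every $D_X$ is principal and $\ch{D}=0$; conversely, if $\ch{D}=0$ each $\sfn_{D_X}$ is linear on $N_\R$, and any two such linear functions agree on the spanning pair of rays in any common refinement $\Sigma_1(X)\cap\Sigma_1(Y)$, so they coincide as a single linear function whose restriction to $N$ is $\sfn_D$.

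The bulk of the work is the nef/convex equivalence~(2). For the forward direction, nefness of $D$ gives that each $\sfn_{D_X}$ is convex, positively homogeneous, and finite-valued. The key monotonicity is that $\sfn_{D_Y}\leq \sfn_{D_X}$ pointwise on $N_\R$ whenever $Y\succ X$: on each sector $\sigma\in\Sigma_2(X)$ with bounding primitive generators $\nvec_1,\nvec_2$, $\sfn_{D_X}|_\sigma$ is the linear function with prescribed values $c_{\tau_j}$ at $\nvec_j$, while $\sfn_{D_Y}|_\sigma$ is convex with the same boundary values, so convexity on the segment $[\nvec_1,\nvec_2]$ plus positive homogeneity force $\sfn_{D_Y}\leq \sfn_{D_X}$ on $\sigma$. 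Thus $\tilde\sfn:=\inf_X\sfn_{D_X}$ is the pointwise infimum of a decreasing net of convex positively homogeneous functions, hence convex and positively homogeneous, and on each rational ray $\tau$ stabilizes to $\sfn_D(\nvec_\tau)$ as soon as $\tau\in\Sigma_1(X)$. Interpreting each $\sfn_{D_X}$ as the support function of a compact convex polygon $K_X\subset N_\R^*$, the $K_X$ form a decreasing family of non-empty compact convex sets whose intersection $K=\bigcap_X K_X$ is non-empty, compact, and convex; its support function $\sfn_K$ agrees with $\tilde\sfn$ and is finite everywhere on $N_\R$, providing the desired continuous convex extension of $\sfn_D$.

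For the converse, suppose $\sfn_D$ admits a convex extension $\tilde\sfn$ on $N_\R$. Convexity of each piecewise-linear $\sfn_{D_X}$ reduces to a local check across each ray $\tau\in\Sigma_1(X)$: writing the primitive generator $\nvec_\tau$ as a positive combination $\alpha\nvec_1+\beta\nvec_2$ of the primitive generators of the two adjacent rays, convexity of $\tilde\sfn$ yields $\sfn_{D_X}(\nvec_\tau)=\tilde\sfn(\nvec_\tau)\leq \alpha\sfn_{D_X}(\nvec_1)+\beta\sfn_{D_X}(\nvec_2)$, which is precisely the inequality needed for the piecewise-linear function to be convex across $\tau$. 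Hence every $D_X$ is nef and so is $D$. The most delicate step in the overall argument is the finiteness of $\tilde\sfn$ on irrational rays: the convex-geometric viewpoint via the sets $K_X$ circumvents the possibility of $\tilde\sfn$ taking the value $-\infty$, by reducing finiteness to non-emptiness of a decreasing intersection of compact convex sets.
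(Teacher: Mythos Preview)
Your argument is correct and follows the paper's approach of reducing to the single-surface characterizations from \S\ref{sec:rztoric}; the construction of the compatible normalized family $\{D_X\}$ and the linearity criterion match the paper's proof essentially verbatim. Where you go beyond the paper is in the nef/convex equivalence: the paper simply asserts that this ``follows from the corresponding facts for support functions on a fixed toric surface,'' whereas you supply a genuine argument via the monotonicity $\sfn_{D_Y}\leq\sfn_{D_X}$ for $Y\succ X$ and a decreasing-compact-sets argument in $N_\R^*$ to secure finiteness of the extension on irrational rays. Your route is correct; a marginally quicker alternative for that finiteness step is to note that subadditivity of each convex $1$-homogeneous $\sfn_{D_X}$ gives the uniform lower bound $\sfn_{D_X}(v)\geq -\sfn_{D_X}(-v)\geq -\sfn_{D_{\cp^2}}(-v)$, which already forces $\inf_X\sfn_{D_X}$ to be finite everywhere.
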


\begin{proof}
For each toric surface $X$, let $D_X\in\pcce(X)$ be an external divisor representing $\alpha_X$.  If $X\succ \cp^2$, then we can add a linear function to $\sfn_{D_X}$ to normalize so that $\sfn_{D_X}(v_\tau)$ is the same for each of the three primitive vectors $v_\tau$ that generate rays in $\Sigma_1(\cp^2)$.  Then $D_X$ is uniquely determined by its class in $\hoo(X)$.  If 
$Y\succ X$ is another toric surface, then $\pi_{YX*} D_Y$ is also a normalized representative of $\alpha_X$ and therefore equal to $D_X$.  Hence the external divisor $D = \{D_X\}\in \pcce(\rzt)$ represents $\alpha$.  That $\sfn_D$ is linear if and only if $\alpha = 0$ and convex if and only if $\alpha$ is nef follow from the corresponding facts for support functions for external divisors on a fixed toric surface.
\end{proof}

We endow both $\hoo(\rzt)$ and $\pcc(\rzt)$ with the product topology, declaring that $T_j\to T\in\pcc(\rzt)$ if $T_{j,X}\to T_X\in\pcc(X)$ for each toric surface $X$.  The assignment $T\mapsto \ch{T}$ of currents to classes is then continuous.
As is the case on individual surfaces $X$, external divisors constitute a closed subspace of $\pcc(\rzt)$, but internal currents do not.  So when a sequence $(T_j)\subset\pcc(\rzt)$ converges, the internal and external components of the terms $T_j$ need not converge individually.  

\begin{defn} Given $S,T\in\pcc(\rzt)$ and a toric surface $X$, we say that $S$ and $T$ are \emph{cohomologous in $X$} if $\ch[X]{S} = \ch[X]{T} \in\hoo(X)$.  We say that $S$ and $T$ are \emph{completely cohomologous} if this is true for every $X$, i.e. if $\ch{S}=\ch{T}$.
\end{defn}

By the $dd^c$-lemma, $S$ and $T$ are cohomologous in $X$ if and only if $S_X-T_X = dd^c\varphi_X$ for some $\varphi_X\in \dpsh(X)$.  When $S$ and $T$ are completely cohomologous, one can use essentially the same potential for $S-T$ in any other toric surface:

\begin{prop}
\label{prop:tamepotential}
If $T\in\pcc(\rzt)$ is completely cohomologous to $0$, then there is a function $\varphi\in\dpsh(\torus)$ such that for every toric surface $X$, we have $\varphi\in\dpsh(X)$ and $T_X = dd^c_X \varphi$. 
\end{prop}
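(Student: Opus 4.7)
The plan is to exhibit a potential on $\torus$ that extends to each toric surface $X$ by using the $dd^c$-lemma on each $X$ separately and then checking that the various potentials all restrict to the same function on $\torus$.

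First, for each toric surface $X$, apply the $dd^c$-lemma (as recalled in \S\ref{sec:background}) to produce $\varphi_X \in \dpsh(X)$ with $dd^c_X \varphi_X = T_X$. Normalize by requiring $\int_{\torus_\R} \varphi_X\,\eta = 0$; since pluriharmonic functions on a compact connected surface are constants, $\varphi_X$ is then uniquely determined.

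The heart of the argument is to show that for any $Y \succ X$ one has the stronger identity $T_Y = \pi_{YX}^* T_X$, not just $\pi_{YX*}T_Y = T_X$. Write $E := T_Y - \pi_{YX}^* T_X$. Since $\pi_{YX*}\pi_{YX}^* = \id$ on $\pcc(X)$ for the birational morphism $\pi_{YX}$, the compatibility hypothesis gives $\pi_{YX*} E = 0$, so $E$ is a closed current whose support lies in the $\pi_{YX}$-exceptional locus. By Siu's theorem $E$ is therefore an $\R$-divisor in $\div(\exc(\pi_{YX}))$. On the other hand, $T_Y$ is cohomologous to $0$ in $Y$ by hypothesis, and $\pi_{YX}^* T_X$ is cohomologous to $0$ in $Y$ because $T_X$ is cohomologous to $0$ in $X$ and pullback respects cohomological equivalence; hence $E$ is cohomologous to $0$ in $Y$ as well. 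Since the intersection form on $\div(\exc(\pi_{YX}))$ is negative definite (as recalled in \S\ref{subsec:ratmaps}), the only cohomologically trivial element of this space is $0$. Thus $E = 0$ and $T_Y = \pi_{YX}^* T_X$.

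It follows that $T_Y = dd^c_Y(\varphi_X \circ \pi_{YX})$. Since $\pi_{YX}$ restricts to the identity on $\torus_\R$, the function $\varphi_X \circ \pi_{YX}$ also integrates to $0$ over $\torus_\R$ against $\eta$, so by uniqueness of the normalized potential on $Y$ we get $\varphi_Y = \varphi_X \circ \pi_{YX}$ on $Y$. Restricting to $\torus$ (where $\pi_{YX}$ is the identity), this says $\varphi_Y|_\torus = \varphi_X|_\torus$. The common function $\varphi := \varphi_X|_\torus \in \dpsh(\torus)$ is therefore well-defined, extends to $\varphi_X \in \dpsh(X)$ on every toric surface $X$, and satisfies $dd^c_X \varphi = T_X$ as required.

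The main obstacle in this plan is establishing $T_Y = \pi_{YX}^* T_X$ exactly, rather than up to an exceptional divisor; this is precisely the point where the complete cohomological triviality hypothesis, combined with negative definiteness of the intersection form on exceptional divisors, is essential.
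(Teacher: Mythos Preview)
Your proof is correct and follows essentially the same approach as the paper's: both establish the key identity $T_Y = \pi_{YX}^* T_X$ by observing that the difference is an exceptional divisor cohomologous to zero, hence zero by negative definiteness of the intersection form on $\div(\exc(\pi_{YX}))$. Your version is slightly more explicit about the normalization and the compatibility of the $\varphi_X$ across surfaces, but the argument is the same.
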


\noindent Note for purposes of integration that since $X\setminus \torus$ is a measure zero subset of $X$, the extension of $\varphi$ to $X$ is essentially unique. We will call $\varphi$ a potential for $T$.  It is, by compactness, unique up to additive constant. 

\begin{proof}
This is similar to the proof of Corollary \ref{cor:internalclass}.  Suppose $Y\succ X$ and let $\pi = \pi_{Y X}$.  Let $\varphi_X\in L^1(X)$ be a potential for $T_X$.  Then on the one hand $\varphi_X\circ\pi$ is a potential for $\pi^* T_X$.  One the other hand, since $\pi_*\pi^* = \id$, we have that $\pi^* T_X - T_Y$ is a divisor supported on $\exc(\pi)$ and cohomologous to $0$.  The only such divisor is trivial, so $T_Y = \pi^* T_X$.  Hence $T_Y := dd^c_Y \varphi$ where $\varphi := \varphi_X|_\torus$.
\end{proof}

The following reformulation of Theorem \ref{thm:canonicalreps} is now immediate from definitions.

\begin{thm}
\label{thm:canonicalreps2}
Any positive internal current $T\in\pcci(\rzt)$ is completely cohomologous to its
homogenization $\bar T$.  Moreover, if $\bar\sfn$ is a support function for
$\bar T$, and $D\in\pcce(\rzt)$ is the external divisor with support function $\bar\sfn$,
then $T$ and $D$ are also completely cohomologous and $\bar\sfn\circ\trop$ is a
potential for $\bar T - D$. 
\end{thm}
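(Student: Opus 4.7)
The plan is to recognize Theorem \ref{thm:canonicalreps2} as essentially a direct reformulation of Theorem \ref{thm:canonicalreps} in the inverse-limit language of toric currents, once one checks that the divisors $D_{u,X}$ produced on individual toric surfaces $X$ assemble compatibly into a single external toric divisor. There are only two items to verify: (i) the assembly of $\{D_{u,X}\}$ into an element $D\in\pcce(\rzt)$ with support function $\bar\sfn$, and (ii) the fact that ``completely cohomologous'' is literally cohomologous on every toric surface.

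First, since $T\in\pcci^+(\rzt)$ extends trivially to every toric surface, Theorem \ref{thm:lineargrowth} supplies a global potential $u\in\psh(\torus)$ for $T$ with $\growth{\sfn_u}<\infty$, so that $\bar T = dd^c(\bar\sfn_u\circ\trop)$ is well-defined. Any support function $\bar\sfn$ for $\bar T$ differs from $\bar\sfn_u$ by a linear function on $N_\R$; adding the pullback of this linear function through $\trop$ to $u$ leaves both $T$ and $\bar T$ unchanged, so I may arrange that $\bar\sfn = \bar\sfn_u$.

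Second, for each toric surface $X$ let $D_X:= D_{u,X}\in\pcce(X)$ be the divisor of Theorem \ref{thm:canonicalreps}, i.e.\ the one with coefficient $\bar\sfn(v_\tau)$ at each $\tau\in\Sigma_1(X)$. For $Y\succ X$, pushforward by $\pi_{YX}$ fixes $C_\tau$ for $\tau\in\Sigma_1(X)$ and contracts $C_\tau$ to a $\torus$-invariant point for $\tau\in\Sigma_1(Y)\setminus\Sigma_1(X)$; hence $\pi_{YX*}D_Y$ is again the divisor on $X$ with coefficients $\bar\sfn(v_\tau)$, so $\pi_{YX*}D_Y = D_X$. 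Thus $\{D_X\}$ assembles into a toric external divisor $D\in\pcce(\rzt)$ whose support function, in the sense of the paragraph before Proposition \ref{prop:nefsupport}, is precisely $\bar\sfn$.

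Finally, Theorem \ref{thm:canonicalreps} supplies on each $X$ the identity $dd^c_X(\bar\sfn\circ\trop) = \bar T_X - D_X$, which shows $\bar T$ and $D$ are cohomologous on every $X$ (i.e.\ completely cohomologous) and identifies $\bar\sfn\circ\trop\in\dpsh(\torus)$ as a universal potential for $\bar T-D$ in the sense of Proposition \ref{prop:tamepotential}; integrability on each $X$ is Proposition \ref{prop:integrability}. To close the loop, the companion identity $dd^c_X(\sfn_u\circ\trop) = T_{ave,X} - D_X$ gives $T_{ave}$ and $D$ completely cohomologous, while $T_X$ and $T_{ave,X}$ are cohomologous on each $X$ because the compact torus $\torus_\R$ acts on $X$ by automorphisms isotopic to the identity (the same observation used in the proof of Corollary \ref{cor:internalclass}), producing $T_{ave}$ from $T$ by averaging. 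Chaining $T\sim T_{ave}\sim \bar T\sim D$ completes the proof. There is no real obstacle — the argument is bookkeeping — the one subtlety being the initial normalization that matches the abstract $\bar\sfn$ in the statement with the concrete homogenization $\bar\sfn_u$ coming from a chosen potential.
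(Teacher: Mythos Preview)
Your proof is correct and is exactly the unpacking the paper omits: the paper gives no argument, stating only that the theorem ``is now immediate from definitions'' as a reformulation of Theorem \ref{thm:canonicalreps}. You have supplied precisely those details (compatibility of the $D_{u,X}$ under pushforward and the chain $T\sim T_{ave}\sim\bar T\sim D$), so your approach matches the paper's intent.
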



We can now prove Theorem \ref{thm:mainB}, which was stated in the introduction.

\begin{proof}[Proof of Theorem \ref{thm:mainB}]
If $\bar T\in\pcci^+(\rzt)$ is homogeneous, then
$\ch{\bar T}\in\hoo(\rzt)$ is nef by Corollary~\ref{cor:internalclass}.
If instead $\alpha\in\hoo(\rzt)$ is nef, represented by $D\in\pcce(\rzt)$, then the support function $\sfn_D$
for $D$ is convex by Proposition \ref{prop:nefsupport}, and Theorem
\ref{thm:canonicalreps2} tells us that $D$ is completely cohomologous to the
positive homogeneous current $\bar T$ with the same support function.  That is,
$\ch{\bar T} = \alpha$. 

If $\bar{T}'\in\pcci^+(\rzt)$ is a another homogeneous current with $\ch{\bar
T'} = \alpha$, and $\sfn$ is a support function for $\bar{T}'$, then the
external divisor $D'$ with the same support function is cohomologous to
$\bar{T}'$ and therefore also $D$.  It follows that $\sfn$ differs from
$\sfn_D$ by a linear function $h:N_\R\to\R$.  Hence $\bar T - \bar T' =
dd^c_\torus (h\circ\trop) = 0$ on $\torus$, and therefore also on any toric
surface, since they are internal currents.  This proves that the continuous map
$\bar T \mapsto \ch{\bar T}$ from positive homogeneous currents to toric
classes is a bijection.

It remains to see that the inverse, carrying a nef class $\alpha\in\hoo(\rzt)$
to its unique positive homogeneous representative $\bar T_\alpha$, is also
continuous.  The map $\alpha\mapsto D_\alpha\in\pcce(\rzt)$ sending $\alpha$ to
its normalized representative $D_\alpha = \sum c_{\tau,\alpha}
C_\tau\in\pcce(\rzt)$ has the property that for each $\tau \in N$ the
coefficient $c_{\tau,\alpha}$ depends continuously on $\alpha$.  Therefore the
corresponding support functions $\bar \psi_\alpha$ depend continuously on
$\alpha$ and Theorem \ref{thm:continuity} implies that $T_\alpha = dd^c
(\bar \psi_\alpha \circ \trop)$ depends continuously on $\alpha$ as well.
\end{proof}


\begin{rem}
\label{REM:UNIQUE_HOMOGENEOUS_CURRENT}
To reiterate, Theorem \ref{thm:mainB} says that any nef $\alpha \in \hoo(\rzt)$ is represented by a unique homogeneous current $\bar{T} \in \pcci^+(\rzt)$; whereas for any fixed toric surface $X$, each nef class $\alpha_X \in \hoo(X)$ is represented by many different homogeneous currents.
\end{rem}

Given a nef class $\alpha_X\in\hoo(X)$ on a particular toric surface, the set of $T\in\pcc^+(X)$ representing $\alpha_X$ is compact.  So Theorems \ref{thm:mainB}, \ref{thm:continuity} and \ref{thm:canonicalreps2} yield the following.

\begin{cor} 
\label{cor:compactgrowth}
Let $\mathcal{T}\subset\pcci^+(\rzt)$ be a family of toric currents and $\mathcal{H}\subset\hoo(\rzt)$ be the corresponding family of toric classes.  Then the following are equivalent.
\begin{itemize}
 \item $\mathcal{T}$ is precompact in $\pcc^+(\rzt)$.
 \item $\mathcal{H}$ is precompact in $\hoo(\rzt)$.
 \item There exists $M\geq 0$ such that $\growth{\bar\sfn_T} \leq M$ for all $T\in\mathcal{T}$.
\end{itemize}
\end{cor}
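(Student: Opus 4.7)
The plan is to prove the chain (1) $\Leftrightarrow$ (2), (3) $\Rightarrow$ (2), and (2) $\Rightarrow$ (3), relying on the dictionary between homogeneous currents and nef classes developed in the preceding section.

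Since $\pcc(\rzt)$ and $\hoo(\rzt)$ carry the product topology, precompactness of $\mathcal{T}$ (respectively $\mathcal{H}$) amounts to precompactness of each incarnation family $\{T_X : T \in \mathcal{T}\} \subset \pcc^+(X)$ (respectively $\{\alpha_X : \alpha \in \mathcal{H}\} \subset \hoo(X)$) for every toric surface $X$. For fixed $X$, standard weak compactness of positive closed currents shows that such a family is precompact iff it has bounded mass, which is equivalent to bounded cohomology class in the finite-dimensional space $\hoo(X)$. Combined with continuity of the map $T \mapsto \ch{T}$, this yields (1) $\Leftrightarrow$ (2).

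For (3) $\Rightarrow$ (2), suppose $\growth{\bar\sfn_T} \leq M$ for every $T \in \mathcal{T}$. Since $\bar\sfn_T$ is $1$-homogeneous and sublinear, it satisfies $|\bar\sfn_T(\nvec)| \leq M\|\nvec\|$ for every $\nvec \in N_\R$. By Theorem \ref{thm:canonicalreps2} and Proposition \ref{prop:nefsupport}, on each toric surface $X$ the class $\ch[X]{T_X}$ is represented by the external divisor whose coefficient at $\tau \in \Sigma_1(X)$ is controlled by the value $\bar\sfn_T(\nvec_\tau)$ at the primitive generator $\nvec_\tau$ (modulo a uniformly bounded affine normalization determined by the fixed finite set $\Sigma_1(\cp^2)$). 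Since $\Sigma_1(X)$ itself is finite, these coefficients are bounded uniformly in $T \in \mathcal{T}$, hence $\ch[X]{T_X}$ is bounded in $\hoo(X)$.

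The main obstacle is (2) $\Rightarrow$ (3), because a \emph{uniform} growth bound must be extracted from precompactness which, a priori, is only pointwise in the choice of $X$. The trick is to pass to the homogenizations $\bar{\mathcal{T}} := \{\bar T : T \in \mathcal{T}\}$. By Theorem \ref{thm:canonicalreps2}, each $\bar T$ is completely cohomologous to the corresponding $T$, so the class family of $\bar{\mathcal{T}}$ coincides with $\mathcal{H}$; and Theorem \ref{thm:mainB} tells us that the assignment $\bar T \mapsto \ch{\bar T}$ is a homeomorphism from the cone of positive homogeneous currents onto the nef cone of $\hoo(\rzt)$. Consequently, precompactness of $\mathcal{H}$ forces precompactness of $\bar{\mathcal{T}}$ in $\pcc^+(\rzt)$. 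A contradiction then completes the argument: if $\growth{\bar\sfn_{T_j}} \to \infty$ along some sequence in $\mathcal{T}$, extract a subsequence for which $(\bar T_j)$ converges weakly (by precompactness of $\bar{\mathcal{T}}$), and invoke the corollary immediately following Theorem \ref{thm:continuity}, which guarantees that $\growth{\bar\sfn_{T_j}}$ stays bounded along any weakly convergent sequence of homogeneous currents.
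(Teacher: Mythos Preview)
Your proof is correct and follows essentially the same approach as the paper, which gives only a one-line justification citing Theorems~\ref{thm:mainB}, \ref{thm:continuity}, \ref{thm:canonicalreps2}, and the compactness of the set of positive representatives of a fixed nef class on each $X$. You have simply unpacked these references: the equivalence (1)$\Leftrightarrow$(2) via bounded mass on each incarnation, (3)$\Rightarrow$(2) via bounded support-function coefficients, and (2)$\Rightarrow$(3) via the homeomorphism of Theorem~\ref{thm:mainB} together with the corollary to Theorem~\ref{thm:continuity}, are exactly the intended steps.
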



Representatives of a given toric class/current are, by definition, compatible under pushforward $\pi_{YX*}$ for $Y\succ X$.  Compatibility under pullback is a much stronger condition.

\begin{defn}
A class $\alpha\in\hoo(\rzt)$ is \emph{Cartier} if it is \emph{determined} on some particular toric surface $X$, i.e. $\alpha_Y = \pi_{YX}^*\alpha_X$ for all $Y\succ X$.  Likewise, $T\in \pcc(\rzt)$ is \emph{tame}, \emph{determined} on $X$, if $T_Y = \pi_{Y X}^*T_X$, for all $Y\succ X$.  
\end{defn}

By Proposition \ref{prop:tamepotential} a class $\alpha\in\hoo(\rzt)$ is Cartier, determined on $X$, if and only if some/any $T\in\pcc(\rzt)$ representing $\alpha$ is tame and determined on $X$.  Poles $C_\tau\in \rzt$ are never tame.  For internal currents, on the other hand, we have the following criterion. 

\begin{prop}
\label{prop:internaltame}
The following are equivalent for an internal current $T\in\pcci^+(\rzt)$ and toric surface $X$.  
\begin{itemize}
 \item $T$ is tame, determined on $X$.
 \item $\nu(T,p_\sigma) = 0$ for each $\torus$-invariant point $p_\sigma\in X$.
 \item $\pi_{YX}^* T_X$ is internal on $Y$ for all $Y\succ X$.
 \item $\sfn_{\bar T}$ is linear on each sector $\sigma\in\Sigma_2(X)$.
\end{itemize}
\end{prop}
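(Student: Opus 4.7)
The plan is to split the four conditions into two groups: $(1)\Leftrightarrow(2)\Leftrightarrow(3)$, handled by direct analysis of pullbacks under birational morphisms of toric surfaces, and $(1)\Leftrightarrow(4)$, handled on the level of the toric cohomology class $[T]\in\hoo(\rzt)$.

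First I would dispose of $(1)\Leftrightarrow(3)$. Since $\pi_{YX}$ is a birational morphism, $\pi_{YX*}\pi_{YX}^*T_X = T_X$; combined with the toric-compatibility $\pi_{YX*}T_Y = T_X$, this forces $T_Y - \pi_{YX}^*T_X$ to be supported on the exceptional locus of $\pi_{YX}$. When both are internal, the difference is an internal current supported on external curves and so must vanish, yielding $T_Y = \pi_{YX}^*T_X$; the converse is trivial. For $(1)\Rightarrow(2)$ I would take $Y$ to be the single blowup of $X$ at $p_\sigma$, whose exceptional curve $E$ is a pole of $Y$; the Siu decomposition $\pi_{YX}^*T_X = R + \nu(T_X,p_\sigma)\,E$ (with $R$ not charging $E$), together with internality of $T_Y = \pi_{YX}^*T_X$, forces $\nu(T_X,p_\sigma)=0$.

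The main nontrivial step is $(2)\Rightarrow(3)$. For general $Y\succ X$ I would factor $\pi_{YX}$ as a chain of $\torus$-invariant point blowups $\pi_i:X_i\to X_{i-1}$ with $X_0=X$, $X_k=Y$, and argue by induction that each partial pullback $S_i:=\pi_i^*\cdots\pi_1^*T_X$ is internal \emph{and} has vanishing Lelong number at every $\torus$-invariant point of $X_i$. Internality propagates because the Siu decomposition of $\pi_i^*S_{i-1}$ along the new exceptional $E_i$ contributes only the term $\nu(S_{i-1},p_i)\,E_i$, which vanishes by the inductive hypothesis. The vanishing-Lelong-number condition propagates at old $\torus$-invariant points of $X_i$ because $\pi_i$ is a local isomorphism there, and at the two new $\torus$-invariant points $q$ lying on $E_i$ by Lemma \ref{lem:favrebd}, which gives $\nu(S_i,q) \leq C\,\nu(S_{i-1},p_i)=0$. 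The subtle point is formulating the inductive hypothesis to include control at \emph{new} $\torus$-invariant points, not just the original ones; once this is arranged, propagation is immediate from Favre's estimate.

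For $(1)\Leftrightarrow(4)$ I would pass to cohomology. The remark following Proposition \ref{prop:tamepotential} gives that $T$ is tame and determined on $X$ iff $[T]\in\hoo(\rzt)$ is Cartier and determined on $X$, which in turn holds iff the unique normalized external-divisor representative $D\in\pcce(\rzt)$ from Proposition \ref{prop:nefsupport} is itself tame and determined on $X$. By Theorem \ref{thm:canonicalreps2}, we may arrange $\sfn_D = \sfn_{\bar T}$ on rational rays. The standard toric blowup formula (when $X_i\to X_{i-1}$ blows up a smooth $\torus$-invariant point $p_\sigma = C_{\tau_1}\cap C_{\tau_2}$, the new exceptional equals $C_\tau$ for $\nvec_\tau = \nvec_{\tau_1}+\nvec_{\tau_2}$, with $\pi_i^*C_{\tau_j} = C_{\tau_j}+C_\tau$) then implies that for a new ray $\tau\in\Sigma_1(Y)\setminus\Sigma_1(X)$ lying in a sector $\sigma\in\Sigma_2(X)$, the coefficient of $C_\tau$ in $\pi_{YX}^*D_X$ is the piecewise-linear extension of $\sfn_{\bar T}|_{\Sigma_1(X)}$ evaluated at $\nvec_\tau$, while the coefficient in $D_Y$ is the actual value $\sfn_{\bar T}(\nvec_\tau)$. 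Since rational rays are dense in each sector and $\sfn_{\bar T}$ is continuous, matching these coefficients over all $Y\succ X$ is equivalent to $\sfn_{\bar T}$ being linear on every sector of $X$, which is condition (4).
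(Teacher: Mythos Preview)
Your argument is correct, but it takes a more hands-on route than the paper in two places.

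For the equivalence of (1)--(3), the paper applies Lemma~\ref{lem:favrebd} directly to the full morphism $\pi_{YX}$: for a general point $p$ on any contracted pole $C_\tau\subset Y$ with $\pi_{YX}(p)=p_\sigma$, Favre's bound gives that the generic Lelong number of $\pi_{YX}^*T_X$ along $C_\tau$ vanishes iff $\nu(T_X,p_\sigma)=0$. This one-shot application makes the induction on a chain of blowups unnecessary. Your inductive version is fine---and your observation that the inductive hypothesis must track Lelong numbers at \emph{all} $\torus$-invariant points of the intermediate surfaces, not just those of $X$, is exactly the right formulation---but the paper's shortcut is worth knowing.

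For $(1)\Leftrightarrow(4)$ the difference is more substantive. You work with the normalized external-divisor representative $D$ of $[T]$ and compare coefficients via the toric blowup formula, reducing to density of rational rays. The paper instead invokes Theorem~\ref{thm:mainB}: since $\bar T$ is the \emph{unique} positive homogeneous representative of $[T]$, and since $T$ tame on $X$ forces $T$ to be completely cohomologous to the homogeneous current $\bar T(X)$, uniqueness gives $\bar T=\bar T(X)$, which is exactly the statement that $\sfn_{\bar T}$ is linear on each sector of $X$. The converse runs the same way. Your computation is self-contained and avoids appealing to Theorem~\ref{thm:mainB}; the paper's argument is shorter but leans on that earlier structural result.
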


\begin{proof}

Given $Y\succ X$, suppose that $C_\tau\subset Y$ is a pole contracted by
$\pi_{YX}$ to the $\torus$-invariant point $p_\sigma \in X$.  Then
Lemma \ref{lem:favrebd}
tells us that $\pi_{YX}^* T_X$ charges $C_\tau$ if and only if
$\nu(T_X,p_\sigma) \neq 0$.  In particular, $T_Y = \pi_{YX}^* T_X$ if and only if
$\nu(T_X,p_\sigma) = 0$ for the image of each pole contracted by $\pi_{YX}$.
Equivalence of the first three criteria follows.

For equivalence with the fourth criterion, suppose that $T$ is tame, determined by $T_X\in\hoo(X)$.  Since
$T_X$ and $\bar T(X)$ are cohomologous in $\hoo(X)$, it follows that the homogeneous internal toric current determined by $\bar T(X)$ is completely cohomologous to $T$.  On the other hand, there is only one homogeneous internal current completely cohomologous to $T$, so $\bar T = \bar T(X)$.  Reversing this logic, we infer that $\bar T = \bar T(X)$ implies that $T$ is tame and determined in $X$.
\end{proof}

\begin{eg}
By Propositions \ref{prop:internalcurve} and \ref{prop:internaltame} an internal curve $C\subset\rzt$ is always tame, determined in any toric surface $X$ that fully realizes $C$.  On the other hand, the homogeneous current $T\in\pcci^+(\rzt)$ with support function $\norm{\cdot}$ is not tame.  In fact, one can check on any toric surface $X$ that $\nu(T,p_\sigma) > 0$ for all $\sigma\in\Sigma_2(X)$.
\end{eg}

For any toric surface $X$, we have an inclusion $\pcc(X)\hookrightarrow \pcc(\rzt)$ as a set of tame currents.  That is, given $T_X\in \pcc(X)$ and $X'$ is another surface, we can choose a third surface $Y$ dominating both $X$ and $X'$ and obtain the incarnation of $T_X$ on $X'$ from $T_{X'} = \pi_{X' X}^* T_X := \pi_{Y X'*} \pi_{Y X}^* T_X$.   If all currents $T_j$ in a given sequence are tame, determined on the same surface $X$, then $(T_j)$ converges as a sequence of toric currents if and only if $(T_{j,X})$ converges in $\pcc(X)$.  Hence the inclusion $\pcc(X)\hookrightarrow\pcc(\rzt)$ is a homeomorphism onto its image.  One should note, however, that since pullbacks of internal currents need not be internal, this inclusion does not respect the decomposition of a toric current into internal and external components.

\subsection{Intersection and Lelong numbers for toric classes and currents}
Following \cite{BFJ08, Can11} we introduce a quadratic form on much of $\hoo(\rzt)$ that simultaneously extends the intersection form on $\hoo(X)$ for all toric surfaces $X$.  Two key observations make this possible.

First, we have a well-defined intersection form on the subspace of $\hoo(\rzt)$ consisting of Cartier classes: if $\alpha\in\hoo(\rzt)$ is determined on some surface $X$, then $(\alpha_Y,\beta_Y)_Y = (\alpha_X,\beta_X)_X$ for any other toric class $\beta$ and any surface $Y\succ X$.  If $\alpha_0$ is the Cartier class determined by a line in $\cp^2$, then the intersection form is negative definite on the orthogonal complement $\ker \pi_{X\cp^2 *}$ of $\alpha_0$ in any surface $X\succ\cp^2$.  So we may define a norm on Cartier classes 
$\alpha$ by decomposing $\alpha = c\alpha_0 + \alpha_1$ into components parallel and orthogonal to $\alpha_0$
and declaring
$$
\norm{\alpha}^2 = c^2 - \alpha_1^2.
$$

Second, Theorem \ref{thm:pushpull1} implies $\alpha_X^2 = (\pi_{YX*} \alpha_Y)^2 \geq \alpha_Y^2$ for any $\alpha\in\hoo(\rzt)$ and any toric surfaces $Y\succ X$.  Let 
$$
\eltwo(\rzt) := \{\alpha\in\hoo(\rzt):\inf_X \alpha_X^2 > - \infty\}.
$$
In particular $\eltwo(\rzt)$ includes all Cartier and all nef classes.  The following theorem summarizes basic results whose arguments are given in Sections 1.4 and 1.5 of \cite{BFJ08}.

\begin{thm}
\label{thm:intersection}
$\eltwo(\rzt) \subset \hoo(\rzt)$ is (canonically isomorphic to) the Cauchy completion of the Cartier classes with respect to the above norm.  Moreover, the intersection form on Cartier classes has a unique extension to $\eltwo(\rzt)$, and the following hold for all $\alpha,\beta\in\eltwo(\rzt)$.
\begin{itemize}
  \item $\alpha^2 = \inf_X \alpha_X^2$.
  \item if $\alpha$ is Cartier, determined on $X$, then $(\alpha\cdot\beta) = (\alpha_X\cdot \beta_X)_X$.
  \item if $\alpha,\beta\in\eltwo(\rzt)$ are non-zero classes satisfying $\alpha^2 \geq 0$ and $(\beta\cdot\alpha) = 0$ then $\beta^2 \leq 0$; equality holds if and only if $\alpha$ and $\beta$ are multiples of each other.
  \item $\alpha\in \hoo(\rzt)$ is nef if and only if $(\alpha\cdot \beta) \geq 0$ for all positive and Cartier $\beta$.
\end{itemize}
\end{thm}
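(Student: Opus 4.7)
\medskip\noindent\textbf{Proof proposal.}  The plan is to transpose the arguments of \cite{BFJ08}, Sections~1.4--1.5 into our toric setting.  First I would organize the Cartier classes: for each toric surface $X$, the set of classes in $\hoo(\rzt)$ determined on $X$ (those of the form $\pi_{YX}^*\alpha_X$ for $Y\succ X$) is canonically identified with $\hoo(X)$, and these subspaces form a directed system under inclusion $\hoo(X)\hookrightarrow \hoo(Y)$ given by $\pi_{YX}^*$ whenever $Y\succ X$.  Since $\pi_{YX}$ is a birational morphism, $\pi_{YX}^*$ is an intersection isometry, and in particular preserves the quadratic norm $\norm{\cdot}^2$ defined on Cartier classes using the decomposition along $\alpha_0$ and its orthogonal complement.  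The union of these subspaces is precisely the space of all Cartier classes in $\hoo(\rzt)$, so this union inherits a well-defined positive definite quadratic form.

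Second, I would identify the Cauchy completion of the Cartier classes with $\eltwo(\rzt)$.  Given $\alpha\in\hoo(\rzt)$, the sequence $\alpha^{(X)}\in\hoo(\rzt)$ of Cartier approximations determined by the incarnations $\alpha_X$ is a natural candidate Cauchy sequence converging to $\alpha$: using the monotonicity $(\alpha_X)^2\geq (\alpha_Y)^2$ for $Y\succ X$ (which follows directly from Theorem \ref{thm:pushpull1} applied to $\pi_{YX}$, since $\extra_{\pi_{YX}}$ is effective on nef pullbacks and the excess pairing controls the difference), together with the projection formula \eqref{eqn:projformula}, one shows that $\norm{\alpha^{(Y)}-\alpha^{(X)}}^2 = (\alpha_X)^2 - (\alpha_Y)^2$.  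Thus the Cauchy condition on $(\alpha^{(X)})$ is equivalent to the infimum $\inf_X (\alpha_X)^2$ being finite, which is exactly the definition of $\eltwo(\rzt)$.  Conversely, any Cauchy sequence of Cartier classes can be rearranged to a compatible collection of incarnations defining an element of $\eltwo(\rzt)$.

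Third, I would extend the intersection pairing by bilinearity and continuity: the polarization identity on Cartier classes shows $(\alpha\cdot\beta)$ is bounded in the norm, so the pairing extends uniquely to $\eltwo(\rzt)$.  The first two listed bullets then follow: the identity $\alpha^2 = \inf_X \alpha_X^2$ is the formula singled out above, and when $\alpha$ is Cartier determined on $X$, the sequence of Cartier approximations of $\beta$ intersected with $\alpha$ stabilizes to $(\alpha_X\cdot\beta_X)_X$ since pullback by $\pi_{YX}^*$ is an isometry.  For the Hodge-type statement, I would approximate $\alpha,\beta$ by their Cartier truncations $\alpha^{(X)},\beta^{(X)}$, apply the classical Hodge Index Theorem on $X$ (which requires the truncated orthogonality condition $(\alpha^{(X)}\cdot\beta^{(X)})_X \to 0$), and pass to the limit, with equality case handled by the fact that the truncations become proportional in the limit.

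The main obstacle will be the last point above and the final bullet characterizing nef classes.  The issue is that Cartier approximations need not inherit the nef property or the orthogonality condition exactly, so the Hodge index argument requires care: one must show that perturbations introduced by approximation do not destroy non-positivity in the limit.  For the nef characterization, one direction is immediate (nef classes pair non-negatively with positive Cartier representatives since this holds on each $X$), but the converse requires approximating an arbitrary effective curve $C\subset X$ by positive Cartier toric classes $\beta$, which amounts to showing that the cone of positive Cartier classes is dense in the cone of positive toric classes in an appropriate sense -- this is where Theorem \ref{thm:mainB} and the support-function machinery from \S\ref{sec:currents} will do the essential work.
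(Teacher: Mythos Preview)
Your overall plan---transposing \cite{BFJ08}, \S\S1.4--1.5, to the toric inverse limit---is exactly what the paper does; indeed the paper gives no independent proof but simply refers the reader to those sections.  So strategically you are on target.

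That said, the ``main obstacle'' you identify for the last bullet is illusory.  To show that $(\alpha\cdot\beta)\geq 0$ for all positive Cartier $\beta$ implies $\alpha$ is nef, you need only check $(\alpha_X\cdot C)_X\geq 0$ for each curve $C$ in each toric surface $X$.  But any curve $C\subset X$ already \emph{is} a positive Cartier class: the integration current over $C$ is positive, and the tame toric current it determines is Cartier, determined on $X$.  Hence by the second bullet $(\alpha\cdot C) = (\alpha_X\cdot C)_X$, and the hypothesis gives this is $\geq 0$.  No approximation, no Theorem~\ref{thm:mainB}, no support functions are needed here.

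Your worry about the Hodge index bullet is similarly overblown.  Once you know the completion decomposes as $\R\alpha_0\oplus H$ with the intersection form positive on $\R\alpha_0$ and negative definite on $H$ (which is exactly what makes $\norm{\cdot}$ a genuine norm), the third bullet is pure linear algebra in a Lorentzian Hilbert space: write $\alpha = c\alpha_0+\alpha_1$, $\beta = d\alpha_0+\beta_1$ with $\alpha_1,\beta_1\in H$; the conditions $\alpha^2\geq 0$, $\alpha\neq 0$ force $c\neq 0$, and then $(\alpha\cdot\beta)=0$ together with Cauchy--Schwarz on $H$ yields $\beta^2\leq 0$ with equality iff $\beta_1$ is proportional to $\alpha_1$, whence $\beta$ is proportional to $\alpha$.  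There is no need to pass back to finite-dimensional approximations and worry about limits of orthogonality conditions.
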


The norm topology on $\eltwo(\rzt)$ is strictly stronger than the (restriction) of the weak topology on $\hoo(\rzt)$ since, among other things, Cartier classes are dense in the weak topology on $\hoo(\rzt)$, whereas $\eltwo(\rzt)$ is a proper subset of $\hoo(\rzt)$.  On the other hand, if $\alpha = \alpha_X\in \hoo(X)\subset\eltwo(\rzt)$ is Cartier, then the linear functional $\beta\mapsto (\alpha\cdot\beta)$ on $\eltwo(\rzt)$ extends continuously to \emph{all} of $\hoo(\rzt)$ via $(\alpha\cdot\beta) := (\alpha_X\cdot\beta_X)$.  Finally, we point out that unlike the situation in \cite{BFJ08}, the (indefinite) Hilbert space $\eltwo(\rzt)$ is separable, because there are only countably many different toric surfaces.  

%

If $T\in\pcc^+(\rzt)$ and $p\in\rzt^\circ$ is a realizable point, then we define the \emph{Lelong number} $\nu(T,p)$ of $T$ at $p$ to be the Lelong number $\nu(T_X,p)$ of the incarnation $T_X$ of $T$ in some (and hence any) toric surface $X$ that realizes $p$.  Lelong numbers are, among other things, always non-negative.  

\begin{lem} 
\label{lem:nefcurrent}
Suppose $X$ is a compact K\"ahler surface, $C\subset X$ is a curve and $T\in\pcc^+(X)$ is a current that does not charge $C$.  Then 
$$
(T\cdot C) \geq \sum_{p\in C} \nu(T,p).
$$
\end{lem}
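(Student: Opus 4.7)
The plan is to reduce to an intersection-theoretic computation on a single blowup of $X$ at the bad points. First, by Siu's semicontinuity theorem, for each $c>0$ the set $\{p\in X:\nu(T,p)\geq c\}$ is an analytic subset of $X$, and since $T$ does not charge any curve, it has dimension zero. In particular this set is discrete, hence meets the compact curve $C$ in only finitely many points. Thus there are only finitely many $p_1,\ldots,p_k\in C$ with $\nu(T,p_i)>0$, and the sum on the right hand side is finite.

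Next, I would let $\pi:\tilde X\to X$ denote the blowup of $X$ at $p_1,\ldots,p_k$, with exceptional divisors $E_i=\pi^{-1}(p_i)$, and let $\tilde C$ denote the strict transform of $C$. The pullback $\pi^*T$ is a positive closed current on $\tilde X$ whose Siu decomposition takes the form
\begin{align*}
\pi^*T = T' + \sum_{i=1}^k \nu(T,p_i)\,E_i,
\end{align*}
where $T'\in\pcc^+(\tilde X)$ does not charge any $E_i$.  Here we use two inputs: because $T$ does not charge curves, $\pi^*T$ does not charge any curve outside the exceptional locus, so only the $E_i$ can occur in the Siu decomposition; and the generic Lelong number of $\pi^*T$ along $E_i$ coincides with $\nu(T,p_i)$, a classical consequence of Lemma \ref{lem:favrebd} for the blowup morphism (see also \cite[Ch.~III]{DemBook}). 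Since $\pi$ is an isomorphism near a generic point of $\tilde C$ and $T$ does not charge $C$, the current $T'$ also does not charge $\tilde C$.

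Now I would apply the projection formula. Using $\pi_*\tilde C = C$ we get
\begin{align*}
(T\cdot C)_X \;=\; (\pi^*T\cdot \tilde C)_{\tilde X} \;=\; (T'\cdot \tilde C)_{\tilde X} \;+\; \sum_{i=1}^k \nu(T,p_i)\,(E_i\cdot \tilde C)_{\tilde X}.
\end{align*}
The identity $\pi^*C = \tilde C + \sum_i m_{p_i}(C)\,E_i$ combined with $(\pi^*C\cdot E_i)=0$ and $(E_i\cdot E_j)=-\delta_{ij}$ yields $(E_i\cdot \tilde C)=m_{p_i}(C)\geq 1$, where $m_{p_i}(C)$ is the multiplicity of $C$ at $p_i$. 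Since $T'$ is a positive closed $(1,1)$ current that does not charge $\tilde C$, the restriction of $T'$ to (the normalization of) $\tilde C$ is a well-defined positive measure of total mass $(T'\cdot \tilde C)$, so $(T'\cdot \tilde C)\geq 0$. Combining these estimates yields $(T\cdot C)\geq \sum_i \nu(T,p_i)=\sum_{p\in C}\nu(T,p)$, as required.

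The main technical hurdle is justifying the Siu-type decomposition with the coefficient of $E_i$ equal to $\nu(T,p_i)$; this is the one place where general pluripotential theory (rather than purely cohomological bookkeeping) is invoked. Given that, the remainder of the argument is routine intersection theory on the blowup, and the inequality $m_{p_i}(C)\geq 1$ is the reason the sharp constant $1$ appears multiplying each Lelong number on the right hand side.
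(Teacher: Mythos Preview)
Your approach is essentially the same as the paper's: blow up at the points of $C$ where $T$ has positive Lelong number, peel off $\nu(T,p_i)E_i$ from $\pi^*T$, and use nonnegativity of the remaining intersection $(T'\cdot\tilde C)$.  The paper blows up one point at a time and iterates, whereas you blow them all up at once; this is cosmetic.

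There is, however, a slip in your reading of the hypothesis.  The lemma only assumes that $T$ does not charge the single curve $C$, not that $T$ charges no curves.  So your appeal to Siu to conclude that $\{p:\nu(T,p)\geq c\}$ is zero-dimensional is unjustified: this set may well contain other curves $C'$ that $T$ charges.  The fix is easy and is implicit in the paper's iterative argument: it suffices to prove $(T\cdot C)\geq\sum_{i=1}^k\nu(T,p_i)$ for any \emph{finite} collection $p_1,\dots,p_k\in C$, and then take the supremum.  Your blowup argument gives exactly this once you drop the spurious finiteness claim.  The same misreading appears when you say ``because $T$ does not charge curves, $\pi^*T$ does not charge any curve outside the exceptional locus'': this is false as stated, but also unnecessary.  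All you need is that $T':=\pi^*T-\sum_i\nu(T,p_i)E_i$ is positive (which follows from the identification of the generic Lelong number along $E_i$) and that $T'$ does not charge $\tilde C$ (which follows from $T$ not charging $C$); $T'$ may perfectly well charge other curves.

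One further point of comparison: for the key nonnegativity $(T'\cdot\tilde C)\geq 0$, the paper argues by decomposing $T'$ into a part that charges no curves (whose class is nef by Demailly's regularization theorem) and a divisorial part supported on curves $C''\neq\tilde C$ (which meet $\tilde C$ properly).  Your justification via ``restriction of $T'$ to the normalization of $\tilde C$ is a positive measure of total mass $(T'\cdot\tilde C)$'' is also correct, but it is the more delicate route, since defining the slice of a current along a curve and identifying its mass with the cohomological intersection number requires some care.  The paper's two-case argument is more elementary.
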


\begin{proof}
We first show that $(T\cdot C)\geq 0$.  By decomposing, we reduce to two cases: $T$ does not charge curves, and $T$ is the current of integration over a curve $C'\neq C$.  In the first case Demailly (see Corollary 6.4 in \cite{Dem92}), showed that the cohomology class of $T$ is nef, and in the second $C'$ and $C$ meet transversely.  Either way $(T\cdot C) \geq 0$.

Now consider the blowup $\pi:\hat X\to X$ of $X$ at $p\in C$. Then pulling back local potentials gives
$$
\pi^* T = T' + \nu(T,p) E,
$$
where $E$ is the curve contracted by $\pi$ and $T'$ is the `strict transform' of $T$ by $\pi$, i.e.\ the unique positive closed current with no mass on $E$ such that $\pi_* T' = T$.  So if $C'\subset \hat X$ denotes the strict transform of $C$, we get
$$
T\cdot C = (T\cdot \pi_* C') = (\pi^* T \cdot C') 
\geq (T'\cdot C') + \nu(T,p) \geq (T'\cdot C')
$$
since $(E\cdot C')\geq 1$.  The argument from the first paragraph tells us again that $(T'\cdot C') \geq 0$.  Hence repeating the estimate for $T'$, etc, finishes the proof.
\end{proof}

\begin{thm}
\label{thm:lelongbd}
Given $T\in\pcci^+(\rzt)$, there exists $M>0$ depending only on the cohomology class of $T$ such that for sufficiently dominant toric surfaces $Y$,
$$
\sum_{p\in Y\setminus \torus} \nu(T_Y,p) < M.
$$
\end{thm}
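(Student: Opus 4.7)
The plan is to turn the sum of Lelong numbers into a single intersection number via Lemma~\ref{lem:nefcurrent}, then exploit the anticanonical formula for a toric blowup together with the defining compatibility of toric currents under pushforward by birational morphisms, in order to reduce everything to an intersection pairing on a fixed ``base'' toric surface that depends only on the class $[T]$.

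First, since $T\in\pcci^+(\rzt)$ is internal, every incarnation $T_Y$ fails to charge any pole of $Y$, and Lemma~\ref{lem:nefcurrent} applied to each $C_\tau\subset Y$ gives $(T_Y\cdot C_\tau)\geq\sum_{p\in C_\tau}\nu(T_Y,p)$.  Summing over $\tau\in\Sigma_1(Y)$, using that every point of $Y\setminus\torus$ lies on at least one pole and that $-K_Y=\sum_{\tau\in\Sigma_1(Y)} C_\tau$ on a smooth complete toric surface, I would obtain
\[
\sum_{p\in Y\setminus\torus}\nu(T_Y,p)\;\leq\;\sum_{\tau\in\Sigma_1(Y)}(T_Y\cdot C_\tau)\;=\;(T_Y\cdot(-K_Y)).
\]

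Next, I would fix a base toric surface $X_0$ (for instance $X_0=\cp^2$) and interpret ``sufficiently dominant $Y$'' as $Y\succ X_0$; any $Y$ satisfying this becomes the domain of a birational toric morphism $\pi:=\pi_{YX_0}$.  Writing the relative canonical formula $K_Y=\pi^*K_{X_0}+R$ with $R\geq 0$ effective and supported on $\exc(\pi)$, and observing that in the toric category $\exc(\pi)$ is a union of $\torus$-invariant curves of $Y$, i.e.\ of poles, $T_Y$ does not charge $R$.  Lemma~\ref{lem:nefcurrent} applied to each component of $R$ thus gives $(T_Y\cdot R)\geq 0$.  Combining this with the projection formula and the defining compatibility $\pi_*T_Y=T_{X_0}$ of toric currents, I would get
\[
(T_Y\cdot(-K_Y))\;=\;(T_Y\cdot\pi^*(-K_{X_0}))-(T_Y\cdot R)\;\leq\;(\pi_*T_Y\cdot(-K_{X_0}))\;=\;(T_{X_0}\cdot(-K_{X_0})).
\]
The right-hand side is a purely cohomological pairing $([T_{X_0}]\cdot[-K_{X_0}])$, and since $[T_{X_0}]$ is the $X_0$-incarnation of the class $[T]\in\hoo(\rzt)$, it depends only on $[T]$.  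Setting $M:=(T_{X_0}\cdot(-K_{X_0}))$ and concatenating the two inequalities yields the theorem for all $Y\succ X_0$.

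The only nontrivial verification, which I expect to be straightforward in the toric setting, is that the exceptional locus of the toric birational morphism $\pi_{YX_0}$ is supported on poles of $Y$, so that Lemma~\ref{lem:nefcurrent} is genuinely available for each component of $R$.  Everything else is a direct calculation: the identity $-K_Y=\sum_\tau C_\tau$ for smooth toric surfaces, the standard relative canonical formula for a composition of blowups between smooth surfaces, and the projection formula for pushforward/pullback of currents by a birational morphism.
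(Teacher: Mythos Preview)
Your proposal is correct and follows essentially the same strategy as the paper: bound $\sum_{p\in Y\setminus\torus}\nu(T_Y,p)$ by $(-K_Y\cdot T_Y)$ via Lemma~\ref{lem:nefcurrent} and the formula $-K_Y=\sum_\tau C_\tau$, then show $(-K_Y\cdot T_Y)\leq(-K_{X_0}\cdot T_{X_0})$ for a fixed base surface, and take $M=(-K_{X_0}\cdot T_{X_0})$.  The only cosmetic difference is in the monotonicity step: the paper argues inductively one blowup at a time, computing exactly $(-K_Y\cdot T_Y)=(-K_X\cdot T_X)-\nu(T_X,p)$ from $K_Y=\pi^*K_X+C$ and $T_Y=\pi^*T_X-\nu(T_X,p)C$, whereas you do it in one stroke with the relative canonical formula $K_Y=\pi^*K_{X_0}+R$, the projection formula, and $(T_Y\cdot R)\geq 0$ from Lemma~\ref{lem:nefcurrent}; your packaging is slightly cleaner, and your caveat about $\exc(\pi_{YX_0})$ consisting of poles is indeed immediate from the toric setup.
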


\begin{proof}
Fix a toric surface $X$.  The canonical divisor on $X$ is given by
$$
K_X = -\sum_{\tau\in\Sigma_1(X)} C_\tau,
$$
so Lemma \ref{lem:nefcurrent} tells us
$$
(-K_X\cdot T_X) \geq \sum_{\tau\in\Sigma_1(X)}\sum_{p\in C_\tau} \nu(T_X,C_\tau).
$$
If $\pi:Y\to X$ is the blowup of $X$ at some $\torus$-invariant point $p$, and $C = \pi^{-1}(p)$ is the new pole, then we have
$K_Y = \pi^*K_X + C$ (by adjunction) and $T_Y = \pi^* T_X - \nu(T,p) C$ (by pulling back local potentials).  Therefore:
$$
0 \leq (-K_Y\cdot T_Y) = (-K_X\cdot T_X) -\nu(T_X,p) \leq (-K_X\cdot T_X).
$$
Hence for any $Y\succ X$ we have
$$
\sum_{p\in Y\setminus\torus} \nu(T_Y,p) \leq M:=(-K_X\cdot T_X). 
$$  
\end{proof}

\subsection{Relationship with \cite{BFJ08}}

Besides the fact that we consider currents as well as classes, the main
difference between our context and that of \cite{BFJ08} is that, for us, a
toric (Weil) class is given by an incarnation on every toric surface $X$,
whereas a Weil class in \cite{BFJ08} has an incarnation on every blowup of
some fixed base surface.  Say for the sake of discussion the base surface is
$\cp^2$, and let $W(\cp^2)$ denote the set of all Weil classes in the sense of
\cite{BFJ08}.

One sees easily that any $\tilde\alpha\in W(\cp^2)$ induces a class $\rs(\tilde\alpha) \in \hoo(\rzt)$ obtained by `restriction'.  That is, we set $\rs(\tilde\alpha)_X = \tilde \alpha_X$ whenever $X\succ \cp^2$ is a toric surface dominating $\cp^2$.  For any other toric surface $Y$, we choose a toric surface $X$ that dominates both $Y$ and $\cp^2$ and set $\rs(\alpha)_Y = \pi_{XY*}\tilde\alpha_X$.  The resulting restriction map $\rs:W(\cp^2) \to \hoo(\rzt)$ is linear, surjective and continuous.  It preserves both positive and nef classes.

In order to apply the main results of \cite{BFJ08}, it will be important for us that the restriction map admits a distinguished section $\ex:\hoo(\rzt) \hookrightarrow W(\cp^2)$.  This depends on the following observation that any blowup of $\cp^2$ dominates a maximal toric surface.  

\begin{prop} If $\pi_{\tilde X}:\tilde X\to\cp^2$ is a birational morphism from
a (non-toric) surface ${\tilde X}$ onto $\cp^2$, then there is a toric surface $X$ and a
birational morphism $\pi_{\tilde XX}:\tilde X\to X$ such that any other
birational morphism $\pi_{\tilde X Y}:\tilde X\to Y$ onto a toric surface $Y$
factors through $X$, i.e. $\pi_{\tilde XY} = \pi_{XY}\circ\pi_{\tilde XX}$
where $\pi_{XY}:X\to Y$ is a morphism.
\end{prop}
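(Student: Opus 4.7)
The plan is to build $X$ by an inductive construction that tracks the $\torus$-invariant part of $\pi_{\tilde X}$. First, I fix a factorization $\tilde X = X_n \to \cdots \to X_0 = \cp^2$ with $X_i = \operatorname{Bl}_{p_i} X_{i-1}$, and construct toric surfaces $\cp^2 = Z_0 \preceq \cdots \preceq Z_n$ together with morphisms $g_i : X_i \to Z_i$. Starting from $g_0 = \operatorname{id}$, at stage $i$ let $q_i := g_{i-1}(p_i) \in Z_{i-1}$. If $q_i$ is $\torus$-invariant, set $Z_i := \operatorname{Bl}_{q_i} Z_{i-1}$ and lift to $g_i : X_i \to Z_i$ via the universal property of blowups, noting that the exceptional divisor of $X_i \to X_{i-1}$ maps onto $q_i$ so that the ideal of $q_i$ pulls back to a Cartier divisor on $X_i$. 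Otherwise set $Z_i := Z_{i-1}$ and $g_i := g_{i-1} \circ \pi_{X_i X_{i-1}}$. Define $X := Z_n$ and $\pi_{\tilde X X} := g_n$; by construction $X$ is toric and $\pi_{\tilde X X}$ is a birational morphism.

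For the universal property, let $h : \tilde X \to Y$ be a birational morphism to a toric $Y$ whose marking is inherited from $\pi_{\tilde X}$. I would first reduce to the case $Y \succ \cp^2$ by forming the smooth toric join $W$ of $Y$ and $\cp^2$: by the Zariski factorization theorem $\tilde X$ dominates a minimal smooth surface $W$ dominating both $Y$ and $\cp^2$, and by $\torus$-equivariance $W$ is toric. It then suffices to prove $X \succeq W$, since $X \to W \to Y$ provides the required morphism. I argue $X \succeq W$ by induction on $|\Sigma_1(W) \setminus \Sigma_1(\cp^2)|$: writing $W = W_k \to \cdots \to W_0 = \cp^2$ as a chain of $\torus$-invariant blowups at points $r_j \in W_{j-1}$, assume $X \succeq W_{j-1}$ and show $X \succeq W_j$. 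This reduces to verifying that the preimage of $r_j$ in $X$ under $\pi_{X W_{j-1}}$ is a divisor, which via the universal property of blowups yields the factorization $X \to W_j \to W_{j-1}$.

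The hard part is this preimage-is-a-divisor verification. The preimage $D \subset \tilde X$ of $r_j$ under $\pi_{\tilde X W_{j-1}}$ is already a divisor because $\tilde X \succeq W_j$. If $\pi_{\tilde X X}(D)$ contains a curve in $X$, we are done; otherwise $D$ is contracted by $\pi_{\tilde X X}$ entirely to a single point $s \in X$. Since $\pi_{X W_{j-1}}$ is $\torus$-equivariant and sends $s$ to the $\torus$-fixed point $r_j$, the point $s$ must itself be $\torus$-invariant in $X$; but at the same time $s$ must lie in the exceptional locus of $\pi_{\tilde X X}$, whose irreducible components by construction arise from non-$\torus$-invariant blowups and therefore cover only non-$\torus$-invariant points of $X$---a contradiction. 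The main obstacle is making this equivariance argument fully rigorous, and separately verifying that the output $X$ does not depend on the factorization of $\pi_{\tilde X}$ chosen, which ultimately rests on the uniqueness of factorizations of birational morphisms of smooth surfaces up to commutation of independent blowups.
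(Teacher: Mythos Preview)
Your construction of $X$ is the same as the paper's: the paper simply cites \cite[Corollary 5.5]{DiLi16} and sketches that one can reorder the blowups in a factorization of $\pi_{\tilde X}$ so that the torus-invariant ones come first, taking $X = X_{k'}$ to be the result of those. Your inductive construction of the $Z_i$ alongside the $X_i$ is an equivalent repackaging of this reordering, and you go further than the paper's sketch by attempting a self-contained proof of the universal property (which the paper leaves entirely to the cited reference).

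The main body of your universal-property argument is sound. The key invariant you need---that every exceptional curve of $g_i : X_i \to Z_i$ is contracted to a \emph{non}-$\torus$-invariant point of $Z_i$---is correct and should be proved by induction simultaneously with the construction of $g_i$ (it is what guarantees, in your case (a), that $g_{i-1}$ is a local isomorphism at $p_i$, so that the lift $g_i$ really exists and adds no new exceptional curves). You invoke this invariant only at the contradiction step; make it explicit earlier.

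The genuine gap is your reduction to $Y \succeq \cp^2$. Neither ``Zariski factorization'' nor ``$\torus$-equivariance'' gives what you claim: $\tilde X$ carries no $\torus$-action, so there is nothing equivariant to appeal to, and factorization into blowups does not by itself produce a minimal smooth surface dominating both $Y$ and $\cp^2$. A clean fix: the morphism $(h,\pi_{\tilde X}) : \tilde X \to Y \times \cp^2$ lands in the closure $\Gamma$ of the graph of $\pi_{Y\cp^2}$, which is the (possibly singular) toric surface whose fan is the common refinement. Since $\tilde X$ is smooth, $\tilde X \to \Gamma$ factors through the minimal resolution $W \to \Gamma$, and $W$ is the smooth toric surface you want, with $W \succeq Y$ and $W \succeq \cp^2$.
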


\begin{proof} This is a special case of the second conclusion of Corollary 5.5 in \cite{DiLi16}.
Factoring $\pi_{\tilde X}=\tau_k\circ\dots\circ\tau_1$ into blowups $\tau_j:X_j\to X_{j-1}$, the existence of $X$ amounts to noting that the factors can be ordered so that first $k'$ among them are `toric', centered at torus invariant points on the surface created by the previous blowups, and the rest are centered at non-torus invariant points appearing in the surface $X=X_{k'}$.
\end{proof}

Given $\alpha\in\hoo(\rzt)$ we may now define the `extension' $\ex(\alpha) \in W(\cp^2)$ by declaring for any blowup $\pi_{\tilde X}:\tilde X\to \cp^2$ that $\ex(\alpha)_{\tilde X} := \pi_{\tilde X X}^*\alpha_X$, where $X\prec \tilde X$ is the maximal toric surface dominated by $\tilde X$.  One checks that since the incarnations $\alpha_X$ of $\alpha$ are consistent across toric surfaces, the incarnations $\ex(\alpha)_{\tilde X}$ are consistent across blowups of $\cp^2$.  It is immediate that $\ex:\hoo(\rzt)\to W(\cp^2)$ is continuous and linear, satisfies $\rs \circ \ex = \id$, and preserves positive and nef classes.  

As with $\eltwo(\rzt)$, we can orthogonally decompose elements $\tilde\alpha = c\tilde \alpha_0 + \tilde\alpha_1\in L^2$ relative to the Cartier class $\tilde\alpha_0$ determined by a line in $\cp^2$, and the strong topology on $L^2$ is then given by the norm defined by
$$
\norm{\tilde\alpha} = c^2 - (\tilde\alpha_1)^2.
$$
Since $\cp^2$ is itself a toric surface, we have $\ex(\alpha_0) = \tilde \alpha_0$ and 
$\rs(\tilde\alpha_0) = \rs(\ex(\alpha_0)) = \alpha_0$.  

\begin{prop}
\label{prop:comparison}
Extension of toric classes restricts to a (strongly) continuous map $\ex:\eltwo(\rzt)\to L^2$ that preserves both the intersection products and the associated norms.  Restriction of Weil classes restricts to a continuous map $\rs:L^2\to\eltwo(\rzt)$ that is non-increasing for both self-intersections and norms.  Finally, we have for any $\alpha\in \eltwo(\rzt)$ and $\tilde\beta \in L^2$ that
$$
(\alpha\cdot \rs(\tilde\beta)) = (\ex(\alpha)\cdot\tilde\beta).
$$
\end{prop}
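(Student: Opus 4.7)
The plan is to reduce every assertion to a statement about incarnations on individual toric surfaces, using two ingredients: pullback along birational morphisms is an intersection isometry, and each blowup of $\cp^2$ has a unique maximal toric surface dominated by it (the proposition stated just before the result). I would verify everything first on the dense subspace of Cartier classes and then extend by continuity.

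For $\ex$, suppose $\alpha\in\hoo(\rzt)$ is Cartier, determined on a toric $X_0\succ\cp^2$. Then $\ex(\alpha)$ is Cartier in the sense of \cite{BFJ08} on $X_0$: for any blowup $\tilde X\succ X_0$ with maximal toric factor $X$, commutativity of $\pi_{\tilde XX_0}=\pi_{XX_0}\circ\pi_{\tilde X X}$ gives $\pi_{\tilde X X}^*\alpha_X = \pi_{\tilde XX_0}^*\alpha_{X_0}$, so the two definitions of the incarnation agree. For a second Cartier $\beta$ determined on the same $X_0$, I compute $(\ex(\alpha)\cdot\ex(\beta)) = (\alpha_{X_0}\cdot\beta_{X_0})_{X_0} = (\alpha\cdot\beta)$. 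Thus $\ex$ is an isometry on Cartier classes, hence extends uniquely and strongly continuously to $\eltwo(\rzt)\to L^2$ preserving both intersection form and norm. Alternatively, $\ex(\alpha)^2 = \inf_{\tilde X}\ex(\alpha)_{\tilde X}^2 = \inf_{\tilde X}\alpha_X^2$ (using isometry of $\pi_{\tilde X X}^*$), which equals $\inf_{X\text{ toric}}\alpha_X^2 = \alpha^2$ since any toric $X$ is its own maximal toric factor.

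For $\rs$, the key observation is that on toric $X\succ\cp^2$ one has $\rs(\tilde\alpha)_X=\tilde\alpha_X$. Since toric blowups form a subset of all blowups, the infimum defining self-intersection in $\hoo(\rzt)$ is taken over a smaller set than in $L^2$, yielding $\rs(\tilde\alpha)^2 \geq \tilde\alpha^2$. In particular $\rs(\tilde\alpha)\in\eltwo(\rzt)$, and this is the non-increasing statement for self-intersection (under the sign convention that the Minkowski-type form can be very negative). Decomposing $\tilde\alpha = c\tilde\alpha_0+\tilde\alpha_1$ orthogonally to the line class, one has $\rs(\tilde\alpha_0)=\alpha_0$ so the coefficient $c$ is preserved, and hence $\rs(\tilde\alpha_1)^2\geq\tilde\alpha_1^2$ with both sides non-positive; this yields $\norm{\rs(\tilde\alpha)}^2 = c^2-\rs(\tilde\alpha_1)^2 \leq c^2-\tilde\alpha_1^2 = \norm{\tilde\alpha}^2$, giving both the norm estimate and strong continuity of $\rs$.

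The adjointness relation is verified first for Cartier $\alpha$ determined on toric $X_0\succ\cp^2$: both $(\alpha\cdot\rs(\tilde\beta))$ and $(\ex(\alpha)\cdot\tilde\beta)$ evaluate to $(\alpha_{X_0}\cdot\tilde\beta_{X_0})_{X_0}$, using $\rs(\tilde\beta)_{X_0}=\tilde\beta_{X_0}$ on the left and $\ex(\alpha)_{X_0}=\alpha_{X_0}$ on the right (since $X_0$ is its own maximal toric factor). Strong continuity of $\ex$ and $\rs$ combined with density of Cartier classes in $\eltwo(\rzt)$ then extends the identity to all $(\alpha,\tilde\beta)\in\eltwo(\rzt)\times L^2$. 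The main obstacle I anticipate is the bookkeeping needed to confirm that $\ex(\alpha)$ really is a Weil class on $\cp^2$ in the sense of \cite{BFJ08}, i.e.\ that $\pi_{\tilde Y\tilde X*}\ex(\alpha)_{\tilde Y} = \ex(\alpha)_{\tilde X}$ for arbitrary birational morphisms between blowups of $\cp^2$ (not just toric ones). For Cartier $\alpha$ this follows from $\pi_{\tilde Y\tilde X}^*\ex(\alpha)_{\tilde X}=\ex(\alpha)_{\tilde Y}$ (itself a consequence of the universal property of the maximal toric factorization) together with $\pi_{\tilde Y\tilde X*}\pi_{\tilde Y\tilde X}^* = \id$; for general $\alpha\in\eltwo(\rzt)$ the compatibility passes through by continuity once the Cartier case is settled.
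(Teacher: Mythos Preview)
Your proposal is correct and follows essentially the same approach as the paper: both reduce everything to incarnation-level statements via the maximal toric factor and the fact that pullback by a birational morphism is an intersection isometry. The paper argues directly for arbitrary $\alpha\in\eltwo(\rzt)$ using $\alpha^2=\inf_X\alpha_X^2$ (your ``alternative'' computation), and proves the adjointness identity by showing $(\alpha_X\cdot\rs(\tilde\beta)_X)=(\ex(\alpha)_{\tilde X}\cdot\tilde\beta_{\tilde X})$ via the projection formula, rather than first restricting to Cartier classes and extending by density; it then deduces that $\rs$ preserves the orthogonal decomposition from adjointness. Your ordering (norm estimate for $\rs$ first, then adjointness via density and continuity) is a harmless rearrangement of the same ingredients.
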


\begin{proof}
Given classes $\alpha, \beta \in \eltwo(\rzt)$, let $\tilde X\to \cp^2$ be a blowup and $X\prec\tilde X$ the maximal toric surface dominated by $\tilde X$.  Then since pullbacks by birational morphisms preserve intersections, we have 
$$
(\ex(\alpha)_{\tilde X})^2
= 
(\pi_{\tilde X X}^* \alpha_X)^2
=
\alpha_X^2.
$$
Hence $\ex(\alpha) \in L^2$ with $\ex(\alpha)^2 = \alpha^2$.  Similarly, if $\beta\in \eltwo(\rzt)$ we have
$(\ex(\alpha)\cdot\ex(\beta)) = \alpha\cdot\beta$, so that $\ex$ is an isometry with respect to intersection.
It follows that if $\alpha = c\alpha_0 + \alpha_1$ is the orthogonal decomposition of $\alpha$ relative to $\alpha_0$, then $\ex(\alpha) = c\tilde\alpha_0 + \ex(\alpha_1)$ is the orthogonal decomposition relative to $\tilde\alpha_0$.  Hence $\norm{\ex(\alpha)} = \norm{\alpha}$.  In particular $\ex$ is continuous in the norm topology.

Concerning the map $\rs$, we have for any $\tilde\alpha\in L^2$ that 
$$
\rs(\tilde\alpha)^2 = \inf_X \tilde \alpha_X^2 \geq \inf_{\tilde X} \tilde\alpha_{\tilde X}^2 = \tilde \alpha^2
$$
where the first infimum is over all toric surfaces, the second is over all blowups of $\cp^2$, and the inequality holds because any toric surface $X$ is dominated by a toric surface that also dominates $\cp^2$.  In particular, $\rs(L^2)\subset \eltwo(\rzt)$.  

Now if $\alpha\in\eltwo(\rzt)$, $\tilde\beta\in L^2$, $\tilde X$ is a blowup of $\cp^2$ and $X$ is the maximal toric surface dominated by $X$, we obtain from the projection formula that
$$
(\alpha_X\cdot \rs(\tilde\beta)_X) = (\alpha_X\cdot \pi_{\tilde X X*}\tilde\beta_{\tilde X}) = (\pi_{\tilde XX}^*\alpha_X\cdot \tilde\beta_{\tilde X}) = (\ex(\alpha)_{\tilde X}\cdot \tilde\beta_{\tilde X}).
$$
Hence $(\alpha\cdot\rs(\tilde\beta)) = (\ex(\alpha)\cdot \tilde\beta)$.

It follows that if $\tilde\alpha = c\tilde\alpha_0+\tilde\alpha_1\in L^2$ is the orthogonal decomposition relative to $\tilde\alpha_0$, then $\rs(\tilde\alpha) = c\alpha_0 + \rs(\tilde\alpha_1)$ is the orthogonal decomposition relative to $\alpha_0$.  Hence
$$
\norm{\rs(\tilde\alpha)} = c^2 - \rs(\tilde \alpha_1)^2 \leq c^2 - \tilde \alpha_1^2 = \norm{\tilde\alpha}.
$$
\end{proof}

\section{Pushforward and pullback by toric maps}
\label{sec:actions}

In this section, still following \cite{BFJ08}, we define and investigate natural pullback and pushforward actions $\rzf^*,\rzf_*$ on toric currents and classes.  The main idea is that Proposition \ref{prop:stablecomp} allows us to pushforward and pullback the incarnations of toric currents and classes in a manner that is consistent across different toric surfaces.  We will focus initially on currents, leaving details of the analogous discussion of classes to the reader.

\begin{cor}
\label{cor:unambiguousaction} Let $f$ be a toric map,  $T\in\pcc(\rzt)$, and $\tilde X\succ X$, $\tilde Y\succ Y$ be toric surfaces. 
\begin{enumerate}
 \item If $\ind(f_{XY})$ contains no $\torus$-invariant points, then
 $f_{XY*} T_X = \pi_{\tilde YY*} f_{\tilde X\tilde Y*} T_{\tilde X}$.
 \item If $f_{XY}$ does not contract any pole of $X$, then 
 $\pi_{\tilde X X*} f_{\tilde X\tilde Y}^* T_{\tilde Y} = f_{XY}^* T_Y.$
\end{enumerate}
\end{cor}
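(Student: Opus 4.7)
The plan for both parts is to exploit the basic commutation
\begin{align*}
f_{XY} \circ \pi_{\tilde X X} = \pi_{\tilde Y Y} \circ f_{\tilde X \tilde Y}
\end{align*}
from \eqref{eqn:commutes} together with Proposition \ref{prop:stablecomp}. Because $\pi_{\tilde Y Y}$ is a morphism, $\ind(\pi_{\tilde Y Y}) = \emptyset$, so the criterion of that proposition is vacuous and the factorizations $(\pi_{\tilde Y Y} \circ f_{\tilde X \tilde Y})_* = \pi_{\tilde Y Y *} \circ f_{\tilde X \tilde Y *}$ and $(\pi_{\tilde Y Y} \circ f_{\tilde X \tilde Y})^* = f_{\tilde X \tilde Y}^* \circ \pi_{\tilde Y Y}^*$ hold unconditionally. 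On the other side, the analogous factorizations through $\pi_{\tilde X X}$ hold exactly when $\pi_{\tilde X X}(\exc(\pi_{\tilde X X})) \cap \ind(f_{XY}) = \emptyset$, and since $\pi_{\tilde X X}(\exc(\pi_{\tilde X X}))$ is precisely the set of $\torus$-invariant points of $X$ blown up to form $\tilde X$, this condition is automatic under the hypothesis of part (1).

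Part (1) then follows directly from the toric current compatibility $T_X = \pi_{\tilde X X *} T_{\tilde X}$: chaining the above identities gives
\begin{align*}
f_{XY *} T_X = f_{XY *} \pi_{\tilde X X *} T_{\tilde X} = (f_{XY} \circ \pi_{\tilde X X})_* T_{\tilde X} = (\pi_{\tilde Y Y} \circ f_{\tilde X \tilde Y})_* T_{\tilde X} = \pi_{\tilde Y Y *} f_{\tilde X \tilde Y *} T_{\tilde X}.
\end{align*}

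The hypothesis of part (2) --- that $f_{XY}$ contracts no pole of $X$ --- does not feed as directly into Proposition \ref{prop:stablecomp}, so the plan there is to first reduce to the holomorphic setting and then treat a residual error term. Choose a toric surface $W \succ \tilde X$ dominant enough that both $f_{WY} := f_{XY} \circ \pi_{WX}$ and $f_{W\tilde Y} := f_{\tilde X \tilde Y} \circ \pi_{W \tilde X}$ are holomorphic. The resolution definition of pullback combined with $\pi_{\tilde X X} \circ \pi_{W \tilde X} = \pi_{WX}$ then yields $f_{XY}^* T_Y = \pi_{WX *} f_{WY}^* T_Y$ and $\pi_{\tilde X X *} f_{\tilde X \tilde Y}^* T_{\tilde Y} = \pi_{WX *} f_{W \tilde Y}^* T_{\tilde Y}$. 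Since $f_{WY} = \pi_{\tilde Y Y} \circ f_{W \tilde Y}$ is a composition of holomorphic maps, $f_{WY}^* T_Y = f_{W \tilde Y}^* \pi_{\tilde Y Y}^* T_Y$, and the desired identity collapses to the vanishing
\begin{align*}
\pi_{WX *} f_{W \tilde Y}^* S = 0, \qquad S := T_{\tilde Y} - \pi_{\tilde Y Y}^* T_Y.
\end{align*}

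This vanishing is where I expect the hypothesis of part (2) to do its real work. The toric current condition together with $\pi_{\tilde Y Y *} \pi_{\tilde Y Y}^* = \id$ shows $\pi_{\tilde Y Y *} S = 0$, so $S$ is a closed $(1,1)$ current supported on $\exc(\pi_{\tilde Y Y})$, and by Siu's theorem as recalled in \S\ref{sec:background}, $S$ is a divisor $\sum c_\sigma C_\sigma$ with $\sigma \in \Sigma_1(\tilde Y) \setminus \Sigma_1(Y)$. For any such $\sigma$, every component of the divisor $f_{W \tilde Y}^* C_\sigma$ must map onto $C_\sigma$ under the holomorphic map $f_{W \tilde Y}$; Theorem \ref{thm:janliandme1}(1) rules out internal curves mapping onto a pole, so every such component is a pole $C_{\tau'}$ of $W$ with $A_f(\tau') = \sigma$. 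Pushforward by $\pi_{WX *}$ kills those $C_{\tau'}$ with $\tau' \notin \Sigma_1(X)$, leaving only poles of $X$ satisfying $A_f(\tau') = \sigma$. But the part (2) hypothesis, combined with Theorem \ref{thm:janliandme1}(2), forces $A_f(\Sigma_1(X)) \subset \Sigma_1(Y)$, while $\sigma \notin \Sigma_1(Y)$; no such $\tau'$ can exist, so $\pi_{WX *} f_{W \tilde Y}^* S = 0$ and the plan concludes.
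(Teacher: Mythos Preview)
Your argument for Part (1) is correct and essentially identical to the paper's.

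For Part (2) the paper takes a much shorter route: it writes $f_{X\tilde Y} = f_{\tilde X\tilde Y}\circ\pi_{\tilde XX}^{-1} = \pi_{\tilde YY}^{-1}\circ f_{XY}$ and applies Proposition~\ref{prop:stablecomp} to each factorization, obtaining in one line
\[
\pi_{\tilde XX*} f_{\tilde X\tilde Y}^* T_{\tilde Y}
= f_{X\tilde Y}^* T_{\tilde Y}
= f_{XY}^* (\pi_{\tilde YY}^{-1})^* T_{\tilde Y}
= f_{XY}^* \pi_{\tilde YY*} T_{\tilde Y}
= f_{XY}^* T_Y.
\]
Your resolution-and-residue approach via $W$ is a legitimate alternative strategy, but it has a genuine gap at the assertion that ``every component of the divisor $f_{W\tilde Y}^* C_\sigma$ must map \emph{onto} $C_\sigma$.'' For a holomorphic map this is simply false: any curve that $f_{W\tilde Y}$ \emph{contracts} to a point of $C_\sigma$ also appears in the pullback divisor with positive multiplicity. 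In particular, a persistently exceptional curve $E\subset\exc(\rzf)$ whose image $\rzf(E)$ lands on $C_\sigma$ (which occurs exactly when the image of $E$ in $Y$ is a $\torus$-invariant point that $\tilde Y$ separates) is such a component. Since $E$ is internal, $\pi_{WX*}E = E_X \neq 0$, and so your conclusion $\pi_{WX*}f_{W\tilde Y}^* S = 0$ does not follow. Theorem~\ref{thm:janliandme1}(1) only prevents internal curves from \emph{surjecting} onto poles; it says nothing about internal curves contracted to points lying on poles, and those are precisely the components you have overlooked. It is worth noting that the paper's middle equality above runs into the very same obstruction: the criterion of Proposition~\ref{prop:stablecomp} for the factorization $\pi_{\tilde YY}^{-1}\circ f_{XY}$ is $f_{XY}(\exc(f_{XY}))\cap\ind(\pi_{\tilde YY}^{-1})=\emptyset$, which again asks that no persistently exceptional curve have image in $Y$ equal to a $\torus$-invariant point blown up in $\tilde Y$.
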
 
 
That $\ind(f_{XY})$ contains no $\torus$-invariant points amounts (by Theorem \ref{thm:janliandme2} and the definition of $\ind(\rzf)$) to saying that $X$ realizes all points in $\ind(\rzf)$ and $\ind(f_{XY}) = \ind(\rzf)$.  That $f_{XY}$ contracts no poles likewise amounts (by Theorem \ref{thm:janliandme1}) to saying $\exc(f_{XY}) = \exc(\rzf)$.
 
\begin{proof}
Suppose $\ind(f_{XY})$ contains no $\torus$-invariant points, hence no points that are images of curves contracted by $\pi_{\tilde XX}$.   
So from Proposition \ref{prop:stablecomp} and $\ind(\pi_{\tilde YY}) = \emptyset$, we infer that 
$$
\pi_{\tilde Y Y*}f_{\tilde X\tilde Y*} T_{\tilde X} = f_{\tilde XY*}T_{\tilde X} = f_{XY*}\pi_{\tilde X X*} T_{\tilde X} = f_{XY*} T_X,
$$
which gives the first assertion. For the second assertion, we have similarly that since $f_{XY}$ contracts no poles,
$$
\pi_{\tilde XX*} f_{\tilde X\tilde Y}^* T_{\tilde Y} = f_{X\tilde Y}^* T_{\tilde Y} = f_{XY}^*(\pi_{\tilde Y Y}^{-1})^* T_{\tilde Y} = f_{XY}^* \pi_{\tilde Y Y*} T_{\tilde Y} = f_{XY}^*T_Y.
$$
\end{proof}

Corollary \ref{cor:unambiguousaction} and the last conclusions of Theorems \ref{thm:janliandme1} and \ref{thm:janliandme2} make the following definition unambiguous.

\begin{defn}
\label{defn:pushandpull}
Let $f$ be a toric map and
let $T\in\pcc(\rzt)$.
\begin{itemize}
 \item The \emph{pullback} $\rzf^*T\in\pcc(\rzt)$ is given in any toric surface $X$ by choosing a toric surface $Y$ sufficiently dominant that $f_{XY}$ contracts no poles of $X$ and setting $(\rzf^*T)_X := f_{XY}^* T_Y$;
 \item The \emph{pushforward} $\rzf_* T\in\pcc(\rzt)$ is given in any toric surface $Y$ by choosing a toric surface $X$ sufficiently dominant that $\ind(f_{XY})$ contains no $\torus$-invariant points and setting
 $(\rzf_* T)_Y = f_{XY*} T_X$.
\end{itemize}
\end{defn}

%
%

\begin{prop} If $f:\torus\tto\torus$ is a toric map and $T\in\pcc(\rzt)$ is internal, then $\rzf^*T$ and $\rzf_*T$ are also internal.
\end{prop}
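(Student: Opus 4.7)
The plan is to verify the internality of $\rzf_* T$ and of $\rzf^* T$ one toric surface at a time. To prove $\rzf_* T$ (respectively $\rzf^* T$) is internal, I will fix a target toric surface $Y$ (respectively $X$), compute its incarnation there via Definition \ref{defn:pushandpull} from a suitably dominant companion surface, decompose the result using Proposition \ref{prop:eidecomp1} into internal and external-divisor components, and show that the external-divisor component has vanishing coefficient along every pole. That coefficient is the Lelong number of the incarnation at a generic point of the pole, so the argument reduces to a Lelong-number vanishing statement at generic pole points.

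\textbf{Pushforward.} Given $Y$, I invoke Theorem \ref{thm:janliandme2}(4) to choose $X \succ Y$ such that $\ind(f_{XY}) = \ind(\rzf)$ contains no $\torus$-invariant points, so that $(\rzf_* T)_Y = f_{XY*} T_X$. For each pole $C_\tau \subset Y$, I pick a general $q \in C_\tau^\circ$ lying outside the finitely many intersection points of $C_\tau$ with $f_{XY}(\ind(f_{XY}))$; the latter is a finite union of \emph{internal} curves by Theorem \ref{thm:janliandme2}(3), so such a choice is possible. By Theorem \ref{thm:janliandme2}(2) the fiber $f_{XY}^{-1}(q)$ consists of finitely many non-$\torus$-invariant \emph{external} points, each lying on a pole of $X$. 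Internality of $T_X$ forces its Lelong number to vanish at each such point. A standard pushforward Lelong-number computation (e.g., via the graph of $f_{XY}$) then gives $\nu(f_{XY*} T_X, q) = 0$, so the coefficient of $C_\tau$ in the external part of $f_{XY*} T_X$ is zero. Since $\tau$ was arbitrary, $(\rzf_* T)_Y$ is internal.

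\textbf{Pullback and main obstacle.} For $\rzf^* T$ the argument is symmetric: fix $X$, and by Theorem \ref{thm:janliandme1}(3) together with Definition \ref{defn:pushandpull} choose $Y \succ X$ so that $\exc(f_{XY}) = \exc(\rzf)$ is entirely internal, whence $(\rzf^* T)_X = f_{XY}^* T_Y$. For each pole $C_\tau \subset X$, take a general $p \in C_\tau^\circ \setminus \ind(f_{XY})$. By Theorem \ref{thm:janliandme2}(2), $q := f_{XY}(p)$ is a non-$\torus$-invariant point on the pole $C_{A_f(\tau)} \subset Y$; internality of $T_Y$ then gives $\nu(T_Y, q) = 0$, and Favre's inequality (Lemma \ref{lem:favrebd}) yields $\nu(f_{XY}^* T_Y, p) \leq C \nu(T_Y, q) = 0$, so the external-divisor component of $f_{XY}^* T_Y$ has vanishing coefficient along $C_\tau$. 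The principal work in both directions is the careful selection of companion surface so that the indeterminate and exceptional loci of $f_{XY}$ are ``internal'' in the sense of Theorems \ref{thm:janliandme1}-\ref{thm:janliandme2}; once this is arranged, generic pole points in one surface correspond under $f_{XY}$ to generic pole points in the other, and the Lelong-number vanishing is immediate from Lemma \ref{lem:favrebd} and the fact that internal currents do not charge poles.
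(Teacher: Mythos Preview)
Your overall strategy matches the paper's: arrange the companion surface so that poles map to poles under $f_{XY}$, then use that an internal current does not charge poles. However, your execution introduces a genuine (if easily repaired) gap.

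In the pushforward argument you write: ``Internality of $T_X$ forces its Lelong number to vanish at each such point.'' This is false as stated. Internality of $T_X$ means only that $T_X$ does not \emph{charge} any pole $C_{\tau'}\subset X$; equivalently, the \emph{generic} Lelong number of $T_X$ along $C_{\tau'}$ is zero. It does \emph{not} imply $\nu(T_X,p)=0$ at every point $p\in C_{\tau'}$: for instance, if $T_X$ is integration over an internal curve meeting $C_{\tau'}$ at $p$, then $\nu(T_X,p)>0$. Your stated conditions on $q$ (avoiding the finitely many points of $C_\tau\cap f_{XY}(\ind(f_{XY}))$) do not guarantee that the preimages of $q$ land at points where $\nu(T_X,\cdot)=0$. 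To fix this you must also require $q$ to avoid the countable image, under the finitely many covers $f_{XY}|_{C_{\tau'}}:C_{\tau'}\to C_\tau$, of the set $\{p\in C_{\tau'}:\nu(T_X,p)>0\}$. The same imprecision appears in the pullback argument when you assert $\nu(T_Y,q)=0$.

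Once this is fixed, your argument is correct but more elaborate than needed. The paper proceeds in one line: having arranged that $f_{XY}$ contracts no poles, each pole $C_\tau\subset X$ has image a pole $C_{A_f(\tau)}\subset Y$; since $T_Y$ does not charge $C_{A_f(\tau)}$, the pullback $f_{XY}^*T_Y$ does not charge $C_\tau$. This is a direct statement about currents charging curves (the generic Lelong number of $g^*T$ along a non-exceptional curve $C$ equals $\ram(g,C)$ times that of $T$ along $g(C)$), and avoids selecting individual points altogether. Your pointwise Lelong-number detour recovers exactly this fact but with extra bookkeeping.
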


\begin{proof}
Choose toric surfaces $X$ and $Y$ so that $f_{XY}$ contracts no poles of $X$.  Corollary \ref{cor:curveimages} then tells us that the image $f_{XY}(C_\tau)$ of any pole in $X$ is a pole in $Y$.  So if $T$ is internal, then $T_Y$ does not charge poles, and therefore neither does $(\rzf^*T)_X = f_{XY}^* T_Y$.  Hence $\rzf^*T$ is internal.  The argument that $\rzf_*T$ is internal is similar.
\end{proof}

We remark that if $D\in\pcc(\rzt)$ is an external divisor, then $\rzf^*D$ and $\rzf_*D$ need not also be external, as the divisor $\rzf^*D$ might have support on $\exc(\rzf)$, and $\rzf_*D$ might have support on $\rzf(\ind(\rzf))$. 

\begin{cor}
\label{cor:pushandpullalt}
If $f:\torus\tto\torus$ is toric then both $\rzf^*$ and $\rzf_*$ preserve the set of tame currents.  
\begin{itemize}
 \item If $T \in \pcc(\rzt)$ is tame and determined in $X$ and $f_{XY}$ contracts no poles of $X$, then $\rzf_*T$ is tame and  determined in $Y$, satisfying $(\rzf_*T)_Y = f_{XY*} T_X$.
 \item If $T \in \pcc(\rzt)$ is tame and determined in $Y$ and $\ind(f_{XY})$ contains no $\torus$-invariant points, then $\rzf^*T$ is tame and determined in $X$, satisfying $(\rzf^* T)_X = (f_{XY})^* T_Y$.
\end{itemize}
\end{cor}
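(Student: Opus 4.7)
The plan is to deduce both statements from Definition \ref{defn:pushandpull}, Corollary \ref{cor:unambiguousaction}, and Proposition \ref{prop:stablecomp}, using tameness of $T$ to rewrite the relevant incarnations as morphism-pullbacks of $T_X$ or $T_Y$ and then verifying the resulting composition identities under the given hypotheses.

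For the pullback case, I would pick a toric surface $Y_0\succ Y$ sufficiently dominant that $f_{XY_0}$ contracts no poles, so Definition \ref{defn:pushandpull} gives $(\rzf^*T)_X = f_{XY_0}^*T_{Y_0}$. Tameness of $T$ in $Y$ yields $T_{Y_0} = \pi_{Y_0Y}^*T_Y$, and because $\pi_{Y_0Y}$ is a morphism Proposition \ref{prop:stablecomp} applies unconditionally to the factorization $f_{XY}=\pi_{Y_0Y}\circ f_{XY_0}$ to give $f_{XY_0}^*\pi_{Y_0Y}^* = f_{XY}^*$, hence $(\rzf^*T)_X = f_{XY}^*T_Y$. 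Repeating the computation with any $X'\succ X$ in place of $X$ gives $(\rzf^*T)_{X'} = f_{X'Y}^*T_Y$, so tameness of $\rzf^*T$ in $X$ reduces to the identity $f_{X'Y}^*T_Y = \pi_{X'X}^*f_{XY}^*T_Y$. This follows from Proposition \ref{prop:stablecomp} applied to $f_{X'Y}=f_{XY}\circ\pi_{X'X}$: the required disjointness $\pi_{X'X}(\exc(\pi_{X'X}))\cap\ind(f_{XY})=\emptyset$ holds because the first set consists entirely of $\torus$-invariant points (since $\pi_{X'X}$ is a composition of toric blowups) and the hypothesis rules out such points from the second.

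The pushforward case follows a dual pattern. Choose $X_0\succ X$ with $\ind(f_{X_0Y})$ containing no $\torus$-invariant points, so Definition \ref{defn:pushandpull} gives $(\rzf_*T)_Y = f_{X_0Y*}T_{X_0} = f_{X_0Y*}\pi_{X_0X}^*T_X$ by tameness. To identify this with $f_{XY*}T_X$, I would pair both sides against an arbitrary smooth $(1,1)$ test form $\omega$ on $Y$: adjointness combined with the identity $\pi_{X_0X*}f_{X_0Y}^*\omega = f_{XY}^*\omega$ (which holds for smooth $\omega$ because any resolution $\sigma:\widetilde{X_0}\to X_0$ of $f_{X_0Y}$ composes with $\pi_{X_0X}$ to yield a resolution of $f_{XY}$) reduces both pairings to $\langle T_X,f_{XY}^*\omega\rangle$. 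Tameness of $\rzf_*T$ in $Y$ then follows by running the same computation with $Y'\succ Y$, combined with the unconditional factorization $f_{XY*} = \pi_{Y'Y*}\circ f_{XY'*}$ and the hypothesis that $f_{XY}$ contracts no poles, which is what prevents $f_{XY'*}T_X$ from carrying extraneous mass on $\exc(\pi_{Y'Y})$.

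The main obstacle I foresee lies in this last step. Since the hypothesis ``$f_{XY}$ contracts no poles'' does not a priori rule out $\torus$-invariant points in $\ind(f_{XY})$ (the involution $g:\cp^2\tto\cp^2$ of Example \ref{eg:inv} already exhibits both internal and $\torus$-invariant indeterminate points while contracting no poles of $\cp^2$), the naive composition law $f_{X_0Y*}=f_{XY*}\circ\pi_{X_0X*}$ from Proposition \ref{prop:stablecomp} is not directly available. To conclude $(\rzf_*T)_{Y'}=\pi_{Y'Y}^*(\rzf_*T)_Y$, I expect to need a careful analysis of where $f_{XY'*}T_X$ can concentrate mass on exceptional divisors of $\pi_{Y'Y}$, leveraging Theorem \ref{thm:janliandme1}(3), which states that in sufficiently dominant targets the image of each persistently exceptional curve lies off the $\torus$-invariant locus, and combining this with the form-level adjointness machinery introduced above.
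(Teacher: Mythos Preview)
Your pullback argument is correct and essentially matches the paper's. For the pushforward formula $(\rzf_*T)_Y = f_{XY*}T_X$, your adjointness detour is unnecessary: factor instead as $f_{XY} = f_{X_0Y}\circ\pi_{X_0X}^{-1}$ and apply Proposition~\ref{prop:stablecomp} directly. Since $\exc(\pi_{X_0X}^{-1}) = \emptyset$, the compatibility condition there is vacuous, so $f_{XY*} = f_{X_0Y*}\circ(\pi_{X_0X}^{-1})_* = f_{X_0Y*}\circ\pi_{X_0X}^*$, and the formula follows from tameness of $T$ without invoking the hypothesis on $f_{XY}$ at all. (Your reading of Example~\ref{eg:inv} is also off: one computes $\ind(g_{\cp^2\cp^2}) = \{[1,1,0],[1,0,1],[0,1,1]\}$, which contains neither internal nor $\torus$-invariant points.)

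Your instinct about the tameness assertion in the pushforward bullet is correct, and in fact sharper than you suggest: the assertion appears to be false as stated, so neither the paper's ``similar'' argument nor your proposed fix via Theorem~\ref{thm:janliandme1}(3) can succeed. The dual of the paper's key step would apply Proposition~\ref{prop:stablecomp} to $f_{X\tilde Y} = \pi_{\tilde YY}^{-1}\circ f_{XY}$, which requires $f_{XY}(\exc(f_{XY}))$ to avoid $\ind(\pi_{\tilde YY}^{-1})$, i.e.\ to avoid the $\torus$-invariant points of $Y$. The hypothesis that $f_{XY}$ contracts no poles forces $\exc(f_{XY}) = \exc(\rzf)$ but does \emph{not} force $Y$ to realize the points of $\rzf(\exc(\rzf))$. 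Concretely, take $f = g\circ h_A$ as in Example~\ref{eg:main} with $A = \left(\begin{smallmatrix}2&1\\1&1\end{smallmatrix}\right)$, set $X = \cp^2$, and let $Y$ be the toric surface with $\Sigma_1(Y) = A\,\Sigma_1(\cp^2) = \{(2,1),(1,1),(-3,-2)\}$. Then $f_{XY}$ contracts no poles, yet $\rzf(\exc(\rzf)) = \ind(\hat g)$ lies on poles $C_\tau$ with $\tau\in\Sigma_1(\cp^2)$, none of which belong to $\Sigma_1(Y)$; hence each $f_{XY}(E)$ is a $\torus$-invariant point of $Y$. If $T$ is a generic line in $\cp^2$ (tame, determined in $X$), it meets each $E\subset\exc(\rzf)$, so the image curve $\rzf(T)$ passes through $\rzf(E)$ and its incarnation on $Y$ hits a $\torus$-invariant point. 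Thus $\rzf_*T$ is not determined in $Y$. Theorem~\ref{thm:janliandme1}(3) yields determination only in some sufficiently dominant $Y'\succ Y$, and the same obstruction blocks the descent to $Y$. Fortunately the pushforward bullet is never invoked elsewhere in the paper.
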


\begin{proof}
The arguments for the two assertions are similar, so we only give details for the second.  Given $\tilde X\succ X$, choose $\tilde Y\succ Y$ so that
$f_{\tilde X\tilde Y}$ contracts no poles of $\tilde X$.  

Since $f_{\tilde X\tilde Y}$ doesn't contract poles, neither does $f_{X\tilde Y}$.  So we further have
$$
(\rzf^*T)_X = f_{X\tilde Y}^* T_{\tilde Y} = f_{X\tilde Y}^* \pi_{\tilde Y Y}^* T_Y = f_{XY}^* T_Y,
$$
where the first equality holds by definition of $\rzf^*T$, the second because $T$ is determined in $Y$, and the third by Proposition \ref{prop:stablecomp} and the fact that $\ind(\pi_{\tilde YY}) = \emptyset$.  

To see that $\rzf^*T$ is tame and determined in $X$ note that 
$$
(\rzf^*T)_{\tilde X} = f_{\tilde X\tilde Y}^* T_{\tilde Y} = f_{\tilde X \tilde Y}^* \pi_{\tilde Y Y}^* T_Y    = f_{\tilde X Y}^* T_Y = \pi_{\tilde X X}^* f_{XY}^* T_Y = \pi_{\tilde X X}^* (\rzf^*T)_{X}.
$$
Here the first equality holds by definition of $\hat f^* T$ and choice of $\tilde Y$, and 
the second holds because $T$ is determined in $Y$.  The third and fourth equalities use Proposition \ref{prop:stablecomp} and the facts that $\ind(\pi_{\tilde YY})=\emptyset$, whereas $\ind(f_{XY})$ contains no
$\torus$-invariant points.  The final equality uses the previous displayed equation.   Since $(\rzf^*T)_{\tilde X} = \pi_{\tilde X
X}^* (\rzf^*T)_{X}$ for arbitrary $\tilde X\succ X$, we see that $(\rzf^*T)$ is determined in~$X$.
\end{proof}

Much of the discussion from \S~\ref{subsec:ratmaps} concerning pushforward and pullback of currents and classes on surfaces generalizes more or less immediately to toric currents and classes.

\begin{prop}
\label{prop:pushpullbasics}
If $f:\torus\tto \torus$ is a toric map, then
\begin{itemize}
 \item $\rzf^*,\rzf_*:\pcc(\rzt) \to \pcc(\rzt)$ are weakly continuous
 and preserve the cone $\pcc^+(\rzt)$.
 \item If $T = dd^c \varphi\in\pcc(\rzt)$ is completely cohomologous to $0$, then so are $\rzf^* T = dd^c(\varphi\circ f)$ and $\rzf_*T = dd^c (f_*\varphi)$.
 \item Hence $\rzf^*,\rzf_*$ descend to continuous linear operators $\rzf^*,\rzf_*:\hoo(\rzt)\to\hoo(\rzt)$.
 \item $\rzf^*$ and $\rzf_*$ preserve the set of nef classes in $\hoo(\rzt)$.
 \item If at least one of the classes $\alpha,\beta\in\hoo(\rzt)$ is Cartier, then we have the Projection Formula $(\rzf^*\alpha\cdot \beta) = (\alpha\cdot \rzf_*\beta)$.
\end{itemize}
\end{prop}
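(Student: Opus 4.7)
The plan is to reduce each of the five assertions to its well-known analogue on individual toric surfaces, using Definition \ref{defn:pushandpull} and Corollary \ref{cor:unambiguousaction} to keep incarnations consistent. For any toric surface $X$ and any $T \in \pcc(\rzt)$, we have $(\rzf^* T)_X = f_{XY}^* T_Y$ for any $Y$ sufficiently dominant that $f_{XY}$ contracts no poles of $X$; symmetrically, $(\rzf_* T)_Y = f_{XY*} T_X$ for $X$ sufficiently dominant that $\ind(f_{XY})$ contains no $\torus$-invariant points. Since $T \mapsto T_X$ is continuous by the product topology on $\pcc(\rzt)$, and pullback/pushforward by a dominant rational map between surfaces is weakly continuous and positivity-preserving (\S\ref{subsec:ratmaps}), the first bullet follows at once.

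For the second bullet, Proposition \ref{prop:tamepotential} supplies a global potential $\varphi \in \dpsh(\torus)$ for $T$, satisfying $T_X = dd^c_X \varphi$ on every toric $X$. Choosing $Y \succ X$ as above, the standard identity $f_{XY}^* dd^c_Y \varphi = dd^c_X (\varphi \circ f_{XY})$ gives $(\rzf^* T)_X = dd^c_X (\varphi \circ f)$, since $\varphi \circ f_{XY} = \varphi \circ f$ on the Zariski open set $\torus \setminus \exc(\rzf)$ and the two agree as $L^1$ classes on $X$. Hence $\varphi \circ f$ is a global potential for $\rzf^* T$ in the sense of Proposition \ref{prop:tamepotential}, so $\rzf^* T$ is completely cohomologous to zero; the argument for $\rzf_*$ uses $f_* \varphi$ in place of $\varphi \circ f$. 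The third bullet is then immediate: $\rzf^*$ and $\rzf_*$ respect complete cohomology and hence descend to linear operators on $\hoo(\rzt)$, continuous because the quotient map $\pcc(\rzt) \to \hoo(\rzt)$ and the operators upstairs are.

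For the fourth bullet, a class $\alpha \in \hoo(\rzt)$ is nef precisely when every incarnation $\alpha_X \in \hoo(X)$ is nef. Pullback and pushforward by a dominant rational map between individual surfaces preserve nef classes (\S\ref{subsec:ratmaps}), and $(\rzf^* \alpha)_X = f_{XY}^* \alpha_Y$, $(\rzf_* \alpha)_Y = f_{XY*} \alpha_X$ for appropriate choices of the other surface, so $\rzf^* \alpha$ and $\rzf_* \alpha$ are nef whenever $\alpha$ is.

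The projection formula will be the main obstacle, since one must align the surfaces witnessing the Cartier condition with those on which Definition \ref{defn:pushandpull} is implemented. Suppose first that $\alpha$ is Cartier, determined on $Y$, and choose $X$ dominant enough that $\ind(f_{XY})$ contains no $\torus$-invariant points; by Corollary \ref{cor:pushandpullalt}, $\rzf^* \alpha$ is Cartier determined on $X$ with $(\rzf^* \alpha)_X = f_{XY}^* \alpha_Y$. The intersection with a Cartier class is computed on any surface realizing it (as noted after Theorem \ref{thm:intersection}), so $(\rzf^* \alpha \cdot \beta) = (f_{XY}^* \alpha_Y \cdot \beta_X)_X$. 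The classical projection formula \eqref{eqn:projformula} on $X$ gives $(f_{XY}^* \alpha_Y \cdot \beta_X)_X = (\alpha_Y \cdot f_{XY*} \beta_X)_Y$, and by our choice of $X$ we have $f_{XY*} \beta_X = (\rzf_* \beta)_Y$, yielding $(\alpha \cdot \rzf_* \beta)$. When instead $\beta$ is Cartier, determined on some $X$, choose $Y \succ X$ so that $f_{XY}$ contracts no poles of $X$; Corollary \ref{cor:pushandpullalt} then makes $\rzf_* \beta$ Cartier determined on $Y$ with incarnation $f_{XY*} \beta_X$, and a symmetric application of \eqref{eqn:projformula} closes the argument.
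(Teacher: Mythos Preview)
Your proof is correct and follows precisely the approach the paper intends. The paper does not actually supply a proof for this proposition; it is stated immediately after the sentence ``Much of the discussion from \S\ref{subsec:ratmaps} concerning pushforward and pullback of currents and classes on surfaces generalizes more or less immediately to toric currents and classes,'' and the authors leave the verification to the reader. You have carried out exactly that verification, reducing each item to its surface-level analogue via Definition~\ref{defn:pushandpull} and Corollary~\ref{cor:pushandpullalt}.

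One cosmetic point: in the last paragraph, when $\beta$ is Cartier determined on $X$, the condition you need on $Y$ is only that $f_{XY}$ contracts no poles of $X$; there is no need for $Y\succ X$. This does no harm, since you can always take $Y$ to dominate any fixed surface, but the relation $Y\succ X$ plays no role in the argument.
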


Proposition \ref{prop:stablecomp} also translates easily to toric classes and currents.

\begin{prop}
\label{prop:stablecomp2}
The following are equivalent for toric maps $f,g:\torus\tto \torus$.
\begin{itemize}
 \item $(\rzf \circ \hat g)^* = \hat g^* \circ \rzf^*$ (on $\pcc(\rzt)$ and/or $\hoo(\rzt)$);
 \item $(\rzf\circ \hat g)_* = \rzf_* \circ \hat g_*$;
 \item $\hat g(\exc(\hat g))\cap \ind(\rzf) = \emptyset$.
\end{itemize}
Hence if $\rzf$ is internally stable, then $(\rzf^n)^* = (\rzf^*)^n$ and $(\rzf^n)_* = (\rzf_*)^n$ for all $n\in\N$.
\end{prop}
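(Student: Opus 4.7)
Proposition \ref{prop:stablecomp2} is an inverse-limit reflection of Proposition \ref{prop:stablecomp}, so I would prove it by translating each assertion onto a carefully chosen pair of toric surfaces where Proposition \ref{prop:stablecomp} applies verbatim. Fix an arbitrary toric surface $X$ on which I want to verify the operator identity. Using the last conclusion of Theorem \ref{thm:janliandme2}, I choose $Y$ sufficiently dominant that $\ind(f_{YZ}) = \ind(\rzf)$ for some/any $Z\succ Y$, and using the third conclusion of Theorem \ref{thm:janliandme1} I require $Y$ dominant enough that no pole of $X$ lies in $\exc(g_{XY})$, whence $\exc(g_{XY}) = \exc(\hat g)$. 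Finally I choose $Z$ dominant enough for Definition \ref{defn:pushandpull} to give
\begin{align*}
[\rzf^*T]_Y &= f_{YZ}^* T_Z, \quad [\hat g^*\rzf^*T]_X = g_{XY}^* f_{YZ}^* T_Z, \\
[(\rzf\circ\hat g)^*T]_X &= (f\circ g)_{XZ}^* T_Z,
\end{align*}
and similarly for pushforwards.

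\textbf{Core reduction.} With these choices, the hypothesis $\hat g(\exc(\hat g))\cap\ind(\rzf) = \emptyset$ is exactly the identity $g_{XY}(\exc(g_{XY}))\cap\ind(f_{YZ}) = \emptyset$. Proposition \ref{prop:stablecomp} then yields $(f_{YZ}\circ g_{XY})^* = g_{XY}^* \circ f_{YZ}^*$ and the adjoint pushforward identity on $\pcc(Z)$ and $\hoo(Z)$. Evaluating both operators at $T_Z$ and comparing with the displayed factorizations above shows that both (1) and (2) hold on the incarnation at $X$; since $X$ was arbitrary, (3) implies (1) and (2). For the converse, assume (3) fails. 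Making the same choices of $X$, $Y$, $Z$, the surface-level condition also fails, and Proposition \ref{prop:stablecomp} supplies a current or class on $Z$ on which $g_{XY}^*\circ f_{YZ}^*$ disagrees with $(f\circ g)_{XZ}^*$; promoting it to a tame toric current on $\rzt$ via Corollary \ref{cor:pushandpullalt} produces a witness against (1), and dually against (2).

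\textbf{Iteration from internal stability.} The final assertion $(\rzf^n)^* = (\rzf^*)^n$ is then proved by induction on $n$, applying the equivalence just established to the pair $(\hat g,\rzf) = (\rzf^{n-1},\rzf)$. Internal stability means $\rzf^k(\exc(\rzf))\cap\ind(\rzf) = \emptyset$ for all $k\geq 1$, and since
\[
\exc(\rzf^{n-1}) = \bigcup_{j=0}^{n-2}\rzf^{-j}(\exc(\rzf)),
\qquad
\rzf^{n-1}(\exc(\rzf^{n-1})) = \bigcup_{k=1}^{n-1}\rzf^{k}(\exc(\rzf)),
\]
the latter is disjoint from $\ind(\rzf)$. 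So condition (3) holds for $(\rzf^{n-1},\rzf)$, giving $(\rzf^{n})^* = (\rzf^{n-1})^* \circ \rzf^* = (\rzf^*)^n$ by the inductive hypothesis. Pushforwards are handled identically.

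\textbf{Main obstacle.} The only real subtlety is the surface bookkeeping: one must verify that $Y$ can be chosen simultaneously dominant enough in two different directions, so that $\ind(f_{YZ})$ stabilizes to the persistent set $\ind(\rzf)$ while $\exc(g_{XY})$ stabilizes to $\exc(\hat g)$. Theorems \ref{thm:janliandme1} and \ref{thm:janliandme2} guarantee each separately for sufficiently dominant $Y$, and the directed-set structure on toric surfaces lets one realize both conditions at a common $Y$. Once this is arranged, the proof is a mechanical transcription of Proposition \ref{prop:stablecomp}.
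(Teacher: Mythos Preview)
Your proposal is correct and is precisely the argument the paper has in mind: the paper gives no proof at all, merely remarking that ``Proposition \ref{prop:stablecomp} also translates easily to toric classes and currents,'' and your reduction to Proposition \ref{prop:stablecomp} via a well-chosen chain $X\prec Y\prec Z$ of toric surfaces is exactly that translation. The bookkeeping you flag as the main obstacle (arranging $Y$ dominant enough that both $\exc(g_{XY})=\exc(\hat g)$ and $\ind(f_{YZ})=\ind(\rzf)$) is handled by Theorems \ref{thm:janliandme1} and \ref{thm:janliandme2} together with the directed-set structure, and your induction for the final sentence matches the displayed formula for $\exc(\rzf^n)$ given immediately after the definition of internal stability.
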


Finally, we generalize Theorem \ref{thm:pushpull1} to toric maps

\begin{thm}
\label{thm:pushpull2}
For any toric map $f:\torus\tto\torus$, the continuous linear operator $\extra:\hoo(\rzt)\to\hoo(\rzt)$ given by $\extra(\alpha) = \rzf_*\rzf^*\alpha - \dtop\alpha$ satisfies the following for any $\alpha\in\hoo(\rzt)$.
\begin{enumerate}
 \item $\extra(\alpha)$ is Cartier, represented by a divisor supported on $\rzf(\ind(\rzf))$.
 \item $\extra(\alpha)$ is nef whenever $\alpha$ is effective.  
 \item $\extra(\alpha) = 0$ if and only if $(\alpha\cdot C) = 0$ for each curve $C\subset\rzf(\ind(\rzf))$.
 \item $(\extra(\alpha)\cdot\alpha) \geq 0$ with equality if and only if $\extra(\alpha) = 0$.
\end{enumerate}
\end{thm}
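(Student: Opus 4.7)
The plan is to reduce everything to Theorem~\ref{thm:pushpull1} applied to an incarnation $f_{XY}:X\tto Y$ for carefully chosen toric surfaces, first proving the four properties for Cartier classes and then extending to all of $\hoo(\rzt)$ by density and continuity.

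\textbf{Setup and Cartier case.} First I will fix a toric surface $Y$ dominant enough to realize $\ind(\rzf)$ and to fully realize each (internal) curve in $\rzf(\ind(\rzf))$, then pick $X\succ Y$ so that $\ind(f_{XY})=\ind(\rzf)$ contains no $\torus$-invariant points and $f_{XY}$ contracts no poles of $X$; such a pair exists by Theorems~\ref{thm:janliandme1} and~\ref{thm:janliandme2}. Given a Cartier $\alpha$ determined on $Y$, two applications of Corollary~\ref{cor:pushandpullalt} yield $(\rzf^*\alpha)_X=f_{XY}^*\alpha_Y$ with $\rzf^*\alpha$ Cartier determined on $X$, and then $(\rzf_*\rzf^*\alpha)_Y=f_{XY*}f_{XY}^*\alpha_Y$ with $\rzf_*\rzf^*\alpha$ Cartier determined on $Y$. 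Hence $\extra(\alpha)$ is itself Cartier, determined on $Y$, with incarnation $\extra_{f_{XY}}(\alpha_Y)$; Theorem~\ref{thm:pushpull1} then supplies $Y$-level versions of (1)--(4). Because $Y$ fully realizes the curves of $\rzf(\ind(\rzf))$, each such curve avoids every $\torus$-invariant point of $Y$, so any further birational morphism $\pi_{Y'Y}:Y'\to Y$ is an isomorphism near them; pulling back the $Y$-incarnation thus introduces no exceptional correction, which identifies $\extra(\alpha)$ with an honest divisor $\sum c_i C_i\in\pcc(\rzt)$ supported on $\rzf(\ind(\rzf))$. Properties (2)--(4) then descend from their $Y$-level counterparts, using Theorem~\ref{thm:intersection} to identify $(\extra(\alpha)\cdot\alpha)$ and $(\extra(\alpha)\cdot C)$ with the corresponding $Y$-level intersection numbers (which is legitimate because $\extra(\alpha)$ is Cartier).

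\textbf{Extension by density.} For arbitrary $\alpha\in\hoo(\rzt)$ I will approximate by the Cartier classes $\alpha^{(Z)}$ determined by $\alpha_Z$ as $Z$ ranges through toric surfaces: the compatibility $\pi_{ZZ'*}\alpha_Z=\alpha_{Z'}$ forces $(\alpha^{(Z)})_{Z'}=\alpha_{Z'}$ for every $Z'\prec Z$, so $\alpha^{(Z)}\to\alpha$ in the product topology. When $\alpha$ is nef (resp.\ effective), so is each $\alpha^{(Z)}$, since pullback by birational morphisms preserves both properties. Continuity of $\rzf^*$ and $\rzf_*$ (Proposition~\ref{prop:pushpullbasics}) gives $\extra(\alpha^{(Z)})\to\extra(\alpha)$; the limit lies in the fixed finite-dimensional closed subspace of $\hoo(\rzt)$ spanned by the internal curves of $\rzf(\ind(\rzf))$, confirming (1). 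Properties (2)--(4) survive the limit because intersection with any fixed Cartier class is a continuous functional on $\hoo(\rzt)$ and because nefness, effectivity, and vanishing of individual divisor coefficients are closed conditions on the finite-dimensional space of divisors supported on $\rzf(\ind(\rzf))$.

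\textbf{Main obstacle.} The subtlest point will be the equality assertions in (3) and (4), where continuity alone yields only weak inequalities. I expect to recover them by exploiting that $\extra(\alpha)$ is always represented by a divisor on the fixed finite configuration of internal curves in $\rzf(\ind(\rzf))$, and applying the Hodge Index portion of Theorem~\ref{thm:intersection} together with effectivity of $\extra(\alpha)$ (once $\alpha$ is known nef) to reduce $(\extra(\alpha)\cdot\alpha)=0$ to $(\extra(\alpha)\cdot C)=0$ for each curve $C$ in the support. In effect, all the real work has already been done on a single $Y$ fully realizing these curves, and the intersection pairing on $\rzt$ agrees with the pairing on $Y$ for classes carried there.
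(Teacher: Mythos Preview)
Your approach is the same as the paper's—reduce to Theorem~\ref{thm:pushpull1} for Cartier classes and extend by continuity—but your setup has a genuine obstruction. Demanding $X\succ Y$ together with ``$f_{XY}$ contracts no poles of $X$'' forces $A_f(\Sigma_1(X))\subset\Sigma_1(Y)\subset\Sigma_1(X)$, so $\Sigma_1(X)$ must be forward-invariant under the action of $A_f$ on rays; when $A_f$ has irrational rotation number (the main case of interest in this paper) no nonempty finite set of rays has this property, and no such pair $X,Y$ exists. The paper sidesteps this entirely by letting $Y$ depend on the given Cartier class $\alpha$: since Corollary~\ref{cor:pushandpullalt} already guarantees abstractly that $\rzf_*\rzf^*\alpha$ is Cartier, one simply takes $Y$ dominant enough that both $\alpha$ and $\rzf_*\rzf^*\alpha$ are determined there, and then imposes on $X$ only the single condition that $\ind(f_{XY})$ contain no $\torus$-invariant point. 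The identity $(\rzf_*\rzf^*\alpha)_Y=f_{XY*}(\rzf^*\alpha)_X$ then comes straight from Definition~\ref{defn:pushandpull}, not from the pole-contraction bullet of Corollary~\ref{cor:pushandpullalt}.

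Two smaller points. You do not address why item~(2) here (``$\alpha$ effective $\Rightarrow\extra(\alpha)$ nef'') differs from its counterpart in Theorem~\ref{thm:pushpull1} (``$\alpha$ nef $\Rightarrow\extra_f(\alpha)$ effective''); the paper observes that each component $C\subset\rzf(\ind(\rzf))$ is an internal curve, hence nef, which is what upgrades the conclusion. And for the equality clauses in (3)--(4), your Hodge-index sketch handles only nef $\alpha$, whereas the assertions are for arbitrary $\alpha$; your closing remark that ``all the real work has already been done on a single $Y$'' is the correct fix and is precisely what the paper does. Once (1) is established, both $\extra(\alpha)$ and each curve $C\subset\rzf(\ind(\rzf))$ are Cartier and determined on $Y$, so the intersections $(\extra(\alpha)\cdot\alpha)$ and $(\alpha\cdot C)$ depend only on $\alpha_Y$; one may therefore replace $\alpha$ by the Cartier class it determines on $Y$ and invoke Theorem~\ref{thm:pushpull1} directly, without any limiting argument.
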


We stress that the intersection in the final two items make sense for
\emph{any} toric class $\alpha$ because curves $C \subset \rzf(\ind(\rzf))$
represent Cartier classes, and so for sufficiently dominant surfaces $X$, we have
$(\alpha\cdot C) := (\alpha_X\cdot C_X)_X$ is independent of $X$.

\begin{proof}
By continuity of $\rzf^*$ and $\rzf_*$, it suffices to establish the first two conclusions in the case where $\alpha$ is Cartier.  Then since $\extra(\alpha)$ is Cartier and all components of $\rzf(\ind(\rzf))$ are tame, we can again assume $\alpha$ is Cartier when proving the last two conclusions.  

So assuming that $\alpha$ is Cartier, we also have that $\rzf^*\alpha$ and $\rzf_*\rzf^*\alpha$ are Cartier.  Choose a toric surface $Y$ sufficiently dominant that $\alpha$, $\rzf_*\rzf^*\alpha$ are determined and all components of $\rzf(\ind(\rzf))$ are fully realized on $Y$.  Then choose a toric surface $X$ sufficiently dominant that $\ind(f_{XY}) = \ind(\rzf)\subset X^\circ$.  It follows from Corollary \ref{cor:pushandpullalt} that $\rzf^*\alpha$ is determined in $X$ with $(\rzf^*\alpha)_X = f_{XY}^*\alpha_Y$.  Hence by Theorem \ref{thm:pushpull1}
\begin{align}\label{EQN:EXTRA_XY}
(\rzf_*\rzf^* \alpha)_Y - \dtop\alpha_Y = (f_{XY*}f_{XY}^* -\dtop)\alpha_Y = \extra_{XY}(\alpha_Y) 
\end{align}
for some class $\extra_{XY}(\alpha_Y)$ depending linearly on $\alpha_Y$ and
represented by a divisor on $f_{XY}(\ind(f_{XY})) = \rzf(\ind(\rzf))$.  
Since the left hand side of (\ref{EQN:EXTRA_XY}) depends only on $Y$, 
$\extra_Y(\alpha_Y) \equiv \extra_{XY}(\alpha_Y)$ is independent of the choice of $X$. Since
$\rzf_*\rzf^* \alpha - \dtop\alpha$ is determined in $Y$, we further have for any $Z\succ Y$
that the corresponding class is given by $\extra_Z(\alpha_Z) =
\pi_{ZY}^*\extra_Y(\alpha_Y)$.  So taking $\extra(\alpha)\in\hoo(\rzt)$ to be
the Cartier class determined by $\extra_Y(\alpha_Y)$, we obtain 
$$
\rzf_*\rzf^*\alpha - \dtop(f)\alpha = \extra(\alpha).  
$$
All remaining assertions about $\extra(\alpha)$ proceed from the corresponding facts about $\extra_Y(\alpha_Y)$, as given by Theorem \ref{thm:pushpull1}.  
We note, though, that the the second conclusion of Theorem~\ref{thm:pushpull2} is stronger than its counterpart in Theorem~\ref{thm:pushpull1} because each irreducible component $C\subset\rzf(\ind(\rzf))$ is an internal curve, representing a nef class, so that $(\alpha\cdot C) \geq 0$ whenever $\alpha$ is (merely) effective and $\extra(\alpha)$ is nef as soon as it is effective.  
\end{proof}

\subsection{Comparison with \cite{BFJ08}}

Our definitions of pushforward and pullback are again inspired by the definitions of $f^*,f_*:W(\cp^2)\to W(\cp^2)$ given in \cite{BFJ08}.  Let us focus here on $f^*$.  Starting with any rational map $f:\cp^2\tto\cp^2$ and a class $\tilde\alpha\in W(\cp^2)$, they define the incarnation $(f^*\tilde\alpha)_X$ on a blowup $X\to\cp^2$ by choosing $Y$ so that $\exc(f_{XY}) = \emptyset$.  and declaring $(f^*\tilde\alpha)_X = f_{XY}^* \tilde\alpha_Y$.  In our toric setting, the best we can do is achieve that $\exc(f_{XY}) = \exc(\rzf)$ consists of only the persistently exceptional curves.  This means among other things that while $(f^n)^* = (f^*)^n$ automatically holds for $f^*:W(\cp^2)\tto W(\cp^2)$, it only holds for $\rzf^*:\hoo(\rzt)\tto\hoo(\rzt)$ when $\rzf$ is internally stable.  Despite this, we have some compatibility between the two notions of pullback.

\begin{prop}
\label{prop:pullbackcomparison}
Given a toric map $f:\torus\tto\torus$, we have $\rzf^* = \rs\circ f^*\circ \ex$ and $\rzf_* = \rs \circ f_* \circ ex$, where $f^*,f_*$ denote pushforward and pullback on $W(\cp^2)$.  
\end{prop}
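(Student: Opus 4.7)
The plan is to check the two identities incarnation-by-incarnation on each toric surface $X$; I shall describe the pullback case in detail, the pushforward being entirely symmetric. First I reduce to the case $X \succ \cp^2$. If $X$ is an arbitrary toric surface, pick some toric $\tilde X$ dominating both $X$ and $\cp^2$; then both $(\rzf^*\alpha)_X$ and $\rs(f^*\ex(\alpha))_X$ arise as $\pi_{\tilde X X *}$ applied to their counterparts on $\tilde X$ (for $\rs(f^*\ex(\alpha))_X$ this uses the definition of $\rs$ on toric surfaces that do not dominate $\cp^2$), so it suffices to establish the equality on $\tilde X$. When $X \succ \cp^2$, the restriction is trivial: $\rs(f^*\ex(\alpha))_X = (f^*\ex(\alpha))_X$.

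Next, to apply the \cite{BFJ08} definition of $f^*$, I need a blowup $\tilde Y \to \cp^2$ with $\exc(f_{X\tilde Y}) = \emptyset$. The construction goes: start with a toric surface $Y$ dominant enough that $\exc(f_{XY}) = \exc(\rzf)$, so that $(\rzf^*\alpha)_X = f_{XY}^*\alpha_Y$ by Definition \ref{defn:pushandpull}. Corollary \ref{cor:curveimages} ensures that $f_{XY}(\exc(\rzf)) = \rzf(\exc(\rzf))$ is a finite set of realizable points in $Y$; blowing up these points (iteratively if needed) yields a non-toric surface $\tilde Y \succ Y$ on which the lift $f_{X\tilde Y}$ contracts no curve at all. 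With this choice the \cite{BFJ08} formula reads $(f^*\ex(\alpha))_X = f_{X\tilde Y}^*\ex(\alpha)_{\tilde Y}$.

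To finish, let $Y'$ be the maximal toric surface dominated by $\tilde Y$. Since $Y$ is toric and sits between $\cp^2$ and $\tilde Y$, we have $Y' \succeq Y$, so $Y'$ is also sufficiently dominant for Definition \ref{defn:pushandpull} and $(\rzf^*\alpha)_X = f_{XY'}^*\alpha_{Y'}$. By the definition of $\ex$, $\ex(\alpha)_{\tilde Y} = \pi_{\tilde Y Y'}^*\alpha_{Y'}$. Proposition \ref{prop:stablecomp} then applies to the factorization $\pi_{\tilde Y Y'} \circ f_{X\tilde Y} = f_{XY'}$, since its hypothesis $f_{X\tilde Y}(\exc(f_{X\tilde Y})) \cap \ind(\pi_{\tilde Y Y'}) = \emptyset$ is vacuous (both factors are empty), and yields
\[
(f^*\ex(\alpha))_X = f_{X\tilde Y}^*\pi_{\tilde Y Y'}^*\alpha_{Y'} = f_{XY'}^*\alpha_{Y'} = (\rzf^*\alpha)_X.
\]

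The main obstacle is the existence of the non-toric blowup $\tilde Y$ with $\exc(f_{X\tilde Y}) = \emptyset$; this is where Corollary \ref{cor:curveimages} earns its keep, since persistently exceptional curves map to isolated realizable points that can be blown up to undo the contraction. Once $\tilde Y$ is in hand, everything else is bookkeeping with Proposition \ref{prop:stablecomp} and the definition of $\ex$. The same recipe, with the roles of $X$ and $Y$ interchanged and the dual \cite{BFJ08} definition of $f_*$ based on a blowup $\tilde X$ satisfying $\ind(f_{\tilde X Y}) = \emptyset$ (which exists because $\ind(\rzf)$ is a finite subset of $\rzt^\circ$), proves $\rzf_* = \rs \circ f_* \circ \ex$.
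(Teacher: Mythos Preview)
Your proof is correct and follows essentially the same approach as the paper's: choose a toric $Y$ with $\exc(f_{XY})=\exc(\rzf)$, blow up the finitely many image points to obtain a non-toric $\tilde Y$ with $\exc(f_{X\tilde Y})=\emptyset$, pass to the maximal toric surface it dominates, and conclude via Proposition~\ref{prop:stablecomp} using that the birational morphism factor has empty indeterminacy. The paper likewise gives details only for pullback and leaves pushforward to the reader; your explicit reduction to $X\succ\cp^2$ is a small clarification the paper leaves implicit when writing $\rs(f^*\ex(\alpha))_X=(f^*\ex(\alpha))_X$.
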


Note that the reverse relationship $f^* = \ex\circ\rzf^*\circ \rs$ does \emph{not} generally hold. 

\begin{proof}
We give the details for pullback only.  Fix a class $\alpha\in\hoo(\rzt)$.  Given a toric surface $X$, let $Y$ be another toric surface such that $f_{XY}$ contracts only persistently exceptional curves.  Blowing up still further we can also choose a (non-toric) surface $\tilde Y\to Y$ such that $f_{X\tilde Y}$ doesn't contract any curves at all.  Replacing $Y$ with the maximal toric surface dominated by $\tilde Y$, we still have that $f_{XY}$ contracts only persistently exceptional curves.  By (our) definition $(\rzf^*\alpha)_X = f_{XY}^*\alpha_Y$; whereas by the definition of pullback from \cite{BFJ08} and the discussion at the end of \S\ref{sec:tcurrents},
\begin{align*}
\rs(f^* \ex(\alpha))_X = (f^*\ex(\alpha))_X =& f_{X\tilde Y}^*\ex(\alpha)_{\tilde Y}  \\ 
&= f_{X\tilde Y}^*\pi_{\tilde Y Y}^*\alpha_Y = (\pi_{\tilde Y Y}\circ f_{X\tilde Y})^*\alpha_Y = f_{XY}^*\alpha_Y = (\hat f^* \alpha )_X.
\end{align*}
The fourth equality follows from Proposition \ref{prop:stablecomp} and the fact that $\ind(\pi_{\tilde YY})=\emptyset$.  So the outcome is the same by either definition.
\end{proof}

\begin{cor}\label{COR:BOUNDED_ON_L2_and_PROJECTION_FORMULA} If $f:\torus\tto\torus$ is toric, then the operators
$\rzf^*,\rzf_*$ on $\hoo(\rzt)$ restrict to bounded linear operators on
$\eltwo(\torus)$ satisfying 
$$
(\rzf^*\alpha\cdot \beta) = (\alpha\cdot f_*\beta) 
$$
for all $\alpha,\beta\in\eltwo(\rzt)$.
\end{cor}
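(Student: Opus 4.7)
The plan is to deduce this corollary directly from Proposition~\ref{prop:pullbackcomparison}, Proposition~\ref{prop:comparison}, and the corresponding facts for the operators $f^*,f_*$ on $W(\cp^2)$ established in \cite{BFJ08}.

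First I would recall that by \cite{BFJ08}, the pullback and pushforward $f^*,f_*:W(\cp^2)\to W(\cp^2)$ restrict to bounded linear operators on the Hilbert space $L^2\subset W(\cp^2)$, and they satisfy the projection formula $(f^*\tilde\alpha\cdot\tilde\beta)=(\tilde\alpha\cdot f_*\tilde\beta)$ there. From Proposition~\ref{prop:comparison} we know that $\ex:\eltwo(\rzt)\to L^2$ is a norm-preserving (isometric) embedding, while $\rs:L^2\to \eltwo(\rzt)$ is norm non-increasing and continuous. The identities $\rzf^*=\rs\circ f^*\circ\ex$ and $\rzf_*=\rs\circ f_*\circ\ex$ of Proposition~\ref{prop:pullbackcomparison} then immediately exhibit $\rzf^*$ and $\rzf_*$ as compositions of three bounded linear operators, so they are bounded linear operators on $\eltwo(\rzt)$ with norm at most that of $f^*$ (respectively $f_*$) on $L^2$.

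For the projection formula the key ingredient is the adjunction identity $(\alpha\cdot\rs(\tilde\beta))=(\ex(\alpha)\cdot\tilde\beta)$ from Proposition~\ref{prop:comparison}. Given $\alpha,\beta\in\eltwo(\rzt)$, I would compute
\begin{align*}
(\rzf^*\alpha\cdot\beta)
&=(\rs(f^*\ex(\alpha))\cdot\beta)
=(\ex(\beta)\cdot f^*\ex(\alpha))\\
&=(f_*\ex(\beta)\cdot\ex(\alpha))
=(\ex(\alpha)\cdot f_*\ex(\beta))
=(\alpha\cdot\rs(f_*\ex(\beta)))
=(\alpha\cdot\rzf_*\beta),
\end{align*}
where the second and second-to-last equalities use the adjunction identity (together with symmetry of the intersection form), and the middle equality uses the projection formula for $f^*,f_*$ on $L^2$ from \cite{BFJ08}.

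The only potential obstacle is the need to invoke the boundedness of $f^*,f_*$ on $L^2$ and their $L^2$ projection formula from \cite{BFJ08}; however, these are precisely among the foundational facts established there, so no further work is required. Everything else is a formal manipulation using the structural properties of $\ex$ and $\rs$ already proved in Proposition~\ref{prop:comparison}.
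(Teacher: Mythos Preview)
Your proof is correct. The boundedness argument is identical to the paper's: both use the factorization $\rzf^* = \rs\circ f^*\circ\ex$ from Proposition~\ref{prop:pullbackcomparison} together with Proposition~\ref{prop:comparison} and the boundedness of $f^*,f_*$ on $L^2$ from \cite{BFJ08}.

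For the projection formula, your route differs slightly from the paper's. The paper argues by density: the formula holds whenever one of the two classes is Cartier (this is the last item of Proposition~\ref{prop:pushpullbasics}), and since Cartier classes are dense in $\eltwo(\rzt)$ and $\rzf^*,\rzf_*$ are continuous there, the formula extends to all of $\eltwo(\rzt)$. You instead push everything up to $L^2\subset W(\cp^2)$ via $\ex$, apply the $L^2$ projection formula from \cite{BFJ08}, and come back down using the adjunction identity $(\alpha\cdot\rs(\tilde\beta)) = (\ex(\alpha)\cdot\tilde\beta)$ of Proposition~\ref{prop:comparison}. Your approach is a clean one-line computation once the adjunction identity is in hand, at the cost of importing the full $L^2$ projection formula from \cite{BFJ08}; the paper's approach is marginally more self-contained, relying only on the already-established Cartier case and continuity. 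Both are entirely valid.
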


\begin{proof}
That $\rzf^*,\rzf_*$ restrict to continuous operators on $\eltwo(\rzt)$ follows immediately from Propositions \ref{prop:comparison} and \ref{prop:pullbackcomparison} and continuity of the operators $f_*,f^*:L^2\to L^2$ (see Proposition 2.3 in \cite{BFJ08}).  The displayed formula follows from continuity of $\rzf^*$ and $\rzf_*$ and Proposition \ref{prop:pushpullbasics}, i.e. the fact that it holds if either of the classes is Cartier.
\end{proof}

We can now apply the main results from \cite{BFJ08} to obtain the starting point for our construction of forward and backward invariant currents associated to a toric map.  

\begin{thm}
\label{thm:invariantclasses}
Let $f:\torus\tto\torus$ be an internally stable toric map.  Then there exist non-zero nef classes $\alpha^*,\alpha_*\in\eltwo(\rzt)$ such that $\rzf^*\alpha^* = \ddeg(f)\alpha^*$ and $\rzf_*\alpha_* = \ddeg\alpha_*$.  If additionally $\ddeg^2(f) > \dtop(f)$, then up to constant multiple these classes are unique, satisfying for any $\alpha\in\eltwo(\rzt)$ that 
\begin{itemize}
 \item $\lim_{n\to\infty} \ddeg^{-n}(\rzf^n)^* \alpha = (\alpha\cdot\alpha_*)\,\alpha^*$;
 \item $\lim_{n\to\infty} \ddeg^{-n}(\rzf^n)_*\alpha = (\alpha\cdot\alpha^*)\,\alpha_*$.
\end{itemize}
\end{thm}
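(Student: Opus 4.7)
The plan is to transport the Perron-Frobenius plus Hodge-index argument of \cite{BFJ08} to the toric setting. All the requisite structure has been assembled: $\rzf^*$ and $\rzf_*$ are bounded linear operators on $\eltwo(\rzt)$ that obey the projection formula (Corollary~\ref{COR:BOUNDED_ON_L2_and_PROJECTION_FORMULA}) and preserve the cone of nef classes (Proposition~\ref{prop:pushpullbasics}); internal stability yields $(\rzf^n)^* = (\rzf^*)^n$ and dually for pushforward (Proposition~\ref{prop:stablecomp2}); the intersection form on $\eltwo(\rzt)$ satisfies Hodge-index rigidity (Theorem~\ref{thm:intersection}); and the excess operator $\extra = \rzf_*\rzf^* - \dtop$ is controlled by Theorem~\ref{thm:pushpull2}.

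For existence, I would fix a nef Cartier class $\alpha_0$ (e.g., the class of a line in $\cp^2$) and form $\beta_n := \ddeg^{-n}(\rzf^*)^n\alpha_0 = \ddeg^{-n}(\rzf^n)^*\alpha_0$, which is nef. Using the projection formula to pair with $\alpha_0$, together with the characterization of $\ddeg$ from Theorem~\ref{thm:ddegfacts} (transferred to $\rzt$ via Proposition~\ref{prop:pullbackcomparison}) and the submultiplicativity of degrees, one bounds $(\beta_n\cdot\alpha_0)$; nefness together with the Hodge index bound in Theorem~\ref{thm:intersection} then bounds $\|\beta_n\|$ in $\eltwo(\rzt)$. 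Passing to a weakly convergent subsequence (or to a Ces\`aro average to secure invariance) produces a nonzero nef class $\alpha^*$ with $\rzf^*\alpha^* = \ddeg\alpha^*$. The dual construction using $\rzf_*$ produces $\alpha_*$.

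Now assume $\ddeg^2 > \dtop$. Combining the projection formula, $\rzf^*\alpha^* = \ddeg\alpha^*$, and Theorem~\ref{thm:pushpull2},
\[
\ddeg^2 (\alpha^*)^2 = (\rzf_*\rzf^*\alpha^*\cdot\alpha^*) = \dtop(\alpha^*)^2 + (\extra(\alpha^*)\cdot\alpha^*),
\]
so $(\ddeg^2-\dtop)(\alpha^*)^2 = (\extra(\alpha^*)\cdot\alpha^*) \geq 0$, with equality forcing $\extra(\alpha^*) = 0$ by Theorem~\ref{thm:pushpull2}(4). If $(\alpha^*)^2 = 0$ then $\rzf_*\alpha^* = (\dtop/\ddeg)\alpha^*$, and applying the symmetric analysis to $\alpha_*$, both null nef classes would have to be proportional by Hodge-index rigidity, contradicting the mismatch of $\rzf_*$-eigenvalues $\dtop/\ddeg \neq \ddeg$. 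Hence $(\alpha^*)^2 > 0$, and symmetrically $(\alpha_*)^2 > 0$. A further application of Theorem~\ref{thm:intersection} then forces $(\alpha^*\cdot\alpha_*) > 0$. Uniqueness follows: any other nef $\rzf^*$-eigenclass $\alpha'$ with eigenvalue $\ddeg$ can be shifted by a multiple of $\alpha^*$ to lie in $\alpha_*^\perp$, where the intersection form is negative semi-definite; the shifted class simultaneously has $(\alpha')^2\geq 0$ from the same computation, so Hodge index pins it down as proportional to $\alpha^*$ and hence zero.

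Finally, normalize $(\alpha^*\cdot\alpha_*) = 1$ and decompose any $\alpha\in\eltwo(\rzt)$ as $\alpha = (\alpha\cdot\alpha_*)\alpha^* + \alpha^\perp$ with $\alpha^\perp\in \alpha_*^\perp$. Since $\alpha_*^\perp$ is $\rzf^*$-invariant (from the projection formula) and the intersection form is negative semi-definite there, iterating $(\rzf^*\beta)^2 = \dtop\beta^2 + (\extra(\beta)\cdot\beta)$ shows that $\|(\rzf^*)^n\alpha^\perp\|$ grows no faster than $\sqrt{\dtop}^{\,n}$ up to lower-order corrections, so $\ddeg^{-n}(\rzf^*)^n\alpha^\perp \to 0$ since $\ddeg > \sqrt{\dtop}$. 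Pairing then confirms $\ddeg^{-n}(\rzf^n)^*\alpha \to (\alpha\cdot\alpha_*)\alpha^*$, and the dual statement for $(\rzf^n)_*$ is symmetric. The main obstacle I anticipate is this last decay estimate: carefully bounding $(\extra(\beta)\cdot\beta)$ uniformly for $\beta$ in $\alpha_*^\perp$ so that the excess term does not spoil the geometric rate $\sqrt{\dtop}/\ddeg < 1$. This is the delicate spectral analysis carried out in \cite{BFJ08}, which we expect to adapt with only cosmetic changes given that the parallel structure has been fully established in the previous sections.
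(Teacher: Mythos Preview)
Your approach differs substantially from the paper's. Rather than re-running the Perron--Frobenius/Hodge-index machinery of \cite{BFJ08} inside $\eltwo(\rzt)$, the paper \emph{imports} the BFJ result wholesale via the comparison maps $\ex:\eltwo(\rzt)\to L^2$ and $\rs:L^2\to\eltwo(\rzt)$ from \S\ref{sec:tcurrents}. Concretely, one starts with a nef class $\alpha\in\eltwo(\rzt)$, extends it to $\tilde\alpha = \ex(\alpha)\in W(\cp^2)$, applies \cite[Theorem~3.2]{BFJ08} (or the resolvent construction $\tilde\alpha^*(t) = \sum t^{-n}f^{n*}\tilde\alpha$ from \cite{FaDa21}) to obtain $\tilde\alpha^*\in W(\cp^2)$ with $f^*\tilde\alpha^* = \ddeg\tilde\alpha^*$, and then restricts: $\alpha^* := \rs(\tilde\alpha^*)$. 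Internal stability together with Proposition~\ref{prop:pullbackcomparison} ($\rzf^* = \rs\circ f^*\circ\ex$) ensures that the restricted class inherits the eigenvalue equation. The convergence statements under $\ddeg^2 > \dtop$ are then read off directly from \cite[Theorem~3.5]{BFJ08} in the same way. This bypasses entirely the delicate spectral decay estimate you flag at the end.

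Your route is plausible in principle---the needed structure is indeed in place---but your sketch contains a gap beyond the acknowledged decay issue. The claim that ``both null nef classes would have to be proportional by Hodge-index rigidity'' is not justified: Theorem~\ref{thm:intersection} only forces $\beta$ to be a multiple of $\alpha$ when $\beta\in\alpha^\perp$, and two distinct null nef classes can have strictly positive intersection (think of the two rulings on $\cp^1\times\cp^1$). The actual BFJ argument showing $(\alpha^*)^2 > 0$ when $\ddeg^2 > \dtop$ is more involved than your sketch suggests. The paper's transport strategy sidesteps this by never having to reprove it.
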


\begin{proof}
We give details only for the class $\alpha^*$ here.  The arguments for $\alpha_*$ are identical.  Theorem 3.2 in \cite{BFJ08} gives a non-zero nef class $\tilde\alpha^*\in W(\cp^2)$ such that $f^*\tilde\alpha^* = \ddeg(f)\tilde\alpha^*$.  The argument from \cite{FaDa21} Theorem 4.12 shows in fact that one may construct $\tilde\alpha^*$ as follows.
Let $\ell\in\cp^2$ be a line.  Then for any nef class $\tilde\alpha \in W(\cp^2)$ and $t>\ddeg$, the series
$$
\tilde\alpha^*(t) = \sum_{n=0}^\infty t^{-n} f^{n*}\tilde\alpha
$$
converges, and any limit $\tilde\alpha^*$ of the bounded family $\tilde\alpha^*(t)/(\tilde\alpha^*(t)\cdot \ell)$ as $t$ decreases to $\ddeg(f)$ is a non-zero nef class with the desired invariance.  So if we begin with a non-zero nef class $\alpha\in\eltwo(\rzt)$ and let $\tilde\alpha = \ex(\alpha)\in W(\cp^2)$ be the extension of $\alpha$, Proposition \ref{prop:pullbackcomparison} and internal stability give us on any toric surface $X$ that 
$$
\alpha^*(t) := \rs(\tilde\alpha^*(t)) = \sum_{n=0}^\infty t^{-n}(\rzf^n)^*\alpha = \sum_{n=0}^\infty t^{-n} (\rzf^*)^n \alpha.
$$
In particular, the series on the right converges.  Applying $\rzf^*$ to both sides yields
$$
t^{-1}\rzf^*\alpha^*(t) = \alpha^*(t) - \alpha.
$$
Moreover, since $\ell$ determines a class in the toric surface $\cp^2$, we have
$(\tilde\alpha^*(t)\cdot \ell) = (\alpha^*(t)\cdot\ell)$, which as is observed
in \cite{FaDa21}, increases to infinity as $t$ decreases to $\ddeg(f)$.  So if
$\tilde \alpha^*$ is any limit of $\tilde\alpha^*(t) /(\tilde\alpha^*(t)\cdot\ell)$ as $t\searrow\ddeg(f)$,
and we let $\alpha^* := \rs(\tilde\alpha^*)\in\hoo(\hat \torus)$ be its
restriction, we obtain from continuity of $\rzf^*$ that
$$
\ddeg^{-1}\rzf^*\alpha^*  = \alpha^*.
$$
This proves the first assertion in the theorem.  

Under the additional assumption that $\ddeg(f)^2 > \dtop(f)$, Theorem 3.5 in \cite{BFJ08} gives for any class $\tilde\alpha\in L^2$ that
$$
\lim_{n\to\infty} f^{n*}\tilde\alpha/\ddeg(f)^n = (\tilde\alpha\cdot\tilde\alpha_*)\,\tilde\alpha^*.
$$
Taking $\tilde\alpha = \ex(\alpha)$ for some class $\alpha\in\eltwo(\rzt)$, we again invoke internal stability and Proposition \ref{prop:pullbackcomparison} to see that
$$
\frac{(\rzf^*)^n\alpha}{\ddeg(f)^n} = \frac{(\rzf^n)^*\alpha}{\ddeg(f)^n} = \frac{\rs(f^{n*}\tilde\alpha)}{\ddeg(f)^n} \to (\tilde\alpha\cdot\tilde\alpha_*)\,\rs(\tilde\alpha^*) = (\alpha\cdot\alpha_*)\,\alpha^*,
$$
since $\tilde\alpha = \ex(\alpha)$ is the extension of a class in $\hoo(\rzt)$.
\end{proof}

We remark that under the stronger hypothesis that $\ddeg^2(f) > \dtop(f)$, the above proof can be simplified. One obtains the invariant class~$\alpha^*$ directly using the last paragraph of the proof (and \cite[Theorem 3.5]{BFJ08}).

\begin{prop}
Let $f:\torus\tto\torus$ and $\alpha^*,\alpha_*\in\hoo(\rzt)$ be as in Theorem \ref{thm:invariantclasses}.  If the tropicalization $A_f$ of $f$ is a homeomorphism with irrational rotation number then neither class $\alpha^*$ or $\alpha_*$ is Cartier.
\end{prop}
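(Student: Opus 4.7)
The plan is to show by contradiction that if either of $\alpha^*$ or $\alpha_*$ were Cartier then its intersection with every pole would vanish, forcing the class itself to be zero by non-degeneracy of the intersection pairing on a toric surface.

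Suppose $\alpha^*$ is Cartier, determined on some toric surface $X$. For any rational ray $\tau\notin\Sigma_1(X)$ and any $Y\succ X$ realizing $C_\tau$, the morphism $\pi_{YX}$ contracts $C_{\tau,Y}$ to a torus-invariant point, so the projection formula for birational morphisms gives $(\alpha^*\cdot C_\tau) = (\alpha^*_X\cdot \pi_{YX*}C_{\tau,Y})_X = 0$. Thus $K' := \{\tau : (\alpha^*\cdot C_\tau)\neq 0\}$ is a finite subset of $\Sigma_1(X)$. The projection formula from Proposition~\ref{prop:pushpullbasics} (which applies because $\alpha^*$ is Cartier), combined with the identity $\rzf_*C_\tau = \deg(\rzf|_{C_\tau})\, C_{A_f(\tau)}$ from part~(4) of Theorem~\ref{thm:tropmap} and the eigen-equation $\rzf^*\alpha^* = \ddeg(f)\, \alpha^*$, yields
\[
\ddeg(f)\,(\alpha^*\cdot C_\tau) = (\rzf^*\alpha^*\cdot C_\tau) = (\alpha^*\cdot \rzf_*C_\tau) = \deg(\rzf|_{C_\tau})\,(\alpha^*\cdot C_{A_f(\tau)}).
\]
Both scalar factors being positive, this forces $\tau\in K' \Leftrightarrow A_f(\tau)\in K'$, so $K'$ is an $A_f$-invariant finite set of rays. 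But the induced circle homeomorphism $\tilde A_f$ has irrational rotation number and hence no periodic orbits (by the Denjoy-type theorem invoked in Theorem~\ref{THM:GOOD_ITERATES}), so it has no non-empty finite invariant set. Therefore $K' = \emptyset$ and $(\alpha^*\cdot C_\tau)=0$ for every rational ray $\tau$. Since every class on a toric surface has an external divisor representative, the pole classes $[C_\tau]$, $\tau\in\Sigma_1(X)$, span $\hoo(X)$; non-degeneracy of the intersection form then implies $\alpha^*_X = 0$, hence $\alpha^* = 0$, contradicting the non-triviality assertion in Theorem~\ref{thm:invariantclasses}.

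For $\alpha_*$ the same outline works with the roles of $\rzf^*$ and $\rzf_*$ swapped, and the main obstacle I anticipate is that, unlike $\rzf_*C_\tau$, the pullback $\rzf^*C_\tau$ generally acquires an internal component. Using Theorems~\ref{thm:janliandme1}, \ref{thm:janliandme2}, and~\ref{thm:tropmap} on a sufficiently dominant toric surface one has
\[
\rzf^*C_\tau = \ram(\rzf,C_{A_f^{-1}(\tau)})\, C_{A_f^{-1}(\tau)} + E_\tau,
\]
where $E_\tau$ is an effective sum of persistently exceptional curves whose $\rzf$-image lies on $C_\tau$. Applying the projection formula to $\rzf_*\alpha_* = \ddeg(f)\alpha_*$ therefore yields only the inequality
\[
\ddeg(f)\,(\alpha_*\cdot C_\tau) = (\alpha_*\cdot \rzf^*C_\tau) \geq \ram(\rzf,C_{A_f^{-1}(\tau)})\,(\alpha_*\cdot C_{A_f^{-1}(\tau)}),
\]
where $(\alpha_*\cdot E_\tau) \geq 0$ because $\alpha_*$ is nef and $E_\tau$ effective. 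This is still enough to force $\{\sigma : (\alpha_*\cdot C_\sigma) > 0\}$ to be forward $A_f$-invariant (bijectivity of $A_f$ on a finite set makes forward-invariance equivalent to full invariance), and the same Denjoy argument concludes the proof.
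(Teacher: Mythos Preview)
Your argument is correct and follows essentially the same route as the paper: use Cartier-ness to see that $(\alpha^*\cdot C_\sigma)=0$ for all poles $C_\sigma$ with $\sigma\notin\Sigma_1(X)$, then use the projection formula together with the action of $\rzf$ on poles to propagate positivity of $(\alpha^*\cdot C_\tau)$ along the $A_f$-orbit of $\tau$, and finally invoke irrational rotation. The paper packages this slightly more directly---rather than defining $K'$ and showing it is a finite $A_f$-invariant set, it picks one $\tau\in\Sigma_1(X)$ with $(\alpha^*\cdot C_\tau)>0$, chooses $k$ so that $A_f^{-k}(\tau)\notin\Sigma_1(X)$, and derives an immediate contradiction---but the content is the same.

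One small correction: your displayed identity $\rzf_*C_\tau=\deg(\rzf|_{C_\tau})\,C_{A_f(\tau)}$ is not generally an equality. As the paper itself notes in its proof, one only has $\rzf_*C_\tau\ge\deg(\rzf|_{C_\tau})\,C_{A_f(\tau)}$, with strict inequality precisely when $\ind(\rzf)\cap C_\tau\neq\emptyset$; the extra terms are effective and supported on the internal curves in $\rzf(\ind(\rzf))$. This does not damage your argument: since $\alpha^*$ is nef, the inequality still yields $A_f(\tau)\in K'\Rightarrow\tau\in K'$, and then finiteness of $K'$ together with bijectivity of $A_f$ on rays gives full invariance. (Interestingly, this is exactly the kind of inequality you already handled correctly in the $\alpha_*$ case.)
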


\begin{proof}
Suppose for contradiction that $\alpha^*$ is Cartier and determined in some toric surface~$X$.  Since $\alpha^*$ is non-trivial and nef and every effective divisor on $X$ is linearly equivalent to an external divisor, there is a pole $C_\tau$ of $X$ such that $(\alpha^*_X \cdot C_\tau)_X > 0$.  Since $A_f$ has irrational rotation number, there further exists $k \in \Z_{> 0}$ such that $\tau' := A_f^{-k}(\tau) \not \in \Sigma_1(X)$.  Then on the one hand the intersection $(\alpha^*\cdot \beta)$ with any other class $\beta\in\hoo(\rzt)$ is given by the intersection $(\alpha_X\cdot\beta_X)$ of incarnations in $X$.  Hence $(\alpha^*\cdot C_{\tau'}) = (\alpha_X\cdot 0) = 0$.

But on the other hand $\rzf^k_* C_{\tau'} \geq \deg(\rzf^k|_{C_{\tau'}})\,C_\tau$ (strict inequality holds precisely when $\ind(\rzf^k)\cap C_{\tau'}\neq \emptyset$), so
$$
(\alpha^* \cdot  C_{\tau'}) = \ddeg^{-k}(\rzf^{k*}\alpha^*\cdot C_{\tau'}) \geq \ddeg^{-k}\deg(f^k|_{C_{\tau'}})(\alpha^*\cdot C_{\tau}) > 0,
$$
where the first inequality holds because $\alpha^*$ is nef.  The gives us our contradiction.

%
%
The proof for the forward invariant class $\alpha_*$ is quite similar and the details are left to the reader.
\end{proof}

\section{Monomial maps and equidistribution}
\label{sec:monomial}

The following facts about monomial maps were first observed by Favre.

\begin{thm}[\cite{Fav99}, also \cite{JoWu11}]
\label{thm:monomialstuff}
Let $h = h_A:\torus\to\torus$ be the monomial map associated to a non-singular linear operator $A:N\to N$.  Then 
\begin{itemize}
 \item $\dtop(h) = \det A$ and $\ddeg(h)$ is the maximum of the absolute values of the eigenvalues of $A$; 
 \item in particular, $\ddeg(h)^2 = \dtop(h)$ if and only if the eigenvalues of $A$ are a complex conjugate pair;  
 \item there exists an iterate $h^n$, $n\leq 6$, and a toric surface $X$ on which $h^n$ extends to an algebraically stable rational map if and only if some power of $A$ has real eigenvalues.
\end{itemize}
\end{thm}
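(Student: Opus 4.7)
The plan is to address the three claims in sequence, leveraging the machinery of Sections~\ref{sec:maps}--\ref{sec:actions}.

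For the first claim, observe that $h_A:\torus\to\torus$ is a surjective group endomorphism whose kernel has order $|\det A|$; thus $\dtop(h_A)=|\det A|$. For the dynamical degree, note that $h_A$ is a self-cover of $\torus$, so $\exc(\rzf)=\ind(\rzf)=\emptyset$ and $h_A$ is vacuously internally stable. By Proposition~\ref{prop:stablecomp2}, $(h_A^*)^n=(h_A^n)^*$ on $\hoo(\rzt)$. Theorem~\ref{thm:tropmap} identifies the tropicalization as $A_{h_A}=A$, and by Theorem~\ref{thm:mainB} the pullback on a nef toric class corresponds to precomposition of its support function by $A$. Hence the spectral radius of $h_A^*$ on nef classes equals the spectral radius of $A$, and $\ddeg(h_A)=\max_i|\lambda_i|$ follows from Theorem~\ref{thm:ddegfacts}.

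Claim~(2) is an immediate consequence: the identity $\ddeg^2=\dtop$ reads $(\max_i|\lambda_i|)^2=|\lambda_1\lambda_2|$, which is equivalent to $|\lambda_1|=|\lambda_2|$. For a nonsingular integer $2\times 2$ matrix this holds precisely when the eigenvalues form a (possibly degenerate) complex conjugate pair.

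For claim~(3), the tropicalization of $h_A^n$ is $A^n$. By Theorem~\ref{thm:rotationcriterion}, $h_A^n$ admits a birationally conjugate algebraically stable model iff the induced circle homeomorphism of $A^n$ has rational rotation number, which for linear $A^n\in GL(2,\R)$ is equivalent to $A^n$ possessing an invariant ray in $N_\R$, i.e., to $A^n$ having real eigenvalues. If $A$ itself has real eigenvalues, take $n=1$. Otherwise, the eigenvalues are $re^{\pm i\theta}$ with $\theta\notin\pi\Z$, and $A^n$ has real eigenvalues precisely when $\zeta:=e^{2i\theta}$ is an $m$-th root of unity with $m\mid n$. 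Since $\zeta$ satisfies a quadratic equation with rational coefficients (coming from the characteristic polynomial of $A$), its degree over $\Q$ is at most~$2$; a primitive $m$-th root of unity has degree $\phi(m)$, so $\phi(m)\leq 2$, forcing $m\in\{1,2,3,4,6\}$. The minimum $n$ with $A^n$ having real eigenvalues is therefore at most~$6$.

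The main obstacle I anticipate lies in claim~(3), specifically verifying that the stable model can be realized on a \emph{toric} surface: given real eigenvectors of $A^n$, one must build an $A^n$-invariant fan whose associated toric surface carries $h_A^n$ as an algebraically stable map, controlling the indeterminacy created by rays of the fan that are not themselves eigenvectors. Once the reduction to real eigenvalues is secured, the number-theoretic bound $n\leq 6$ is straightforward.
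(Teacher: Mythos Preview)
The paper does not prove this theorem; it is quoted from Favre and Jonsson--Wulcan as background, so there is no paper proof to compare against. Your argument for item~(1) and your $\phi(m)\le 2$ bound giving $n\le 6$ are sound. (A minor aside on item~(2): $|\lambda_1|=|\lambda_2|$ can also occur with $\lambda_2=-\lambda_1\in\R^*$, e.g.\ $A=\left(\begin{smallmatrix}0&1\\4&0\end{smallmatrix}\right)$, which is not a complex conjugate pair; this is an imprecision already in the stated theorem rather than in your reasoning.)

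The real gap is in item~(3). Your appeal to Theorem~\ref{thm:rotationcriterion} does not deliver the conclusion: that theorem only guarantees that \emph{some} iterate of $f$ is birationally conjugate to a stable map on \emph{some} (possibly non-toric) rational surface; it neither identifies the iterate as the first $n$ for which $A^n$ has real eigenvalues nor produces a toric model. Along the way you assert that rational rotation number for the linear map $A^n$ is equivalent to $A^n$ having an invariant ray, which is false (rotation by $1/3$ has rational rotation number and no fixed ray). What the cited references actually do is construct, once $A^n$ has real eigenvalues, a complete fan $\Sigma$ such that $A^n$ maps each cone of $\Sigma$ into a cone of $\Sigma$; then $h_{A^n}$ contracts no pole of the toric surface $X(\Sigma)$ and is algebraically stable there. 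You correctly flag this fan construction as the main obstacle, but it is the entire substance of the cited proofs rather than a loose end --- and since Theorem~\ref{thm:rotationcriterion} is a later and more general statement that rests on the monomial case, invoking it here is also potentially circular.
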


The equidistribution results from \cite{DDG11} apply to monomial maps
satisfying the third conclusion of this Theorem \ref{thm:monomialstuff}.  In
this section, we prove a complementary equidistribution result for the case
when the third conclusion fails, i.e. those for which the underlying matrix $A$
has eigenvalues $\xi,\bar\xi = |\xi|e^{\pm 2\pi i\theta}\in\C$ with irrational
arguments $\pm\theta\in\R$.  First however, we prove Theorem \ref{thm:mainC} from the introduction.


\begin{proof}[Proof of Theorem \ref{thm:mainC}]
Since $f$ is internally stable, Theorem \ref{thm:invariantclasses} gives that there is a nef class $\alpha^*\in\hoo(\torus)$ satisfying $\rzf^*\alpha^*=\ddeg(f)\alpha^*$.  Corollary \ref{COR:BOUNDED_ON_L2_and_PROJECTION_FORMULA} and Theorem \ref{thm:pushpull2} tells us that
$$
\ddeg(f)^2(\alpha^*\cdot\alpha^*) = (\rzf^*\alpha^*\cdot\rzf^*\alpha^*) = \dtop(f) (\alpha^*\cdot\alpha^*) + (\extra(\alpha^*)\cdot\alpha^*).
$$
Since $\ddeg(f)^2 = \dtop(f)$, we see that $(\extra(\alpha^*)\cdot\alpha^*) = 0$ and therefore $(C\cdot\alpha^*) = 0$ for every curve $C\subset\rzf(\ind(\rzf))$.  But any such curve is internal and therefore nef. So by Theorem \ref{thm:intersection} $C^2 = 0$ and the cohomology class of $C$ is a multiple of $\alpha^*$.  

Suppose now that $f$ is not a shifted monomial map.  Then by Corollary \ref{cor:noexceptions} there is at least one persistently exceptional curve $E$ for $\rzf$.
Since $\rzf(E)$ is a point, the pushforward $\rzf_*E$ is supported on $\rzf(\ind(\rzf))$:
$$
\ddeg(f) (E\cdot \alpha^*) = (E\cdot\rzf^*\alpha^*) = (\rzf_* E\cdot\alpha^*) \leq M\sum_{C\subset\rzf(\ind(\rzf))} (C\cdot\alpha^*) = 0.
$$
Again because $E$ is internal, it follows that the class of $E$ is a positive multiple of $\alpha^*$ and $E^2 = \alpha^{*2} = 0$.  

Now let $X$ be a toric surface that fully realizes all curves in $\exc(\rzf)$ and $\rzf(\ind(\rzf))$ and $K_X = -\sum_{\tau \in\Sigma_1(X)} C_\tau$ be the canonical class of $X$.  Then
the genus formula tells us that the arithmetic genus of $E$ is a non-negative integer given by
$$
1 + \frac12 E\cdot (E+K_X) = 1 + \frac12 E\cdot K_X = 1 - \frac12\sum_{\tau\in\Sigma_1(X)} E\cdot C_\tau \leq 0,
$$
since by Corollary \ref{cor:balanceformula}, $E$ meets at least two external curves in $X$.  So in fact $E\cdot K_X = -2$, and the arithmetic genus of $E$ vanishes.  This implies that $E$ is a smooth rational curve that meets exactly two poles $C_{\tau_2},C_{-\tau_2}\subset\rzt$, both realized in $X$, and $(E\cdot C_{\pm\tau_2}) = 1$.  

Let $\tau_1\subset N_\R$ denote another rational ray, chosen so that the
generators of $\tau_j\cap N$, $j=1,2$ form a basis for $N$.  Then
$\{0\},\pm\tau_1,\pm\tau_2$ and the four complementary sectors constitute a fan
in $N_\R$, and this fan determines a toric surface $X$ that fully realizes the
persistently exceptional curve $E$.  The distinguished coordinate system
$(x_1,x_2)$ identifying $C_{\tau_j}$ with $\{x_j = 0\}$  extends to an
isomorphism $X\to \cp^1\times\cp^1$ identifying $C_{-\tau_j}$ with $\{x_j =
\infty\}$.  
Let us write $f$ with domain and codomain in these distinguished coordinates:
$$
(x_1,x_2) \mapsto (f_1(x_1,x_2),f_2(x_1,x_2)).
$$
The curve $E$ is a vertical line $\{x_1 = c\}$ for some $c\in\C^*$,
and since $E$ is cohomologous to a multiple of $\alpha^*$, the invariance
$\rzf^*\alpha^* = \ddeg\alpha^*$ implies that $f_1(x_1,x_2) \equiv f_1(x_1)$ is a function of $x_1$ only.   I.e. 
$f$ is a skew product preserving the vertical
fibration.  

Write $f_1(x_1) = x_1^a g_1(x_1)$, where $g_1(0)\neq 0,\infty$ and $f_2(x_1,x_2) = x_1^b x_2^c g_2(x_1,x_2)$ where $g_2$ has no zero or pole along either axis $\{x_j=0\}$.  Writing out $f^*\eta = \cov(f)\eta$ in coordinates then gives
$$
\cov(f) = ac + x_1 c\frac{g'_1}{g_1} + ax_2\frac{\partial g_2/\partial x_2}{g_2} +  x_1x_2 \frac{g'_1}{g_1} \frac{\partial g_2/\partial x_2}{g_2}.
$$
Setting $x_2 = 0$, we see that $g'_1\equiv 0$ so that $\cov(f) = ac$ and $g_1(x_1) \equiv t\in\C^*$ is constant.  It further follows then that $\frac{\partial g_2}{\partial x_2} \equiv 0$ so that $g_2 = g_2(x_1)$ is a function of $x_1$ only.  In short, the distinguished coordinate representation of $f$ reduces to 
$$
(x_1,x_2) \mapsto (t x_1^a,x_2^c g(x_1)),
$$
where $a$ and $b$ are integers satisfying $|ab| = \cov(f)$, $t\in\C^*$, and $g(x_1) = x_1^b g_2(x_1)$ is a rational function of $x_1$.  In particular, $f(\{x_1=0\})$ is either $\{x_1=0\}$ or $\{x_1=\infty\}$.  

Since $f^*\{x_1 = 0\} = |a|\{x_1=0\}$, we infer $a = \pm\ddeg(f)$.  One computes directly that typical points $(x_1,x_2)$ have $|ac|$ preimages under $f$.  Hence $\ddeg(f)^2 = \dtop(f) = |ac| = \ddeg(f)|c|$, i.e. $c=\pm\ddeg(f)$.
\end{proof}

The following statement was noted in the proof of Theorem \ref{thm:mainC}.

\begin{rem}\label{COR:RATIONAL_ROT_NUMBER}
In either conclusion of Theorem \ref{thm:mainC} one sees that the tropicalization $A_f$ of $f$ is a homeomorphism.  In the case when $f$ is a skew product, it maps the pole $\{x_1=0\}$ to itself, so $A_f$ fixes the ray $\tau_1$ that indexes this pole.   Hence under the additional hypothesis that $A_f$ has irrational rotation number, the conclusion narrows: $f$ must be a shifted monomial map.
\end{rem}

For the remainder of this section, we focus on monomial maps $h = h_A$ that do not admit algebraically stable models.  That is, we assume that $A$ has eigenvalues $\xi,\bar\xi = |\xi|e^{\pm 2\pi i\theta}$ for $\theta\in\R\setminus\Q$.  For any such $A$ we have a norm $\|\cdot\|_A$ on $N_\R$, the Euclidean norm relative to an appropriately chosen basis, such that $\|A\nvec\|_A = |\xi|\|\nvec\|_A$ for all $\nvec\in N_\R$.  Since this norm is a convex, homogeneous function on $N_\R$, we have an associated homogeneous current $\bar T_A\in\pcci^+(\rzt)$ given on $\torus$ by $\bar T_A := dd^c\|\trop\|_A$.  Since $\trop\circ h = A\circ\trop$, pullbacks of homogeneous currents by $\rzh$ remain homogeneous.  In particular, 
$$
\rzh^* \bar T_A = dd^c\|\trop \circ h\|_A = dd^c\|A\circ\trop\|_A = |\xi|dd^c\|\trop\|_A = \ddeg(h)\bar T_A. 
$$
Hence the class $\alpha^*\in\hoo(\rzt)$ of $\bar T_A$ is a leading eigenvector for pullback, the existence of which is guaranteed by Theorem \ref{thm:invariantclasses}.   In fact, since $\ind(\rzh) = \emptyset$, we further have from Theorem \ref{thm:pushpull2} that
$$
\ddeg(h) \rzh_* \bar T_A = \rzh_*\rzh^* \bar T_A = \dtop(h) \bar T_A = \ddeg(h)^2 \bar T_A.  
$$
Hence $\rzh_* \bar T_A = \ddeg(h) \bar T_A$, too.  So $\alpha_* = \alpha^*$ is also a leading eigenvector for pushforward.

We can now state our equidistribution theorem for monomial maps.   

\begin{thm}
\label{thm:monomialequidistribution}
Let $A:N\to N$ be a linear operator with eigenvalues $\xi,\bar\xi\in\C$ such that $\xi^n \notin\R$ for any $n\in\N$.  Let $\bar T_A\in\pcc^+(\torus)$ be the associated homogeneous current. Then any non-zero tame current $T\in\pcci^+(\rzt)$, there exists $c^* > 0$ such that
$$
\lim_{n\to\infty} \frac{1}{n}\sum_{j=0}^{n-1} \frac{\rzh^{j*} T}{|\xi|^j} = c^* \bar T_A.
$$
\end{thm}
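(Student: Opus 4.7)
The plan is to decompose $T = \bar T + dd^c\varphi$, where by Theorems \ref{thm:canonicalreps2} and \ref{thm:mainB} the homogenization $\bar T \in \pcci^+(\rzt)$ is the unique positive homogeneous representative of $[T]$, and by Proposition \ref{prop:tamepotential} $\varphi$ is a global relative potential for $T-\bar T$, integrable on every toric surface. The theorem then reduces to showing (a) $\frac{1}{n}\sum_{j=0}^{n-1}\rzh^{j*}\bar T/|\xi|^j \to c^*\bar T_A$ for some $c^*>0$, and (b) the Cesaro averages of $dd^c(\varphi\circ h^j)/|\xi|^j$ converge weakly to zero on every toric surface.

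For (a), pullback by $\rzh$ on homogeneous currents corresponds to precomposition of support functions by the tropicalization $A_h=A$. Working in coordinates where $\|\cdot\|_A$ is Euclidean, $A = |\xi| R_\theta$ with $\theta = \arg\xi$ irrational under the hypothesis $\xi^n\notin\R$ (see the discussion following Remark \ref{COR:RATIONAL_ROT_NUMBER}). By 1-homogeneity of $\bar\psi_u$, the support function of $\rzh^{j*}\bar T/|\xi|^j$ is $\bar\psi_u\circ R_{j\theta}$. Unique ergodicity of the irrational rotation $R_\theta$ on the $\|\cdot\|_A$-unit circle implies that $\frac{1}{n}\sum \bar\psi_u\circ R_{j\theta}$ converges uniformly to $F(\nvec):=\int_0^1\bar\psi_u(R_s\nvec)\,ds$, which is 1-homogeneous, convex, and invariant under the full rotation group, hence equal to $c^*\|\cdot\|_A$. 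Since $T\neq 0$ forces $[T]\neq 0$, the function $\bar\psi_u$ is nontrivial and (after normalization) nonnegative, so $c^*>0$. Theorem \ref{thm:continuity} then upgrades uniform convergence of support functions to weak convergence of the corresponding homogeneous currents on every toric surface.

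For (b), fix a toric surface $X$ and set $f_j := (\varphi\circ h^j)/|\xi|^j$, normalized so that $\int_X f_j\,dV=0$. The identity $dd^c_X f_j = \rzh^{j*}T/|\xi|^j - \rzh^{j*}\bar T/|\xi|^j$ expresses $dd^c_X f_j$ as a difference of two positive closed currents whose cohomology classes on $X$ are uniformly bounded (by Corollary \ref{cor:compactgrowth}, since the classes of $\rzh^{j*}\bar T/|\xi|^j$ come from a precompact family by part (a), and $[\rzh^{j*}T]=[\rzh^{j*}\bar T]$). Standard compactness for $\dpsh$-functions with bounded $dd^c$-mass and zero average then forces $\|f_j\|_{L^1(X)}\leq C$, so the Cesaro averages $\Phi_n := \frac{1}{n}\sum f_j$ are also bounded in $L^1(X)$ and admit weak subsequential limits. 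Any such limit $\Phi$ satisfies the eigenvalue equation $\Phi\circ h=|\xi|\Phi$, which follows from the telescoping identity $\rzh^* f_j = |\xi|f_{j+1}$ giving $\rzh^*\Phi_n-|\xi|\Phi_n = |\xi|(f_n-f_0)/n \to 0$.

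The main obstacle will be concluding from the eigenvalue equation that $\Phi\equiv 0$. Positivity $T\geq 0$ gives $dd^c f_j + \rzh^{j*}\bar T/|\xi|^j \geq 0$, and passing to the Cesaro limit yields $dd^c\Phi + c^*\bar T_A \geq 0$, so $\Phi$ is $c^*\bar T_A$-plurisubharmonic; since $\bar T_A$ has continuous potentials on $X$ (Theorem \ref{thm:canonicalreps}), a Hartogs-type argument as in Lemma \ref{lem:supbd} bounds $\sup_X\Phi$ from above. The symmetric lower bound is genuinely harder: one must control $-\Phi$ from above via $dd^c(-f_j)+\rzh^{j*}T/|\xi|^j \geq 0$, which requires uniform mass bounds on the Cesaro averages $\frac{1}{n}\sum\rzh^{j*}T/|\xi|^j$ together with a bounding homogeneous current for this family in the spirit of Lemma \ref{lem:mdom}. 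Once $\Phi$ is bounded on both sides, iterating $\Phi\circ h^m = |\xi|^m\Phi$ with $|\xi|>1$ forces $\Phi = 0$ almost everywhere, so $dd^c\Phi_n \to 0$ weakly along every subsequence, completing (b) and hence the theorem.
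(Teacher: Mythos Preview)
Your part (a) is exactly Lemma \ref{lem:homogcase} in the paper, so that half is fine.

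Part (b) is where you diverge from the paper and where the real gap lies. The paper does \emph{not} argue via an eigenvalue equation for a Cesaro limit $\Phi$. Instead it shows the stronger statement that $|\xi|^{-n}\rzh^{n*}(T-\bar T)\to 0$ directly (no Cesaro averaging needed for this piece). The key device you are missing is rotational averaging: choosing a $\torus_\R$-invariant K\"ahler form $\omega$, one has $\int_K(\varphi\circ h^n)\,\omega^2=\int_K(\varphi_{ave}\circ h^n)\,\omega^2$, and $\varphi_{ave}=(\sfn_u-\bar\sfn_u)\circ\trop+\const$ is a function of $\trop$ alone. The problem then reduces to the purely tropical Lemma \ref{lem:symcase}, which says $(\sfn\circ A^n-\bar\sfn\circ A^n)/|\xi|^n\to 0$ uniformly on compacta. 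Tameness enters precisely here: it forces $\bar T=\bar T(X)$ to have continuous local potentials on $X$ (Proposition \ref{prop:internaltame} and Theorem \ref{thm:canonicalreps}), so $\varphi$ can be taken bounded above, which is what makes the integral estimate go through.

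Your eigenvalue-equation route never invokes tameness, and this is a warning sign: the paper explicitly remarks after the statement of Theorem \ref{thm:monomialequidistribution} that it does not know whether the tameness hypothesis can be removed. Concretely, the step you flag as ``genuinely harder'' is genuinely missing. To bound $-\Phi$ above you would need the Cesaro averages $\frac1n\sum\rzh^{j*}T/|\xi|^j$ to be dominated by a fixed current with continuous local potentials on $X$. Lemma \ref{lem:mdom} only dominates the $X$-linearizations $\bar T(X)$, not the currents themselves; the currents $\rzh^{j*}T/|\xi|^j$ can have Lelong numbers at $\torus$-invariant points of $X$ (even when $T$ is tame, since $\rzh^{j*}T$ is determined on a surface that varies with $j$), so their local potentials need not be uniformly bounded. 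Without that, you cannot conclude $\Phi$ is bounded below, and the iteration $\Phi\circ h^m=|\xi|^m\Phi$ does not force $\Phi=0$. There is also a softer issue: extracting an $L^1$-convergent subsequence of $(\Phi_n)$ and then passing $\Phi_n\circ h\to\Phi\circ h$ in $L^1$ both require the kind of quasi-psh compactness that again rests on a reference current with bounded potentials.
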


Since monomial maps preserve external divisors, the requirement that the current $T$ be internal is necessary in this theorem.  We do not know if the tameness requirement is also necessary.  Practically speaking, we depend on it at the conclusion of the proof to ensure $T-\bar T$ has a potential that is bounded above on $\torus$.  It is not used at all in the preliminary results Lemmas \ref{lem:homogcase} and \ref{lem:symcase} below.

\begin{lem}
\label{lem:homogcase}
Theorem \ref{thm:monomialequidistribution} is true for homogeneous currents $T = \bar T\in\pcci^+(\rzt)$.
\end{lem}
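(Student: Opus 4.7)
\textbf{Proof plan for Lemma \ref{lem:homogcase}.} The strategy is to reduce the weak convergence of currents to pointwise convergence of their support functions and then invoke Weyl equidistribution. Since $\bar T$ is homogeneous with support function $\bar\psi$, and since $\trop\circ h_A = A\circ\trop$ on $\torus$, the pullback satisfies $\rzh^*\bar T = dd^c(\bar\psi\circ A\circ\trop)$. This is again homogeneous, with support function $\bar\psi\circ A$; iterating, $\rzh^{j*}\bar T$ is homogeneous with support function $\bar\psi\circ A^j$. I would fix a basis of $N_\R$ identifying it $\R$-linearly with $\C$ so that $A$ acts as multiplication by $\xi=|\xi|e^{2\pi i\theta}$ and $\|\cdot\|_A$ becomes the Euclidean modulus. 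In this basis, positive homogeneity of $\bar\psi$ gives
\begin{equation*}
|\xi|^{-j}\,\bar\psi(A^j z) \;=\; \bar\psi\!\bigl(|\xi|^{-j}A^j z\bigr) \;=\; \bar\psi\!\bigl(e^{2\pi i j\theta}z\bigr).
\end{equation*}

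Next, the support function of the Cesàro average $S_n := \frac{1}{n}\sum_{j=0}^{n-1}|\xi|^{-j}\rzh^{j*}\bar T$ is therefore $\Psi_n(z)=\frac{1}{n}\sum_{j=0}^{n-1}\bar\psi(e^{2\pi i j\theta}z)$. Since $\theta\notin\Q$, Weyl equidistribution applied to the continuous map $t\mapsto\bar\psi(e^{2\pi i t}z)$ gives pointwise convergence $\Psi_n(z)\to\Psi_\infty(z):=\int_0^1\bar\psi(e^{2\pi i t}z)\,dt$. Convexity of $\bar\psi$ makes it locally Lipschitz, so the family $\{z\mapsto\bar\psi(e^{2\pi it}z):t\in[0,1]\}$ is equicontinuous on each compact subset of $N_\R$; hence the convergence $\Psi_n\to\Psi_\infty$ is in fact uniform on compact sets. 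The limit $\Psi_\infty$ is positively homogeneous, convex, and invariant under the rotation group $\{z\mapsto e^{2\pi i s}z\}$, so it must be a non-negative multiple of the Euclidean norm: $\Psi_\infty(z)=c^*\|z\|_A$ with $c^*:=\int_0^1\bar\psi(e^{2\pi it})\,dt$. By Theorem \ref{thm:continuity}, pointwise (equivalently, locally uniform) convergence of support functions of homogeneous currents gives weak convergence of the currents on every toric surface, so $S_n\to c^*\bar T_A$.

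It remains only to show $c^*>0$; I expect this to be the single point that requires any care. Positive homogeneity and convexity of $\bar\psi$ force $\bar\psi(0)=0$ and $\bar\psi(\nvec)+\bar\psi(-\nvec)\geq 0$, so
\begin{equation*}
2c^* \;=\; \int_0^1\!\bigl[\bar\psi(e^{2\pi it})+\bar\psi(-e^{2\pi it})\bigr]\,dt\;\geq\;0.
\end{equation*}
If $c^*=0$ then the non-negative continuous integrand vanishes identically, giving $\bar\psi(-\nvec)=-\bar\psi(\nvec)$ on the unit circle, hence everywhere by homogeneity. Combined with convexity this forces $\bar\psi$ to be additive and therefore $\R$-linear, so $\bar T=dd^c(\bar\psi\circ\trop)=0$, contradicting the hypothesis. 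Thus $c^*>0$ and the proof is complete.
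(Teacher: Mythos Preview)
Your proof is correct and follows essentially the same approach as the paper: both reduce to Birkhoff averages of the support function under the induced circle rotation and use unique ergodicity/Weyl equidistribution to identify the limit as $c^*\|\cdot\|_A$, then invoke Theorem~\ref{thm:continuity} (implicitly in the paper) to pass back to currents. The only cosmetic differences are that the paper works abstractly with $\|\cdot\|_A$ rather than identifying $N_\R$ with $\C$, cites unique ergodicity directly for uniform convergence rather than Weyl plus equicontinuity, and obtains $c^*>0$ by normalizing $\bar\sfn\geq 0$ at the outset rather than via your odd-plus-convex-implies-linear argument.
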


Note that without the C\'esaro averaging in Theorem \ref{thm:monomialequidistribution}, this lemma fails for any homogeneous current $\bar T$ that is not already a multiple of $\bar T_A$.  Since homogeneous currents uniquely represent nef classes in $\hoo(\rzt)$, one sees that C\'esaro averaging is needed for Theorem \ref{thm:monomialequidistribution} to hold even on the cohomological level.  

\begin{proof}
On $\torus$, we have $\bar T = dd^c(\bar\sfn\circ\trop)$ for some non-negative homogeneous convex function $\bar\sfn:N_\R\to \R$, and therefore
$$
\rzh^{j*} \bar T = dd^c(\bar\sfn\circ\trop\circ h^j) = dd^c(\bar\sfn\circ A^j\circ\trop).
$$
In particular $\rzh^{j*}\bar T$ is also homogeneous, and we have for any $\nvec\in N_\R$ that 
$$
|\xi|^{-j} \bar\sfn(A^j\nvec) = \norm[A]{\nvec}\cdot \bar\sfn\left(
\frac{A^j\nvec}{\norm[A]{A^j\nvec}}\right). 
$$
Since $A$ induces a homeomorphism with irrational rotation number on $\{\norm[A]{\nvec} = 1\}$, which is therefore uniquely ergodic, we also see that
$$
\lim_{n\to\infty} \frac{1}{n}\sum_{j=0}^{n-1} \bar\sfn\left(
\frac{A^j\nvec}{\norm[A]{A^j\nvec}}\right) = c^* > 0
$$
uniformly in $\nvec$ for some constant $c^* > 0$.  We conclude that
$$
\lim_{n\to\infty} \frac{1}{n}\sum_{j=0}^{n-1} \frac{\rzh^{j*}\bar T}{|\xi^j|} 
= 
dd^c \left(\norm[A]{\trop} \cdot 
\lim_{n\to\infty} 
\bar\sfn\left(
\frac{A^j \circ \trop}{\norm[A]{A^j\circ\trop}}
\right)\right)
= c^* dd^c \norm[A]{\trop} = c^* \bar T_A.
$$
\end{proof}

The analog of Lemma \ref{lem:homogcase} for normalized \emph{pushforwards} of a homogeneous current holds with a similar proof.  However, the remainder of the proof of Theorem \ref{thm:monomialequidistribution}, including the next result, is specific to normalized pullbacks.

\begin{lem}
\label{lem:symcase}
Let $\sfn:N_\R\to \R$ be a convex function with $\growth{\sfn}<\infty$ and $\bar\sfn$ be its homogenization.  Then
$$
\lim_{n\to\infty} \frac{\sfn\circ A^n - \bar\sfn\circ A^n}{|\xi|^n} = 0
$$
uniformly on compact subsets of $N_\R$.  
\end{lem}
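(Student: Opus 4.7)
The plan is to exploit positive $1$-homogeneity of $\bar\sfn$ to reduce the statement to the normal convergence $\sfn(t\mvec)/t\to\bar\sfn(\mvec)$ supplied by Proposition~\ref{prop:homogenization}. For $\nvec\neq 0$, write $r_n := \norm[A]{A^n\nvec} = |\xi|^n\norm[A]{\nvec}$ and $\nvec_n := A^n\nvec/r_n$, so that $\nvec_n$ always lies on the compact sphere $S := \{\mvec\in N_\R : \norm[A]{\mvec} = 1\}$. Using $\bar\sfn(A^n\nvec) = r_n\,\bar\sfn(\nvec_n)$, the quantity to estimate factors as
\begin{align*}
\frac{\sfn(A^n\nvec) - \bar\sfn(A^n\nvec)}{|\xi|^n} \;=\; \norm[A]{\nvec}\left[\frac{\sfn(r_n\nvec_n)}{r_n} - \bar\sfn(\nvec_n)\right].
\end{align*}
The goal then becomes to show that the bracket above tends to zero uniformly on a suitable subset of $K$, and that its contribution on the remaining subset can be made arbitrarily small by crude estimates.

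Given a compact $K\subset N_\R$ with $R := \max_{\nvec\in K}\norm[A]{\nvec}$ and any $\epsilon > 0$, I would split $K$ at a threshold $\delta > 0$ to be chosen. On the ``outer'' piece $K_1 := \{\nvec\in K : \norm[A]{\nvec}\geq \delta\}$ we have $r_n\geq |\xi|^n\delta\to\infty$ uniformly in $\nvec$, while $\nvec_n$ ranges over the compact set $S$. Since $\sfn(t\mvec)/t \to \bar\sfn(\mvec)$ uniformly on $S$ by Proposition~\ref{prop:homogenization}, the bracket above is smaller than $\epsilon/R$ for all sufficiently large $n$ and all $\nvec\in K_1$; multiplying by $\norm[A]{\nvec}\leq R$ keeps the whole expression below $\epsilon$. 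On the ``inner'' piece $K_2 := K\setminus K_1$, I would estimate both terms crudely via \eqref{EQN:GROWTH2}: combined with equivalence of $\norm[A]{\cdot}$ and the Euclidean norm on $N_\R$, and the identity $\growth{\bar\sfn} = \growth{\sfn}$ from Proposition~\ref{prop:homogenization}, this yields a constant $C$ (depending only on $\growth{\sfn}$) such that $|\sfn(\mvec)|\leq C\norm[A]{\mvec} + |\sfn(0)|$ and $|\bar\sfn(\mvec)|\leq C\norm[A]{\mvec}$ for all $\mvec\in N_\R$. Evaluating at $\mvec = A^n\nvec$ with $\norm[A]{A^n\nvec} = |\xi|^n\norm[A]{\nvec} < |\xi|^n\delta$ and dividing by $|\xi|^n$ gives
\begin{align*}
\left|\frac{\sfn(A^n\nvec) - \bar\sfn(A^n\nvec)}{|\xi|^n}\right| \;\leq\; 2C\delta + \frac{|\sfn(0)|}{|\xi|^n}
\end{align*}
on $K_2$. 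Since $|\xi|>1$ (an easy consequence of the hypothesis $\xi^n\notin\R$ for all $n$ and integrality of $A$: otherwise the characteristic polynomial $x^2-\operatorname{tr}(A)x+1$ would force $\xi$ to be a root of unity), the second term vanishes as $n\to\infty$; choosing $\delta$ with $2C\delta<\epsilon/2$ and then $n$ large therefore handles $K_2$.

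The main technical obstacle I anticipate is the non-uniformity of $r_n\to\infty$ as $\nvec\to 0$: for $\norm[A]{\nvec}$ of order $|\xi|^{-n}$, $r_n$ stays bounded and the convergence of Proposition~\ref{prop:homogenization} cannot be applied directly. The two-scale split above sidesteps this by trading uniform asymptotics on $K_1$ for a $\delta$-controlled crude bound on $K_2$; compactness of the unit sphere $S$ is what makes the first estimate uniform in $\nvec$.
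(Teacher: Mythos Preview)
Your proof is correct and follows essentially the same route as the paper: factor the quantity as $\norm[A]{\nvec}\bigl(\sfn(r_n\nvec_n)/r_n-\bar\sfn(\nvec_n)\bigr)$ with $\nvec_n$ on the unit $\norm[A]{\cdot}$-sphere, then invoke the normal convergence $\sfn(t\mvec)/t\to\bar\sfn(\mvec)$ from Proposition~\ref{prop:homogenization}. The paper's version is terser and in fact slightly less careful than yours: it asserts the bracketed term tends to $0$ ``uniformly on $N_\R$'' and then multiplies by $\norm[A]{\nvec}$, without addressing the failure of $r_n\to\infty$ to be uniform as $\nvec\to 0$. Your two-scale split $K=K_1\cup K_2$, together with the crude growth bound on $K_2$ and the observation $|\xi|>1$, is exactly what is needed to make that step rigorous; so your treatment is a strict refinement of the paper's argument rather than a different approach.
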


\begin{proof}
Recall that the homogenization of $\sfn$ is given for any $\nvec\in N_\R$ by $\bar\sfn(\nvec)= \lim_{t\to\infty} t^{-1}\sfn(t\nvec)$.  So if we set $t_n := \norm[A]{A^n\nvec} = |\xi|^n\norm[A]{\nvec}$ and $u_n := t_n^{-1} A^n\nvec$, then $t_n\nearrow \infty$ and Proposition \ref{prop:homogenization} therefore yields
$$
\norm[A]{\nvec}^{-1} \lim_{n\to\infty} \frac{\sfn(A^n\nvec) -\bar\sfn(A^n\nvec)}{|\xi|^n} = 
\lim_{n\to\infty}\left(\frac{\sfn(t_n u_n)}{t_n} - \bar\sfn(u_n)\right) = 0. 
$$
uniformly on $N_\R$.  Multiplying through by $\norm[A]{\nvec}$, we get the same limit, albeit with uniform convergence only on compact subsets of $N_\R$.
%
%
\end{proof}

The rest of the proof of Theorem \ref{thm:monomialequidistribution} resembles that of Lemma 1.10.12 in \cite{Sib99}.

\begin{proof}[Proof of Theorem \ref{thm:monomialequidistribution}] 
Fix a tame current $T\in\pcci^+(\rzt)$, a toric surface $X$ on which $T$ is determined, and a $\torus_\R$-invariant K\"ahler form $\omega$ on $X$.  Let $T_{ave}$ be the rotational average of $T$ and $\bar T$ be the homogenization of $T_{ave}$.  By Theorem \ref{thm:canonicalreps} and Proposition \ref{prop:internaltame}, $\bar T = \bar T(X)$ has a continuous local potential in a neighborhood of any point in $X$.  This means we can write $T - \bar T = dd^c\varphi$, where $\varphi:X\to [-\infty,0)$ is integrable and negative.  

It suffice to show that $|\xi|^{-n}\rzh^{n*} (T-\bar T)\to 0$ in $\pcc(\rzt)$.
We will first show that this holds on $\torus$.
Let $\chi$ be a smooth $(1,1)$ form supported on a compact set $\{\norm[A]{\trop(x)}\leq c\} \subset \torus$.  Then
$$
\left|\pair{h^{n*}(T-\bar T)}{\chi}\right| = \left|\int_{\norm[A]{\trop}<c} (\varphi\circ h^n) \,dd^c\chi\right|
    \leq -C_\omega \norm[C^2]{\chi} \int_{\norm[A]{\trop}<c}(\varphi\circ h^n) \,\omega^2,
$$
where $C_\omega>0$ depends only on the K\"ahler form.  On the other hand, $\torus_\R$-invariance of $\omega^2$ and $\trop$ implies for any $y\in\torus_\R$ that
$$
\int_{\norm[A]{\trop}<c} (\varphi\circ h^n) \,\omega^2 = \int_{\norm[A]{\trop}<c} (\varphi_y \circ h^n) \,\omega^2
=
\int_{\norm[A]{\trop}<c} (\varphi_{ave}\circ h^n) \,\omega^2
$$
where $\varphi_y(x) :=\varphi(xy)$ and $\varphi_{ave}(x) := \int_{y\in\torus_\R} \varphi_y(x)\,\eta$.  Now $\varphi_{ave}$ is a potential on $X$ for $T_{ave} - \bar T$ and so equal to $\sfn_{ave}(\trop(x)) - \bar\sfn(\trop(x)) + B$ for some constant $B\in\R$.  Lemma~\ref{lem:symcase} therefore implies
$$
\lim_{n\to\infty} \left|\pair{|\xi|^{-n} h^{n*}(T-\bar T)}{\chi}\right| \leq \lim_{n\to\infty} C_\omega\norm[C^2]{\chi} \int_{\norm[A]{\trop}<c} \frac{\varphi_{ave}\circ h^n}{|\xi|^n} \,\omega^2 = 0.
$$
That is, $\lim |\xi|^{-n} h^{n*}(T-\bar T) = 0$ on $\torus$.

To see that the limit remains zero in $\pcc(\rzt)$, note that the previous
computation shows that any limit point $S$ of $|\xi|^{-n} \rzh^{n*} T$ in
$\pcc(\rzt)$ has the form $S = \bar T_A + S_{ext}$ where $S_{ext}$ is an
effective external divisor.  On the other hand, any limit point of $\lim |\xi|^{-n}
\rzh^{n*} \bar T$ is internal by Corollary \ref{COR:CLOSED_SETS_POSITIVE_HOMOG_CURR}, so since since $T$ and $\bar T$
are completely cohomologous, the same is true of $S$ and $\bar T_A$.  So
$S_{ext}\geq 0$ is completely cohomologous to zero and must therefore vanish.
\end{proof}

To close this section, we make two further remarks about Theorem \ref{thm:monomialequidistribution}.  First of all, one can restate it in a form closer to Theorem \ref{thm:mainA}, as a theorem specifically about equidistribution of preimages of curves on $\cp^2$, though invariance of $T_A$ and convergence of pullbacks then hold only modulo external divisors.  See the discussion in Section \ref{sec:invcurrents} about passing from Theorem \ref{thm:invcurrents} to Theorem \ref{thm:mainA}. 
Secondly, given $h$ as in Theorem \ref{thm:monomialequidistribution} and $\tau\in\torus$, one can check that the shifted monomial map $\tau h$ is conjugate via translation on $\torus$ to $h$ itself.  Hence the theorem applies to \emph{shifted} monomial maps, too, albeit relative to some translate of $T_A$.

\section{Invariant toric currents}
\label{sec:invcurrents}

In this section, we establish the main results of this article, Theorems \ref{thm:mainA} and \ref{thm:mainD} from the introduction.  The tools developed in previous sections allow us to employ arguments that are by now standard for algebraically stable surface maps.  First we construct the current $T^*$ in Theorem \ref{thm:mainA}. 

\begin{thm}
\label{thm:invcurrents}
Let $f:\torus\tto \torus$ be an internally stable toric rational map whose tropicalization $A_f$ is a homeomorphism with irrational rotation number.  Let $\alpha^*,\alpha_*\in\hoo(\rzt)$ be the nef classes associated to $\rzf$ in the conclusion of Theorem \ref{thm:invariantclasses}.  If $f$ is not a shifted monomial map, then there exists $T^*\in\pcc(\rzt)$ such that for any nef $T\in\pcc(\rzt)$, we have
$$
\lim \frac{\rzf^{n*}T}{\ddeg(f)^n} = (T\cdot\alpha_*) T^*.
$$
Hence $T^*$ is nef and positive, satisfies $\rzf^* T^* = \ddeg(f) T^*$ and represents the class $\alpha^*$.
\end{thm}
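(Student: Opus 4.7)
The plan is to realize $T^*$ as a limit of normalized pullbacks $\ddeg(f)^{-n}\rzf^{n*}T$ by comparing them to canonical homogeneous representatives, and to control the resulting relative potentials through Theorem~\ref{thm:volshrink}. First I would set $\alpha := \ch{T}\in\hoo(\rzt)$ and $\alpha_n := \ddeg(f)^{-n}\rzf^{n*}\alpha$. Since $A_f$ has irrational rotation number and $f$ is not a shifted monomial map, Theorem~\ref{thm:mainC} together with Remark~\ref{COR:RATIONAL_ROT_NUMBER} forces $\ddeg(f)^2 > \dtop(f)$, so Theorem~\ref{thm:invariantclasses} gives $\alpha_n \to (\alpha\cdot\alpha_*)\alpha^*$ in $\hoo(\rzt)$. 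Letting $\bar T_n$ and $\bar T^*$ denote the canonical homogeneous representatives (Theorem~\ref{thm:mainB}) of $\alpha_n$ and $\alpha^*$, the homeomorphism property in Theorem~\ref{thm:mainB} yields $\bar T_n \to (\alpha\cdot\alpha_*)\bar T^*$ in $\pcc(\rzt)$.

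Next I would introduce relative potentials. Since $\ddeg(f)^{-n}\rzf^{n*}T$ and $\bar T_n$ represent the same toric class $\alpha_n$, Proposition~\ref{prop:tamepotential} supplies $\varphi_n\in\dpsh(\torus)$ with $\ddeg(f)^{-n}\rzf^{n*}T = \bar T_n + dd^c\varphi_n$, normalized so that $\int_{\torus_\R}\varphi_n\,\twoform = 0$. Similarly choose $\psi_n\in\dpsh(\torus)$ with $\rzf^*\bar T_n - \ddeg(f)\bar T_{n+1} = dd^c\psi_n$. Internal stability gives $(\rzf^{n+1})^* = \rzf^*\circ(\rzf^n)^*$, so applying $\ddeg(f)^{-1}\rzf^*$ to the first identity and comparing with its counterpart at level $n+1$ yields the recursion
$$
\varphi_{n+1} = \ddeg(f)^{-1}\bigl(\psi_n + \varphi_n\circ f\bigr) + c_n,
$$
with constants $c_n$ absorbable into the normalization. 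Iterating gives the telescoping expression
$$
\varphi_n = \ddeg(f)^{-n}\varphi_0\circ f^n + \sum_{k=0}^{n-1}\ddeg(f)^{-(k+1)}\psi_{n-1-k}\circ f^k.
$$

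The crux is to show this converges in $L^1(X)$ on every toric surface $X$. Because $\bar T_n \to (\alpha\cdot\alpha_*)\bar T^*$, Corollary~\ref{cor:compactgrowth} bounds the growths $\growth{\sfn_{\bar T_n}}$ uniformly, and Theorem~\ref{thm:volbd} then yields uniform exponential tail estimates $\vol_X\{|\psi_n|>t\}\leq a e^{-bt}$. Having $\ddeg(f)^2>\dtop(f)$, I can fix $\mu$ with $\sqrt{\dtop(f)}<\mu<\ddeg(f)$; Theorem~\ref{thm:volshrink} provides $\mu$ as a dynamical lower volume exponent. Rewriting $\vol_X\{|\psi_n\circ f^k|>t\} = \vol_X f^{-k}(\{|\psi_n|>t\})$ and using $\vol f^k(f^{-k}(S))\leq \vol S$ with the lower volume exponent, I get $\vol_X f^{-k}(S)\leq C\,(\vol S)^{1/(b\mu^k)}$, which after integration in $t$ gives $\|\psi_n\circ f^k\|_{L^1(X)}\leq C'\mu^k$ uniformly in $n$. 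The telescoping series is thus dominated by the convergent geometric series $\sum_k(\mu/\ddeg(f))^k$, while $\ddeg(f)^{-n}\varphi_0\circ f^n\to 0$ in $L^1(X)$ for the same reason. Combined with $\psi_n\to(\alpha\cdot\alpha_*)\tilde\psi$ in $L^1(X)$ (a consequence of Theorem~\ref{thm:continuity} applied to the limit $\bar T_n\to(\alpha\cdot\alpha_*)\bar T^*$ on both sides of the potential equation), where $\tilde\psi$ is a $T$-independent potential for $\rzf^*\bar T^* - \ddeg(f)\bar T^*$, a standard split-the-series argument (estimate small $k$ by continuity, large $k$ by the uniform geometric bound) shows $\varphi_n$ converges in $L^1(X)$ to $(\alpha\cdot\alpha_*)S_\infty$, where $S_\infty := \sum_{k\geq 0}\ddeg(f)^{-(k+1)}\tilde\psi\circ f^k$.

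Finally, I would define $T^* := \bar T^* + dd^c S_\infty$. This current is manifestly independent of $T$ and represents $\alpha^*$; by construction $\ddeg(f)^{-n}\rzf^{n*}T \to (\alpha\cdot\alpha_*)T^*$ in $\pcc(\rzt)$. The functional identity $\ddeg(f)S_\infty = \tilde\psi + S_\infty\circ f$ combined with $\rzf^*\bar T^* = \ddeg(f)\bar T^* + dd^c\tilde\psi$ yields $\rzf^*T^* = \ddeg(f)T^*$ after a direct computation. Nefness and positivity of $T^*$ descend from the approximating currents $\ddeg(f)^{-n}\rzf^{n*}T$ upon choosing $T$ with $(\alpha\cdot\alpha_*)>0$ (e.g., a K\"ahler class). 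The hard part will be the uniform $L^1$ control on $\psi_n$ and its iterates: it requires simultaneously the uniform growth bound from Corollary~\ref{cor:compactgrowth}, the uniform exponential volume tail estimate from Theorem~\ref{thm:volbd}, and the transport of those tails through iterates of $f$ via Theorem~\ref{thm:volshrink}; a secondary delicate point is securing $\psi_n\to(\alpha\cdot\alpha_*)\tilde\psi$ in $L^1$ from the weak convergence of the corresponding currents with a normalization compatible with the recursion.
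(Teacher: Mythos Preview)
Your proposal is correct and follows essentially the same route as the paper: the same telescoping of $T_n - \bar T_n$ into pullbacks of potentials for $\ddeg^{-1}\rzf^*\bar T_{k-1} - \bar T_k$ (your $\ddeg^{-1}\psi_{k-1}$ is exactly the paper's $\varphi_k$), the same uniform growth bound via Corollary~\ref{cor:compactgrowth}, the same exponential tail from Theorem~\ref{thm:volbd}, and the same transport through iterates via Theorem~\ref{thm:volshrink} to get the geometric bound $\|\psi_{n-1-k}\circ f^k\|_{L^1(X)}\leq C\mu^k$.

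The one place you diverge is in pinning down the limit and its $T$-independence. The paper simply asserts that the dominated telescoping sum converges and then proves \emph{a posteriori} that any two invariant positive currents in the class $\alpha^*$ coincide (since their difference $dd^c\varphi$ satisfies $\ddeg^{-n}\varphi\circ f^n\to 0$ in $L^1$), deducing independence of $T$ from uniqueness. You instead argue that $\psi_n\to(\alpha\cdot\alpha_*)\tilde\psi$ in $L^1$ and run a split-the-series argument to identify the limit explicitly as $\bar T^* + dd^c S_\infty$ with $S_\infty$ manifestly $T$-independent. Your route is a bit more work (you correctly flag the $L^1$ convergence $\psi_n\to(\alpha\cdot\alpha_*)\tilde\psi$ as the delicate point; it does follow from weak continuity of $\rzf^*$, Theorem~\ref{thm:continuity}, and the compactness underlying Theorem~\ref{thm:volbd}), but it makes the convergence of the $n$-dependent series honest and yields invariance and $T$-independence in one stroke, whereas the paper's terser argument really needs its separate uniqueness step to close the loop.
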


The condition that $T$ be nef in the conclusion of this theorem guarantees that the intersection $(T\cdot\alpha^*)$ is finite.  It is satisfied e.g. when $T$ is positive and internal, or when $T$ is the tame current determined by an effective divisor in $\cp^2$.
%

\begin{proof}
Fix an internal current $T\in\pcci^+(\rzt)$ such that $(T\cdot\alpha_*) = 1$.
Since $f$ is internally stable, $T_n := \ddeg^{-n}\rzf^{n*} T = \ddeg^{-n}
(\rzf^*)^n T$.  Since $A_f$ has irrational rotation number and $f$ is not a
shifted monomial map, Corollary \ref{COR:RATIONAL_ROT_NUMBER} tells us that $\ddeg(f)^2 >
\dtop(f)$.  Hence by Theorem~\ref{thm:invariantclasses}, $\lim\ch{T_n} =
\alpha^*$.  In particular $([T_n])\subset \hoo(\rzt)$ is a relatively compact
sequence.  So, by Corollary~\ref{cor:compactgrowth}, there exists $M>0$ such that
$\growth{\bar\sfn_{T_n}} \leq M$ for all $n$.  

By Corollary \ref{cor:internalclass} we have $T - \bar T = T_0 - \bar T_0$ is completely cohomologous to zero.  So by Proposition \ref{prop:tamepotential} there exists $\varphi_0 := \varphi_{T}:\torus\to\R$ such that $\varphi$ is integrable on any toric surface $X$ compactifying $\torus$ and $T-\bar T = dd^c\varphi_0$ on $X$.  Similarly, for any $n>1$, the difference
$\lambda_1^{-1}(\rzf^* \bar T_{n-1} - \overline{\rzf^* \bar T_{n-1}})$ is completely cohomologous to zero with potential $\varphi_n :=
\varphi_{\ddeg^{-1}\rzf^*\bar T_{n-1}}\in L^1(X)$ independent of the choice of toric surface $X$.
Since $\bar T_{n-1}$ is completely cohomologous to $T_{n-1}$, it follows from Theorem \ref{thm:mainB} that
$\overline{\rzf^* \bar T_{n-1}} = \overline{\rzf^* T_{n-1}} = \bar T_n$.   Therefore, 
$$
dd^c\varphi_n = \lambda^{-1}\rzf^*\bar T_{n-1} - \bar T_n.
$$
Thus
\begin{eqnarray*}
T_n & = & \frac{\rzf^{n*} T}{\ddeg^n} = \frac{\rzf^{*n}\bar T_0 + dd^c \varphi_0\circ \rzf^n}{\ddeg^n}\\
    & = &\frac{\rzf^{(n-1)*}\bar T_1}{\ddeg^{n-1}} + dd^c\left(\frac{\varphi_1\circ \rzf^{n-1}}{\ddeg^{n-1}} + \frac{\varphi_0\circ \rzf^n}{\ddeg^n}\right) = \dots \\
    & = &\bar T_n + dd^c \sum_{j=0}^n \frac{\varphi_{n-j}\circ \rzf^j}{\ddeg^j}. 
\end{eqnarray*}
Since $\ch{T_n}$ converges to $\alpha^*$, Theorem \ref{thm:mainB} tells us that $\bar
T_n$ converges to the unique homogeneous current representing $\alpha^*$.  Therefore, to prove that the sequence $(T_n)$ converges,
it suffices to show for any fixed toric surface $X$ that the sum on the right
converges in $L^1(X)$ as $n\to\infty$.

Once we fix $X$, Corollary \ref{cor:compactgrowth} and Theorem \ref{thm:volbd} yield constants $a,b>0$ such that 
\begin{align*}
\vol\{|\varphi_n|\geq t\} \leq ae^{-bt}
\end{align*}
for all non-negative $n$ and $t$.  From Theorem \ref{thm:volshrink} we further infer for any $\mu >\sqrt{\dtop(f)}$ that there exist constants $A,B>0$ such that
$$
\vol\{|\varphi_n\circ f^m|\geq t\} \leq A e^{-Bt\mu^{-m}}
$$
for all non-negative $n$, $m$ and $t$.  Since $\ddeg(f)^2 > \dtop(f)$, we may assume $\ddeg> \mu$.  Thus
$$
\int_X \frac{|\varphi_{n-j}\circ f^j|}{\ddeg^j}\,dV = \frac{1}{\ddeg^j}\int_0^\infty \vol\{|\varphi_{n-j}\circ f^j|\geq t\}\,dt \leq \frac{A}{\ddeg^j}\int_0^\infty e^{-Bt\mu^{-j}}\,dt \leq \frac{A}{B}\left(\frac{\mu}{\ddeg}\right)^j.
$$
Hence
$\sum_{j=0}^\infty \frac{\varphi_{n-j}\circ f^j}{\ddeg^j}$ converges in $L^1(X)$ and therefore $T^* := \lim T_n$ exists.  Since $T_n$ is nef and positive for all $n$, so is $T^*$.  The cohomology class
is $\ch{T^*} = \lim \ch{T_n} = \alpha^*$.  Continuity of $f^*$ implies that $f^*T^* = \ddeg T^*$.  

To see that $T^*$ is independent of the choice of initial current $T$, suppose
$T'\in\pcc^+(\rzt)$ is another nef toric current
satisfying $f^* T' = \ddeg T'$.  Since $\ddeg(f)^2 > \dtop(f)$, the class
$\alpha^*$ is unique up to positive multiple.  Hence $T'$ and $T^*$ are
completely cohomologous, and we have $T' - T^* = dd^c \varphi$ for some
relative potential $\varphi$.  Invariance gives 
$$
T'-T^* = \frac{f^{n*}(T'-T^*)}{\ddeg}^n = dd^c \frac{\varphi\circ f^n}{\ddeg^n}
$$ 
for all $n\in\N$.  But one shows as above that $\lim_{n\to\infty}\int_X \frac{|\varphi\circ f^n|}{\ddeg^n}\,dV = 0$ geometrically on any toric surface $X$.
So in fact $T'=T^*$.

The assumptions that our initial current $T$ is internal and positive can be dispensed with similarly.  If $T\in\pcc(\rzt)$ is an arbitrary (in particular,  not
necessarily invariant) nef current with $(T\cdot \alpha_*) = 1$, then there exists a homogeneous current $\bar T\in\pcci^+(\rzt)$ completely cohomologous to $T$.  Writing $T-\bar T = dd^c \varphi$ for some relative potential $\varphi$, we have again for any toric surface $X$ that $\ddeg^{-n}\varphi\circ f^n\to 0$ in $L^1(X)$, and therefore 
$\ddeg^{-n}\rzf^{n*}(T - \bar T) \to 0$.
Hence 
$$
\lim \ddeg^{-n} \rzf^{n*} T = \lim \ddeg^{-n} \rzf^{n*} \bar T = T^*.
$$
\end{proof}

\begin{thm}
\label{thm:zerolelong}
Let $f:\torus\tto\torus$ and $T^*\in\pcc^+(\rzt)$ be as in Theorem \ref{thm:invcurrents}.  Then the Lelong number $\nu(T^*,p)$ vanishes at all points $p\in\rzt^\circ$ that are not persistently indeterminate for any iterate $\rzf^n$ of $\rzf$.  Hence $T^*$ does not charge curves.  
\end{thm}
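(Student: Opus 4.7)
The plan is to exploit the eigencurrent relation $(\rzf^n)^* T^* = \ddeg(f)^n T^*$, which holds for every $n \geq 0$ by internal stability of $f$ (Proposition \ref{prop:stablecomp2}) combined with $\rzf^* T^* = \ddeg(f) T^*$ from Theorem \ref{thm:invcurrents}. Applied with Lemma \ref{lem:favrebd}, this gives for each $p \in \rzt^\circ \setminus \bigcup_n \ind(\rzf^n)$ the bound
\[
\ddeg(f)^n \, \nu(T^*, p) \;=\; \nu((\rzf^n)^* T^*, p) \;\leq\; C_n(p)\, \nu(T^*, \rzf^n(p)),
\]
where $C_n(p)$ is the local topological degree of $\rzf^n$ at $p$ (or the general constant from Lemma \ref{lem:favrebd} when $\rzf^n$ is not locally finite at $p$). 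The conclusion $\nu(T^*, p) = 0$ will follow once we show $C_n(p)\,\nu(T^*, \rzf^n(p)) = o(\ddeg(f)^n)$.

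As a preliminary step, I would verify a uniform bound $M := \sup_{q \in \rzt^\circ}\nu(T^*, q) < \infty$. For internal $q \in \torus$, the Lelong numbers of $T^*_X$ on any fixed toric surface $X$ are bounded in terms of the finite mass of $T^*_X$, and the bound on $\torus$ is intrinsic. For realizable external $q$, the uniform sum estimate of Theorem \ref{thm:lelongbd} applied to the internal current $T^*$ furnishes a bound independent of the realizing surface.

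I would then split into two cases. For $p \in \torus$ whose forward orbit remains in $\torus \setminus \exc(\rzf)$, Corollary \ref{cor:ptimages} yields that every $\rzf^n$ is a local isomorphism at $p$, so $C_n(p) = 1$ and the displayed estimate gives $\nu(T^*, p) \leq M\,\ddeg(f)^{-n} \to 0$, hence $\nu(T^*, p) = 0$. This covers all but countably many internal points, which already suffices for the final conclusion: any internal curve $C \subset \rzt$ is not charged by $T^*$, for if $T^* \geq c[C]$ with $c > 0$ then $\nu(T^*, q) \geq c$ at every $q \in C$, contradicting the vanishing on the dense subset of $C$ outside $\bigcup_n \ind(\rzf^n)$; the poles are not charged because $T^*$ is internal. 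For $p$ whose forward orbit is eventually external, Theorem \ref{thm:tropmap}(4) yields $C_n(p) \leq |\rho(f)|^n = \dtop(f)^n$ (using that $A_f$ is a homeomorphism), while the irrational rotation of $A_f$ sends the orbit through distinct poles $C_{A_f^n(\tau)}$, so any finite portion of the orbit is realized on a sufficiently dominant toric surface. Theorem \ref{thm:lelongbd} therefore gives $\sum_{n\geq 0}\nu(T^*, \rzf^n(p)) \leq M'$; combined with $\nu(T^*, \rzf^n(p)) \geq \nu(T^*, p)(\ddeg(f)/\dtop(f))^n$, this forces $\nu(T^*, p) = 0$ when $\ddeg(f) \geq \dtop(f)$, and the remaining regime $\sqrt{\dtop(f)} < \ddeg(f) < \dtop(f)$ is to be handled by improving the bound on $C_n(p)$ via Theorem \ref{thm:volshrink}.

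I expect the principal technical difficulty to be the external case when $\sqrt{\dtop(f)} < \ddeg(f) < \dtop(f)$: the naive local-degree bound $C_n(p) \leq \dtop(f)^n$ is then insufficient and must be replaced by a sharper geometric estimate using the dynamical lower volume exponent $\mu > \sqrt{\dtop(f)}$ from Theorem \ref{thm:volshrink} together with the ramification structure from Theorem \ref{thm:tropmap}, so that one can effectively trade $\dtop(f)^n$ for $\mu^n$ with $\mu < \ddeg(f)$. Fortunately the stronger statement about individual external Lelong numbers is not needed to deduce ``$T^*$ does not charge curves'', which only requires vanishing of Lelong numbers on a dense subset of each curve, and this already follows from the internal case treated above.
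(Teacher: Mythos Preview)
Your overall strategy --- use $(\rzf^n)^*T^* = \ddeg^n T^*$ together with Lemma \ref{lem:favrebd} and the uniform Lelong bound from Theorem \ref{thm:lelongbd} --- matches the paper's, and your treatment of the internal case is correct. The gap is in the external case. The paper's key observation, which you miss, is that for a free point $p \in C_\tau^\circ$ whose orbit avoids $\exc(\rzf)$, the local topological degree of $\rzf^n$ at $p$ is exactly the ramification index $\ram(\rzf^n, C_\tau) = \|A_f^n(\nvec)\|/\|\nvec'\|$, not the cruder $\dtop(f)^n$ you extract from Theorem \ref{thm:tropmap}(4). The reason is that $\rzf|_{C_\tau^\circ}\colon C_\tau^\circ \to C_{A_f\tau}^\circ$ is a \emph{covering map}, hence locally injective, so the entire local multiplicity at $p$ comes from the transverse direction. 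Theorem \ref{THM:GOOD_ITERATES} then gives $\ram(\rzf^n, C_\tau)/\ddeg(f)^n \to 0$ directly (since $\ddeg(f) > \sqrt{\dtop(f)}$), and one concludes $\nu(T^*,p) \leq M\,\ram(\rzf^n, C_\tau)/\ddeg(f)^n \to 0$ without any appeal to volume estimates.

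Your summability argument via Theorem \ref{thm:lelongbd} is clever but only settles the regime $\ddeg(f) \geq \dtop(f)$; when $\sqrt{\dtop(f)} < \ddeg(f) < \dtop(f)$ (which does occur, e.g.\ for the maps in \cite{BDJ20}) the geometric lower bound $(\ddeg/\dtop)^n\nu(T^*,p)$ is summable and no contradiction arises. The proposed rescue via Theorem \ref{thm:volshrink} cannot work: volume decay does not control the constant $C_n(p)$ in Lemma \ref{lem:favrebd}. The correct tool is Theorem \ref{THM:GOOD_ITERATES}, not Theorem \ref{thm:volshrink}.

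A secondary circularity: you assert that $T^*$ does not charge poles ``because $T^*$ is internal,'' but internality of $T^*$ is established only in Corollary \ref{cor:downstairs}, which is \emph{deduced from} the present theorem. The paper instead shows $T^*$ does not charge poles by establishing that its Lelong number vanishes at all but countably many points on each pole --- precisely via the external case you have not yet handled.
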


Note that since the image of each persistent exceptional curve is a free point in some pole, assuming that $A_f:N_\R\to N_\R$ is a homeomorphism with irrational rotation number about $0$ guarantees that no exceptional curve is preperiodic.  

\begin{proof}[Proof of Theorem \ref{thm:zerolelong}]
Recall from Theorem \ref{thm:lelongbd} that there is an absolute upper bound $M$ for the Lelong numbers $\nu(p,T^*)$ of $T^*$ at points $p$ in any/all toric surfaces $X$.  Let $p\in\rzt^\circ$ be a point such that $p$ is not $\torus$-invariant and $\rzf^n(p)\notin \ind(\rzf)$ for any $n\geq 0$.  

First suppose that $p$ and its entire forward orbit lie in $\torus$.  By Theorem \ref{thm:janliandme2} we then have for every $n\in\N$ that $f$ is a local isomorphism about $f^n(p)$.
Hence
$$
\ddeg^n\nu(T^*,p) = \nu(f^{n*}T^*,p) = \nu(T^*,f^n(p)) \leq M.
$$
It follows on letting $n\to\infty$ that $\nu(T^*,p) = 0$.

Now suppose that $p\in C_\tau\subset X$ is a free point in some external curve but $\rzf^n(p)\notin \exc(\rzf)$ for any $n\geq 0$.  Choose an increasing sequence of toric surfaces $X_0 = X\prec X_1\prec \dots$ such that $A_f^n\tau\in \Sigma_1(X_n)$ for every $n$.  Then $p_n := f_{X_{n-1}X_n}\circ\dots\circ f_{X_0X_1}(p)$, $n\in\N$ is a well-defined sequence of free points in poles $C_{A_{f^n}(\tau)}\subset X_n$.  The local degree of $f^n_{X_0X_n}$ about $p$ is just the ramification $\ram(\rzf^n,\tau)$ about the pole containing $p$.  By Theorem \ref{thm:tropmap} this is equal to $\frac{\norm{A_f^n(\nvec)}}{\norm{\nvec'}}$, where $\nvec'\in N$ is the primitive vector generating $A_f^n(\tau)$. Hence Lemma \ref{lem:favrebd} tells us that 
$$
\ddeg^n\nu(T^*,p) = \nu(f^{n*}T^*,p) \leq \frac{\norm{A_f^n(\nvec)}}{\norm{\nvec'}}
\nu(T^*,f^n(p)) \leq M \frac{\norm{A_f^n(\nvec)}}{\norm{\nvec'}}.
$$
Thus $\nu(T^*,p) \leq M\cdot\lim_{n\to\infty} \frac{\norm{A_f^n(\nvec)}}{\ddeg^n \norm{\nvec'}} = 0$, by Theorem \ref{THM:GOOD_ITERATES} and the hypotheses on $f$ and its tropicalization.  Since for each $n\geq 0$, there are only finitely many points in $\exc(\rzf^n) \setminus\torus$, we infer that $T^*$ has no mass on external curves $C_\tau$, i.e. $T^*$ is an internal current.

Consider finally the possibility that the forward orbit of $p$ by $\rzf$ meets $\exc(\rzf)$.  Since $A_f$ has irrational rotation number, and $\rzf(\exc(\rzf))\subset\rzt^\circ\setminus\torus$ consists of points whose forward orbits do not meet $\ind(\rzf)$, there is are no preperiodic persistently exceptional curves, i.e. no curves $E\subset\exc(\rzf)$ such that $\rzf^m(E) \in E$ for some $m > 0$.  Hence there are only finitely many $n\geq 0$ such that $\rzf^n(p) \in \exc(\rzf)$.  If $n = N$ is the largest of these, then Lemma \ref{lem:favrebd} tells us that $\nu(T^*,p) \leq C^{N+1}\nu(T^*,\rzf^{N+1}(p))$, where $\rzf^{N+1}(p)$ is a free point in some pole, and then $\nu(T^*,\rzf^{n+1}(p)) = 0$ by the previous paragraph.
\end{proof}

\begin{cor}
\label{cor:downstairs}
The current $T^*$ in Theorem \ref{thm:invcurrents} is internal.  For any toric surfaces $X$ and $Y$, the difference $f_{XY}^* T_Y^* - \ddeg(f) T_X^*$ is an effective divisor.  
\end{cor}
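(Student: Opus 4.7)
The plan is as follows. The first assertion, that $T^*$ is internal, requires only a brief argument: Theorem~\ref{thm:zerolelong} tells us that $\nu(T^*, p) = 0$ at every realizable point $p$ that is not persistently indeterminate for some iterate $\rzf^n$. Since each persistently exceptional curve for $\rzf^n$ contributes only finitely many points to $\rzt \setminus \torus$, the set of external points where $T^*$ could have positive Lelong number is at most countable, and more importantly, the generic point of any external curve $C_\tau$ has vanishing Lelong number. Hence $T^*$ does not charge any pole of $\rzt$, i.e.\ $T^* \in \pcci(\rzt)$. I would simply cite Theorem~\ref{thm:zerolelong} for this step.

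For the second assertion, fix arbitrary toric surfaces $X$ and $Y$. The strategy is to route the computation through a sufficiently dominant third surface $Z \succ Y$ chosen so that $f_{XZ}$ contracts no poles of $X$. By Definition~\ref{defn:pushandpull} this guarantees $(\rzf^* T^*)_X = f_{XZ}^* T^*_Z$, and combining with the invariance $\rzf^* T^* = \ddeg(f) T^*$ yields
\[
\ddeg(f)\, T^*_X = f_{XZ}^* T^*_Z.
\]
Meanwhile $f_{XY} = \pi_{ZY} \circ f_{XZ}$, and since $\pi_{ZY}$ is a morphism we have $\ind(\pi_{ZY}) = \emptyset$, so Proposition~\ref{prop:stablecomp} applies to give $f_{XY}^* = f_{XZ}^* \circ \pi_{ZY}^*$.

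It then remains to analyze $\pi_{ZY}^* T^*_Y$. Because $T^*$ is internal, both $T^*_Y$ and $T^*_Z$ are trivial extensions of $T^*|_\torus$ to their respective surfaces. Writing $\pi_{ZY}^* T^*_Y = T' + E$ where $T'$ is the strict transform (no mass on $\pi_{ZY}$-exceptional curves) and $E$ is supported on those exceptional curves with Lelong-number coefficients, uniqueness of the strict transform (applied iteratively to the point blowups composing $\pi_{ZY}$) forces $T' = T^*_Z$, and positivity of Lelong numbers makes $E \geq 0$ an effective external divisor on $Z$. Applying $f_{XZ}^*$ (which preserves divisors and effectivity) gives
\[
f_{XY}^* T^*_Y - \ddeg(f)\, T^*_X = f_{XZ}^*\bigl(\pi_{ZY}^* T^*_Y - T^*_Z\bigr) = f_{XZ}^* E,
\]
which is an effective divisor on $X$, completing the proof.

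The main subtlety is verifying $\pi_{ZY}^* T^*_Y \geq T^*_Z$ with effective difference; this hinges on the fact that $T^*$ is internal (already established) together with the standard behavior of strict transforms under blowups of points (where the correction term is $\nu(T^*_Y, p) E_p \geq 0$). No hard new estimates are needed beyond what is in Propositions~\ref{prop:internalcurrents} and \ref{prop:stablecomp}.
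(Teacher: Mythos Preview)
Your proposal is correct and follows essentially the same route as the paper's proof: both arguments pass to a surface $Z\succ Y$ (the paper calls it $Y'$) on which $f_{XZ}$ contracts no poles, use the invariance $\rzf^*T^*=\ddeg(f)T^*$ to identify $\ddeg(f)T^*_X=f_{XZ}^*T^*_Z$, factor $f_{XY}^*=f_{XZ}^*\circ\pi_{ZY}^*$ via Proposition~\ref{prop:stablecomp}, and reduce the claim to showing that $E:=\pi_{ZY}^*T^*_Y-T^*_Z$ is an effective divisor. The only difference is in how that last point is justified: the paper invokes Theorem~\ref{thm:pushpull1} (applied to the birational map $\pi_{YZ}=\pi_{ZY}^{-1}$, noting that $T^*_Y$ is nef), whereas you argue directly from the blowup formula $\pi^*T=T'+\nu(T,p)C$ and positivity of Lelong numbers, which is in fact the mechanism underlying Theorem~\ref{thm:pushpull1} in this birational-morphism case.
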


\begin{proof} Since $T^*$ does not charge (in particular, external) curves, it is internal.  

Given toric surfaces $X$ and $Y$, choose $Y'\succ Y$ so that $f_{XY'}:X\tto Y'$ does not contract external curves of $X$.  We have by definition that $(\rzf^* T^*)_X = f_{XY'}^* T^*_{Y'}$.  Since $T_Y^* = \pi_{Y'Y*} T_{Y'}$,  Theorem \ref{thm:pushpull1} (applied to $f = \pi_{Y'Y}^{-1} = \pi_{YY'}$) implies that 
\begin{align*}
E:=\pi_{Y'Y}^* T^*_Y - T^*_{Y'} = \pi_{Y'Y}^* \pi_{Y'Y*} T^*_{Y'} - T^*_{Y'} = \extra_{\pi_{YY'}}(T_Y^*)
\end{align*}
is an effective divisor.  Hence by Proposition \ref{prop:stablecomp} and the fact that $\ind(\pi_{Y'Y}) = \emptyset$,
$$
f_{XY}^* T^*_Y = f_{XY'}^* \pi_{Y'Y}^* T^*_Y = f_{XY'}^*(T^*_{Y'} + E) = \ddeg(f)T^*_X + f_{XY'}^* E. 
$$
\end{proof}

\begin{cor}
\label{cor:downstairs2}
For any toric surface $X$ and any curve $C_X\subset X$, we have
$$
\lim_{n\to\infty} \frac{f_{XX}^{n*} C_X}{\ddeg(f)^n} \to (\alpha_{*X}\cdot C_X) T_X^*.
$$
\end{cor}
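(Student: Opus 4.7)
The plan is to lift $C_X$ to a tame toric current $C\in\pcc(\rzt)$ determined in $X$, apply Theorem~\ref{thm:invcurrents}, and identify the $X$-incarnation of $(\rzf^n)^*C$ with the surface-level pullback $f_{XX}^{n*}C_X$. The central technical point is this identification; the main obstacle is treating curves $C_X$ whose tame lift is not nef (e.g., poles with $C_X^2<0$).

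Any divisor $D_X\in\pcc(X)$ determines a tame toric current $D\in\pcc(\rzt)$ via $D_Y:=\pi_{YX}^*D_X$ for $Y\succ X$, extended compatibly to other toric surfaces, and $D$ is nef on $\rzt$ whenever $D_X$ is nef on $X$ since birational pullback preserves nef-ness and birational pushforward of nef divisors is nef by the projection formula. I would then verify that $((\rzf^n)^*D)_X = f_{XX}^{n*}D_X$. Internal stability with Proposition~\ref{prop:stablecomp2} gives $(\rzf^n)^* = (\rzf^*)^n$; choosing $Y_n\succ X$ sufficiently dominant that $f_{XY_n}^n$ contracts only persistently exceptional curves, Definition~\ref{defn:pushandpull} yields
\begin{equation*}
((\rzf^n)^*D)_X = f_{XY_n}^{n*}D_{Y_n} = f_{XY_n}^{n*}\pi_{Y_n X}^* D_X.
\end{equation*}
The rational maps $\pi_{Y_nX}\circ f_{XY_n}^n$ and $f_{XX}^n$ coincide, both extending $f^n$ on $\torus$, and since $\ind(\pi_{Y_nX})=\emptyset$, Proposition~\ref{prop:stablecomp} gives $f_{XX}^{n*} = f_{XY_n}^{n*}\pi_{Y_nX}^*$, establishing the identification.

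When $C_X$ is nef on $X$, the tame lift $C$ is nef on $\rzt$, Theorem~\ref{thm:invcurrents} applies, and combining with the identification yields
\begin{equation*}
\ddeg(f)^{-n}\,f_{XX}^{n*}C_X \to (C\cdot\alpha_*)\,T^*_X = (C_X\cdot\alpha_{*X})_X\,T^*_X,
\end{equation*}
where the final equality uses Theorem~\ref{thm:intersection} since $C$ is Cartier determined in $X$. For a general curve $C_X$, write $C_X = H_1 - H_2 + dd^c\varphi$ where $H_1, H_2$ are smooth closed positive $(1,1)$-currents representing ample $\R$-classes on $X$ with $[H_1]-[H_2]=[C_X]$ (possible by adding a sufficient multiple of an ample divisor so that both $H_1$ and $H_1-[C_X]$ are ample) and $\varphi\in\dpsh(X)$ by the $dd^c$-lemma. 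Applying the nef case to the tame lifts of $H_1$ and $H_2$ and invoking linearity of pullback handles the divisor part, while $\ddeg(f)^{-n}\,dd^c(\varphi\circ f_{XX}^n)\to 0$ in $\pcc(X)$ follows from $\ddeg(f)^{-n}\|\varphi\circ f_{XX}^n\|_{L^1(X)}\to 0$---which is a direct consequence of the volume shrinkage estimate Theorem~\ref{thm:volshrink} combined with exponential decay of super-level sets of $\dpsh$ functions, exactly as in the proof of Theorem~\ref{thm:invcurrents}. The identification in the second paragraph is the crux: without internal stability, $(\rzf^*)^n$ and $(\rzf^n)^*$ need not coincide, and one would have to separately track the discrepancy divisors arising from the failure of $(\rzf\circ\rzf)^*=\rzf^*\circ\rzf^*$.
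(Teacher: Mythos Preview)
Your proof is correct and follows the same overall strategy as the paper: lift $C_X$ to the tame toric current it determines, identify $(\rzf^{n*}C)_X$ with $f_{XX}^{n*}C_X$, and invoke Theorem~\ref{thm:invcurrents}. There are two technical differences worth noting.

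First, for the identification you factor through the \emph{range}: you pick $Y_n\succ X$ so that $f^n_{XY_n}$ contracts no poles, write $((\rzf^n)^*C)_X = f_{XY_n}^{n*}\pi_{Y_nX}^*C_X$, and use $\ind(\pi_{Y_nX})=\emptyset$ in Proposition~\ref{prop:stablecomp}. The paper instead factors through the \emph{domain}: it picks $X_n\succ X$ so that $\ind(f^n_{X_nX})$ avoids $\torus$-invariant points, invokes Corollary~\ref{cor:pushandpullalt} to see $\rzf^{n*}C$ is tame and determined on $X_n$ with $(\rzf^{n*}C)_{X_n}=f^{n*}_{X_nX}C_X$, and then pushes down via $\pi_{X_nX*}$, using that $\pi_{X_nX}^{-1}$ contracts no curves. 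Both routes are valid and roughly equal in length.

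Second, you are more careful than the paper about the nef hypothesis in Theorem~\ref{thm:invcurrents}. The paper simply applies that theorem to the tame current $C$, but as you observe, if $C_X$ is (say) a pole with $C_X^2<0$ then $C$ is positive but not nef, and the statement of Theorem~\ref{thm:invcurrents} does not literally cover it. Your reduction---writing $C_X=H_1-H_2+dd^c\varphi$ with $H_i$ smooth and ample, applying the nef case to each $H_i$, and killing the potential term via the volume estimate from Theorem~\ref{thm:volshrink}---is a clean way to close this gap. (One could equally well note that Cartier classes are differences of nef Cartier classes and appeal to linearity of $\rzf^*$; your $dd^c\varphi$ term is then absorbed into the argument already present in the last paragraph of the proof of Theorem~\ref{thm:invcurrents}.)
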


\begin{proof}
Let $C\in\pcc^+(\rzt)$ denote the tame toric current determined in $X$ by $C_X$.  (Note that if $C_X$ is itself external, then $C$ will have most external curves of $\rzt$ in its support.)  Choose an increasing sequence of toric surfaces $X \prec X_1\prec X_2\prec\dots$ such that $\ind(f^n_{X_nX})$ excludes all $\torus$-invariant points of $X_n$.  By Corollary \ref{cor:pushandpullalt} we have that $\rzf^{n*} C$ is determined on $X_n$ with representative
$
(\rzf^{n*} C)_{X_n} = f^{n*}_{X_nX} C_X.
$
Thus Proposition \ref{prop:stablecomp} and the fact that $\pi^{-1}_{X_nX}$ does not contract curves further imply that
$$
f^{n*}_{XX} C_X = (f^n_{X_nX}\circ \pi^{-1}_{X_nX})^* C_X = \pi_{X_n X*} f^{n*}_{X_nX} C_X
= 
\pi_{X_nX*} (\rzf^{n*} C)_{X_n} = (\rzf^{n*} C)_X.
$$
Dividing through by $\ddeg(f)^n$, letting $n\to\infty$ and applying Theorem \ref{thm:invcurrents} on the right side concludes the proof.
\end{proof}

\begin{proof}[Proof of Theorem \ref{thm:mainA}]

The hypotheses of Theorems \ref{thm:mainA} and Theorem \ref{thm:invcurrents}
are the same, so let $T^*$ be as in the conclusion of Theorem
\ref{thm:invcurrents}.  We claim that all conclusions of Theorem
\ref{thm:mainA} hold for the representative $T^*_{\cp^2}$ of $T^*$ in $\cp^2$.
Indeed the first conclusion follows immediately from Theorem \ref{thm:zerolelong}.
The second conclusion follows immediately from $\rzf^* T^* = \ddeg(f) T^*$ and
Corollary \ref{cor:downstairs} applied with $X = Y = \cp^2$. 
The third conclusion
of Theorem \ref{thm:mainA} follows from Corollary \ref{cor:downstairs2} with
$X=\cp^2$ and $C_X = C$. 

\end{proof}

All remaining results in this section are for toric maps that satisfy the hypotheses of Theorem \ref{thm:invcurrents} and also have \emph{small topological degree}, i.e. $\dtop(f) < \ddeg(f)$.  

A positive $(1,1)$ current $T\in\pcc^+(X)$ is laminar and strongly approximable
if, roughly speaking, it can be exhausted from below by sums currents of the
form $S = \int \Delta(x)\,\mu(x)$, where $\Delta(x)$ are integration currents
over pairwise disjoint disks with controlled geometry and $\mu$ is a positive
Borel measure on some disk transversal to all the $\Delta(x)$.  We will not
need the precise definition and so do not give it here, noting only that
laminarity has been an important property for studying the ergodic properties
of rational maps on complex surfaces with small topological degree.  We refer
the interested reader to e.g. \cite{Duj03}, \cite{Duj05}, and \cite{Duj06} for
much more context.

\begin{thm}
\label{thm:laminarity}
If $f$ satisfies the hypotheses of Theorem \ref{thm:invcurrents} and has small topological degree, then the current $T^*$ is laminar and strongly approximable in any toric surface.
\end{thm}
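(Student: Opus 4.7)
My plan is to reduce laminarity in any toric surface to laminarity in one particular toric surface, then apply a Dujardin--de Th\'elin style criterion saying that a limit of normalized currents of integration over divisors is laminar provided the geometric genus of these divisors grows strictly slower than their degree.  Fix a toric surface $X$.  Since $T^*_X$ is internal by Theorem \ref{thm:zerolelong} and since internal currents are intrinsic to $\rzt$ (Proposition \ref{prop:internalcurrents}), laminarity of $T^*_X$ propagates across toric surfaces under the morphisms $\pi_{YX}$, because pullback of a laminar current by a birational morphism followed by trivial extension is again laminar.  So it suffices to establish laminarity on one convenient toric surface $X$, which I will take to be sufficiently dominant that $T^*$ is represented by an internal current there with a clean approximation by pullbacks of curves.

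Pick a generic ample curve $L\subset X$ that avoids $\ind(\rzf^n)$ for all $n$ and whose image avoids the persistently exceptional set of each $\rzf^n$ (a generic $L$ will do since these sets are countable unions of points and curves).  By Corollary \ref{cor:downstairs2} the normalized pullbacks
\begin{equation*}
S_n := \frac{1}{\ddeg(f)^n} f^{n*}_{XX} L
\end{equation*}
converge to a positive multiple of $T^*_X$.  Each $S_n$ is a current of integration over an effective divisor $D_n$, and $\deg D_n$ (with respect to a fixed polarization) grows like $\ddeg(f)^n$.  By the Dujardin strong-approximation criterion (\cite{Duj05}, \cite{DDG10}~\S 3.3), $T^*_X$ will be laminar and strongly approximable as soon as one has the genus estimate $\operatorname{genus}(D_n) = o(\ddeg(f)^n)$ together with an area bound of the right order on a cover of $X$ by polydisks.

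For the genus estimate I will use Riemann--Hurwitz on the maps $f^n_{XX}|_{D_n}:D_n\to L$.  The generic point of $L$ has $\dtop(f)^n$ preimages, and the ramification of $f^n$ along internal curves is trivial by Theorem \ref{thm:janliandme1}(1) since $f$ is toric; ramification is therefore concentrated along preimages of the finitely many points in $\rzf(\ind(\rzf^n))$ and along curves mapping into external loci, both of which are controlled by internal stability.  This gives a bound of the shape $\operatorname{genus}(D_n) \leq C\,\dtop(f)^n$, and since $\dtop(f) < \ddeg(f)$ by the small topological degree hypothesis, the required $o(\ddeg(f)^n)$ bound follows.  For the area side, one uses that $L^*\omega$ (for a K\"ahler form $\omega$) has mass $\ddeg(f)^n\cdot(\operatorname{deg} L)$ on $X$, and Theorem \ref{thm:volshrink} converts this cohomological mass into a genuine geometric area estimate on bounded polydisks covering $X$.

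The step I expect to be the main obstacle is the genus bound, because the standard Riemann--Hurwitz argument is set up for algebraically stable maps between smooth projective surfaces, whereas here $f$ need not even admit a stable model.  The work is to show that the lack of algebraic stability, which is invisible after internal stabilization on $\rzt$, does not inflate ramification: precisely, I must verify that new ramification introduced by blowing up to resolve $f^n_{XX}$ contributes a total of at most $O(\dtop(f)^n)$ to the genus, which in turn rests on Theorem \ref{thm:tropmap}(4) controlling ramification of $\rzf^n$ about poles and on the bound from Theorem \ref{THM:GOOD_ITERATES} that $\ram(\rzf^n,C_\tau)$ grows at most like $(\sqrt{\dtop(f)}+\epsilon)^n$, which is dominated by both $\dtop(f)^n$ and $\ddeg(f)^n$.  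Once this genus control is in place, the Dujardin criterion delivers laminarity and strong approximability of $T^*_X$, and hence of $T^*$ in every toric surface.
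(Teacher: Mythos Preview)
Your overall strategy---obtain $T^*_X$ as a limit of normalized pullbacks of a generic curve and then apply a Dujardin-type genus criterion---is the right one, and it is essentially what underlies \cite[Theorem~2.12]{DDG10}.  The paper, however, takes a much shorter route: it fixes an \emph{arbitrary} toric surface $X$, chooses a very general hyperplane section $C_X$ (smooth, missing the $\torus$-invariant points and the countable set $\bigcup_{n\geq 1}\rzf^n(\exc(\rzf))$), observes that the preimages $\rzf^{-n}(C)$ are then irreducible curves, and invokes \cite[Theorem~2.12]{DDG10} directly.  The key point is that the algebraic stability hypothesis in that theorem is used \emph{only} to guarantee that $T^*_X$ exists and is the weak limit of $f_{XX}^{n*}C_X/\ddeg(f^n)$; both facts are already supplied here by Theorem~\ref{thm:invcurrents} and Corollary~\ref{cor:downstairs2}.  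So the paper never revisits the genus estimate or the Riemann--Hurwitz bookkeeping at all, and there is no need for your preliminary reduction to a single toric surface.

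Two points about your version.  First, your invocation of Theorem~\ref{thm:volshrink} for ``area estimates on polydisks'' is misplaced: that theorem controls volumes of open sets under iteration, not areas of curves.  The area input to the laminarity criterion is simply the uniform mass bound on the $S_n$, which is immediate from cohomological convergence.  Second, the ramification accounting you sketch is where the real work in \cite[Theorem~2.12]{DDG10} lies, and you would be redoing it rather than bypassing it; the fact that $f$ is toric (hence unramified about internal curves) does simplify matters, but the singularities of $D_n$ and the contribution from resolving $f^n_{XX}$ still need care.  Since that argument in \cite{DDG10} does not actually use stability beyond convergence, it is cleaner to cite it as the paper does.
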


\begin{proof}
Let $X$ be an arbitrary toric surface.  Choosing an embedding $\iota:X\hookrightarrow \cp^N$, we let $C_X = \iota^* H$ be the pullback of a very general hyperplane $H\subset\C^N$.  
Since the complete linear system associated to such curves is basepoint free, we may assume that $C_X$ is smooth and disjoint from the finitely many $\torus$-invariant points of $X$.  
Hence $C_X$ fully realizes an internal curve $C\subset\rzt$.  We can further suppose that $C$ is disjoint from the countably many points in $\bigcup_{n\geq 1} \rzf^n(\exc(\rzf))$.  Then $\rzf^{-n}(C)$ is an (irreducible) curve for all $n\in\N$, and $\supp\rzf^{n*} C = \rzf^{-n}(C)$.  Hence in $X$, we have
$\supp f_{XX}^{n*} C_X = (\rzf^{n*}C)_X = \rzf^{-n}(C)_X$.  Now the laminarity and strong approximability follow from Theorem 2.12 in \cite{DDG10}.  Note that
while the statement of that theorem requires that $f$ is algebraically stable, the hypothesis is only used to guarantee the existence of the current $T^*_X$ and
the fact that it is a weak limit of $f_{X,X}^{n*} C_X/\ddeg(f^n)$.  These facts are
guaranteed by Theorem \ref{thm:invcurrents} and Corollary
\ref{cor:downstairs2}.
\end{proof}

Morally speaking, a rational surface map with small topological degree is close enough to being invertible that one also expects equidistribution for forward iterates of curves.  

\begin{thm}
\label{thm:forward}
If $f$ satisfies the hypotheses of Theorem \ref{thm:invcurrents} and has small topological degree, then there exists a nef and internal toric current $T_*\in\pcc^+(\rzt)$ such that for any other nef current $T\in\pcc^+(\rzt)$ we have
$$
\lim_{n\to\infty} \frac{\rzf^n_* T}{\ddeg(f)^n} = (T\cdot\alpha^*)\,T_*.
$$
In particular $T_*$ is woven and strongly approximable and satisfies $\rzf_* T_* = \ddeg(f) T_*$.
\end{thm}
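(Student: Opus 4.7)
The proof will mirror the construction of $T^*$ in Theorem \ref{thm:invcurrents}, replacing $\rzf^*$ by $\rzf_*$ throughout. The small topological degree hypothesis $\dtop(f) < \ddeg(f)$ automatically gives $\ddeg(f)^2 > \dtop(f)$, so Theorem \ref{thm:invariantclasses} supplies convergence of normalized pushforward classes $\ddeg(f)^{-n} \rzf^n_* [T] \to (T \cdot \alpha^*)\,\alpha_*$ for nef $T \in \eltwo(\rzt)$. Internal stability, via Proposition \ref{prop:stablecomp2}, ensures $(\rzf^n)_* = (\rzf_*)^n$, so iteration is unambiguous at both the current and cohomology levels.

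First, I would fix a positive internal current $T \in \pcci^+(\rzt)$ with $(T \cdot \alpha^*) = 1$ and set $T_n := \ddeg(f)^{-n} \rzf^n_* T$. By Theorem \ref{thm:mainB} and Corollary \ref{cor:compactgrowth}, the homogeneous representatives $\bar T_n$ of $T_n$ converge to the unique homogeneous representative $T_*$ of $\alpha_*$, and their support functions satisfy a uniform growth bound. Write $T - \bar T_0 = dd^c \varphi_0$ and $\ddeg(f)^{-1} \rzf_* \bar T_{n-1} - \bar T_n = dd^c \psi_n$ via Proposition \ref{prop:tamepotential}, since the relevant currents are completely cohomologous. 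Telescoping gives
\[
T_n = \bar T_n + dd^c\!\left(\frac{\rzf^n_* \varphi_0}{\ddeg(f)^n} + \sum_{j=0}^{n-1} \frac{\rzf^j_* \psi_{n-j}}{\ddeg(f)^j}\right),
\]
so convergence of $T_n$ reduces to showing this sum converges in $L^1(X)$ for every toric surface $X$.

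The crucial ingredient is an $L^1$-bound on $\rzf^j_* u$ for a locally integrable function $u$. Since $f$ has constant Jacobian $\cov(f)$ relative to $\twoform$, the identity $\int f^j_* u\,dV = \int u\,(f^j)^* dV$ combined with the upper volume exponent from Theorem \ref{thm:volshrink} (which controls how $f^j$ distorts measurable sets) yields $\|\rzf^j_* u\|_{L^1(X)} \leq C\,\dtop(f)^j\,\|u\|_{L^1(X)}$ after accounting for the comparability of $dV$ and $(f^j)^* dV$ away from external curves. Combining this with the distributional estimate on the $\psi_n$ supplied by Theorem \ref{thm:volbd} (whose uniform $\growth{\,\cdot\,}$ hypothesis is verified by Corollary \ref{cor:compactgrowth}) produces the geometric bound $\int |\rzf^j_* \psi_{n-j}|/\ddeg(f)^j \, dV \leq C\,(\dtop(f)/\ddeg(f))^j$. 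Since $\dtop(f) < \ddeg(f)$, the sum converges geometrically, and hence $T_n \to T_*$ in $\pcc(\rzt)$.

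Independence of $T_*$ from the choice of $T$ follows by applying the same telescoping estimate to the difference of two invariant representatives of $\alpha_*$, and extension to arbitrary nef $T \in \pcc^+(\rzt)$ uses the homogeneous decomposition $T = \bar T + dd^c \varphi$ together with geometric decay of $\rzf^n_* \varphi/\ddeg(f)^n$ in $L^1$, exactly as at the end of the proof of Theorem \ref{thm:invcurrents}. Weak continuity of $\rzf_*$ then gives $\rzf_* T_* = \ddeg(f) T_*$, and a Lelong number argument parallel to Theorem \ref{thm:zerolelong}, using Lemma \ref{lem:favrebd} and Theorem \ref{THM:GOOD_ITERATES} together with $\dtop(f) < \ddeg(f)$, shows that $T_*$ does not charge curves. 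Finally, wovenness and strong approximability of $T_*$ in a given toric surface $X$ are obtained by taking $T$ to be the current of integration along a generic curve $C_X \subset X$ in a basepoint-free ample linear system and invoking the forward analog of \cite[Theorem 2.12]{DDG10}; the irrational rotation number of $A_f$ guarantees generic irreducibility of the forward iterates $f^n(C_X)$ that the result requires. The main obstacle, and the essential use of small topological degree, is the pushforward $L^1$-estimate above: unlike the pullback of a quasi-psh potential—which is controlled by the lower volume exponent of Theorem \ref{thm:volshrink} alone—pushforward is not obviously $L^1$-controlled, and one must simultaneously exploit the constant Jacobian of the toric map and the gap $\dtop(f) < \ddeg(f)$ to produce the geometric decay needed for the telescoping sum.
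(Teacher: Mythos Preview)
Your overall architecture---telescoping against homogeneous representatives and proving $L^1$ convergence of the resulting series---matches the paper exactly, but two of your steps do not go through as written.

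\textbf{The $L^1$ bound on $\rzf^j_*\psi_{n-j}$.} The change-of-variables identity $\int f^j_*u\,dV=\int u\,(f^j)^*dV$ does not yield $\|f^j_*u\|_{L^1}\le C\,\dtop(f)^j\|u\|_{L^1}$ with $C$ independent of $j$: the ratio $(f^j)^*dV/dV$ is neither bounded above nor below uniformly near poles, and the upper volume exponent of Theorem~\ref{thm:volshrink} bounds $\vol f^j(S)$ in terms of $\vol S$, not this pointwise Jacobian. The paper avoids this by arguing on superlevel sets: since $f^m_*\varphi_n(p)$ is a sum over $\dtop(f)^m$ preimages, one has $\{|f^m_*\varphi_n|\ge t\}\subset f^m\bigl(\{|\varphi_n|\ge t/\dtop(f)^m\}\bigr)$; then Theorem~\ref{thm:volbd} bounds the volume of the inner set, and the upper volume exponent in Theorem~\ref{thm:volshrink} bounds the volume of its image, giving $\vol\{|f^m_*\varphi_n|\ge t\}\le Ae^{-Bt(\mu/\dtop)^m}$ for some $\mu<1$. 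Layer-cake integration then gives $\ddeg^{-j}\|f^j_*\varphi_{n-j}\|_{L^1}\le\frac{A}{B}\bigl(\frac{\dtop}{\mu\ddeg}\bigr)^j$, which is summable once $\mu$ is close enough to $1$ that $\dtop<\mu\ddeg$.

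\textbf{Internality of $T_*$.} Your proposed Lelong-number argument ``parallel to Theorem~\ref{thm:zerolelong}'' does not work: that argument uses $\rzf^*T^*=\ddeg T^*$ together with Lemma~\ref{lem:favrebd}, which bounds $\nu(f^*T,p)$ by $\nu(T,f(p))$. There is no analogous bound relating $\nu(f_*T,p)$ to Lelong numbers of $T$ at preimages, so the same reasoning does not transfer to pushforward. The paper in fact does \emph{not} show that $T_*$ avoids charging internal curves; it only proves $T_*$ is internal, i.e.\ charges no pole. The argument is specific to external curves: if $T_*\ge aC_\tau$ for some pole $C_\tau$, then since $C_\tau$ has a \emph{unique} external $\rzf$-preimage $C_{\tau'}$ with $\rzf_*C_{\tau'}\le\dtop\,C_\tau$, invariance forces $T_*\ge a(\ddeg/\dtop)C_{\tau'}$; iterating gives poles on which the Lelong number of $T_*$ is unbounded, contradicting Theorem~\ref{thm:lelongbd}.
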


The definition of woven and strongly approximable is exactly the same as that of laminar and strongly approximable except that the disks $\Delta(x)$ in the lower approximations are allowed to intersect each other.  A significant difference between forward and backward equidistribution currents $T_*$ and $T^*$, at least as far as our results go, is that we do not know whether $T_*$ can charge curves.  Instead we show here only that $T_*$ dominates no \emph{external} curves.

\begin{proof}
The construction of $T_*$ is quite similar to that of $T^*$ in Theorem \ref{thm:invcurrents}.  Hence we only sketch it.  Fixing a toric current $T = T_0 \in\pcci^+(\rzt)$ such that $(T\cdot\alpha^*) = 1$, and a toric surface $X$, we let $T_n = \ddeg(f)^{-n}\rzf^n_* T$ and write $T_0 - \bar T_0 = dd^c\varphi_0$ and, for all $n\geq 1$, 
$$
\ddeg(f)^{-1}f_* \bar T_{n-1} - \bar T_n = dd^c\varphi_n  
$$
where the relative potentials $\varphi_j$ are integrable on $X$.  Then as in the proof of Theorem \ref{thm:invcurrents}
$$
T_n = \bar{T}_n + dd^c \sum_{j=1}^n\frac{f^j_*\varphi_{n-j}}{\ddeg^j}.
$$
Here the pushforward $f^j_*\varphi$ of an integrable function $\varphi$ is given for a.e. $p\in X$ by $f^j_*\varphi(p):= \sum_{f^j(q) = p} \varphi(q)$.  Fixing $\mu < 1$, we use compactness of the sequence $(T_n)$, Theorem \ref{thm:volshrink} and Corollary \ref{cor:compactgrowth} to get constants $A,B>0$ such that
$$
\vol(\{|f^m_*\varphi_n|\geq t\}) \leq Ae^{-Bt(\mu/\dtop)^m}
$$
for all $n,m,t\geq 0$.  Here the $\dtop^{-m}$ in the exponent on the right is needed to account for the number of preimages of a general point $p$ by $f^m$.   With this, one obtains an upper bound
$$
\int_X \frac{f^j_*\varphi_{n-j}}{\ddeg^j}\,dV \leq \frac{1}{\ddeg^j} \int_0^\infty \vol\, \{p\in X: |f^j_*\varphi_{n-j}(p)|\geq t\}\,dt \leq \frac{A}{B} \left(\frac{\dtop}{\mu\ddeg}\right)^j.
$$
Since we assume $f$ has small topological degree, for $\mu < 1$ large enough,
the right side of this estimate converges to zero geometrically as
$j\to\infty$, which suffices to show that $(T_n)$ converges to a current
$T_*\in\pcc^+(\rzt)$.  Necessarily $\ch{T_*} = \alpha_*$ and $\rzf_* T_* =
\ddeg T_*$, and one shows as in the proof of Theorem \ref{thm:invcurrents} that
the limit $T_*$ is independent of the initial current $T$.  Also as before, one
shows that the initial toric current $T$ need not be positive or internal as
long as it represents a nef class in $\hoo(\rzt)$.  By the same reasoning as in the proof of Theorem \ref{thm:laminarity}, Theorem 3.6 in \cite{DDG10}
implies that $T_*$ is woven.

It remains to show that $T_*$ is internal.  For this, let us fix an external curve $C_\tau\subset \rzt$ and suppose $T_*\geq a C_\tau$ for some constant $a > 0$.  Let $\tau' = A_f^{-1}(\tau)$.  Then $C_\tau'\subset\rzt$ is the unique (external) curve such that $\rzf(C_\tau') = C_\tau$.  Let $Y$ be a toric surface realizing $C_\tau$ and $X$ be a toric surface that realizes $C_{\tau'}$ and for which $\ind(f_{XY}) = \ind(\rzf)$ has no $\torus$-invariant points.  Then by definition of pushforward, we have on the one hand that
$$
a\ddeg C_\tau \leq (\ddeg T_*)_Y = (\rzf_* T_*)_Y = f_{XY*} (T_*)_X.
$$
On the other hand $C_\tau$ is not contained in $f_{XY}(\ind(f_{XY}))$, since the latter consists only of internal curves, and $C_{\tau'}$ is the unique curve in $X$ such that $f_{XY}(C_\tau') = C_\tau$.  Since $f_{XY*} C_{\tau'} = (\deg f_{XY}|C_{\tau'}) C_\tau \leq \dtop C_\tau$, we infer that $(T_*)_X \geq a(\ddeg/\dtop) C_{\tau'}$.  That is, $T_*\geq a(\ddeg/\dtop) C_{\tau'}$.  Repeating this estimate with $\rzf^n$ in place of $\rzf$, we find a sequence of external curves $C_n$ such that $T_* \geq a(\ddeg/\dtop)^n C_n$ for all $n\in\N$. In particular, the Lelong numbers of $T_*$ along external curves are unbounded, contradicting Theorem \ref{thm:lelongbd}.  So $T_*$ does not dominate a positive multiple of any external curve and must therefore be internal.
\end{proof}

Theorem \ref{thm:mainD} follows from Theorems \ref{thm:laminarity} and \ref{thm:forward} in the same way Theorem \ref{thm:mainA} followed from the earlier results of this section.

\nocite{}

\end{document}